\DeclareSymbolFont{cmlargesymbols}{OMX}{cmex}{m}{n}
\DeclareMathSymbol{\mycoprod}{\mathop}{cmlargesymbols}{"60}
\let\coprod\mycoprod
\def\subrel#1#2{\mathrel{\mathop{#2}\limits_{#1}}}
\def\ssp{\Sigma{\rm Sp}}
\def\isp{{\rm Sp}}
\def\bmod{{\rm BMod}}
\def\comod{{\rm Comod}}
\def\lcomod{{\rm LComod}}
\def\rcomod{{\rm RComod}}
\def\lmod{{\rm LMod}}
\def\rmod{{\rm RMod}}
\def\Ho{{\rm Ho}}
\def\sp{{\rm Sp}}
\def\ab{{\rm Ab}_*}
\begin{document}

\title*{On quasi-categories of comodules and Landweber exactness}
\author{Takeshi Torii} 
\institute{Takeshi Torii \at 
Department of Mathematics, 
Okayama University,
Okayama 700--8530, Japan,\\
\email{torii@math.okayama-u.ac.jp}}
%
%
\maketitle

\abstract*{
In this paper we study quasi-categories of
comodules over coalgebras in a stable homotopy theory.
We show that the quasi-category of comodules
over the coalgebra associated to a Landweber exact
$\mathbb{S}$-algebra depends only on the height of
the associated formal group.
We also show that the quasi-category of $E(n)$-local spectra
is equivalent to the quasi-category of comodules
over the coalgebra $A\otimes A$
for any Landweber exact $\mathbb{S}_{(p)}$-algebra
of height $n$ at a prime $p$.
Furthermore,
we show that
the category of module objects over a discrete
model of the Morava $E$-theory spectrum
in the $K(n)$-local discrete symmetric $\mathbb{G}_n$-spectra
is a model of the $K(n)$-local category, 
where $\mathbb{G}_n$ is the extended Morava stabilizer group.
}

\abstract{
In this paper we study quasi-categories of
comodules over coalgebras in a stable homotopy theory.
We show that the quasi-category of comodules
over the coalgebra associated to a Landweber exact
$\mathbb{S}$-algebra depends only on the height of
the associated formal group.
We also show that the quasi-category of $E(n)$-local spectra
is equivalent to the quasi-category of comodules
over the coalgebra $A\otimes A$
for any Landweber exact $\mathbb{S}_{(p)}$-algebra
of height $n$ at a prime $p$.
Furthermore,
we show that
the category of module objects over a discrete
model of the Morava $E$-theory spectrum
in the $K(n)$-local discrete symmetric $\mathbb{G}_n$-spectra
is a model of the $K(n)$-local category, 
where $\mathbb{G}_n$ is the extended Morava stabilizer group.
}

\section{Introduction}

It is known that the stable homotopy category
of spectra is intimately related to
the theory of formal groups through
complex cobordism and 
Adams-Novikov spectral sequence
by the works of Morava~\cite{Morava}, 
Miller-Ravenel-Wilson~\cite{MRW},
Devinatz-Hopkins-Smith~\cite{DHS}, 
Hopkins-Smith~\cite{Hopkins-Smith},
Hovey-Strickland~\cite{Hovey-Strickland0} 
and many others.
The $E_2$-page of the Adams-Novikov spectral sequence 
is described as the derived functor of
taking primitives 
in the abelian category of
graded comodules over the co-operation
Hopf algebroid
associated to the complex cobordism spectrum.

We also have a localized version of Adams-Novikov
spectral sequence. 
For example,
for a Landweber exact spectrum $E$
of height $n$ at a prime $p$,
we have an $E$-based Adams-Novikov spectral sequence
abutting to the homotopy groups of $E$-local
spectra.
In this case the $E$-localization 
and the $E_2$-page
of the $E$-based Adams-Novikov spectral sequence
depends only on the height $n$ of the associated
formal group at $p$.
There are many results that
the derived functor describing the
$E_2$-page of the $E$-based Adams-Novikov
spectral sequence depends only on
the substack of the moduli stack of formal groups
\cite{Hovey2},
\cite{Hovey-Sadofsky},
\cite{Hovey-Strickland},
\cite{Naumann}.

These results suggest that
there may be an intimate relationship between
localized quasi-categories of spectra
and quasi-categories of comodules over co-operation
coalgebras.
In this paper we investigate this relationship.
We show that
the quasi-category of comodules over a coalgebra associated to
a Landweber exact $\mathbb{S}$-algebra depends only on the height
of the associated formal group and
that the quasi-category of comodules over a coalgebra
associated to a Landweber exact $\mathbb{S}_{(p)}$-algebra of height $n$
at a prime $p$ is equivalent to the quasi-category
of $E(n)$-local spectra,
where $E(n)$ is the $n$th Johnson-Wilson spectrum
at $p$.

First, we introduce a quasi-category of comodules
over a coalgebra associated to an algebra object
of a stable homotopy theory $\mathcal{C}$.
In this paper we regard coalgebra objects
as algebra objects of the opposite
monoidal quasi-category of $A$-$A$-bimodule objects
for an algebra object $A$ of $\mathcal{C}$.
We regard comodule objects over a coalgebra $\Gamma$ 
as module objects over $\Gamma$
in the opposite quasi-category of $A$-module objects
in $\mathcal{C}$.
In particular, we show that $A\otimes A$ is a coalgebra
object for an algebra object $A$ of $\mathcal{C}$
and we can consider the quasi-category
\[ \lcomod_{\Gamma(A)}(\mathcal{C})\]
of left comodules
over $A\otimes A$ in $\mathcal{C}$,
where $\Gamma(A)$ represents the pair $(A,A\otimes A)$.
For a map $A\to B$ of algebra objects of $\mathcal{C}$,
we have the extension of scalars functor
$B\otimes_A(-): \lmod_A(\mathcal{C})\to\lmod_B(-)$,
where $\lmod_A(\mathcal{C})$ and
$\lmod_B(\mathcal{C})$ are
the quasi-categories of left $A$-modules and $B$-modules,
respectively.
We show that the extension of scalars functor
extends to a functor
\[ B\otimes_A(-):
    \lcomod_{\Gamma(A)}(\mathcal{C})\longrightarrow
    \lcomod_{\Gamma(B)}(\mathcal{C}).\]  
of quasi-categories of comodules.

Next, we consider Landweber exact $\mathbb{S}$-algebras
in the quasi-category of spectra $\sp$,
where $\mathbb{S}$ is the sphere spectrum.    
We show that, if $A$ is a Landweber exact
$\mathbb{S}$-algebra,
then the quasi-category of comodules over the coalgebra
$A\otimes A$ depends only on the height of 
the associated formal group.

\begin{theorem}
[{cf. Theorem~\ref{thm:equiv-comodule-at-p}}]
If $A$ and $B$ are Landweber exact $\mathbb{S}$-algebras
with the same height at all primes $p$,
then there is an equivalence of quasi-categories
\[ \lcomod_{\Gamma(A)}(\sp)\simeq
   \lcomod_{\Gamma(B)}(\sp).\]
\end{theorem}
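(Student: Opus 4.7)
The plan is to reduce the comparison to a prime-by-prime statement and then, at each prime, pass through a common Landweber exact $\mathbb{S}_{(p)}$-algebra dominating both $A$ and $B$. Two structural inputs carry the argument: the functoriality $B\otimes_A(-)\colon \lcomod_{\Gamma(A)}(\sp)\to\lcomod_{\Gamma(B)}(\sp)$ already constructed in the paper, and the classical Landweber exact functor theorem identifying the homology theories and their comodule structures.

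First I would establish the prime-local reduction, which is the content of the referenced Theorem~\ref{thm:equiv-comodule-at-p}. For a Landweber exact $\mathbb{S}$-algebra $A$, the $p$-localization $A_{(p)}$ is a Landweber exact $\mathbb{S}_{(p)}$-algebra whose associated formal group at $p$ has the same height as that of $A$. The localization map $A\to A_{(p)}$ induces an extension-of-scalars functor of comodule quasi-categories, and a standard arithmetic fracture argument, together with the fact that $A\otimes A$ splits compatibly after $p$-localization, lets one assemble $\lcomod_{\Gamma(A)}(\sp)$ from the family $\{\lcomod_{\Gamma(A_{(p)})}(\sp)\}_p$ in a way that is natural in $A$. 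Hence it suffices to show that $\lcomod_{\Gamma(A_{(p)})}(\sp)\simeq \lcomod_{\Gamma(B_{(p)})}(\sp)$ for each $p$ separately.

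Next, at a fixed prime $p$, I would introduce the tensor product $C:=A_{(p)}\otimes B_{(p)}$. Since Landweber exactness is preserved under smashing in this setting (the invariant regular sequence defining the chromatic filtration remains regular on $C_*=A_*\otimes_{MU_*} MU_*\otimes_{MU_*} B_*$, and the resulting formal group has the same height profile), $C$ is a Landweber exact $\mathbb{S}_{(p)}$-algebra of height $n$. The unit maps $A_{(p)}\to C\leftarrow B_{(p)}$ then yield, via the extension-of-scalars functoriality, a zigzag
\[
  \lcomod_{\Gamma(A_{(p)})}(\sp)\;\longrightarrow\;
  \lcomod_{\Gamma(C)}(\sp)\;\longleftarrow\;
  \lcomod_{\Gamma(B_{(p)})}(\sp),
\]
and it remains to prove that each arrow is an equivalence.

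To show that $C\otimes_{A_{(p)}}(-)$ is an equivalence, I would apply the $\infty$-categorical Barr--Beck--Lurie theorem to the extension-of-scalars/forgetful adjunction, which reduces the question to verifying that the induced map of comonads, presented by the cosimplicial objects $A_{(p)}^{\otimes\bullet+1}$ and $C^{\otimes\bullet+1}$, realizes an equivalence of the resulting categories of coalgebras. Concretely this amounts to checking, for every comodule $M$, that the unit $M\to C\otimes_{A_{(p)}}M$ becomes an equivalence after passing to derived primitives, which is precisely the homotopical form of the classical identification $C_*\otimes_{(A_{(p)})_*}(A_{(p)})_*(X)\cong C_*(X)$ provided by Landweber exactness. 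The main obstacle is this last step: lifting the known abelian-category equivalence of Landweber exact comodule categories (in the spirit of Hovey--Sadofsky, Hovey--Strickland, and Naumann) to an equivalence at the quasi-categorical level, where one must genuinely control the monadicity hypothesis and the convergence of the associated descent tower rather than only the associated graded on $\pi_*$.
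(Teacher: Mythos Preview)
Your strategy diverges from the paper's in two essential ways, and the second divergence contains a genuine gap.

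First, the prime-by-prime reduction via arithmetic fracture is both unnecessary and unjustified as stated. You assert that $\lcomod_{\Gamma(A)}(\sp)$ can be reassembled from the family $\{\lcomod_{\Gamma(A_{(p)})}(\sp)\}_p$, but reassembling a quasi-category from its $p$-local pieces is not a ``standard'' move here, and you would also need the local equivalences to be compatible in the limit. The paper simply works globally throughout: nothing in the argument requires localizing at a prime.

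Second, and more seriously, your zigzag through $C=A\otimes B$ using extension of scalars along algebra maps, together with Barr--Beck--Lurie, does not reduce to anything you can check. You propose to verify that ``the unit $M\to C\otimes_{A_{(p)}}M$ becomes an equivalence after passing to derived primitives,'' and then identify this with the Landweber isomorphism $C_*\otimes_{(A_{(p)})_*}(A_{(p)})_*(X)\cong C_*(X)$. But that classical identification concerns only extended comodules $A\otimes X$, not arbitrary $M\in\lcomod_{\Gamma(A_{(p)})}(\sp)$. To pass from extended comodules to all comodules you would already need to know something tantamount to the equivalence you are trying to prove. You yourself flag this as the ``main obstacle,'' and your outline does not resolve it.

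The paper's route is quite different and avoids both issues. It constructs a \emph{direct} functor
\[
  F_{B,A}(M)=(B\otimes A)\,\square_{A\otimes A}\,M
\]
using the cotensor product, with no algebra map between $A$ and $B$ required and no zigzag. That this lands in $\lcomod_{\Gamma(B)}(\sp)$ follows from a limit-commutation criterion (Proposition~\ref{prop:cotensor-product-comodule-structure}) once one checks that tensoring with powers of $B$ commutes with the relevant cotensor product (Lemma~\ref{lem:landweber-commutes-cotensor}). The key technical input is the Bousfield--Kan spectral sequence for the cobar construction: its $E_2$-page is $\mathrm{Cotor}^s_{A_*(A)}(R_*(A),M_*)$, and these vanish for $s>0$ precisely because the \emph{algebraic} functor $G_{B_*,A_*}=B_*(A)\square_{A_*(A)}(-)$ is already an equivalence of abelian comodule categories (Hovey--Strickland). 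Hence the spectral sequence collapses, giving $\pi_*F_{B,A}(M)\cong G_{B_*,A_*}(M_*)$, and since $G_{B_*,A_*}$ is an equivalence, so is $F_{B,A}$. This spectral-sequence collapse is exactly the mechanism that lifts the abelian-category statement to the quasi-categorical one, and it is the idea missing from your outline.
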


We also show that the quasi-category of comodules over $A\otimes A$
is equivalent to the quasi-category $L_n\sp$
of $E(n)$-local spectra
if $A$ is a Landweber exact $\mathbb{S}_{(p)}$-algebra of height $n$
at a primes $p$.

\begin{theorem}
[{cf.~Theorem~\ref{thm:Ln-equivalent-comodules}}]
If $A$ is a Landweber exact $\mathbb{S}_{(p)}$-algebra
of height $n$ at a prime $p$,
then there is an equivalence of quasi-categories
\[ L_n\sp\simeq\lcomod_{\Gamma(A)}(\sp).\]
\end{theorem}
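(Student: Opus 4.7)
The plan is to apply the comonadic form of the $\infty$-categorical Barr--Beck--Lurie theorem to the extension/forgetful adjunction associated with $A$. By the previous theorem (Theorem~\ref{thm:equiv-comodule-at-p}), the quasi-category $\lcomod_{\Gamma(A)}(\sp)$ depends only on the height, so we may reduce to the case $A=E(n)$ if convenient, but the argument is essentially uniform in $A$.

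First I would construct a comparison functor $\Phi:L_n\sp\to\lcomod_{\Gamma(A)}(\sp)$. The extension of scalars $F:=A\otimes(-):\sp\to\lmod_A(\sp)$, left adjoint to the forgetful functor $U$, induces a comonad $FU$ on $\lmod_A(\sp)$ whose comultiplication is induced by the unit $\mathbb{S}\to A$. By the setup in the introduction, the quasi-category of $FU$-coalgebras is identified with $\lcomod_{\Gamma(A)}(\sp)$, and the universal property of the comonad provides a comparison functor $\sp\to\lcomod_{\Gamma(A)}(\sp)$. Since Landweber exactness at height $n$ at $p$ yields the equality of Bousfield classes $\langle A\rangle=\langle E(n)\rangle$, this functor annihilates $E(n)$-acyclic spectra and therefore factors through the Bousfield localization to yield $\Phi$.

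Next I would verify the hypotheses of Barr--Beck--Lurie for the restricted adjunction $F':L_n\sp\rightleftarrows\lmod_A(\sp):G'$, where $G'=L_n\circ U$. Conservativity of $F'$ is immediate: if $X\in L_n\sp$ and $A\otimes X\simeq 0$, then $E(n)\otimes X\simeq 0$ by coincidence of Bousfield classes, whence $X\simeq L_nX\simeq 0$. The essential requirement is that $F'$ preserves totalizations of $F'$-split cosimplicial diagrams. Concretely, for an $F'$-split cosimplicial object $X^\bullet$ in $L_n\sp$, the image $A\otimes X^\bullet$ admits an extra codegeneracy in $\lmod_A(\sp)$, so $\mathrm{Tot}(A\otimes X^\bullet)$ is an absolute limit. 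One then shows that the canonical map $A\otimes\mathrm{Tot}(X^\bullet)\to\mathrm{Tot}(A\otimes X^\bullet)$ is an equivalence; by conservativity on $L_n\sp$, this may be checked after one further smash with $A$, where the double cobar-style splitting makes the comparison transparent.

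The main obstacle will be this preservation of split totalizations. While split (co)simplicial diagrams are absolute in the target $\lmod_A(\sp)$, the functor $F'$ is a left adjoint and does not preserve arbitrary limits in $L_n\sp$; one must therefore exploit the specific structure of $F'$-splitness in $L_n\sp$, combined with the fact that totalization in $L_n\sp$ is computed from totalization in $\sp$ followed by $L_n$. This step encodes the convergence of the $A$-based Adams--Novikov tower for $E(n)$-local spectra and is the technical heart of the argument. Once it is in place, Barr--Beck--Lurie identifies $L_n\sp$ with the $\infty$-category of $FU$-coalgebras, i.e.\ with $\lcomod_{\Gamma(A)}(\sp)$, yielding the desired equivalence.
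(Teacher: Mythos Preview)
Your overall strategy via the comonadic Barr--Beck--Lurie theorem is sound, but your sketch of the crucial step---preservation of $F'$-split totalizations---is circular. You propose to check the comparison map $A\otimes\mathrm{Tot}(X^\bullet)\to\mathrm{Tot}(A\otimes X^\bullet)$ after one further smash with $A$. Unwinding this, the target becomes $\mathrm{Tot}(A\otimes A\otimes X^\bullet)$ by absoluteness of split limits, but the source $A\otimes A\otimes\mathrm{Tot}(X^\bullet)$ is not known to agree with this unless you already know that $A\otimes(-)$ commutes with the original totalization---precisely the statement you set out to prove. The ``double cobar-style splitting'' does not break the loop: splitness of $A\otimes X^\bullet$ controls $\mathrm{Tot}(A\otimes(-))$, not $A\otimes\mathrm{Tot}(-)$.

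The missing ingredient is the Hopkins--Ravenel smash product theorem, which you allude to but do not actually deploy. The paper invokes it in the form that $L_n\mathbb{S}$ is $E(n)$-nilpotent in $L_n\sp$ (Proposition~\ref{prop:E(n)-local-equivalence-comodule}), and then applies the general descent criterion of Theorem~\ref{theorem:general-equivalence-comodules}: nilpotence of the unit forces the Tot-tower of the cobar construction to be pro-constant (Lemma~\ref{lemma:A-nilpotent-pro-constancy}), so the counit $A\otimes P(M)\to M$ is an equivalence for every comodule $M$ (Corollary~\ref{cor:primitive-decomposition--comodules}), and the adjunction is an equivalence directly, without Barr--Beck. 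If you prefer your formulation, nilpotence yields the stronger conclusion that $A\otimes(-)$ commutes with \emph{all} totalizations in $L_n\sp$, which a fortiori verifies your split-totalization hypothesis. Either way, the passage from $E(n)$ to a general Landweber exact $\mathbb{S}_{(p)}$-algebra $A$ of height $n$ is then handled as you indicate, via Theorem~\ref{thm:equiv-comodule-at-p} and Proposition~\ref{prop:compatibility-extension-AandB}.
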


As an application of the results in this paper
we show that the model category constructed in \cite{Torii1}
is a model of the $K(n)$-local category,
where $K(n)$ is the $n$th Morava $K$-theory spectrum at a prime $p$.
We denote by $\Sigma\sp$ the model category of symmetric spectra
and by $\Sigma\sp_{K(n)}$ its left Bousfield localization of
$\Sigma\sp$ with respect to $K(n)$.
The $n$th extended Morava stabilizer group $\mathbb{G}_n$
is a profinite group and we can consider the model category
$\Sigma\sp(\mathbb{G}_n)$ of discrete symmetric
$\mathbb{G}_n$-spectra and its Bousfield localization
$\Sigma\sp(\mathbb{G}_n)_{K(n)}$ with respect to $K(n)$.
We have a commutative monoid object
$F_n$ in $\Sigma\sp(\mathbb{G}_n)_{K(n)}$ constructed by
Davis~\cite{Davis} and Behrens-Davis~\cite{Behrens-Davis},
which is a discrete model of the $n$th
Morava $E$-theory spectrum $E_n$.
In \cite{Torii1} we showed that the extension of scalars
functor 
\[ L_{K(n)}(F_n\otimes(-)):
    \Sigma\sp(\mathbb{G}_n)_{K(n)}
    \longrightarrow \lmod_{F_n}(\Sigma\sp(\mathbb{G}_n)_{K(n)}),  \]
which is a left Quillen functor,
is homotopically fully faithful,
that is, it induces a weak homotopy equivalence
between mapping spaces for any two objects
in $\Sigma\sp(\mathbb{G}_n)_{K(n)}$.
In this paper we show that this functor is actually
a left Quillen equivalence and hence we can consider
the category $\lmod_{F_n}(\Sigma\sp(\mathbb{G}_n)_{K(n)})$
is a model of the $K(n)$-local category.

\begin{theorem}
[{cf.~Theorem~\ref{thm:model-K(n)-local-category}}]
The extension of scalars
functor
\[ L_{K(n)}(F_n\otimes(-)):
    \Sigma\sp(\mathbb{G}_n)_{K(n)}
    \longrightarrow \lmod_{F_n}(\Sigma\sp(\mathbb{G}_n)_{K(n)})  \]
is a left Quillen equivalence.
\end{theorem}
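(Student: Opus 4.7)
The plan is to pass to the underlying quasi-categories and show that the induced adjunction is an equivalence, which is equivalent to the given Quillen pair being a Quillen equivalence. From \cite{Torii1} we already know that $L_{K(n)}(F_n\otimes(-))$ is homotopically fully faithful, so the derived unit $X\to \mathbb{R}U\,\mathbb{L}L_{K(n)}(F_n\otimes X)$ is a weak equivalence for every $X$ in $\Sigma\sp(\mathbb{G}_n)_{K(n)}$, where $U$ denotes the forgetful right adjoint. It therefore suffices to show that the derived counit is also a weak equivalence, equivalently that the left adjoint is essentially surjective on underlying quasi-categories.

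To prove essential surjectivity I would invoke Lurie's quasi-categorical monadicity theorem (Barr--Beck--Lurie) applied to $\mathbb{R}U\colon \lmod_{F_n}(\Sigma\sp(\mathbb{G}_n)_{K(n)})\to\Sigma\sp(\mathbb{G}_n)_{K(n)}$. Two hypotheses must be checked. First, $\mathbb{R}U$ is conservative, which is immediate because weak equivalences in $\lmod_{F_n}(\Sigma\sp(\mathbb{G}_n)_{K(n)})$ are by construction those maps whose underlying maps of $K(n)$-local discrete $\mathbb{G}_n$-spectra are weak equivalences. Second, $\mathbb{R}U$ must preserve $\mathbb{R}U$-split geometric realizations, which is standard for a forgetful functor from modules over an algebra object in a presentable symmetric monoidal stable quasi-category; the $K(n)$-localization does not interfere because $L_{K(n)}$ is a left adjoint and preserves all colimits.

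Granting these, the underlying quasi-category of $\lmod_{F_n}(\Sigma\sp(\mathbb{G}_n)_{K(n)})$ is identified with the quasi-category of algebras over the monad $T=\mathbb{R}U\circ\mathbb{L}L_{K(n)}(F_n\otimes(-))$ on $\Sigma\sp(\mathbb{G}_n)_{K(n)}$. But the homotopical full faithfulness recalled above is precisely the statement that the unit of $T$ is a natural weak equivalence, so $T$ is equivalent to the identity monad. Thus the quasi-category of $T$-algebras coincides with the underlying quasi-category of $\Sigma\sp(\mathbb{G}_n)_{K(n)}$, and tracing through the Barr--Beck--Lurie comparison shows that the resulting equivalence is naturally identified with $\mathbb{L}L_{K(n)}(F_n\otimes(-))$. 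This yields the desired Quillen equivalence.

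The main obstacle I expect is the verification of the second monadicity hypothesis in this specific model-categorical setting: one must confirm that the sifted colimits appearing in the bar construction which exhibits each $F_n$-module as a geometric realization of free $F_n$-modules are computed correctly at the level of underlying quasi-categories after passing through both the $K(n)$-localization and the extension of scalars along $\mathbb{S}\to F_n$ inside the equivariant category $\Sigma\sp(\mathbb{G}_n)$. The remaining pieces, namely conservativity of $\mathbb{R}U$ and the full faithfulness of the free functor, are either immediate from the definition of the model structure or already supplied by \cite{Torii1}.
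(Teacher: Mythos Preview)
Your approach is correct and genuinely different from the paper's. The referenced Theorem in the body actually treats the adjunction ${\rm Ex}:\ssp_{K(n)}\rightleftarrows\lmod_{F_n}(\ssp(\mathbb{G}_n)_{K(n)}):{\rm Re}$ (with source $\ssp_{K(n)}$ rather than $\ssp(\mathbb{G}_n)_{K(n)}$), and the paper proves it by establishing \emph{comonadicity}: it shows that $\isp_{K(n)}$ is comonadic over $\lmod_{E_n}(\isp_{K(n)})$ using the $E_n$-nilpotence of $L_{K(n)}\mathbb{S}$ together with the comodule machinery of \S\ref{sec:quasi-categories-comodules}, and then transports this along the identification $\lmod_{F_n}(\isp(\mathbb{G}_n)_{K(n)})\simeq\comod_{(UF_n,\Theta)}(\isp_{K(n)})$ supplied by \cite{Torii1}. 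Your route stays on the monadic side and, for the functor as stated, uses only the homotopical full faithfulness from \cite{Torii1} together with conservativity of the module forgetful functor. In fact it simplifies further than you realize: once the derived unit is an equivalence and $\mathbb{R}U$ is conservative, the triangle identity $\mathbb{R}U(\epsilon)\circ\eta_{\mathbb{R}U}={\rm id}$ forces $\mathbb{R}U(\epsilon)$ and hence $\epsilon$ to be an equivalence directly, so invoking the full Barr--Beck--Lurie theorem and worrying about split geometric realizations are both unnecessary. The paper's route ties the result into its broader comodule framework and the Devinatz--Hopkins nilpotence input; yours is shorter and essentially formal once \cite{Torii1} is granted.
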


The organization of this paper is as follows:
In \S\ref{sec:Notation}
we fix some notation we use throughout this paper.
In \S\ref{sec:opposite_monoidal}
we study opposite coCartesian fibrations,
opposite monoidal quasi-categories, 
and opposite tensored quasi-categories.
In particular,
we show that a lax monoidal right adjoint functor
between monoidal quasi-categories
induces a lax monoidal right adjoint functor
between the opposite monoidal quasi-categories.
In\S\ref{sec:quasi-categories-comodules}
we introduce a quasi-category
of comodules over a coalgebra in a stable homotopy theory.
We define a cotensor product
of a right comodule and a left comodule
over a coalgebra as a limit of
the cobar construction.
We study the relationship between 
localizations of a stable homotopy theory
and quasi-categories of comodules.
In \S\ref{sec:comodule-spectra}
we study comodules in spectra over a coalgebra
associated to a Landweber exact $\mathbb{S}$-algebra.
First, we study the Bousfield-Kan spectral sequence
associated to the two-sided cobar construction.
Next,
we show that the quasi-category of comodules
over the coalgebra associated to a Landweber
exact $\mathbb{S}$-algebra depends only on the height
of the associated formal group.
Finally,
we show that the model category
of modules over $F_n$
in the $K(n)$-local discrete symmetric $\mathbb{G}_n$-spectra 
is a model of the $K(n)$-local
category.
In \S\ref{sec:proof-theorem-construction-RF}
we give a proof of 
Proposition~\ref{prop:existence-final-object-sections}
stated in \S\ref{sec:opposite_monoidal},
which is technical but 
important for constructing a canonical map between
opposite coCartesian fibrations.

\if0
It is know that the stable homotopy category
of spectra is related to the theory of formal groups.

Adams-Novikov spectral sequence connects the stable homotopy category
of spectra with the theory of formal groups.

The $E_2$-page of Adams-Novikov spectral sequence 
is described as the derived functor of 
in the abelian category of
graded comodules over the Hopf algebroid
associated to the complex oriented cohomology theory.

There are many results that
this derived functor depends only on
the substack of the moduli stack of formal groups.
\cite{Hovey-Sadofsky},\cite{Hovey2},
\cite{Hovey-Strickland},
\cite{Naumann},

In this paper we introduce the quasi-category of comodules
over the coalgebra associated to an algebra object.
We show that, if the algebra object is Landweber exact,
then the quasi-category depends only on the height.   

The organization of this paper as follows:
In \S\ref{sec:opposite_monoidal}
we study the opposite monoidal quasi-category.
We show that the lax monoidal right adjoint functor
between monoidal categories
induces a lax monoidal functor right adjoint functor
between the opposite monoidal categories.
\fi

\section{Notation}
\label{sec:Notation}

For a category $\mathcal{C}$,
we denote by 
${\rm Hom}_{\mathcal{C}}(x,y)$
the set of all morphisms 
from $x$ to $y$ in $\mathcal{C}$
for $x,y\in\mathcal{C}$.

We denote by ${\rm sSet}$
the category of simplicial sets.
For a simplicial set $K$,
we denote by $K^{\rm op}$
the opposite simplicial set
(see \cite[\S1.2.1]{Lurie1}).  
If $K$ is a quasi-category,
then $K^{\rm op}$ is also a quasi-category.
For simplicial sets $X,Y$,
we denote by ${\rm Fun}(X,Y)$
the simplicial mapping space from $X$ to $Y$.
For a simplicial set $X$ equipped with a map
$\pi: X\to S$ of simplicial sets,
we denote by $X_s$ the fiber of $\pi$ over $s\in S$.
If $X$ and $Y$ are simplicial sets over a simplicial set $S$,
then we denote by
${\rm Fun}_S(X,Y)$ the simplicial set of maps
from $X$ to $Y$ over $S$.

For a small (simplicial) category $\mathcal{C}$,
we denote by $N(\mathcal{C})$
the simplicial set obtained by
applying the (simplicial) nerve functor $N(-)$ to $\mathcal{C}$
(see \cite[\S1.1.5]{Lurie1}).
We denote by ${\rm Cat}_{\infty}$
the quasi-category of (small) quasi-categories
(see \cite[\S3]{Lurie1}).

We denote by $\Sigma\sp$ 
the category of symmetric spectra
equipped with the stable model structure
(see \cite{HSS}).
We denote by ${\rm Sp}$
the quasi-category of spectra,
which is the underlying quasi-category
of the simplicial model category $\Sigma\sp$. 
We denote by ${\rm Ho}(\sp)$
the stable homotopy category of spectra.
We denote by $\mathbb{S}$ the sphere spectrum.
For a spectrum $X\in {\rm Sp}$,
we write $X_*$ for the homotopy groups $\pi_*X$.
For spectra $X,Y\in {\rm Sp}$,
we write $X\otimes Y$ the smash product
of $X$ and $Y$.


\section{Opposite monoidal quasi-categories and 
opposite tensored quasi-categories
over monoidal quasi-categories}
\label{sec:opposite_monoidal}

In this section we study 
the opposite quasi-categories of monoidal quasi-categories
and the opposite quasi-categories
of tensored quasi-categories over monoidal quasi-categories.
The author thinks that the results in this section
are well-known to experts but
he decided to include this section because
he does not find out appropriate references.

In \S\ref{subsec:opposite-coCartesian-fibrations}
we recall a model of opposite coCartesian fibrations
by Barwick-Glasman-Nardin~\cite{Barwick-Glasman-Nardin}
and study maps between opposite coCartesian fibrations.  
In \S\ref{subsec:opposite-monoidal-quasi-categories}
we study the opposite quasi-category
of a monoidal quasi-category and 
show that a lax monoidal right adjoint functor
between monoidal quasi-categories
induces a lax monoidal right adjoint functor
between the opposite monoidal quasi-categories.
In \S\ref{subsec:opposite-module-quasi-categories}
we study the opposite 
of a tensored quasi-category over a monoidal quasi-category.
We show that a lax tensored right adjoint 
functor between tensored quasi-categories 
induces a lax tensored right adjoint functor
between the opposites of the tensored quasi-categories.

\subsection{Opposite coCartesian fibrations}
\label{subsec:opposite-coCartesian-fibrations}

For a coCartesian fibration
we have the opposite coCartesian fibration whose fibers
are the opposite quasi-categories of the fibers
of the original coCartesian fibration.
In this subsection we recall the explicit model of 
opposite coCartesian fibrations
due to Barwick-Glasman-Nardin~\cite{Barwick-Glasman-Nardin}. 
We show that a map between coCartesian fibrations
whose restriction to every fiber admits a left adjoint
induces a map between the opposite coCartesian fibrations.

First, we recall the explicit model of 
opposite coCartesian fibrations
by Barwick-Glasman-Nardin~\cite{Barwick-Glasman-Nardin}. 

Let $S$ be a simplicial set and 
let $p: X\to S$ be a coCartesian fibration with small fibers.
We denote by $X_s$ the quasi-category that is 
the fiber of $p$ over $s\in S$.
Let ${\rm Cat}_{\infty}$ be
the quasi-category of small quasi-categories.
By \cite[\S3.3.2]{Lurie1},
the coCartesian fibration $p$
is classified by a functor
$\mathbf{X}:S\to {\rm Cat}_{\infty}$.
There is an involution
\[ R: {\rm Cat}_{\infty}\longrightarrow
      {\rm Cat}_{\infty}\]
carrying a quasi-category to its opposite.
The composite functor $R\mathbf{X}$ classifies a coCartesian
fibration $Rp:RX\to S$
in which the fiber $(RX)_s$ of $Rp$ over $s\in S$
is equivalent to the opposite quasi-category
$(X_s)^{\rm op}$ for all $s\in S$.
We call $Rp: RX\to S$ 
the opposite coCartesian fibration
of $p: X\to S$. 
In the following of this subsection
we assume that the base simplicial set
$S$ is a quasi-category.


To describe the model of opposite coCartesian fibrations,
we recall the twisted arrow quasi-category. 
The twisted arrow quasi-category
$\widetilde{\mathcal{O}}(K)$ 
for a quasi-category $K$ 
is the simplicial set 
in which the set of $n$-simplexes is given by
\[ \widetilde{\mathcal{O}}(K)_n =
   {\rm Hom}_{{\rm Set}_{\Delta}}
   ((\Delta^{n})^{\rm op}\star\Delta^n,K)\]
with obvious structure maps.   
The simplicial set
$\widetilde{\mathcal{O}}(K)$ is actually a quasi-category
(see \cite[Prop.~4.2.3]{Lurie3}).
Note that the inclusions
$\Delta^n\hookrightarrow
(\Delta^n)^{\rm op}\star\Delta^n$
and 
$(\Delta^n)^{\rm op}\hookrightarrow
(\Delta^n)^{\rm op}\star\Delta^n$
induce maps of simplicial sets
$\widetilde{\mathcal{O}}(K)\to K$
and 
$\widetilde{\mathcal{O}}(K)\to K^{\rm op}$,
respectively.

We use the twisted arrow category
$\widetilde{\mathcal{O}}(\Delta^n)$
for the $n$-simple $\Delta^n$ for $n\ge 0$
to describe the model of opposite coCartesian fibrations.
The twisted arrow quasi-category
$\widetilde{\mathcal{O}}(\Delta^n)$
for $\Delta^n$
is the nerve of $\widetilde{[n]}$,
where $\widetilde{[n]}$
is the ordered set of all pairs $(i,j)$
of integers with $0\le i\le j\le n$
equipped with order relation 
$(i,j)\le (i',j')$
if and only if 
$i\ge i'$ and $j\le j'$.  
The ordered set $\widetilde{[n]}$
is depicted as follows
\begin{equation}\label{eq:twisted-arroes-diagram} 
\begin{array}{ccccccccccc}
   00 & \rightarrow & 01 & 
   \rightarrow & 02 & \rightarrow & \cdots &
   \rightarrow & 0\overline{1} &
   \rightarrow & 0\overline{0} \\
      &            & \uparrow &
      & \uparrow & & & & \uparrow & & \uparrow \\ 
      &            & 11 &
   \rightarrow & 12 & \rightarrow & \cdots &
   \rightarrow & 1\overline{1} & 
   \rightarrow & 1\overline{0} \\
      & & & & \uparrow & & & & \uparrow & & \uparrow \\    
      & & & & 22 & \rightarrow & \cdots
      & \rightarrow & 2\overline{1}& \rightarrow
      & 2\overline{0} \\              
      & & & & & & & & \uparrow & & \uparrow \\
      & & & & & & \ddots & & \vdots & & \vdots \\
      & & & & & & & & \uparrow & & \uparrow \\
      & & & & & & & & \overline{1}\overline{1}
      & \rightarrow & \overline{1}\overline{0}\\
      & & & & & & & & & & \uparrow \\
      & & & & & & & & & & \phantom{,}\overline{0}\overline{0},\\
\end{array}
\end{equation}
where $\overline{k}=n-k$.

By functoriality of $\widetilde{\mathcal{O}}(-)$,
we have a cosimplicial simplicial set
$\widetilde{\mathcal{O}}(\Delta^{\bullet})$.  
For a simplicial set $K$ over $S$,
we define a simplicial set $H(K)$ over $S$ as follows.
The simplicial set $H(K)$
is a simplicial subset 
of ${\rm Hom}_{\rm sSet}
    (\widetilde{\mathcal{O}}(\Delta^{\bullet}),K)$.
A map $\varphi: \widetilde{\mathcal{O}}(\Delta^n)\to K$
is an $n$-simplex of $H(K)$ for $n\ge 0$
if the $j$-simplex $\varphi(jj)\to \cdots 
\to \varphi(1j)\to \varphi(0j)$ covers 
a totally degenerate $j$-simplex of $S$,
that is, a $j$-simplex in the image of the map
$S_0\to S_j$, for all $0\le j\le n$.
Assigning to an $n$-simplex $\varphi$ of $H(K)$
the $n$-simplex $p\varphi(00)\to p\varphi(01)\to 
\cdots\to p\varphi(0n)$ of $S$,
we obtain a map $H(K)\to S$.

Let $p:X\to S$ be a coCartesian fibration,
where $S$ is a quasi-category.
We define a simplicial set $RX$
as follows. 
The simplicial set $RX$ is
a simplicial subset of $H(X)$. 
A map $\varphi: \widetilde{\mathcal{O}}(\Delta^n)\to X$
is an $n$-simplex of $RX$ for $n\ge 0$
if the following two conditions are satisfied:
\begin{enumerate}
\item
The $j$-simplex $\varphi(jj)\to \cdots 
\to \varphi(1j)\to \varphi(0j)$ covers 
a totally degenerate $j$-simplex of $S$
for all $0\le j\le n$.
\item
The $1$-simplex
$\varphi(ij)\to \varphi(ik)$ is a $p$-coCartesian edge for all 
$0\le i\le j\le k\le n$.
\end{enumerate}

As in $H(X)$, we have a map
\[ Rp: RX\to S, \]
which is a coCartesian fibration.
The fiber $(RX)_s$ over $s\in S$ is equivalent
to the opposite quasi-category $(X_s)^{\rm op}$ of 
the fiber $X_s$ for all $s\in S$.
An edge $\varphi\in {\rm Hom}_{\rm sSet}(\Delta^1,RX)$
is $Rp$-coCartesian if and only if
the edge $\varphi(11)\to \varphi(01)$ 
is an equivalence in the fiber $X_s$,
where $s=p\varphi(11)$.
The coCartesian fibration
$Rp: RX\to S$ is a model of the opposite
coCartesian fibration corresponding
to the composite
\[ R\mathbf{X}: S\stackrel{\mathbf{X}}{\longrightarrow} 
   {\rm Cat}_{\infty}\stackrel{R}{\longrightarrow}{\rm Cat}_{\infty},\]
where $\mathbf{X}:S\to {\rm Cat}_{\infty}$
is the map corresponding to the coCartesian
fibration $p: X\to S$,
and $R: {\rm Cat}_{\infty}\to{\rm Cat}_{\infty}$
is the functor which assigns to a quasi-category
its opposite quasi-category.

Next, we consider a map
between coCartesian fibrations which
admits a left adjoint for each fibers.
We show that the map induces a canonical
map in the opposite direction
between the opposite coCartesian fibrations.
   
Let $p: X\to S$ and $q: Y\to S$ be coCartesian fibrations over 
a quasi-category $S$.
Suppose we have a map $G: Y\to X$ over $S$.
Note that we do not assume that $G$ preserves coCartesian edges.
The map $G:Y\to X$ over $S$ induces a functor $G_s: Y_s\to X_s$ 
between the quasi-categories of fibers
for each $s\in S$.

We shall define a simplicial set 
$\mathcal{R}$ over $S$ equipped with maps
$\pi_X:\mathcal{R}\to RX$ and
$\pi_Y:\mathcal{R}\to RY$ over $S$.
For a simplicial set $K$ and $X$,
we denote by ${\rm Fun}(K,X)$
the mapping simplicial set from $K$ to $X$.
The map $p: X\to S$
induces a map $p_*: {\rm Fun}(\Delta^1,X)\to 
{\rm Fun}(\Delta^1,S)$.
We regard $S$ as a simplicial subset of
${\rm Fun}(\Delta^1,S)$
via constant maps.
We denote by ${\rm Fun}^S(\Delta^1,X)$
the pullback of $p_*$ along 
the inclusion $S\hookrightarrow {\rm Fun}(\Delta^1,S)$.
The inclusion $\Delta^{\{i\}}\hookrightarrow \Delta^1$
induces a map
${\rm Fun}^S(\Delta^1,X)\to X$ over $S$ for $i=0,1$. 

We have the inclusion
$RX\hookrightarrow H(X)\cong 
 H({\rm Fun}(\Delta^{\{0\}},X))$.
The map $G: Y\to X$ over $S$
induces a map
$RY\hookrightarrow H(Y)
\stackrel{G_*}{\longrightarrow}H(X)\cong
H({\rm Fun}(\Delta^{\{1\}},X))$.
The inclusion $\Delta^{\{i\}}\hookrightarrow
\Delta^1$ 
induces a map
$H({\rm Fun}^S(\Delta^1,X))\to
H({\rm Fun}(\Delta^{\{i\}},X))$
for $i=0,1$.
Using these maps,
we define a simplicial set $\mathcal{R}$ by
\[ \mathcal{R}=RX\times_{H({\rm Fun}(\Delta^{\{0\}},X))}
               H({\rm Fun}^S(\Delta^1,X))
   \times_{H({\rm Fun}(\Delta^{\{1\}},X))}
   RY. \]
We have a map $\mathcal{R}\to S$ and 
projections
$\pi_X:\mathcal{R}\to RX$ and $\pi_Y: \mathcal{R}\to RY$
over $S$.

Now we assume that
the functor
$G_s: Y_s\to X_s$ admits a left adjoint $F_s$ for all $s\in S$.
Then an object $x$ of $X$ with $s=p(x)$
determines an object $(x,u_x,F_s(x))$
of $\mathcal{R}$,
where $u_x: x\to G_sF_s(x)$
is the unit map of the adjunction
$(F_s,G_s)$ at $x$.
We define $\mathcal{R}^0$ to be
the full subcategory of $\mathcal{R}$
spanned by $\{(x,u_x,F_s(x))\}$
for all $x\in X$, where $s=p(x)$.
Let 
\[ \pi_{X}^{0}: \mathcal{R}^0\longrightarrow RX.\]
be the restriction of $\pi_X$ to $\mathcal{R}^0$.

\if0
Regarding $\mathcal{R}$ as a simplicial set
over $RX$ via $\pi_X$,
we consider the quasi-category of
sections ${\rm Fun}_{RX}(RX,\mathcal{R})$,
which is the fiber of the map
$\pi_{X*}: {\rm Fun}(RX,\mathcal{R})\to
{\rm Fun}(RX,RX)$ 
over the identity of $RX$.
We shall show that
${\rm Fun}_{RX}(RX,\mathcal{R})$ 
has a final object $t_0$
and define a map $RF: RX\to RY$ to be $\pi_Yt_0$
if $G_s$ admits a left adjoint $F_s$
for all $s\in S$.
\fi

\begin{proposition}
\label{prop:existence-final-object-sections}
The map $\pi_X^0:\mathcal{R}^0\to RX$
is a trivial Kan fibration.
\end{proposition}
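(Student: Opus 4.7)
The strategy is to verify the right lifting property of $\pi_X^0$ directly against every boundary inclusion $\partial\Delta^n\hookrightarrow\Delta^n$. The argument reduces vertex-by-vertex conditions to a higher coherence problem, which is resolved using the universal property of units of the fiberwise adjunctions.

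First, I unpack the data. An $n$-simplex of $\mathcal{R}$ records a triple $(\varphi_X,\eta,\varphi_Y)$ in which $\varphi_X:\widetilde{\mathcal{O}}(\Delta^n)\to X$ is an $n$-simplex of $RX$, $\varphi_Y:\widetilde{\mathcal{O}}(\Delta^n)\to Y$ is an $n$-simplex of $RY$, and $\eta$ encodes a natural transformation from $\varphi_X$ to $G\varphi_Y$ lying fiberwise over the underlying simplex $\sigma:\Delta^n\to S$. Such a simplex belongs to $\mathcal{R}^0$ precisely when, at each of the $n+1$ diagonal vertices $(i,i)$ of $\widetilde{\mathcal{O}}(\Delta^n)$ (which are the vertices of $\Delta^n$), the edge $\eta(i,i):\varphi_X(i,i)\to G_{\sigma(i)}\varphi_Y(i,i)$ is the unit $u_{\varphi_X(i,i)}$ of the adjunction $F_{\sigma(i)}\dashv G_{\sigma(i)}$.

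For $n=0$, the lifting problem is essential surjectivity: any vertex $x\in X$ over $s=p(x)$ lifts to the object $(x,u_x,F_s(x))\in\mathcal{R}^0$, whose existence is guaranteed by the hypothesis that each $G_s$ admits a left adjoint. For $n\ge 1$, every vertex of $\Delta^n$ already lies in $\partial\Delta^n$, so the unit conditions defining $\mathcal{R}^0$ are enforced by the given boundary lift; the remaining task is to extend $\varphi_Y$ and $\eta$ over the interior cells of $\widetilde{\mathcal{O}}(\Delta^n)\times\Delta^1$ subject to the coCartesian conditions of $RY$. The construction proceeds by induction along the partial order on $\widetilde{[n]}$ displayed in diagram~\eqref{eq:twisted-arroes-diagram}: the $p$-coCartesian edges force $\varphi_X(i,j)$ for $i<j$ to be a coCartesian pushforward of $\varphi_X(i,i)$, and similarly the $q$-coCartesian edges force $\varphi_Y(i,j)$ to be a coCartesian pushforward of $\varphi_Y(i,i)\simeq F_{\sigma(i)}\varphi_X(i,i)$ in $Y$. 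The comparison edge $\eta(i,j)$ at an interior vertex is then induced from the diagonal unit $\eta(i,i)$ by factoring through the $p$-coCartesian transport in $X$.

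The main technical obstacle, and the reason the full proof is deferred to Section~\ref{sec:proof-theorem-construction-RF}, is organizing the higher-dimensional coherence data so that the universal property of units at the diagonal vertices produces a contractible space of compatible lifts at every simplicial level. It is important to note that \emph{no} Beck--Chevalley identity is needed: the edge $\eta(i,j)$ is simply the resulting comparison morphism, recorded by the intermediate factor of $\mathcal{R}$ and not assumed to be invertible. The universal property of the unit $u_c$ for any $c$ in a fiber states that the space of pairs $(d,c\to G_s(d))$ with fixed source $c$ is contractible with initial object $(F_s c,u_c)$; applying this relatively to the twisted arrow diagram and assembling the contractible spaces of choices stage-by-stage gives the required filler of any boundary lift, establishing the right lifting property.
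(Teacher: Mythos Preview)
Your high-level plan---verify the right lifting property of $\pi_X^0$ against every $\partial\Delta^n\hookrightarrow\Delta^n$, with the $n=0$ case given by the existence of units and the $n\ge 1$ case by a combination of coCartesian extension and the universal property of units---matches the paper's strategy. But your sketch both mischaracterizes how the adjunction is used and leaves the genuine technical content unaddressed.

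The paper does \emph{not} invoke the unit property at every diagonal vertex $(i,i)$. Its key reduction is Lemma~\ref{lem:key-lemma-lifiting-R}: a lift along $\partial\Delta^n\hookrightarrow\Delta^n$ against $\pi_X:\mathcal{R}\to RX$ exists as soon as the \emph{final} vertex $\Delta^{\{n\}}$ lands in $\mathcal{R}^0$. The argument then splits cleanly: first, marked-anodyne techniques (Lemmas~\ref{lem:opposite-makred-anodyne-partial-M} and \ref{lem:opp-mark-anodyne-B-C}) extend $\varphi_Y$ and $\eta$ over all of $\widetilde{\mathcal{O}}(\Delta^n)$ except the last column $D=\{nn\to\cdots\to 0n\}$, using only the coCartesian structures of $p$ and $q$ and no adjunction hypothesis at all; second, the adjunction is invoked exactly once, on $D$, via the final lemma of \S\ref{subsec:proof-Prop-1}, which converts the problem of filling $D\times\Delta^1$ into a lifting problem against marked anodyne maps in a correspondence $\mathcal{M}\to\Delta^1$ realizing $(F_s,G_s)$. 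Your ``stage-by-stage'' assembly and ``applying this relatively to the twisted arrow diagram'' do not approximate this: the real work is the combinatorial filtration of $\widetilde{\mathcal{O}}(\Delta^n)$ by paths (indexed lexicographically) and the identification of each successive inclusion with a horn of the form $\Lambda^n(i_1,\dots,i_k)$ or $\Lambda^n_0$ with a marked initial edge.

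A minor but genuine error: the comma category $c\downarrow G_s$ is not contractible; it has $(F_sc,u_c)$ as an initial object, which is what you actually need. Initiality gives lifting against right cones, not arbitrary boundary spheres, so your final sentence does not yield a proof without the specific decomposition the paper provides.
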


We defer the proof of 
Proposition~\ref{prop:existence-final-object-sections}
to \S\ref{sec:proof-theorem-construction-RF}.

We take a section $T_0$ of $\pi_X^0$,
which is unique up to contractible space of choices.
Let $\pi_Y^0:\mathcal{R}^0\to RY$ 
be the restriction of $\pi_Y$ to $\mathcal{R}^0$.
We define a functor 
\[ RF: RX \longrightarrow RY \]
to be $\pi_Y^0\,T_0$.

We would like to describe some properties
of the section $T_0$.
Let $s\in S$.
We consider the restriction of $T_0$
to $(RX)_s$.
The fiber $\mathcal{R}_s$ is described as
\[ \mathcal{R}_s=(RX)_s\times_{H({\rm Fun}(\Delta^{\{0\}},X_s))}
   H({\rm Fun}(\Delta^1,X_s))
   \times_{H({\rm Fun}(\Delta^{\{1\}},X_s))}(RY)_s,\] 
and the fiber $\mathcal{R}^0_s$ is a full subcategory
of $\mathcal{R}_s$.
The composition
$(RX)_s\hookrightarrow H(X_s)\stackrel{(F_s)_*}
{\longrightarrow}H(Y_s)$
factors through $(RY)_s$.
We denote by $R(F_s)$ the induced functor $(RX)_s\to (RY)_s$.
The unit map
$u_s: 1_{X_s}\to G_sF_s$ in ${\rm Fun}(X_s,X_s)$
can be identified with a map
$u_s: X_s\to {\rm Fun}(\Delta^1,X_s)$.
We obtain a map
$Hu_s: (RX)_s\to H({\rm Fun}(\Delta^1,X_s))$
by the composition
$(RX)_s\hookrightarrow H(X_s)\stackrel{(u_s)_*}{\longrightarrow}
H({\rm Fun}(\Delta^1,X_s))$.
Note that $Hu_s$ followed by
$H({\rm Fun}(\Delta^1,X_s))\to
H({\rm Fun}(\Delta^{\{0\}},X_s))$
is the inclusion $(RX)_s\hookrightarrow H(X_s)$,
and the map $Hu_s$ followed by
$H({\rm Fun}(\Delta^1,X_s))\to
H({\rm Fun}(\Delta^{\{1\}},X_s))$
is the composition of $R(F_s): (RX)_x\to (RY)_s$
followed by the inclusion
$(RY)_s\hookrightarrow H(X_s)$.
Hence we obtain a section
of $\mathcal{R}^0_s$ over $(RX)_s$:
\[ (1_{(RX)_s},Hu_s,R(F_s)):
   (RX)_s\longrightarrow \mathcal{R}^0_s.\]

\begin{proposition}
\label{prop:final-object-property-1}
We have 
\[ T_0|_{(RX)_s}\simeq (1_{(RX)_s},Hu_s,R(F_s)).\]
for any $s\in S$.
\end{proposition}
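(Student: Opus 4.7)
The plan is to show that both $T_0|_{(RX)_s}$ and the explicit section $(1_{(RX)_s}, Hu_s, R(F_s))$ are sections of a common trivial Kan fibration, and therefore equivalent in its (contractible) space of sections.

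First I would identify the fiber of $\pi_X^0$ over $(RX)_s$ with $\mathcal{R}^0_s$. Since the structure map $\mathcal{R}^0\to S$ factors through $\pi_X^0$ followed by $RX\to S$, and $(RX)_s = RX\times_S\{s\}$, the pullback
\[ \mathcal{R}^0\times_{RX}(RX)_s \;\cong\; \mathcal{R}^0\times_S\{s\} \;=\; \mathcal{R}^0_s.\]
By Proposition~\ref{prop:existence-final-object-sections} the map $\pi_X^0$ is a trivial Kan fibration, and trivial Kan fibrations are stable under pullback, so the induced map
\[ \pi_X^0|_{\mathcal{R}^0_s}: \mathcal{R}^0_s \longrightarrow (RX)_s \]
is again a trivial Kan fibration. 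Hence its space of sections ${\rm Fun}_{(RX)_s}((RX)_s,\mathcal{R}^0_s)$ is contractible, and any two sections are equivalent.

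Next I would check that the triple $(1_{(RX)_s}, Hu_s, R(F_s))$ actually defines a section of $\pi_X^0|_{\mathcal{R}^0_s}$ and not merely of $\pi_X|_{\mathcal{R}_s}$. That it is a section of the latter is recorded in the paragraph preceding the statement: $Hu_s$ restricts along $\Delta^{\{0\}}\hookrightarrow\Delta^1$ to the inclusion $(RX)_s\hookrightarrow H(X_s)$ and along $\Delta^{\{1\}}\hookrightarrow\Delta^1$ to $R(F_s)$ composed with $(RY)_s\hookrightarrow H(Y_s)$, which match the two outer components of the pullback defining $\mathcal{R}_s$. On objects an $x\in(RX)_s$ is sent to the triple $(x,u_x,F_s(x))$, which is exactly the form of objects spanning $\mathcal{R}^0$; since $\mathcal{R}^0\subset\mathcal{R}$ is a full subcategory, the section factors through $\mathcal{R}^0_s$.

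Both $T_0|_{(RX)_s}$ and $(1_{(RX)_s},Hu_s,R(F_s))$ are then sections of the trivial Kan fibration $\pi_X^0|_{\mathcal{R}^0_s}$, and contractibility of the section space yields the asserted equivalence. The main substantive input is the trivial Kan fibration property from Proposition~\ref{prop:existence-final-object-sections} (whose proof is deferred to \S\ref{sec:proof-theorem-construction-RF}); beyond that, the only potential obstacle is verifying that the canonical section lands in the full subcategory $\mathcal{R}^0_s$, which reduces to the object-level identification of the unit and is essentially by definition.
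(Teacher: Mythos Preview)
Your proposal is correct and follows essentially the same approach as the paper: restrict $\pi_X^0$ to the fiber over $s$ to obtain a trivial Kan fibration $\mathcal{R}^0_s\to (RX)_s$, observe that both $T_0|_{(RX)_s}$ and $(1_{(RX)_s},Hu_s,R(F_s))$ are sections, and conclude they are equivalent by contractibility of the space of sections. You are slightly more explicit than the paper in checking that the canonical section factors through the full subcategory $\mathcal{R}^0_s$, but the argument is otherwise identical.
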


\begin{proof}
Restricting $\pi_X^0$ to the fibers over $s\in S$,
we obtain a trivial Kan fibration
$(\pi_X^0)_s: \mathcal{R}^0_s\to (RX)_s$.
The restriction of the section $T_0$
to $(RX)_s$ is a section of $(\pi_X^0)_s$.
The map $(1_{(RX)_s},Hu_s,R(F_s))$
is also a section of $(\pi_X^0)_s$.
Hence we have
$T_0|_{(RX)_s}\simeq (1_{(RX)_s},Hu_s,R(F_s))$.
\qed\end{proof}

\if0
We note that we can identify objects in $RX$ with objects in $X$. 
Let $x\in RX$ be an object of $RX$ with $s=Rp(x)$.
If $G_s$ admits a left adjoint $F_s$,
then we have an object
$(x,u,F_s(x))$ in $\mathcal{R}$,
where $u: x\to G_sF_s(x)$
is the unit map of the adjunction
$(F_s,G_s)$ at $x$.

\begin{proposition}
\label{prop:final-object-property-1}
For any object $x\in RX$,
we have 
\[ t_0(x)\simeq (x,u,F_s(x)).\]
\end{proposition}
\fi


Next, we consider
the image of edges of $RX$
under the section $T_0$.
Let $\varphi$ be an edge of $RX$
over $e:s\to s'$ in $S$
represented by a $p$-coCartesian edge 
$\varphi(00)\to\varphi(01)$ in $X$ and
an edge $\varphi(11)\to\varphi(01)$
in the fiber $X_{s'}$.
We take a $q$-coCartesian edge
$\psi: F_s\varphi(00)\to y'$ in $Y$ over $e$.
Since $\varphi(00)\to\varphi(01)$
is $p$-coCartesian,
we obtain an edge $\varphi(01)\to G_{s'}y'$ in $X_{s'}$,
which makes the following diagram commute
\begin{equation}\label{diagram:coCartesian-induce-map} 
  \xymatrix{
    \varphi(00) \ar[r] \ar[d]_u &  \varphi(01) \ar@{.>}[d]\\
    G_sF_s\varphi(00) \ar[r]^{G\psi}  &  G_{s'}y',\\
   }
\end{equation}  
where $u$ is the unit map of the adjunction
$(F_s,G_s)$ at $\varphi(00)$.
Let $w: F_{s'}\varphi(11)\to y'$ be
the map in $Y_{s'}$
obtained from $\varphi(11)\to \varphi(01)$
by applying $F_{s'}$
followed by the adjoint map of $\varphi(01)\to G_{s'}y'$.
We denote by $R\varphi$
the edge of $RY$ over $e$ represented by
\[ F_s\varphi(00)\stackrel{\psi}{\longrightarrow}y'
   \stackrel{w}{\longleftarrow}F_{s'}\varphi(11).\]
Since the composite
$\varphi(11)\to\varphi(01)\to G_{s'}y'$
is adjoint to 
$w: F_{s'}\varphi(11)\to y'$,
we have an edge 
$H\varphi: \varphi\to G(R\varphi)$
of $H({\rm Fun}^S(\Delta^1,X))$
represented by the following commutative diagram
\[\xymatrix{
    \varphi(00) \ar[r] \ar[d]_u &  \varphi(01) \ar[d] 
     & \ar[l] \varphi(11) \ar[d]^u\\
    G_sF_s\varphi(00) \ar[r]^{G\psi}  &  G_{s'}y'
     & \ar[l]_{G_{s'}w} G_{s'}F_{s'}\varphi(11).\\ }
\]

\begin{proposition}
\label{prop:final-section-properties-2}
For any edge $\varphi$ of $RX$,
we have 
\[ T_0(\varphi)\simeq (\varphi,H\varphi,R\varphi). \]
\end{proposition}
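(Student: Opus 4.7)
My plan is to mimic the strategy of the proof of Proposition~\ref{prop:final-object-property-1}: I will first verify that the triple $(\varphi, H\varphi, R\varphi)$ is a bona fide edge of $\mathcal{R}^0$ lifting $\varphi$ through $\pi_X^0$, and then invoke the fact that $\pi_X^0$ is a trivial Kan fibration to conclude that any two such lifts are equivalent.

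For the first step I would check that the triple defines a $1$-simplex of $\mathcal{R}$. The map $H\varphi \colon \widetilde{\mathcal{O}}(\Delta^1) \to \mathrm{Fun}^S(\Delta^1, X)$ is determined by its three columns---the unit $u$ at $\varphi(00)$, the edge $\varphi(01) \to G_{s'}y'$, and the unit $u$ at $\varphi(11)$---which lie in the fibers $X_s$, $X_{s'}$, $X_{s'}$ respectively, together with the two squares extracted from diagram~(\ref{diagram:coCartesian-induce-map}) and from the adjointness that expresses $w$ as the mate of the composite $\varphi(11) \to \varphi(01) \to G_{s'}y'$. The totally degenerate condition defining $H$ at level $j=1$ holds because the edge from column $11$ to column $01$ lies entirely in $X_{s'}$ and hence projects to the identity at $s'$ in $S$. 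The edge $R\varphi$ is a valid $1$-simplex of $RY$ since its outer edge $\psi$ is $q$-coCartesian by construction and its inner edge $w$ lies in $Y_{s'}$. Compatibility of $H\varphi$ with $\varphi$ and with $G(R\varphi)$ under the projections to $H(\mathrm{Fun}(\Delta^{\{0\}}, X))$ and $H(\mathrm{Fun}(\Delta^{\{1\}}, X))$ is immediate by reading off the top and bottom rows of the diagram. Because both endpoints of the triple are of the form $(z, u_z, F_{p(z)}(z))$ and $\mathcal{R}^0$ is full in $\mathcal{R}$, the edge itself lies in $\mathcal{R}^0$.

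For the second step I would note that $\pi_X^0$ sends the constructed edge to $\varphi$, so it is a lift of $\varphi$ through $\pi_X^0$, as is $T_0(\varphi)$. By Proposition~\ref{prop:existence-final-object-sections}, $\pi_X^0$ is a trivial Kan fibration, hence so is the induced map $\mathrm{Fun}(\Delta^1, \mathcal{R}^0) \to \mathrm{Fun}(\Delta^1, RX)$; its fiber over $\varphi$ is therefore a contractible Kan complex containing both $T_0(\varphi)$ and $(\varphi, H\varphi, R\varphi)$ as vertices, which yields the claimed equivalence. The main obstacle is really the bookkeeping in the first step: one must track the projections to $S$ carefully to confirm both the totally degenerate condition defining $H$ and the coCartesian condition characterizing edges of $RY$, and one must match the commutativity data in the defining diagram with the correct squares in $\mathrm{Fun}^S(\Delta^1, X)$. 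Once those verifications are in place, the equivalence with $T_0(\varphi)$ is a formal consequence of the trivial Kan fibration lifting property.
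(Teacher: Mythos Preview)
Your proposal is correct and follows essentially the same approach as the paper. The paper phrases the second step as pulling back $\pi_X^0$ along $\varphi:\Delta^1\to RX$ to obtain a trivial Kan fibration $\mathcal{R}_\varphi^0\to\Delta^1$ and observing that both $T_0(\varphi)$ and $(\varphi,H\varphi,R\varphi)$ are sections of it; this is equivalent to your formulation via the fiber of $\mathrm{Fun}(\Delta^1,\mathcal{R}^0)\to\mathrm{Fun}(\Delta^1,RX)$. The paper omits the verification that the triple is a valid edge of $\mathcal{R}^0$, which you supply in more detail.
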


\begin{proof}
Let $\pi_{\varphi}^0: \mathcal{R}_{\varphi}^0\to \Delta^1$
be the trivial Kan fibration obtained by
the pullback of $\pi_X^0$ along the map
$\varphi: \Delta^1\to RX$.
The triple $(\varphi, H\varphi,R\varphi)$ determines
a section of $\pi_{\varphi}^0$.
Hence $T_0(\varphi)\simeq (\varphi, H\varphi,R\varphi)$.
\qed\end{proof}


The main result in this subsection is 
the following theorem.

\begin{theorem}
\label{thm:construction-RF}
Let $p: X\to S$ and $q: Y\to S$ be coCartesian fibrations over 
a quasi-category $S$.
Suppose we have a map $G: Y\to X$ 
over $S$.
If $G_s$ admits a left adjoint $F_s$
for all $s\in S$,
then 
there exists a canonical map $RF:RX\to RY$
over $S$
up to contractible space of choices.
We have
$(RF)_s\simeq F_s^{\,\rm op}$ for all $s\in S$
and $RF(\varphi)\simeq R\varphi$
for any edge $\varphi$ of $RX$.
\end{theorem}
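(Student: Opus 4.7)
The plan is to assemble the theorem directly from the three preceding propositions. Proposition~\ref{prop:existence-final-object-sections} asserts that the projection $\pi_X^0:\mathcal{R}^0\to RX$ is a trivial Kan fibration. Consequently the space of its sections, namely the fiber of ${\rm Fun}(RX,\mathcal{R}^0)\to{\rm Fun}(RX,RX)$ over the identity, is a contractible Kan complex, since trivial Kan fibrations are stable under cotensor by $RX$ and have contractible spaces of sections. Choose any section $T_0$, which is thus determined up to a contractible space of choices, and set
\[ RF := \pi_Y^0 \circ T_0 : RX \longrightarrow RY. \]
Since every simplicial set and every structure map entering the definition of $\mathcal{R}^0$ lies over $S$, the resulting map $RF$ is automatically a map over $S$.

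For the fiberwise comparison, pull the construction back along $\{s\}\hookrightarrow S$. Proposition~\ref{prop:final-object-property-1} gives $T_0|_{(RX)_s}\simeq (1_{(RX)_s}, Hu_s, R(F_s))$, so that $(RF)_s = \pi_Y^0\circ T_0|_{(RX)_s}\simeq R(F_s)$. Under the identifications $(RX)_s\simeq (X_s)^{\rm op}$ and $(RY)_s\simeq (Y_s)^{\rm op}$, the functor $R(F_s)$ is by construction the opposite $F_s^{\rm op}$. For the edgewise statement, let $\varphi$ be a $1$-simplex of $RX$ and pull back along $\varphi:\Delta^1\to RX$; Proposition~\ref{prop:final-section-properties-2} identifies $T_0(\varphi)\simeq (\varphi,H\varphi,R\varphi)$, and applying $\pi_Y^0$ yields $RF(\varphi)\simeq R\varphi$, as required.

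The main obstacle is Proposition~\ref{prop:existence-final-object-sections} itself, whose proof is deferred to \S\ref{sec:proof-theorem-construction-RF}. This is the technical heart of the construction: it encodes the fact that the fiberwise left adjoints $F_s$ assemble into a coherent global datum, and one expects its proof to proceed by analyzing, for each simplex $\sigma:\Delta^n\to RX$, the fiber of $\pi_X^0$ over $\sigma$ and showing that it is a contractible Kan complex using the uniqueness up to contractible choice of left adjoints to each $G_s$. Once that proposition is granted, the theorem as stated is a formal consequence, because the remaining identifications on fibers and edges follow by pulling the trivial Kan fibration $\pi_X^0$ back along $(RX)_s\hookrightarrow RX$ or along $\varphi:\Delta^1\to RX$ and invoking essential uniqueness of sections.
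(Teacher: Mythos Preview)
Your proposal is correct and follows exactly the paper's approach: the paper's proof of this theorem is a single sentence citing Propositions~\ref{prop:existence-final-object-sections}, \ref{prop:final-object-property-1}, and \ref{prop:final-section-properties-2}, and the definition of $RF$ as $\pi_Y^0\,T_0$ for a section $T_0$ of the trivial Kan fibration $\pi_X^0$ is given in the text between those propositions. Your write-up simply unpacks in detail what the paper leaves implicit.
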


\begin{proof}
The theorem follows from
Propositions~\ref{prop:existence-final-object-sections},
\ref{prop:final-object-property-1}, and 
\ref{prop:final-section-properties-2}.
\qed
\end{proof}

Now we consider which coCartesian edge of $RX$
is preserved by the functor $RF$.
Let $e: s\to s'$ be a $1$-simplex of $S$.
Since $q:Y\to S$ and $p:X\to S$ are coCartesian fibrations,
we have functors 
$e^Y_!: Y_s\to Y_{s'}$ and 
$e^X_!: X_s\to X_{s'}$
associated to $e$.
The map $G:Y\to X$ over $S$ induces  
a diagram 
$\partial(\Delta^1\times\Delta^1)\to{\rm Cat}_{\infty}$
depicted as 
\begin{equation}
\label{eq:Delta1Delta1diagram}
\xymatrix{
     Y_s \ar[d]_{e^Y_!} \ar[r]^{G_s}& X_s \ar[d]^{e^X_!} \\
     Y_{s'} \ar[r]^{G_{s'}} & X_{s'}  \\   }
\end{equation}
and a natural transformation
\begin{equation}
\label{eq:natural-transformation-associated-to-G} 
e^X_!G_s\longrightarrow G_{s'}e^Y_!.
\end{equation}
If natural 
transformation~(\ref{eq:natural-transformation-associated-to-G})
is an equivalence,
then
$G: Y\to X$ preserves coCartesian edges over $e$.

We recall the definition of left adjointable
diagram (see \cite[Def.~4.7.5.13]{Lurie2}).
Suppose we are given a diagram of quasi-categories
\[ \xymatrix{
    \mathcal{C} \ar[r]^{G} \ar[d]_U& 
    \mathcal{D} \ar[d]^V\\
    \mathcal{C}' \ar[r]^{G'} & \mathcal{D}' }\]
which commutes up to a specified equivalence
$\alpha: VG\simeq G'U$.
We say that this diagram is left adjointable
if the functors $G$ and $G'$ admit left adjoints 
$F$ and $F'$, respectively,
and if the composite transformation
\[ F'V\to F'VGF\stackrel{\alpha}{\simeq}
   F'G'UF\to UF \]
is an equivalence,
where the first map is induced by the unit map
of the adjunction $(F,G)$, 
and the third map is induced by the counit map
of the adjunction $(F',G')$.

\if0
We have the coCartesian fibrations $Rp: RX\to S$
and $Rq: RY\to S$.
The map $RF: RX\to RY$ over $S$ induces a natural transformation
\[ e_!^{RY}(RF)_s\to (RF)_{s'}e_!^{RX} \]
for an edge $e: s\to s'$ of $S$.
Note that we can identify
$Rp$-coCartesian edges over $e$ with
$p$-coCartesian edges over $e$.
Hence $e_!^{RX}: (RX)_s\to (RX)_{s'}$
can be identified with 
$(e_!^X)^{\rm op}: (X_s)^{\rm op}\to (X_{s'})^{\rm op}$.

\begin{lemma}
\label{lem:description-natural-transformation-opposite}
The natural transformation
$e_!^{RY}(RF)_s\to (RF)_{s'}e_!^{RX}$
is the opposite of the composition
\[ F_{s'}e_!^X\to F_{s'}e_!^XG_sF_s\to
   F_{s'}G_{s'}e_!^YF_s\to e_!^YF_s.\] 
where 
the first map is induced by the unit map
of the adjunction $(F_s,G_s$), 
the second map is induced by
natural 
transformation~{\rm
  (\ref{eq:natural-transformation-associated-to-G})},
and
the third map is induced by the counit map
of the adjunction $(F_{s'},G_{s'})$.
\end{lemma}

\begin{proof}
Let $\varphi$ be an $Rp$-coCartesian edge  of $RX$
over $e$
represented by $\varphi(00)\to \varphi(01)
\leftarrow \varphi(11)$,
where $\varphi(00)\to\varphi(01)$
is a $p$-coCartesian edge of $X$ over $e$
and $\varphi(11)\to \varphi(01)$
is an equivalence in $X_{s'}$.
We can regard $\varphi(11)$
as $e^{RX}_!\varphi(00)$.
Suppose that the edge $R\varphi$ of $RY$ over $e$
is represented by
\[ F_s\varphi(00)\stackrel{\psi}{\longrightarrow} 
   y'\stackrel{w}{\longleftarrow} F_{s'}\varphi(11),\]
where $\psi$ is a $q$-coCartesian edge of $Y$ over $e$
and $w$ is an edge of $Y_{s'}$.
We can regard $y'$ as $e^{RY}_!(RF)_s\varphi(00)$
and $F_{s'}\varphi(11)$
as $(RF)_{s'}e^{RY}_!\varphi(00)$.
Then $e^{RY}_!(RF)_s\varphi(00)\to
(RF)_{s'}e^{RX}_!\varphi(00)$
can be identified with the opposite of $w$.
The lemma follows from the fact 
that $w$ is the adjoint of the
map
$\varphi(11)\stackrel{\simeq}{\to}\varphi(01)\to G_{s'}y'$,
where $\varphi(01)\to G_{s'}y'$ is obtained 
by diagram~(\ref{diagram:coCartesian-induce-map}). 
\qed
\end{proof}
\fi

\begin{proposition}
\label{prop:preserving-coCartesian-edges}
Let $e$ be a $1$-simplex of $S$.
If natural 
transformation~{\rm (\ref{eq:natural-transformation-associated-to-G})}
is an equivalence and
diagram~{\rm (\ref{eq:Delta1Delta1diagram})}
equipped with this equivalence
is left adjointable,
then $RF: RX\to RY$ preserves
coCartesian edges over $e$.
\end{proposition}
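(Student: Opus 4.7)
The plan is to take an $Rp$-coCartesian edge $\varphi$ of $RX$ over $e$, apply Theorem~\ref{thm:construction-RF} to describe $RF(\varphi)\simeq R\varphi$ explicitly, and show that the ``connecting'' edge $w$ in the representation of $R\varphi$ is an equivalence in the fiber $Y_{s'}$. By the criterion for coCartesian edges of $Rq: RY \to S$ given in \S\ref{subsec:opposite-coCartesian-fibrations}, this will imply that $RF(\varphi)$ is $Rq$-coCartesian over $e$.

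First, I would unpack what it means for $\varphi$ to be $Rp$-coCartesian. Writing $e\colon s\to s'$, the edge $\varphi(11)\to\varphi(01)$ in the fiber $X_{s'}$ is an equivalence, so $\varphi(11)$ models $e^X_!\varphi(00)$ via the $p$-coCartesian edge $\varphi(00)\to\varphi(01)$. By Theorem~\ref{thm:construction-RF} together with Proposition~\ref{prop:final-section-properties-2}, $RF(\varphi)\simeq R\varphi$ is represented by the zig-zag $F_s\varphi(00)\stackrel{\psi}{\to}y'\stackrel{w}{\leftarrow}F_{s'}\varphi(11)$, where $\psi$ is a $q$-coCartesian lift of $e$ (so $y'\simeq e^Y_!F_s\varphi(00)$) and $w$ is obtained by applying $F_{s'}$ to the composite $\varphi(11)\to\varphi(01)\to G_{s'}y'$ from diagram~(\ref{diagram:coCartesian-induce-map}) and then composing with the counit at $y'$. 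It therefore suffices to show that $w$ is an equivalence in $Y_{s'}$.

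Second, I would identify $w$ with the Beck--Chevalley transformation associated to diagram~(\ref{eq:Delta1Delta1diagram}) evaluated at $\varphi(00)$. Since $\varphi(11)\to\varphi(01)$ is an equivalence, the composite $\varphi(11)\to\varphi(01)\to G_{s'}y'$ is equivalent to the dashed map $\varphi(01)\to G_{s'}y'$ alone, which by the $p$-coCartesian property of $\varphi(00)\to\varphi(01)$ is the unique extension in $X_{s'}$ of $\varphi(00)\stackrel{u}{\to}G_sF_s\varphi(00)\stackrel{G\psi}{\to}G_{s'}y'$. The assumption that natural transformation~(\ref{eq:natural-transformation-associated-to-G}) is an equivalence means that $G$ preserves $p$-coCartesian edges over $e$, so $G\psi$ is itself $p$-coCartesian, and under $y'\simeq e^Y_!F_s\varphi(00)$ this identifies $G_{s'}y'$ with $e^X_!G_sF_s\varphi(00)\simeq G_{s'}e^Y_!F_s\varphi(00)$. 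Tracing through these identifications, $\varphi(01)\to G_{s'}y'$ becomes the map $e^X_!\varphi(00)\stackrel{e^X_!u}{\to}e^X_!G_sF_s\varphi(00)\simeq G_{s'}e^Y_!F_s\varphi(00)$ in $X_{s'}$, and taking its $F_{s'}$-adjoint and composing with the counit at $e^Y_! F_s\varphi(00)$ yields precisely the Beck--Chevalley composite $F_{s'}e^X_!\varphi(00)\to F_{s'}e^X_!G_sF_s\varphi(00)\simeq F_{s'}G_{s'}e^Y_!F_s\varphi(00)\to e^Y_!F_s\varphi(00)$.

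By the left adjointability hypothesis on diagram~(\ref{eq:Delta1Delta1diagram}) equipped with the equivalence~(\ref{eq:natural-transformation-associated-to-G}), this Beck--Chevalley map is an equivalence, so $w$ is an equivalence in $Y_{s'}$, and hence $R\varphi$, which represents $RF(\varphi)$, is $Rq$-coCartesian over $e$. The main obstacle is the coherent bookkeeping in the second step: carefully matching the map $\varphi(01)\to G_{s'}y'$ produced by $p$-coCartesian lifting in $X$ with the component coming from the natural transformation $e^X_!G_s\to G_{s'}e^Y_!$, and then identifying its $F_{s'}$-adjoint-plus-counit with the standard Beck--Chevalley composite. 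The combinatorics of the twisted arrow model used for $RX$ and $RY$ make this verification slightly fiddly, but once the identifications are in place the conclusion is immediate from the adjointability hypothesis.
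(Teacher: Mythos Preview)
Your proof is correct and follows essentially the same approach as the paper's: you take an $Rp$-coCartesian edge $\varphi$, use the explicit description of $R\varphi$ from Proposition~\ref{prop:final-section-properties-2}, and show the connecting morphism $w$ is an equivalence by identifying it with the Beck--Chevalley map, which is an equivalence by left adjointability. The only difference is that you spell out the identification of $w$ with the Beck--Chevalley composite in more detail than the paper, which simply records that $w$ is adjoint to the map $e^X_!\varphi(00)\to G_{s'}e^Y_!F_s\varphi(00)$ and invokes left adjointability directly.
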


\begin{proof}
Let $\varphi$ be an $Rp$-coCartesian edge  of $RX$
over $e$
represented by $\varphi(00)\to \varphi(01)
\leftarrow \varphi(11)$,
where $\varphi(00)\to\varphi(01)$
is a $p$-coCartesian edge of $X$ over $e$
and $\varphi(11)\to \varphi(01)$
is an equivalence in $X_{s'}$.
We can regard $\varphi(11)$
as $e^{X}_!\varphi(00)$.

Suppose that the edge $R\varphi$ of $RY$ over $e$
is represented by
\[ F_s\varphi(00)\stackrel{\psi}{\longrightarrow} 
   y'\stackrel{w}{\longleftarrow} F_{s'}\varphi(11),\]
where $\psi$ is a $q$-coCartesian edge of $Y$ over $e$
and $w$ is an edge of $Y_{s'}$.
We have to show that $w$ is an equivalence
of $Y_{s'}$.

We can regard $y'$ as $e^{Y}_!F_s\varphi(00)$
and $F_{s'}\varphi(11)$
as $F_{s'}e^X_!\varphi(00)$.
The morphism $w$ is the adjoint of the
morphism
$\varphi(11)\to\varphi(01)\to G_{s'}y'$,
which can be identified with
$e^X_!\varphi(00)\to G_{s'}e^Y_!F_s\varphi(00)$.
By the assumption that
diagram~(\ref{eq:Delta1Delta1diagram}) 
is left adjointable,
we see that $w$ is an equivalence.
\qed
\if0
By the assumptions and 
Lemma~\ref{lem:description-natural-transformation-opposite}, 
we see that 
the natural transformation
$e^{RY}_!(RF)_s\stackrel{\simeq}{\longrightarrow}
   (RF)_{s'}e^{RY}_!$
is an equivalence.
Hence $RF: RX\to RY$ preserves coCartesian edges over $e$.
\qed
\fi
\end{proof}

\subsection{Opposite monoidal quasi-categories}
\label{subsec:opposite-monoidal-quasi-categories}

In this subsection
we study the opposite monoidal quasi-category
of a monoidal quasi-category.
We show that a lax right adjoint functor
between monoidal quasi-categories
induces a lax right adjoint functor
between the opposite monoidal quasi-categories.

First, we recall the definition
of monoidal quasi-categories.
Let $p: M\to N(\Delta)^{\rm op}$
be a coCartesian fibration 
of simplicial sets.
For any $n\ge 0$,
the inclusion
$[1]\cong\{i-1,i\}\hookrightarrow [n]$ induces 
a functor 
$p_i: X_{[n]}\to X_{[1]}$ 
of quasi-categories for $i=1,\ldots,n$.
We say that $p$ is a monoidal quasi-category
if the functor
\[ p_1\times\cdots\times p_n:
   X_{[n]}\longrightarrow
   \overbrace{X_{[1]}\times\cdots\times X_{[1]}}^n \]
is a categorical equivalence 
for all $n\ge 0$.
The fiber $M_{[1]}$ of $p$
over $[1]\in\Delta$ is said to be
the underlying quasi-category of 
the monoidal category $p$.

Let $p:M\to N(\Delta)^{\rm op}$
be a monoidal quasi-category.
Since $p$ is a coCartesian fibration by definition,
we have a functor $\mathbf{X}:
N(\Delta)^{\rm op}\to {\rm Cat}_{\infty}$
classifying $p$.
We have the opposite coCartesian fibration
$Rp: RM\to N(\Delta)^{\rm op}$
that is classified by the functor
$R\mathbf{X}$.
We easily see that 
$Rp: RM\to N(\Delta)^{\rm op}$ 
is a monoidal quasi-category.
Note that the fiber $(RM)_{[n]}$
is equivalent to $(M_{[n]})^{\rm op}\simeq
(M_{[1]}^{\rm op})^n$
for any $n\ge 0$.
We say that $RM$
is the opposite monoidal quasi-category of $M$. 

A map $[m]\to [n]$ in $\Delta$
is said to be convex 
if it is injective and
the image is $\{i,i+1,\ldots,i+m\}$
for some $i$.
Let $p:M\to N(\Delta)^{\rm op}$
and $q:N\to N(\Delta)^{\rm op}$
be monoidal quasi-categories.
A lax monoidal functor
$G: N\to M$ between the monoidal quasi-categories
is a map of simplicial sets
over $N(\Delta)^{\rm op}$
which carries $p$-coCartesian edges over convex
morphisms in $N(\Delta)^{\rm op}$ to $q$-coCartesian edges.

\begin{lemma}
If $G_{[1]}: N_{[1]}\to M_{[1]}$ admits a left adjoint $F_{[1]}$,
then there is a canonical functor $RF: RM\to RN$ 
over $N(\Delta)^{\rm op}$ up to contractible space of choices.
We have $(RF)_{[n]}\simeq (F_{[1]}^{\,\rm op})^n$
for all $n\ge 0$.
\end{lemma}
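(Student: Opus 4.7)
The plan is to regard $G: N \to M$ as a map between coCartesian fibrations over the quasi-category $S = N(\Delta)^{\rm op}$ and apply Theorem~\ref{thm:construction-RF}. For that I need to check that $G_{[n]}: N_{[n]} \to M_{[n]}$ admits a left adjoint for every $[n] \in \Delta$.

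To verify this, I would reduce $G_{[n]}$ to $(G_{[1]})^n$ using the monoidal structures of $p$ and $q$ together with the lax monoidality of $G$. By definition of a monoidal quasi-category, the Segal maps
\[ \prod p_i : M_{[n]} \longrightarrow (M_{[1]})^n, \qquad \prod q_i : N_{[n]} \longrightarrow (N_{[1]})^n \]
are categorical equivalences. Since $G$ is lax monoidal, it preserves coCartesian edges over the convex inclusions $\{i-1,i\} \hookrightarrow [n]$, so the square
\[ \xymatrix{
    N_{[n]} \ar[r]^{\prod q_i} \ar[d]_{G_{[n]}} &
    (N_{[1]})^n \ar[d]^{(G_{[1]})^n} \\
    M_{[n]} \ar[r]^{\prod p_i} & (M_{[1]})^n
}\]
commutes up to canonical equivalence, yielding $G_{[n]} \simeq (G_{[1]})^n$. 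The $n$-fold product $(F_{[1]})^n$ is then left adjoint to $(G_{[1]})^n$, and transporting along the Segal equivalences produces a left adjoint $F_{[n]} \simeq (F_{[1]})^n$ of $G_{[n]}$.

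The hypothesis of Theorem~\ref{thm:construction-RF} is thus satisfied for every object of $N(\Delta)^{\rm op}$, and the theorem supplies a canonical map $RF : RM \to RN$ over $N(\Delta)^{\rm op}$, unique up to a contractible space of choices, with fibers
\[ (RF)_{[n]} \simeq (F_{[n]})^{\rm op} \simeq \bigl((F_{[1]})^n\bigr)^{\rm op} \simeq (F_{[1]}^{\,\rm op})^n, \]
as claimed. The only nontrivial step is the identification $G_{[n]} \simeq (G_{[1]})^n$; once it is in hand, the rest is formal. I expect that making this canonical equivalence precise in the quasi-category ${\rm Cat}_{\infty}$, rather than just at the level of homotopy categories, is the main technical point, though it is a direct consequence of the definitions of monoidal quasi-category and lax monoidal functor.
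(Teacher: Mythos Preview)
Your proposal is correct and follows essentially the same route as the paper: identify $G_{[n]}\simeq (G_{[1]})^n$ via the Segal equivalences and the lax monoidality of $G$, deduce that each $G_{[n]}$ admits a left adjoint, and then invoke Theorem~\ref{thm:construction-RF}. Your write-up is slightly more detailed (making the commuting square with the Segal maps explicit and spelling out the fiber identification $(RF)_{[n]}\simeq (F_{[1]}^{\rm op})^n$), but the argument is the same.
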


\begin{proof}
For any $n\ge 0$,
we have equivalences $M_{[n]}\simeq (M_{[1]})^n$
and $N_{[n]}\simeq (N_{[1]})^n$.
Since $G$ is a lax monoidal functor,
we see that $G_{[n]}$ is equivalent
to $(G_{[1]})^n$ under the above equivalences.
Hence $G_{[n]}$ admits a left adjoint for all $n\ge 0$. 
The lemma follows from 
Theorem~\ref{thm:construction-RF}.
\qed\end{proof}

\begin{proposition}
\label{prop:opposite-lax-monoidal-functor}
If $G: N\to M$ is a lax monoidal functor 
between monoidal quasi-categories
such that  
$G_{[1]}: N_{[1]}\to M_{[1]}$ admits a left adjoint,
then the functor
$RF: RM\to RN$ is also a lax monoidal functor
between the opposite monoidal quasi-categories.
\end{proposition}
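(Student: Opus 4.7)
The plan is to apply Proposition \ref{prop:preserving-coCartesian-edges} to every convex morphism in $N(\Delta)^{\rm op}$. Unwinding the definition of a lax monoidal functor, proving that $RF: RM\to RN$ is lax monoidal amounts to showing that $RF$ carries $Rp$-coCartesian edges over convex morphisms to $Rq$-coCartesian edges. By Proposition \ref{prop:preserving-coCartesian-edges}, it suffices to check, for each convex 1-simplex $e: [n]\to [m]$ in $N(\Delta)^{\rm op}$, that the natural transformation $e^M_!\,G_{[n]}\to G_{[m]}\,e^N_!$ in diagram~(\ref{eq:Delta1Delta1diagram}) is an equivalence and that the resulting square is left adjointable in the sense of \cite[Def.~4.7.5.13]{Lurie2}.

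The first condition is immediate from the hypothesis that $G$ is lax monoidal: by definition $G$ carries $q$-coCartesian edges over convex morphisms to $p$-coCartesian edges, and this is precisely the statement that the comparison natural transformation is an equivalence. For the second condition, I would exploit the product decomposition supplied by the monoidal structure. Using the equivalences $M_{[n]}\simeq M_{[1]}^n$, $M_{[m]}\simeq M_{[1]}^m$, $N_{[n]}\simeq N_{[1]}^n$, $N_{[m]}\simeq N_{[1]}^m$, a convex inclusion $[m]\hookrightarrow [n]$ in $\Delta$ translates $e^M_!$ and $e^N_!$ into the projections onto the factors indexed by the image of this inclusion, and $G_{[n]}$, $G_{[m]}$ correspond to $G_{[1]}^n$ and $G_{[1]}^m$ respectively. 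Hence diagram~(\ref{eq:Delta1Delta1diagram}) becomes
\[
\xymatrix{
N_{[1]}^n \ar[r]^{G_{[1]}^n}\ar[d]_{\pi} & M_{[1]}^n \ar[d]^{\pi}\\
N_{[1]}^m \ar[r]^{G_{[1]}^m} & M_{[1]}^m,
}
\]
where both vertical arrows are the same projection and the square commutes strictly up to the canonical equivalence.

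For this diagram, the left adjoints of $G_{[1]}^n$ and $G_{[1]}^m$ are $F_{[1]}^n$ and $F_{[1]}^m$, so the composite transformation of \cite[Def.~4.7.5.13]{Lurie2} is
\[
F_{[1]}^m\,\pi \longrightarrow F_{[1]}^m\,\pi\,G_{[1]}^n\,F_{[1]}^n \simeq F_{[1]}^m\,G_{[1]}^m\,\pi\,F_{[1]}^n \longrightarrow \pi\,F_{[1]}^n,
\]
and on each factor indexed by an element of the image it reduces to the composite of the unit and counit of the adjunction $(F_{[1]},G_{[1]})$ applied to the identity, which is an equivalence by one of the triangle identities. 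The projected-out factors contribute only identity data. Thus the square is left adjointable.

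Applying Proposition \ref{prop:preserving-coCartesian-edges} in this setting shows that $RF$ preserves $Rp$-coCartesian edges over every convex morphism in $N(\Delta)^{\rm op}$, which by definition means $RF$ is lax monoidal. The main point requiring care is the reduction to the product form of the diagram and making sure that the canonical equivalence $\pi\,G_{[1]}^n\simeq G_{[1]}^m\,\pi$ is the one supplied by the lax monoidal structure, so that the triangle identity really does give the adjointability comparison.
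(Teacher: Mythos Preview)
Your proof is correct and follows essentially the same approach as the paper: reduce to Proposition~\ref{prop:preserving-coCartesian-edges} for each convex morphism, use the lax monoidal hypothesis to get commutativity of diagram~(\ref{eq:Delta1Delta1diagram}), and exploit the product decomposition to see that the vertical maps are projections and hence the square is left adjointable. The only difference is that you spell out the left adjointability verification via the triangle identity, whereas the paper simply asserts it once the vertical maps are identified with projections.
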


\begin{proof}
We have to show that $RF$ preserves
coCartesian edges over convex morphisms.
Let $\alpha: [m]\to [n]$ be a convex morphism
in $\Delta$.
Since $G:N\to M$ is a lax monoidal functor,
we have a commutative diagram
\[ \xymatrix{
     N_{[n]} \ar[d]_{\alpha^N_!} \ar[r]^{G_{[n]}}& 
     M_{[n]} \ar[d]^{\alpha^M_!}\\
     N_{[m]} \ar[r]^{G_{[m]}}&   M_{[m]}\\  
   }\]
in ${\rm Cat}_{\infty}$.
Since 
$\alpha^M_!: M_{[n]}\to M_{[m]}$ and 
$\alpha^N_!:N_{[n]}\to N_{[m]}$ are equivalent to
projections $M_{[1]}^n\to M_{[1]}^m$
and $N_{[1]}^n\to N_{[1]}^m$, respectively, 
we see that the diagram is left adjointable.
The proposition follows from 
Proposition~\ref{prop:preserving-coCartesian-edges}.
\qed\end{proof}

\subsection{Opposites of tensored quasi-categories
over monoidal quasi-categories}
\label{subsec:opposite-module-quasi-categories}

In this subsection we study the opposite
of a tensored quasi-category over a monoidal quasi-category.
We show that the opposite of a lax tensored right adjoint 
functor between tensored quasi-categories 
induces a lax tensored right adjoint functor
between the opposites of the tensored quasi-categories.

First,
we recall the definition 
of left tensored quasi-category
over a monoidal quasi-category.
Let $p:X\to N(\Delta)^{\rm op}\times\Delta^1$
be a coCartesian fibration of simplicial sets.
For any $n\ge 0$,
the identity ${\rm id}_{[n]}:[n]\to [n]$ in $\Delta$
and the edge
$\{0\}\to\{1\}$ in $\Delta^1$
induces a morphism
$([n],0)\to([n],1)$ in $N(\Delta)^{\rm op}\times \Delta^1$,
and hence we obtain a functor of quasi-categories
$\alpha_n:X_{([n],0)}\to X_{([n],1)}$.
For any $n\ge 0$,
the inclusion
$[0]\cong\{n\}\hookrightarrow [n]$ in $\Delta$
and the identity ${\rm id}_{\{0\}}:\{0\}\to\{0\}$ in $\Delta^1$
induces a morphism
$([0],0)\to ([n],0)$ in $N(\Delta)^{\rm op}\times \Delta^1$,
and hence we obtain 
a functor of quasi-categories
$\beta_n:X_{([n],0)}\to X_{([0],0)}$.
If the base change of $p$ 
along the inclusion
$N(\Delta)^{\rm op}\times\{1\}\hookrightarrow
N(\Delta)^{\rm op}\times\Delta^1$
is a monoidal quasi-category,
and the functor
\[ \alpha_n\times \beta_n: X_{([n],0)}\longrightarrow
   X_{([n],1)}\times X_{([0],0)}\]
is a categorical equivalence
for all $n\ge 0$,
then 
we say that $p$ is a left tensored quasi-category.

Now suppose $p: X\to N(\Delta)^{\rm op}\times\Delta^1$
is a left tensored quasi-category.
We set $\mathcal{M}=X_{([1],1)}$ and 
$\mathcal{C}=X_{([0],0)}$.
Note that $\mathcal{M}$ is the underlying
quasi-category of a monoidal quasi-category
$p|_{N(\Delta)^{\rm op}\times\{1\}}$.
We say that $\mathcal{C}$
is left tensored over the monoidal quasi-category $\mathcal{M}$.

Let $\Sigma$ be a set of edges of 
$N(\Delta)^{\rm op}\times \Delta^1$
consisting of edges of
the forms
\[ ([n],0)\longrightarrow ([m],0), \]
where $[m]\to[n]$ is a convex morphism in $\Delta$
that carries $m$ to $n$,
and 
\[ ([n],i)\longrightarrow ([m],1) \]
for $i=0,1$,
where $[m]\to[n]$ is convex.

Suppose that $p:X\to N(\Delta)^{\rm op}\times \Delta^1$
and $q: Y\to N(\Delta)^{\rm op}\times \Delta^1$
are left tensored quasi-categories.
We say that a functor 
$G: Y\to X$ over $N(\Delta)^{\rm op}\times \Delta^1$
is a lax left tensored functor
if $G$ carries $p$-coCartesian edges
over $\Sigma$ to $q$-coCartesian edges.

\begin{lemma}
Let $p:X\to N(\Delta)^{\rm op}\times \Delta^1$
and $q: Y\to N(\Delta)^{\rm op}\times \Delta^1$
be left tensored quasi-categories.
If $G: Y\to X$ is a lax left tensored functor
such that $G_{([0],0)}$ and $G_{([1],1)}$
admit left adjoints $F_{([0],0)}$ and $F_{([1],1)}$,
respectively,
then there is a canonical functor
$RF: RX\to RY$ over $N(\Delta)^{\rm op}\times \Delta^1$
up to contractible space of choices.
We have
$(RF)_{([n],0)}\simeq (F_{([1],1)}^{\rm op})^n
\times F_{([0],0)}^{\rm op}$
and
$(RF)_{([n],1)}\simeq (F_{[1],1}^{\rm op})^n$
for all $n\ge 0$.
\end{lemma}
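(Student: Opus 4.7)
The plan is to reduce everything to Theorem~\ref{thm:construction-RF} applied to the map $G:Y\to X$ over the base quasi-category $S=N(\Delta)^{\rm op}\times\Delta^1$. The functor $RF:RX\to RY$ and its uniqueness (up to a contractible space of choices) will be produced by that theorem as soon as we verify that the restriction $G_s$ to each fiber $s\in S$ admits a left adjoint. Only the two specific adjoints $F_{([1],1)}$ and $F_{([0],0)}$ are assumed; the task is to promote these to fiberwise left adjoints at every object $([n],i)$.

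First I would exploit the defining equivalences of a left tensored quasi-category. Applied to $p$, the base change of $p$ to $N(\Delta)^{\rm op}\times\{1\}$ is a monoidal quasi-category, so the segal-type functors yield a categorical equivalence $X_{([n],1)}\simeq (X_{([1],1)})^n=\mathcal{M}^n$; similarly, the equivalence $\alpha_n\times\beta_n$ gives $X_{([n],0)}\simeq X_{([n],1)}\times X_{([0],0)}\simeq \mathcal{M}^n\times\mathcal{C}$. The same holds for $q:Y\to N(\Delta)^{\rm op}\times\Delta^1$, with $\mathcal{M}'=Y_{([1],1)}$ and $\mathcal{C}'=Y_{([0],0)}$. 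The edges implementing these splittings all lie in the set $\Sigma$ (they come from convex morphisms in $\Delta$ together with the vertical edges $([n],0)\to([n],1)$ allowed in the second bullet of the definition of $\Sigma$), so the lax left tensored assumption guarantees that $G$ sends the relevant $p$-coCartesian edges to $q$-coCartesian edges. Translating across the equivalences, this identifies
\[
G_{([n],1)}\simeq (G_{([1],1)})^n,\qquad
G_{([n],0)}\simeq (G_{([1],1)})^n\times G_{([0],0)}.
\]

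With these product decompositions in hand, the existence of fiberwise left adjoints is immediate: a finite product of right adjoints is a right adjoint whose left adjoint is the product of the left adjoints of the factors. Hence $G_{([n],1)}$ has left adjoint $(F_{([1],1)})^n$ and $G_{([n],0)}$ has left adjoint $(F_{([1],1)})^n\times F_{([0],0)}$. Applying Theorem~\ref{thm:construction-RF} (noting that $S=N(\Delta)^{\rm op}\times\Delta^1$ is a quasi-category) produces the desired functor $RF:RX\to RY$ over $S$, canonical up to contractible choice, with $(RF)_s\simeq F_s^{\,\rm op}$ for every $s\in S$; substituting the identifications above yields the stated formulas for $(RF)_{([n],0)}$ and $(RF)_{([n],1)}$.

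The only real subtlety I anticipate is the bookkeeping in the previous paragraph: checking that each product projection used to split $X_{([n],0)}$ and $X_{([n],1)}$ is indeed indexed by an edge in $\Sigma$, so that the lax left tensored hypothesis on $G$ really does identify $G_{([n],0)}$ and $G_{([n],1)}$ with the asserted product functors up to coherent equivalence. Once this identification is in place, the rest is a direct invocation of Theorem~\ref{thm:construction-RF}, exactly parallel to the proof of the corresponding lemma for monoidal quasi-categories in \S\ref{subsec:opposite-monoidal-quasi-categories}.
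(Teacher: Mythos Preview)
Your proposal is correct and follows essentially the same approach as the paper's proof: decompose each fiber $X_{([n],i)}$ and $Y_{([n],i)}$ as a product using the Segal-type equivalences, use the lax left tensored hypothesis to identify $G_{([n],i)}$ with the corresponding product of $G_{([1],1)}$'s and (when $i=0$) $G_{([0],0)}$, deduce that every $G_s$ admits a left adjoint, and then invoke Theorem~\ref{thm:construction-RF}. The paper's proof is terser and simply asserts the product identification of $G_{([n],i)}$ from the lax left tensored condition, whereas you spell out that the relevant edges lie in $\Sigma$; otherwise the arguments are the same.
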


\begin{proof}
For any $n\ge 0$,
we have equivalences
$X_{([n],0)}\simeq (X_{([1],1)})^n\times X_{([0],0)}$
and 
$Y_{([n],0)}\simeq (Y_{([1],1)})^n\times Y_{([0],0)}$.
Since $G$ is a lax left tensored functor,
we see that $G_{([n],0)}$ 
is equivalent to 
$(G_{([1],1)})^n\times G_{([0],0)}$.
In the same way,
we see that $G_{([n],1)}$
is equivalent to
$(G_{([1],1)})^n$ for any $n\ge 0$.
Hence $G_s$ admits a left adjoint
for all $s\in N(\Delta)^{\rm op}\times \Delta^1$.
The lemma follows from 
Theorem~\ref{thm:construction-RF}.
\qed\end{proof}

\begin{proposition}
\label{prop:opposite-lax-tensored-module-functor}
Let $p:X\to N(\Delta)^{\rm op}\times \Delta^1$
and $q: Y\to N(\Delta)^{\rm op}\times \Delta^1$
be left tensored quasi-categories.
If $G: Y\to X$ is a lax left tensored functor
such that $G_{([0],0)}$ and $G_{([1],1)}$
admit left adjoints,
then the functor $RF: RX\to RY$ is also a lax left tensored functor.     
\end{proposition}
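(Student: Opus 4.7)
The plan is to mimic the proof of Proposition \ref{prop:opposite-lax-monoidal-functor} by reducing to Proposition \ref{prop:preserving-coCartesian-edges}, applied edge by edge. Concretely, I need to show that $RF$ carries $Rp$-coCartesian edges over every $e\in\Sigma$ to $Rq$-coCartesian edges. By Proposition \ref{prop:preserving-coCartesian-edges}, it suffices to verify for every $e\in\Sigma$ that the canonical natural transformation \eqref{eq:natural-transformation-associated-to-G} associated to the square \eqref{eq:Delta1Delta1diagram} is an equivalence and that the resulting square is left adjointable.

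The first point is immediate: since $G$ is a lax left tensored functor, by definition $G$ carries $p$-coCartesian edges over $\Sigma$ to $q$-coCartesian edges, and this is precisely the statement that \eqref{eq:natural-transformation-associated-to-G} is an equivalence for each such $e$. For the second point, I would split $\Sigma$ into its three types of edges and analyze each under the product decompositions $X_{([n],0)}\simeq X_{([1],1)}^n\times X_{([0],0)}$, $X_{([n],1)}\simeq X_{([1],1)}^n$ (and similarly for $Y$). For an edge $([n],0)\to([m],0)$ coming from a convex map $[m]\to[n]$ carrying $m$ to $n$, the induced functor $X_{([n],0)}\to X_{([m],0)}$ is, up to equivalence, the projection $X_{([1],1)}^n\times X_{([0],0)}\to X_{([1],1)}^m\times X_{([0],0)}$ onto the last $m$ monoidal factors and the $X_{([0],0)}$-factor; the same holds for $Y$, and $G$ is likewise a product $G_{([1],1)}^n\times G_{([0],0)}$ on fibers, so the square \eqref{eq:Delta1Delta1diagram} is a product of trivial squares and hence left adjointable. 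The edges $([n],1)\to([m],1)$ reduce to the monoidal case already handled in Proposition \ref{prop:opposite-lax-monoidal-functor}. For the edges $([n],i)\to([m],1)$ with $[m]\to[n]$ convex, the horizontal functors are again projections onto a subset of the monoidal factors (discarding the $X_{([0],0)}$-factor when $i=0$), so the same product-of-projections argument applies.

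With these verifications, Proposition \ref{prop:preserving-coCartesian-edges} yields that $RF$ preserves coCartesian edges over each $e\in\Sigma$, which is the defining property of a lax left tensored functor.

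The only potentially subtle step is the edge $([n],0)\to([m],1)$ mixing the tensored and monoidal fibers, since here one must trace carefully how the lax left tensored structure identifies the relevant pushforward with a projection that forgets the $X_{([0],0)}$-factor; but once the identification is made, left adjointability of the square is again formal. Everything else is a direct transcription of the argument already given for the monoidal case.
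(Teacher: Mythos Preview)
Your proposal is correct and follows essentially the same approach as the paper's proof: both reduce to Proposition~\ref{prop:preserving-coCartesian-edges} by observing that for each edge in $\Sigma$ the pushforward functors $\alpha^X_!$ and $\alpha^Y_!$ are (under the product decompositions of the fibers) projections, so the square~\eqref{eq:Delta1Delta1diagram} is left adjointable. Your case-by-case analysis of the three types of edges in $\Sigma$ is more explicit than the paper's, which simply asserts that the pushforwards are equivalent to projections, but the argument is the same.
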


\begin{proof}
We can prove the proposition
in the same say as 
Proposition~\ref{prop:opposite-lax-monoidal-functor}.
We have to show that $RF$ preserves coCartesian
edges over $\Sigma$.
Let $\alpha: s\to s'$ be an edge in $\Sigma$.
Since $G: Y\to X$ is a lax left tensored functor,
we have a commutative diagram
\[ \xymatrix{
     Y_s \ar[d]_{\alpha^Y_!} \ar[r]^{G_s}&
     X_s \ar[d]^{\alpha^X_!}\\
     Y_{s'} \ar[r]^{G_{s'}} &  X_{s'}\\  
   }\]
in ${\rm Cat}_{\infty}$.
Since 
$\alpha^Y_!: Y_{s}\to Y_{s'}$ and 
$\alpha^X_!: X_{s}\to X_{s'}$ are equivalent to
the projections,
we see that the diagram is left adjointable.
The proposition follows from 
Proposition~\ref{prop:preserving-coCartesian-edges}.
\qed\end{proof}

\section{Quasi-categories of comodules}
\label{sec:quasi-categories-comodules}

In this section we introduce a quasi-category
of comodules over a coalgebra in a stable homotopy theory $\mathcal{C}$.
We regard a coalgebra as an algebra object
of the opposite monoidal quasi-category of 
$A$-$A$-bimodule objects, where $A$ is an algebra object 
of $\mathcal{C}$.  
We regard a comodule object over a coalgebra $\Gamma$
as a module object over $\Gamma$
in the opposite quasi-category of $A$-module objects.
We define a cotensor product
of a right comodule and a left comodule
over a coalgebra as a limit of
the cobar construction.
Using these formulations,
we study the functor
from the localization of $\mathcal{C}$
with respect to $A$ to
the quasi-category of comodules
over the coalgebra $A\otimes A$.

\subsection{Monoidal structure on 
${}_A{\rm BMod}_{A}(\mathcal{C})^{\rm op}$}

In this subsection
we introduce a quasi-category of coalgebras
and a quasi-category of comodules over a coalgebra
in a stable homotopy theory.

Let $\mathcal{M}^{\otimes}$ be a monoidal quasi-category.
We denote by $\mathcal{M}$
the underlying quasi-category of 
the monoidal quasi-category $\mathcal{M}^{\otimes}$.
For algebra objects $A$ and $B$ of $\mathcal{M}$,
we denote by ${_A}\bmod_B(\mathcal{M})$
the quasi-category of $A$-$B$-bimodule objects in $\mathcal{M}$.
If $B$ is the monoidal unit $\mathbf{1}$ in $\mathcal{M}$,
we abbreviate the quasi-category
${_A}\bmod_{\mathbf{1}}(\mathcal{M})$
of $A$-$\mathbf{1}$-bimodule objects in $\mathcal{M}$
as ${_A}\bmod(\mathcal{M})$.
Let $\mathcal{N}$ be a quasi-category
left tensored over $\mathcal{M}^{\otimes}$.
For an algebra object $A$ of $\mathcal{M}$,
we denote by $\lmod_A(\mathcal{N})$
the quasi-category of left $A$-module objects in $\mathcal{N}$.
Note that there is a natural equivalence
$\lmod_A(\mathcal{M})\simeq
{_A}\bmod(\mathcal{M})$ of quasi-categories.

Let $(\mathcal{C},\otimes,\mathbf{1})$ be a stable homotopy theory
in the sense of \cite[Def.~2.1]{Mathew},
that is,
$\mathcal{C}$ is a presentable stable quasi-category
which is the underlying quasi-category
of a symmetric monoidal quasi-category $\mathcal{C}^{\otimes}$,
where the tensor product commutes
with all colimits separately in each variable.
For an algebra object
$A$ of $\mathcal{C}$,
we denote by
${}_A\bmod_A(\mathcal{C})$
the quasi-category of $A$-$A$-bimodules in $\mathcal{C}$,
which is the underlying quasi-category
of the monoidal quasi-category
${}_A\bmod_A(\mathcal{C})^{\otimes}$,
where the tensor product is given by 
the relative tensor product $\otimes_A$
and the unit is the $A$-$A$-bimodule $A$
(see \cite[4.3 and 4.4]{Lurie2}).
Note that the relative tensor product $\otimes_A$
commutes with all colimits separately 
in each variable by \cite[Cor.~4.4.2.15]{Lurie2}.    
For algebra objects $A$ and $B$ of $\mathcal{C}$,
we denote by ${}_A\bmod_A(\mathcal{C})$
the quasi-category of $A$-$B$-bimodules,
which is presentable
by \cite[Cor.~4.3.3.10]{Lurie2}.

If $\mathcal{M}^{\otimes}$ is a monoidal quasi-category,
then the opposite quasi-category
$(\mathcal{M}^{\otimes})^{\rm op}$ 
also carries a monoidal structure.
Since ${}_A\bmod_A(\mathcal{C})$
is the underlying quasi-category 
of the monoidal quasi-category
${}_A\bmod_A(\mathcal{C})^{\otimes}$
for an algebra object $A$ of $\mathcal{C}$,
the opposite quasi-category
${}_A\bmod_A(\mathcal{C})^{\rm op}$
is the underlying quasi-category
of the opposite monoidal quasi-category
$({}_A\bmod_A(\mathcal{C})^{\otimes})^{\rm op}$.
We regard an algebra object
$\Gamma$ of ${}_A\bmod_A(\mathcal{C})^{\rm op}$
as a coalgebra object of ${}_A\bmod_A(\mathcal{C})$.
We define the quasi-category ${}_A{\rm CoAlg}_A(\mathcal{C})$
of coalgebra objects of ${}_A\bmod_A(\mathcal{C})$
to be the opposite of the quasi-category
of algebra objects of ${}_A\bmod_A(\mathcal{C})^{\rm op}$:
\[ {}_A{\rm CoAlg}_A(\mathcal{C})=
   {\rm Alg}({}_A\bmod_A(\mathcal{C})^{\rm op})^{\rm op}.\] 


\if0
Let ${}_A\bmod(\mathcal{C})$ be
the quasi-category of $A$-$\mathbf{1}$-bimodules 
in $\mathcal{C}$,
where $\mathbf{1}$ is the unit object 
of the symmetric monoidal quasi-category $\mathcal{C}$.
By the relative tensor product $\otimes_A$,
the quasi-category ${}_A\bmod(\mathcal{C})$
is left tensored over 
the monoidal quasi-category
${}_A\bmod_A(\mathcal{C})^{\otimes}$.
We note that there is an equivalence of
quasi-categories between
${}_A\bmod(\mathcal{C})$
and the quasi-category of left $A$-modules
$\lmod_A(\mathcal{C})$ in $\mathcal{C}$.
\fi

For a quasi-category $\mathcal{Y}$ left tendered 
over a monoidal category $\mathcal{M}^{\otimes}$,
the opposite quasi-category $\mathcal{Y}^{\rm op}$
carries the structure of left tensored 
quasi-category over the opposite
monoidal quasi-category $(\mathcal{M}^{\otimes})^{\rm op}$.

The quasi-category ${_A}\bmod(\mathcal{C})\simeq \lmod_A(\mathcal{C})$
is left tensored over ${_A}\bmod_A(\mathcal{\mathcal{C}})^{\otimes}$
by the relative tensor product $\otimes_A$
for an algebra object $A$ of $\mathcal{C}$.
Hence the opposite 
quasi-category ${}_A\bmod(\mathcal{C})^{\rm op}$
is left tensored over 
the opposite monoidal quasi-category
$({}_A\bmod_A(\mathcal{C})^{\otimes})^{\rm op}$.
Let $\Gamma$ be a coalgebra object
of ${}_A\bmod_A(\mathcal{C})$,
that is, an algebra object
of ${}_A\bmod_A(\mathcal{C})^{\rm op}$.
We regard a left $\Gamma$-module
in ${}_A\bmod(\mathcal{C})^{\rm op}$
as a left $\Gamma$-comodule 
in ${}_A\bmod(\mathcal{C})$.
We define the quasi-category
of left $\Gamma$-comodules
$\lcomod_{(A,\Gamma)}(\mathcal{C})$
to be the opposite of the quasi-category
of left $\Gamma$-module objects
in ${}_A\bmod_A(\mathcal{C})^{\rm op}$:
\[ \lcomod_{(A,\Gamma)}(\mathcal{C})=
   (\lmod_{\Gamma}({}_A\bmod(\mathcal{C})^{\rm op}))^{\rm op}.\]
Note that $\lcomod_{(A,\Gamma)}(\mathcal{C})$
is right tensored over $\mathcal{C}$ and 
there is a forgetful functor
\[ \lcomod_{(A,\Gamma)}(\mathcal{C})\longrightarrow
   {}_A{\rm BMod}(\mathcal{C})\simeq
   {\rm LMod}_A(\mathcal{C}),\] 
which is a map of quasi-categories right tensored over $\mathcal{C}$.

In the same way as $\lcomod_{(A,\Gamma)}(\mathcal{C})$,
we can define the quasi-category
of right $\Gamma$-comodules 
$\rcomod_{(A,\Gamma)}(\mathcal{C})$
in $\mathcal{C}$
for a coalgebra $\Gamma$ of 
${}_A\bmod_A(\mathcal{C})$.

\subsection{Comparison maps}


In this subsection
we construct a functor between quasi-categories of comodules
for a map of algebra objects.
We also show that the definition of a quasi-category
of comodules in this paper is consistent with
the definition in \cite{Torii1}.

Suppose $(\mathcal{C},\otimes,\mathbf{1})$
be a stable homotopy theory.
Let
$f: A\to B$ be a map of algebra objects of $\mathcal{C}$.
We have the functor
\[ (f,f)^*:{}_B\bmod_B(\mathcal{C})\longrightarrow 
   {}_A\bmod_A(\mathcal{C}) \]
which is obtained 
by restriction of scalars through $f$.
The functor $(f,f)^*$ extends to
a lax monoidal functor
\[ ((f,f)^{*})^{\otimes}:{}_B\bmod_B(\mathcal{C})^{\otimes}
   \longrightarrow 
   {}_A\bmod_A(\mathcal{C})^{\otimes}. \]
Furthermore,
the functor $(f,f)^*$ admits
a left adjoint 
\[ (f,f)_!:{}_A\bmod_A(\mathcal{C})\longrightarrow 
   {}_B\bmod_B(\mathcal{C}), \]
which assigns to an $A$-$A$-bimodule $X$
the $B$-$B$-bimodule $B\otimes_A X\otimes_AB$.
By Proposition~\ref{prop:opposite-lax-monoidal-functor},
we obtain the following lemma.

\begin{lemma}
If $f: A\to B$ is a map of algebra objects of $\mathcal{C}$,
then the induced functor
\[ (f,f)_!^{\rm op}:{}_A\bmod_A(\mathcal{C})^{\rm op}
   \longrightarrow 
   {_B}\bmod_B(\mathcal{C})^{\rm op} \]
can be extended to a lax monoidal functor.
\end{lemma}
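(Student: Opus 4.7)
The plan is to invoke Proposition~\ref{prop:opposite-lax-monoidal-functor} directly, taking $N^{\otimes}={}_B\bmod_B(\mathcal{C})^{\otimes}$, $M^{\otimes}={}_A\bmod_A(\mathcal{C})^{\otimes}$, and $G$ equal to the lax monoidal restriction of scalars functor $((f,f)^*)^{\otimes}$ that was just constructed in the paragraph preceding the lemma. Since the underlying functor $G_{[1]}=(f,f)^{*}\colon {}_B\bmod_B(\mathcal{C})\to {}_A\bmod_A(\mathcal{C})$ admits the left adjoint $(f,f)_!$ sending $X\mapsto B\otimes_AX\otimes_AB$, the hypothesis of Proposition~\ref{prop:opposite-lax-monoidal-functor} is satisfied.

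By Proposition~\ref{prop:opposite-lax-monoidal-functor}, applying the opposite-coCartesian-fibration construction $R$ from \S\ref{subsec:opposite-coCartesian-fibrations} to the lax monoidal functor $((f,f)^*)^{\otimes}$ yields a lax monoidal functor
\[
RF\colon R\bigl({}_A\bmod_A(\mathcal{C})^{\otimes}\bigr)\longrightarrow R\bigl({}_B\bmod_B(\mathcal{C})^{\otimes}\bigr)
\]
between the opposite monoidal quasi-categories. By Theorem~\ref{thm:construction-RF}, the underlying functor of $RF$ on the fiber over $[1]$ is canonically equivalent to $F_{[1]}^{\,\rm op}=(f,f)_!^{\,\rm op}$, which identifies $RF$, up to the contractible space of choices provided by Proposition~\ref{prop:existence-final-object-sections}, with a lax monoidal extension of $(f,f)_!^{\,\rm op}$. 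This is exactly what the lemma asks for.

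The only step that requires any care is checking that the inputs to Proposition~\ref{prop:opposite-lax-monoidal-functor} are genuinely in place; the substantive content, namely the passage from a lax monoidal right adjoint to a lax monoidal opposite functor, is already packaged in that proposition (which in turn rests on Theorem~\ref{thm:construction-RF} and Proposition~\ref{prop:preserving-coCartesian-edges}). The existence of $(f,f)_!$ as a left adjoint of $(f,f)^*$ is standard for the restriction/extension of scalars along a map of algebras in a stable homotopy theory, and is justified by \cite[\S4.3, \S4.4]{Lurie2}; there is no further obstacle to overcome.
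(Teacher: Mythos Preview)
Your proposal is correct and follows exactly the paper's approach: the paper simply states ``By Proposition~\ref{prop:opposite-lax-monoidal-functor}, we obtain the following lemma'' immediately before the statement, and your write-up just spells out explicitly how the hypotheses of that proposition are satisfied by the lax monoidal functor $((f,f)^*)^{\otimes}$ and its left adjoint $(f,f)_!$.
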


In the following of this paper,
for simplicity,
we say that the underlying quasi-category
$\mathcal{M}$ of a monoidal category $\mathcal{M}^{\otimes}$
is a monoidal category and that 
the underlying functor $F: \mathcal{M}\to\mathcal{N}$
of a (lax) monoidal functor 
$F^{\otimes}: \mathcal{M}^{\otimes}\to \mathcal{N}^{\otimes}$
is a (lax) monoidal functor.

For a map of algebra objects $f: A\to B$
in $\mathcal{C}$,
the lax monoidal functor
$(f,f)_!^{\rm op}$
induces a map of quasi-categories
of algebra objects
\[ (f,f)_!^{\rm op}:
    {\rm Alg}({_A}\bmod_A(\mathcal{C}))^{\rm op})\longrightarrow
   {\rm Alg}({_B}\bmod_B(\mathcal{C})^{\rm op})\]
and hence we obtain a map
of quasi-categories of coalgebra objects
\[ (f,f)_!:
    {}_A{\rm CoAlg}_A(\mathcal{C})\longrightarrow
   {}_B{\rm CoAlg}_B(\mathcal{C}).\]
Therefore, for a coalgebra object $\Gamma$ of 
${_A}\bmod_A(\mathcal{C})$,
we obtain a coalgebra objects
\[ B\otimes_A\Gamma\otimes_A B=(f,f)_!(\Gamma) \]
of ${_B}\bmod_B(\mathcal{C})$.
In particular, since the monoidal unit $A$ in 
${_A}\bmod_A(\mathcal{C})$
is a coalgebra object,
we see that
\[ B\otimes_A B=(f,f)_!(A) \]
is a coalgebra object of ${_B}\bmod_B(\mathcal{C})$. 

In particular,
since the monoidal unit $\mathbf{1}$
is a coalgebra object of 
${}_{\mathbf{1}}\bmod_{\mathbf{1}}(\mathcal{C})
\simeq\mathcal{C}$,
we have a coalgebra object 
\[ A\otimes A =(f,f)_{!}(\mathbf{1}) \]
of
${}_A\bmod_A(\mathcal{C})$,
where $f:\mathbf{1}\to A$ is the unit map of $A$. 
We write 
\[ \Gamma(A)=(A,A\otimes A) \]
for simplicity and 
we call
$A\otimes A$-comodules $\Gamma(A)$-comodules
interchangeably.

Let $f: A\to B$ be a map of algebra objects of $\mathcal{C}$.
We denote by 
$f^*: {\rm LMod}_B(\mathcal{C})\to
      {\rm LMod}_A(\mathcal{C})$
the restriction of scalars functor.
Recall that $f^*$ is a right adjoint to 
the extension of scalars functor 
\[ f_{!}: {\rm LMod}_A(\mathcal{C})\rightarrow
          {\rm LMod}_B(\mathcal{C}),\]
which is given by $f_{!}(M)\simeq B\otimes_A M$.

\begin{theorem}
\label{thm:base-change-comodules}
Let $\Gamma$ be a coalgebra 
object in ${_A}\bmod_A(\mathcal{C})$ and
let $f: A\to B$ be a map of algebra objects
of $\mathcal{C}$.
The map $f$ induces a functor of quasi-categories 
\[ f_!: \lcomod_{(A,\Gamma)}(\mathcal{C})\longrightarrow
   \lcomod_{(B,\Sigma)}(\mathcal{C})\]
which covers the functor 
$f_{!}: \lmod_A(\mathcal{C})\to\lmod_B(\mathcal{C})$
through the forgetful functors,
where $\Sigma=(f,f)_!\Gamma$. 
\end{theorem}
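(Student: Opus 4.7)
The plan is to apply the opposite-tensored-category machinery of \S\ref{subsec:opposite-module-quasi-categories} to lift the pair of adjunctions $(f_{!},f^{*})$ on left modules and $((f,f)_{!},(f,f)^{*})$ on bimodules simultaneously to a lax left tensored functor on the opposites, and then to pass to module objects over coalgebras.

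Concretely, I would first encode the tensored quasi-categories ${}_A\bmod(\mathcal{C})$ over ${}_A\bmod_A(\mathcal{C})^{\otimes}$ and ${}_B\bmod(\mathcal{C})$ over ${}_B\bmod_B(\mathcal{C})^{\otimes}$ as coCartesian fibrations over $N(\Delta)^{\rm op}\times\Delta^{1}$. The restriction-of-scalars maps $f^{*}$ and $(f,f)^{*}$ then assemble into a lax left tensored functor $G$ from the second fibration to the first, the comparison data being the canonical maps $(f,f)^{*}M\otimes_{A}f^{*}N\to f^{*}(M\otimes_{B}N)$ induced by the algebra map $f$. On fibers, $G_{([0],0)}=f^{*}$ and $G_{([1],1)}=(f,f)^{*}$ admit left adjoints, namely $f_{!}=B\otimes_{A}(-)$ and $(f,f)_{!}=B\otimes_{A}(-)\otimes_{A}B$. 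Hence Proposition~\ref{prop:opposite-lax-tensored-module-functor} produces a canonical lax left tensored functor
\[ f_{!}^{\rm op}:\ {}_A\bmod(\mathcal{C})^{\rm op}\longrightarrow {}_B\bmod(\mathcal{C})^{\rm op} \]
covering the lax monoidal functor $(f,f)_{!}^{\rm op}:{}_A\bmod_A(\mathcal{C})^{\rm op}\to {}_B\bmod_B(\mathcal{C})^{\rm op}$ of the preceding lemma, whose underlying action on objects is $M\mapsto B\otimes_{A}M$.

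To finish, I would invoke the standard $\infty$-operadic operations. A lax monoidal functor induces a functor between algebra quasi-categories; this is already used in the paper to define $(f,f)_{!}:{}_A{\rm CoAlg}_A(\mathcal{C})\to{}_B{\rm CoAlg}_B(\mathcal{C})$ sending $\Gamma$ to $\Sigma$. The parallel fact for lax tensored functors yields a functor
\[ \lmod_{\Gamma}({}_A\bmod(\mathcal{C})^{\rm op})\longrightarrow \lmod_{(f,f)_{!}\Gamma}({}_B\bmod(\mathcal{C})^{\rm op}) \]
covering $f_{!}^{\rm op}$ on underlying objects (cf.~\cite{Lurie2}). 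Taking opposites on both sides gives the desired functor $f_{!}:\lcomod_{(A,\Gamma)}(\mathcal{C})\to\lcomod_{(B,\Sigma)}(\mathcal{C})$; compatibility with the forgetful functors is automatic from the lax tensored structure, and the induced action on underlying modules is $B\otimes_{A}(-)=f_{!}$ by the final clause of Theorem~\ref{thm:construction-RF}.

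The main obstacle is the first step: assembling $f^{*}$ and $(f,f)^{*}$ into a single lax left tensored functor over $N(\Delta)^{\rm op}\times\Delta^{1}$ meeting the exact specification of \S\ref{subsec:opposite-module-quasi-categories}, since all the coherences of the canonical comparison maps $(f,f)^{*}M\otimes_{A}f^{*}N\to f^{*}(M\otimes_{B}N)$ must be encoded simultaneously. Once this coherence is in place, Proposition~\ref{prop:opposite-lax-tensored-module-functor} together with the standard passage to module objects over algebras finishes the argument, and the covering property over $\lmod_{A}(\mathcal{C})\to\lmod_{B}(\mathcal{C})$ is read off from the object-level description of $f_{!}^{\rm op}$.
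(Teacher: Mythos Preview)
Your proposal is correct and follows essentially the same route as the paper: the paper's proof is the single line ``This follows from Proposition~\ref{prop:opposite-lax-tensored-module-functor}'', and you have simply unpacked what that citation entails (assembling $f^{*}$ and $(f,f)^{*}$ into a lax left tensored functor, applying Proposition~\ref{prop:opposite-lax-tensored-module-functor} to pass to opposites, and then passing to module objects over the algebra $\Gamma$). The point you flag as the main obstacle---constructing the lax tensored structure on restriction of scalars---is indeed the implicit input, and the paper takes it for granted.
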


\begin{proof}
This follows from
Proposition~\ref{prop:opposite-lax-tensored-module-functor}.
\qed\end{proof}

Suppose we have a map $f:A\to B$ 
of algebra objects of $\mathcal{C}$.
This induces
an adjunction of functors
\[ f_{\,!}:\lmod_A(\mathcal{C})\rightleftarrows 
   \lmod_B(\mathcal{C}):
   f^*.\]
Taking the opposite quasi-categories, 
we obtain an adjunction of functors
\[ f^{*\,\rm op}: \lmod_B(\mathcal{C})^{\rm op}
        \rightleftarrows \lmod_A(\mathcal{C})^{\rm op}: 
   f_{\,!}^{\,\rm op}.\]
By this adjunction,
we obtain an endomorphism monad 
\[ T\in{\rm Alg}({\rm End}({\rm LMod}_B(\mathcal{C})^{\rm op})), \]
and a quasi-category
\[ {\rm LMod}_T({\rm LMod}_B(\mathcal{C})^{\rm op}). \]
of left $T$-modules in ${\rm LMod}_B(\mathcal{C})^{\rm op}$
(see \cite[\S4.7.4]{Lurie2}).

The following theorem shows that
the definition of a quasi-category
of comodules is consistent with 
the definition in \cite{Torii1}.

\begin{theorem}
\label{thm:comparibility-of-definition-of-comodules}
There is an equivalence of quasi-categories
\[ \lcomod_{(B,B\otimes_A B)}(\mathcal{C})\simeq
   {\rm LMod}_T({\rm LMod}_B(\mathcal{C})^{\rm op})^{\rm op}. \]
\end{theorem}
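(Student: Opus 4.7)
The plan is to identify both sides as modules over certain monads on $\lmod_B(\mathcal{C})^{\mathrm{op}}$ and then compare those monads. First I would unfold the definition of the left-hand side: by construction,
\[ \lcomod_{(B,B\otimes_A B)}(\mathcal{C}) = \lmod_{B\otimes_A B}(\lmod_B(\mathcal{C})^{\mathrm{op}})^{\mathrm{op}}. \]
The coalgebra $B\otimes_A B$, regarded as an algebra in ${}_B\bmod_B(\mathcal{C})^{\mathrm{op}}$, acts on the left-tensored quasi-category $\lmod_B(\mathcal{C})^{\mathrm{op}}$. Lurie's general identification of modules over an algebra object with modules over the induced monad (HA~\S4.7.1 together with the tensored setup of \S4.8.5) then identifies $\lmod_{B\otimes_A B}(\lmod_B(\mathcal{C})^{\mathrm{op}})$ with $\lmod_{T_0}(\lmod_B(\mathcal{C})^{\mathrm{op}})$, where $T_0$ is the monad on $\lmod_B(\mathcal{C})^{\mathrm{op}}$ determined by tensoring with the algebra $B\otimes_A B$.

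Next I would describe $T_0$ and $T$ explicitly and observe that they have the same underlying endofunctor. The monad $T_0$ sends $M$ to $(B\otimes_A B)\otimes_B M\simeq B\otimes_A M$, with multiplication induced by the comultiplication of the coalgebra $B\otimes_A B=(f,f)_!(\mathbf{1})$ in ${}_B\bmod_B(\mathcal{C})$. On the other hand, $T=f_!^{\,\mathrm{op}}\circ f^{*\,\mathrm{op}}$ sends $M$ to $f_!f^*M\simeq B\otimes_A M$, with multiplication induced by the unit $\mathrm{id}\to f^*f_!$ of the original adjunction $(f_!,f^*)$. Both monads thus have underlying functor $M\mapsto B\otimes_A M$, and the agreement of underlying functors is tautological.

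The main content (and main obstacle) is upgrading this agreement to an equivalence $T\simeq T_0$ as algebras in $\mathrm{End}(\lmod_B(\mathcal{C})^{\mathrm{op}})$. The cleanest route is via the universal property of the endomorphism monad: $T$ is characterized as the universal monad on $\lmod_B(\mathcal{C})^{\mathrm{op}}$ through which the adjunction $f^{*\,\mathrm{op}}\dashv f_!^{\,\mathrm{op}}$ factors. Since the algebra structure on $B\otimes_A B$ is obtained by applying the lax monoidal functor $(f,f)_!^{\mathrm{op}}$ of Proposition~\ref{prop:opposite-lax-monoidal-functor} to the coalgebra unit $\mathbf{1}$, and since $(f,f)_!$ is itself the bimodule extension of the adjunction $(f_!,f^*)$, the tensored action of $B\otimes_A B$ on $\lmod_B(\mathcal{C})^{\mathrm{op}}$ is compatible with the opposite adjunction. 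This yields a canonical map of monads $T\to T_0$ whose underlying natural transformation of endofunctors is the equivalence $f_!f^*(-)\simeq (B\otimes_A B)\otimes_B(-)$, hence an equivalence of monads.

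Combining the three steps gives $\lcomod_{(B,B\otimes_A B)}(\mathcal{C}) \simeq \lmod_{T_0}(\lmod_B(\mathcal{C})^{\mathrm{op}})^{\mathrm{op}} \simeq \lmod_T(\lmod_B(\mathcal{C})^{\mathrm{op}})^{\mathrm{op}}$, as required. The coherence step comparing monad structures, rather than any computation with the underlying functors, is the delicate part, and is most naturally handled by appealing to the universal property of the endomorphism monad instead of checking the multiplications by hand.
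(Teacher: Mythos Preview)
Your proposal is correct and takes essentially the same route as the paper: both reduce to identifying the algebra $B\otimes_A B$ (acting on $\lmod_B(\mathcal{C})^{\mathrm{op}}$ by tensoring) with the endomorphism monad $T$ of the opposite adjunction, via the universal property of endomorphism objects in \cite[Prop.~4.7.4.3]{Lurie2}. The paper makes the verification slightly more concrete by lifting $G=f_!^{\,\mathrm{op}}$ to a $(B\otimes_A B)$-module in $\mathrm{Fun}(\mathcal{B},\mathcal{A})$ and checking directly that the induced comparison $B\otimes_A B\to G\circ F$ is an equivalence; note that this universal property produces a map of monads \emph{into} $T$, not out of it as you wrote, though this does not affect the conclusion.
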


\begin{proof}
Put $\mathcal{A}=\lmod_A(\mathcal{C})^{\rm op}$ 
and $\mathcal{B}=\lmod_B(\mathcal{C})^{\rm op}$.
We have an adjunction of functors
$F: \mathcal{B}\rightleftarrows \mathcal{A}:G$,
where $F=f^{*\,\rm op}$ and 
$G=f_{\,!}^{\,\rm op}$.
Since ${\rm LMod}_B(\mathcal{C})\simeq
{}_B{\rm BMod}(\mathcal{C})$,
we can regard $B\otimes_A B$
as an algebra object of ${\rm End}(\mathcal{B})$.
By \cite[Prop.~4.7.4.3]{Lurie2},
we can lift $G$ to 
$\overline{G}\in 
{\rm LMod}_{B\otimes_A B}({\rm Fun}(\mathcal{B},\mathcal{A}))$.
We can verify that the composition
\[ B\otimes_A B\stackrel{}{\longrightarrow}
   (B\otimes_A B)\circ G\circ F
   \stackrel{}{\longrightarrow} G\circ F\]
is an equivalence in ${\rm End}(\mathcal{B})$,
where the first map is induced by
the unit map of the adjunction $(F,G)$,
and the second map is induced by
the left $B\otimes_A B$-action on $G$
in ${\rm Fun}(\mathcal{B},\mathcal{A})$.
By \cite[Prop.~4.7.4.3]{Lurie2},
we see that $B\otimes_A B$ is equivalent to 
the endomorphism monad $T$. 
Hence we obtain an equivalence
between $\lmod_T(\lmod_B(\mathcal{C})^{\rm op})$ and 
$\lmod_{B\otimes_A B}({}_B{\rm BMod}(\mathcal{C})^{\rm op})$.
\qed\end{proof}

\subsection{Cotensor products for comodules 
in quasi-categories}

Let $(\mathcal{C},\otimes, \mathbf{1})$ be a stable 
homotopy theory.
In this subsection
we define a (derived) cotensor product
of comodules in $\mathcal{C}$.
In particular,
we define a (derived) functor of taking primitives
of comodules.
We also study a comodule structure on 
cotensor products.

Let $A$ be an algebra object of $\mathcal{C}$. 
Suppose $\Gamma$ is a coalgebra object of 
the quasi-category ${}_A\bmod_A(\mathcal{C})$
of $A$-$A$-bimodules in $\mathcal{C}$,
that is,
$\Gamma$ is an algebra object of 
the opposite monoidal quasi-category
${}_A\bmod_A(\mathcal{C})^{\rm op}$.

For a right $\Gamma$-comodule $M$
and a left $\Gamma$-comodule $N$,
we shall define a cotensor product
\[ M\square_{\Gamma} N. \]

We regard $M$ as an object
in $\rmod_{\Gamma}(\bmod_A(\mathcal{C})^{\rm op})$
and $N$ as an object
in $\lmod_{\Gamma}({}_A\bmod(\mathcal{C})^{\rm op})$.
We can construct a two-sided bar construction
\[ B_{\bullet}(M,\Gamma,N), \]
which is a simplicial object in 
$\mathcal{C}^{\rm op}$.
The simplicial object $B_{\bullet}(M,\Gamma,N)$
has the $n$th term 
given by 
\[ B_n(M,\Gamma,N)\simeq
   M\otimes_A\overbrace{\Gamma\otimes_A\cdots\otimes_A\Gamma}^n 
   \otimes_A N\]
with the usual structure maps.
We regard $B_{\bullet}(M,\Gamma,N)$ as a 
cosimplicial object 
\[ C^{\bullet}(M,\Gamma,N) \]
in $\mathcal{C}$ and 
define the cotensor product $M\square_{\Gamma}N$ 
to be the limit of the cosimplicial 
object $C^{\bullet}(M,\Gamma,N)$:
\[ M\square_{\Gamma}N=\, {\rm lim}_{N(\Delta)}\
   C^{\bullet}(M,\Gamma,N).\]

Now we regard $A$ as a right $A$-module and
suppose that $A$ is a right $\Gamma$-comodule
via $\eta_R: A\to A\otimes_A\Gamma\simeq\Gamma$.
We define a functor
\[ P: \lcomod_{(A,\Gamma)}\longrightarrow
      \mathcal{C}\]
by 
\[ P(N)=A\square_{\Gamma}N.\]
We consider the functor $P$ is a derived 
functor of taking primitives in $N$. 

Suppose we have algebra objects $A,B,C$ of $\mathcal{C}$.
The quasi-category of $B$-$A$-bimodules
${}_B\bmod_A(\mathcal{C})$ in $\mathcal{C}$
is right tensored over 
the monoidal quasi-category
${}_A\bmod_A(\mathcal{C})$
and the quasi-category
of $A$-$C$-bimodules
${}_A\bmod_C(\mathcal{C})$ in $\mathcal{C}$
is left tensored over 
the monoidal quasi-category
${}_A\bmod_A(\mathcal{C})$.
Let $\Gamma$ be a coalgebra object
of ${}_A\bmod_A(\mathcal{C})$.
We can define right $\Gamma$-comodule
objects of ${}_B\bmod_A(\mathcal{C})$
and left $\Gamma$-comodule objects
of ${}_A\bmod_C(\mathcal{C})$
in the same way as 
$\Gamma$-comodule objects of ${}_A\bmod_A(\mathcal{C})$.
Suppose we have a right $\Gamma$-comodule $M$  
of ${}_B\bmod_A(\mathcal{C})$
and a left $\Gamma$-comodule $N$ 
of ${}_A\bmod_C(\mathcal{C})$.
We can form 
the cobar construction
$C^{\bullet}(M,\Gamma,N)$
in ${}_B\bmod_C(\mathcal{C})$.
Hence the cotensor product
$M\square_{\Gamma}N$ is a $B$-$C$-bimodule
\[ M\square_{\Gamma}N\in {}_B\bmod_C(\mathcal{C}).\]

\if0
\section{Tower}

Let $\mathcal{C}$
be a triangulated category.
We denote by ${\rm Pro}(\mathcal{C})$
the category of pro-objects of $\mathcal{C}$.

\begin{lemma}
Let $\{X^{\bullet}\}$ be an object in ${\rm Pro}(\mathcal{C})$
given by the tower
\[   \cdots\rightarrow X^n \rightarrow \cdots \rightarrow X^1 \rightarrow X^0.\]
The tower $\{X^{\bullet}\}$ is isomorphic 
to the zero object in ${\rm Pro}(\mathcal{C})$
if and only if there exists a non-negative integer $p(n)$
such that $X^{n+p(n)}\to X^n$ is zero for each $n$. 
\end{lemma}

\begin{proof}
The lemma follows from the fact that
the morphism set in ${\rm Pro}(\mathcal{C})$
from $\{X^{\bullet}\}$ to itself is given by 
\[ {\rm Hom}_{{\rm Pro}(\mathcal{C})}(\{X^{\bullet}\},\{X^{\bullet}\})  
   =\,\subrel{n}{\rm lim}\,\subrel{m}{\rm colim}\,
    {\rm Hom}_{\mathcal{C}}(X^m,X^n).\]
\qed\end{proof}

Let $f^{\bullet}: \{X^{\bullet}\}\to \{Y^{\bullet}\}$
be a map of towers of $\mathcal{C}$.
Thus, we have maps $f^n: X^n\to Y^n$ for each $n\ge 0$ 
such that the diagram 
\[ \begin{array}{ccc}
     X^{n+1}& \stackrel{f^{n+1}}{\hbox to 20mm{\rightarrowfill}}& Y^{n+1}\\[2mm]
     \bigg \downarrow & & \bigg \downarrow \\[2mm]
     X^n & \stackrel{f^n}{\hbox to 20mm{\rightarrowfill}}& Y^n\\
   \end{array}\]
commutes for all $n\ge 0$.

\begin{lemma}
The map $f^{\bullet}$ is an isomorphism in ${\rm Pro}(\mathcal{C})$
if and only if
there exists a map $g^n: Y^{n+p(n)}\to X^n$ for each $n$
such that $f^n\circ g^n$ is the map $Y^{n+p(n)}\to Y^n$
and $g^n\circ f^{n+q(n)}$ is the map $X^{n+p(n)}\to X^n$.
\end{lemma}

\begin{proof}
First, we suppose that $f^{\bullet}$ is an isomorphism.
Let $g^{\bullet}=(f^{\bullet})^{-1}: \{Y^{\bullet}\}\to\{X^{\bullet}\}$.
Since 
\[ {\rm Hom}_{{\rm Pro}(\mathcal{C})}(\{Y^{\bullet}\},\{X^{\bullet}\}=\,
   \subrel{n}{\rm lim}\,\subrel{m}{\rm colim}\,
   {\rm Hom}_{\mathcal{C}}(Y^m,X^n),\]
we obtain a map $g^n: Y^{n+p(n)}\to X^n$ for each $n.$
Since $f^{\bullet}\circ g^{\bullet}=1_{\{Y^{\bullet}\}}$,
we may assume that $f^n \circ g^n$ is the map $Y^{n+p(n)}\to Y^n$.
Also, since $g^{\bullet}\circ f^{\bullet}=1_{\{X^{\bullet}\}}$,
we may assume that $g^n\circ f^{n+p(n)}$ is the map
$X^{n+p(n)}\to X^n$.

Conversely,
we suppose to have a map $g^n: Y^{n+p(n)}\to X^n$
for each $n$ such that
$f^n\circ g^n$ is the map $Y^{n+p(n)}\to Y^n$
and $g^n\circ f^{n+p(n)}$ is the map $X^{n+p(n)}\to X^n$.
The maps $\{g^n\}$ define a map $g^{\bullet}: \{Y^{\bullet}\}\to
\{X^{\bullet}\}$ in ${\rm Pro}(\mathcal{C})$.
We can verify that $g^{\bullet}$ is the inverse of $f^{\bullet}$.
\qed\end{proof}

Let 
\[ \{X^{\bullet}\}\stackrel{f^{\bullet}}{\longrightarrow}
   \{Y^{\bullet}\}\stackrel{g^{\bullet}}{\longrightarrow}
   \{Z^{\bullet}\}\stackrel{h^{\bullet}}{\longrightarrow}
   \{\Sigma X^{\bullet}\} \]
be maps of towers of $\mathcal{C}$.
We suppose that the sequence 
\[ X^n\stackrel{f^n}{\longrightarrow}
   Y^n\stackrel{g^n}{\longrightarrow}
   Z^n\stackrel{h^n}{\longrightarrow} \Sigma X^n\]
is a distinguished triangle for all $n\ge 0$.

\begin{lemma}
If $\{X^{\bullet}\}$ and $\{Z^{\bullet}\}$
are isomorphic to the zero object in ${\rm Pro}(\mathcal{C})$,
then so is $\{Y^{\bullet}\}$.
\end{lemma}

\begin{proof}
For each $n\ge 0$,
there exists a non-negative integer $p$ such that
$X^{n+p}\to X^n$ is zero.
Since the composition $X^{n+p}\to
Y^{n+p}\to Y^n$
is zero,
we have a map $k^n: Z^{n+p}\to Y^n$ such that
$k^n\circ g^{n+p}$ is $Y^{n+p}\to Y^n$.
There exists a non-negative integer $q$ 
such that $Z^{n+p+q}\to Z^{n+p}$ is zero.
We see that the map $Y^{n+p+q}\to Y^n$ is zero.
\qed\end{proof}

\begin{lemma}
The map $g^{\bullet}$ is an isomorphism in ${\rm Pro}(\mathcal{C})$
if and only if $\{X^{\bullet}\}$ is isomorphic to the zero object
in ${\rm Pro}(\mathcal{C})$.
\end{lemma}

\begin{proof}
First, we suppose that $\{X^{\bullet}\}$ is isomorphic 
to the zero object.
For each $n\ge 0$,
we have a non-negative integer $p$ such that
the map $X^{n+p}\to X^n$ is zero.
Since the composition $X^{n+p}\to Y^{n+p}\to \Sigma Y^n$ is zero,
we obtain a map $k^n: Z^{n+p}\to Y^n$ such that 
$k^n\circ g^{n+p}$ is the map $Y^{n+p}\to Y^n$.
We set $\varphi=g^n\circ k^n-\pi: Z^{n+q}\to Z^n$,
where $\pi$ is the map $Z^{n+q}\to Z^n$.
Since $g^{n+p}\circ \varphi$ is zero,
we have a map $l^n: \Sigma X^{n+p}\to Z^n$
such that $l^n\circ h^{n+p}=\varphi$.
There exists a non-negative integer $q$ such that
$\Sigma X^{n+p+q}\to \Sigma X^{n+p}$ is zero.
We see that the composition $\sigma\circ \varphi$
is zero, where $\sigma$ is the map $Z^{n+p+q}\to Z^{n+p}$. 
Hence we obtain a map $k^n\circ \sigma: Z^{n+p+q}\to Y^n$
such that $g^n\circ k^n\circ\sigma$ is the map $Z^{n+p+q}\to Z^n$
and $k^n\circ \sigma\circ g^{n+p+q}$ is the map
$Y^{n+p+q}\to Y^n$.

Conversely, 
we suppose that $g^{\bullet}$ is an isomorphism.
For each $n$,
there are a non-negative integer $p$ and a map
$k^n: Z^{n+p}\to Y^n$ such that $g^n\circ k^n$ is the map
$Z^{n+p}\to Z^n$.
We see that the composition $\Sigma^{-1}Z^{n+p}\to X^{n+p}\to X^n$
is zero.
Hence there is a map $l^n: Y^{n+p}\to X^n$ such that
$l^n\circ f^n$ is the map $X^{n+p}\to X^n$.
There are non-negative integer $q$ and a map
$k^{n+p}:Z^{n+p+q}\to Y^{n+p}$ such that
$k^{n+p}\circ g^{n+p+q}$ is the map 
$Y^{n+p+q}\to Y^{n+p}$.
We see that the map $X^{n+p+q}\to X^n$ is zero.
\qed\end{proof}

Now we assume that $\mathcal{C}$ has a symmetric monoidal structure
which is compatible with the triangulated structure.
Let 
\[ A\longrightarrow B\longrightarrow C\longrightarrow \Sigma A\]
be a distinguished triangle in $\mathcal{C}$.
We have a commutative diagram
\[ \begin{array}{ccccccc}
     \{A\otimes X^{\bullet}\} & \longrightarrow & 
     \{B\otimes X^{\bullet}\} & \longrightarrow & 
     \{C\otimes X^{\bullet}\} & \longrightarrow &
     \{\Sigma A\otimes X^{\bullet}\}\\[1mm]
     \bigg\downarrow & & \bigg\downarrow & & \bigg\downarrow & & \bigg\downarrow \\[3mm]
     \{A\otimes Y^{\bullet}\} & \longrightarrow & 
     \{B\otimes Y^{\bullet}\} & \longrightarrow &
     \{C\otimes Y^{\bullet}\} & \longrightarrow & 
     \{\Sigma A\otimes Y^{\bullet}\} \\
   \end{array}\]
in ${\rm Pro}(\mathcal{C})$.

\begin{proposition}
If the maps 
$\{A\otimes X^{\bullet}\}\to \{A\otimes Y^{\bullet}\}$ and 
$\{C\otimes X^{\bullet}\}\to \{C\otimes Y^{\bullet}\}$
are isomorphisms in ${\rm Pro}(\mathcal{C})$,
then so is the map $\{B\otimes X^{\bullet}\}\to \{B\otimes Y^{\bullet}\}$.
\end{proposition}

\begin{proof}
By the assumption,  
$\{A\otimes Z^{\bullet}\}$ and $\{C\otimes Z^{\bullet}\}$
are isomorphic to the zero object in ${\rm Pro}(\mathcal{C})$.
This implies that $\{B\otimes \{Z^{\bullet}\}\}$
is also isomorphic to the zero object. 
Hence we see that the map $\{B\otimes X^{\bullet}\}\to 
\{B\otimes Y^{\bullet}\}$ is an isomorphism in ${\rm Pro}(\mathcal{C})$.
\qed\end{proof}

\section{Tower}

Let $\mathcal{C}$
be a triangulated category.
We denote by ${\rm Pro}(\mathcal{C})$
the category of pro-objects of $\mathcal{C}$.

\begin{lemma}
Let $\{X^{\bullet}\}$ be an object in ${\rm Pro}(\mathcal{C})$
given by the tower
\[   \cdots\rightarrow X^n \rightarrow \cdots \rightarrow X^1 \rightarrow X^0.\]
The tower $\{X^{\bullet}\}$ is isomorphic 
to the zero object in ${\rm Pro}(\mathcal{C})$
if and only if there exists a non-negative integer $p(n)$
such that $X^{n+p(n)}\to X^n$ is zero for each $n$. 
\end{lemma}

\begin{proof}
The lemma follows from the fact that
the morphism set in ${\rm Pro}(\mathcal{C})$
from $\{X^{\bullet}\}$ to itself is given by 
\[ {\rm Hom}_{{\rm Pro}(\mathcal{C})}(\{X^{\bullet}\},\{X^{\bullet}\})  
   =\,\subrel{n}{\rm lim}\,\subrel{m}{\rm colim}\,
    {\rm Hom}_{\mathcal{C}}(X^m,X^n).\]
\qed\end{proof}

Let $f^{\bullet}: \{X^{\bullet}\}\to \{Y^{\bullet}\}$
be a map of towers of $\mathcal{C}$.
Thus, we have maps $f^n: X^n\to Y^n$ for each $n\ge 0$ 
such that the diagram 
\[ \begin{array}{ccc}
     X^{n+1}& \stackrel{f^{n+1}}{\hbox to 20mm{\rightarrowfill}}& Y^{n+1}\\[2mm]
     \bigg \downarrow & & \bigg \downarrow \\[2mm]
     X^n & \stackrel{f^n}{\hbox to 20mm{\rightarrowfill}}& Y^n\\
   \end{array}\]
commutes for all $n\ge 0$.

\begin{lemma}
The map $f^{\bullet}$ is an isomorphism in ${\rm Pro}(\mathcal{C})$
if and only if
there exists a map $g^n: Y^{n+p(n)}\to X^n$ for each $n$
such that $f^n\circ g^n$ is the map $Y^{n+p(n)}\to Y^n$
and $g^n\circ f^{n+q(n)}$ is the map $X^{n+p(n)}\to X^n$.
\end{lemma}

\begin{proof}
First, we suppose that $f^{\bullet}$ is an isomorphism.
Let $g^{\bullet}=(f^{\bullet})^{-1}: \{Y^{\bullet}\}\to\{X^{\bullet}\}$.
Since 
\[ {\rm Hom}_{{\rm Pro}(\mathcal{C})}(\{Y^{\bullet}\},\{X^{\bullet}\}=\,
   \subrel{n}{\rm lim}\,\subrel{m}{\rm colim}\,
   {\rm Hom}_{\mathcal{C}}(Y^m,X^n),\]
we obtain a map $g^n: Y^{n+p(n)}\to X^n$ for each $n.$
Since $f^{\bullet}\circ g^{\bullet}=1_{\{Y^{\bullet}\}}$,
we may assume that $f^n \circ g^n$ is the map $Y^{n+p(n)}\to Y^n$.
Also, since $g^{\bullet}\circ f^{\bullet}=1_{\{X^{\bullet}\}}$,
we may assume that $g^n\circ f^{n+p(n)}$ is the map
$X^{n+p(n)}\to X^n$.

Conversely,
we suppose to have a map $g^n: Y^{n+p(n)}\to X^n$
for each $n$ such that
$f^n\circ g^n$ is the map $Y^{n+p(n)}\to Y^n$
and $g^n\circ f^{n+p(n)}$ is the map $X^{n+p(n)}\to X^n$.
The maps $\{g^n\}$ define a map $g^{\bullet}: \{Y^{\bullet}\}\to
\{X^{\bullet}\}$ in ${\rm Pro}(\mathcal{C})$.
We can verify that $g^{\bullet}$ is the inverse of $f^{\bullet}$.
\qed\end{proof}

Let 
\[ \{X^{\bullet}\}\stackrel{f^{\bullet}}{\longrightarrow}
   \{Y^{\bullet}\}\stackrel{g^{\bullet}}{\longrightarrow}
   \{Z^{\bullet}\}\stackrel{h^{\bullet}}{\longrightarrow}
   \{\Sigma X^{\bullet}\} \]
be maps of towers of $\mathcal{C}$.
We suppose that the sequence 
\[ X^n\stackrel{f^n}{\longrightarrow}
   Y^n\stackrel{g^n}{\longrightarrow}
   Z^n\stackrel{h^n}{\longrightarrow} \Sigma X^n\]
is a distinguished triangle for all $n\ge 0$.

\begin{lemma}
If $\{X^{\bullet}\}$ and $\{Z^{\bullet}\}$
are isomorphic to the zero object in ${\rm Pro}(\mathcal{C})$,
then so is $\{Y^{\bullet}\}$.
\end{lemma}

\begin{proof}
For each $n\ge 0$,
there exists a non-negative integer $p$ such that
$X^{n+p}\to X^n$ is zero.
Since the composition $X^{n+p}\to
Y^{n+p}\to Y^n$
is zero,
we have a map $k^n: Z^{n+p}\to Y^n$ such that
$k^n\circ g^{n+p}$ is $Y^{n+p}\to Y^n$.
There exists a non-negative integer $q$ 
such that $Z^{n+p+q}\to Z^{n+p}$ is zero.
We see that the map $Y^{n+p+q}\to Y^n$ is zero.
\qed\end{proof}

\begin{lemma}
The map $g^{\bullet}$ is an isomorphism in ${\rm Pro}(\mathcal{C})$
if and only if $\{X^{\bullet}\}$ is isomorphic to the zero object
in ${\rm Pro}(\mathcal{C})$.
\end{lemma}

\begin{proof}
First, we suppose that $\{X^{\bullet}\}$ is isomorphic 
to the zero object.
For each $n\ge 0$,
we have a non-negative integer $p$ such that
the map $X^{n+p}\to X^n$ is zero.
Since the composition $X^{n+p}\to Y^{n+p}\to \Sigma Y^n$ is zero,
we obtain a map $k^n: Z^{n+p}\to Y^n$ such that 
$k^n\circ g^{n+p}$ is the map $Y^{n+p}\to Y^n$.
We set $\varphi=g^n\circ k^n-\pi: Z^{n+q}\to Z^n$,
where $\pi$ is the map $Z^{n+q}\to Z^n$.
Since $g^{n+p}\circ \varphi$ is zero,
we have a map $l^n: \Sigma X^{n+p}\to Z^n$
such that $l^n\circ h^{n+p}=\varphi$.
There exists a non-negative integer $q$ such that
$\Sigma X^{n+p+q}\to \Sigma X^{n+p}$ is zero.
We see that the composition $\sigma\circ \varphi$
is zero, where $\sigma$ is the map $Z^{n+p+q}\to Z^{n+p}$. 
Hence we obtain a map $k^n\circ \sigma: Z^{n+p+q}\to Y^n$
such that $g^n\circ k^n\circ\sigma$ is the map $Z^{n+p+q}\to Z^n$
and $k^n\circ \sigma\circ g^{n+p+q}$ is the map
$Y^{n+p+q}\to Y^n$.

Conversely, 
we suppose that $g^{\bullet}$ is an isomorphism.
For each $n$,
there are a non-negative integer $p$ and a map
$k^n: Z^{n+p}\to Y^n$ such that $g^n\circ k^n$ is the map
$Z^{n+p}\to Z^n$.
We see that the composition $\Sigma^{-1}Z^{n+p}\to X^{n+p}\to X^n$
is zero.
Hence there is a map $l^n: Y^{n+p}\to X^n$ such that
$l^n\circ f^n$ is the map $X^{n+p}\to X^n$.
There are non-negative integer $q$ and a map
$k^{n+p}:Z^{n+p+q}\to Y^{n+p}$ such that
$k^{n+p}\circ g^{n+p+q}$ is the map 
$Y^{n+p+q}\to Y^{n+p}$.
We see that the map $X^{n+p+q}\to X^n$ is zero.
\qed\end{proof}

Now we assume that $\mathcal{C}$ has a symmetric monoidal structure
which is compatible with the triangulated structure.
Let 
\[ A\longrightarrow B\longrightarrow C\longrightarrow \Sigma A\]
be a distinguished triangle in $\mathcal{C}$.
We have a commutative diagram
\[ \begin{array}{ccccccc}
     \{A\otimes X^{\bullet}\} & \longrightarrow & 
     \{B\otimes X^{\bullet}\} & \longrightarrow & 
     \{C\otimes X^{\bullet}\} & \longrightarrow &
     \{\Sigma A\otimes X^{\bullet}\}\\[1mm]
     \bigg\downarrow & & \bigg\downarrow & & \bigg\downarrow & & \bigg\downarrow \\[3mm]
     \{A\otimes Y^{\bullet}\} & \longrightarrow & 
     \{B\otimes Y^{\bullet}\} & \longrightarrow &
     \{C\otimes Y^{\bullet}\} & \longrightarrow & 
     \{\Sigma A\otimes Y^{\bullet}\} \\
   \end{array}\]
in ${\rm Pro}(\mathcal{C})$.

\begin{proposition}
If the maps $\{A\otimes X^{\bullet}\}\to \{A\otimes Y^{\bullet}\}$ and 
$\{C\otimes X^{\bullet}\}\to \{C\otimes Y^{\bullet}\}$
are isomorphisms in ${\rm Pro}(\mathcal{C})$,
then so is the map $\{B\otimes X^{\bullet}\}\to \{B\otimes Y^{\bullet}\}$.
\end{proposition}

\begin{proof}
By the assumption,  
$\{A\otimes Z^{\bullet}\}$ and $\{C\otimes Z^{\bullet}\}$
are isomorphic to the zero object in ${\rm Pro}(\mathcal{C})$.
This implies that $\{B\otimes \{Z^{\bullet}\}\}$
is also isomorphic to the zero object. 
Hence we see that the map $\{B\otimes X^{\bullet}\}\to 
\{B\otimes Y^{\bullet}\}$ is an isomorphism in ${\rm Pro}(\mathcal{C})$.
\qed\end{proof}
\fi

Let $\Sigma$ be a coalgebra object
of ${}_B\bmod_B(\mathcal{C})$.
Now suppose $M$ is a $(\Sigma,\Gamma)$-bicomodule object
of ${}_B\bmod_A(\mathcal{C})$,
that is,
$M$ is a $(\Sigma,\Gamma)$-bimodule object
of ${}_B\bmod_A(\mathcal{C})^{\rm op}$.
In general,
the cotensor product
$M\square_{\Gamma}N$ does not
support a left $\Sigma$-comodule structure.
The following proposition gives
us a sufficient condition for $M\square_{\Gamma}N$
to be a left $\Sigma$-comodule object
of ${}_B\bmod_C(\mathcal{C})$
induced by the left $\Sigma$-comodule
structure on $M$.

\begin{proposition}
\label{prop:cotensor-product-comodule-structure}
Let $M$ be a $(\Sigma,\Gamma)$-bicomodule object
of ${}_B\bmod_A(\mathcal{C})$ and let
$N$ be a left $\Gamma$-comodule object
of ${}_A\bmod_C(\mathcal{C})$.
If the canonical map
\[ \overbrace{\Sigma\otimes_B\cdots\otimes_B\Sigma}^r
   \otimes_B (M\square_{\Gamma}N)\longrightarrow
   (\overbrace{\Sigma\otimes_B\cdots\otimes_B\Sigma}^r\otimes_B
   M)\square_{\Gamma}N\]
is an equivalence in ${}_B\bmod_C(\mathcal{C})$
for all $r>0$,
then the left $\Sigma$-comodule structure on $M$
induces a left $\Sigma$-comodule structure
on $M\square_{\Gamma}N$.
\end{proposition}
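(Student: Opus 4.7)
My plan is to lift the cobar construction $C^\bullet(M,\Gamma,N)$ from a cosimplicial object in ${}_B\bmod_C(\mathcal{C})$ to one in the quasi-category $\mathcal{D}$ of left $\Sigma$-comodules in ${}_B\bmod_C(\mathcal{C})$, and then take its limit there. The $n$-th level
$$C^n(M,\Gamma,N) \simeq M \otimes_A \overbrace{\Gamma \otimes_A \cdots \otimes_A \Gamma}^n \otimes_A N$$
carries a natural left $\Sigma$-coaction obtained by applying the coaction $\psi\colon M \to \Sigma \otimes_B M$ to the leftmost factor. The $(\Sigma,\Gamma)$-bicomodule structure on $M$ asserts that $\psi$ is a morphism of right $\Gamma$-comodules, which ensures that the coface and codegeneracy maps of $C^\bullet(M,\Gamma,N)$ lift to morphisms of left $\Sigma$-comodules. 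Thus $C^\bullet(M,\Gamma,N)$ promotes to a cosimplicial object $\widetilde{C}^\bullet\colon N(\Delta) \to \mathcal{D}$.

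Next, I would compute $\lim_{N(\Delta)} \widetilde{C}^\bullet$ in $\mathcal{D}$ and identify its underlying $B$-$C$-bimodule with $M \square_\Gamma N$, thereby producing the desired left $\Sigma$-coaction on the cotensor product. The cleanest route is to invoke the cofree-forgetful adjunction
$$U\colon \mathcal{D} \rightleftarrows {}_B\bmod_C(\mathcal{C}) : \Sigma \otimes_B (-),$$
which is comonadic with associated comonad $T = \Sigma \otimes_B (-)$. By the dual Barr-Beck-Lurie theorem (cf.\ \cite[\S 4.7.3]{Lurie2}), the limit of $\widetilde{C}^\bullet$ exists in $\mathcal{D}$ and is preserved by $U$ provided each iterate $T^r = \Sigma^{\otimes_B r} \otimes_B (-)$ preserves the cosimplicial limit $\lim_{N(\Delta)} C^\bullet(M,\Gamma,N) = M\square_\Gamma N$ in ${}_B\bmod_C(\mathcal{C})$. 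This is exactly the content of the hypothesis: the equivalence
$$\Sigma^{\otimes_B r} \otimes_B (M \square_\Gamma N) \simeq (\Sigma^{\otimes_B r} \otimes_B M) \square_\Gamma N = \lim_{N(\Delta)} C^\bullet(\Sigma^{\otimes_B r} \otimes_B M, \Gamma, N)$$
for all $r > 0$ says that $T^r$ commutes with the limit defining the cotensor product.

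More concretely, the coaction on $M \square_\Gamma N$ is the composite
$$M \square_\Gamma N \longrightarrow (\Sigma \otimes_B M) \square_\Gamma N \stackrel{\simeq}{\longleftarrow} \Sigma \otimes_B (M \square_\Gamma N),$$
where the first map is obtained by applying $(-)\square_\Gamma N$ to the right $\Gamma$-comodule map $\psi$ and the second is the inverse of the $r=1$ equivalence. The $r=2$ equivalence then verifies coassociativity by identifying the two iterates of this composite, and counitality follows from the counit of the coalgebra $\Sigma$. The higher-arity hypotheses $r \geq 3$ furnish the higher coherences needed to upgrade this to a genuine comodule structure in the quasi-categorical sense.

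The main obstacle is the coherent construction of the lifted cosimplicial object $\widetilde{C}^\bullet$ in the quasi-categorical setting: while the $\Sigma$-coaction on each individual $C^n$ and its compatibility with each individual face or degeneracy is transparent, producing a single functor $N(\Delta) \to \mathcal{D}$ requires assembling all higher simplices at once. I would handle this by exhibiting the two-sided cobar as a natural construction, namely that $X \mapsto C^\bullet(X,\Gamma,N)$ is functorial in the right $\Gamma$-comodule $X$ and takes values in cosimplicial $B$-$C$-bimodules. Regarding $M$ as an object of the quasi-category of right $\Gamma$-comodules in ${}_B\bmod_A(\mathcal{C})$ equipped with an additional lift to a left $\Sigma$-comodule — which is precisely the bicomodule datum — then extracts $\widetilde{C}^\bullet$ directly from this naturality, bypassing a case-by-case verification of higher coherences.
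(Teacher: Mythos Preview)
Your proposal is correct and follows essentially the same approach as the paper: both lift the cobar construction to a cosimplicial object in the quasi-category of left $\Sigma$-comodules (the paper does this by passing to the opposite category and forming the two-sided bar $B_\bullet(M,\Gamma,N)$ as a simplicial object in $\lmod_\Sigma({}_B\bmod_C(\mathcal{C})^{\rm op})$), and then argue that the limit exists there and is created by the forgetful functor precisely because the hypothesis says each $\Sigma^{\otimes_B r}\otimes_B(-)$ preserves the relevant limit. The one difference is that the paper does not cite Barr--Beck--Lurie for this creation-of-limits step but instead isolates it as a standalone lemma (Lemma~\ref{lem:general-convergence-colimit-criterion}) and proves it directly using Lurie's simplicial model of module objects; your appeal to \cite[\S4.7.3]{Lurie2} is pointing at the right circle of ideas, but the precise statement you need is exactly that lemma rather than the comonadicity theorem itself.
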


In order to prove 
Proposition~\ref{prop:cotensor-product-comodule-structure},
we need the following lemma.
 
\begin{lemma}
\label{lem:general-convergence-colimit-criterion}
Let $\mathcal{M}$ be a monoidal quasi-category,
$A$ an algebra object of $\mathcal{M}$, and
$\mathcal{D}$ a quasi-category left-tensored over $\mathcal{M}$.
Suppose we have a diagram
$X: K\to \lmod_A(\mathcal{D})$,
where $K$ is a simplicial set.
We set $Y=\pi\circ X:K\to\mathcal{D}$,
where $\pi:\lmod_A(\mathcal{D})\to\mathcal{D}$
is the forgetful functor.
We assume that there exists a colimit
${\rm colim}_K^{\mathcal{D}}(A^{\otimes r}\otimes Y)$
in $\mathcal{D}$ for all $r\ge 0$.
If the canonical map 
${\rm colim}_K^{\mathcal{D}} (A^{\otimes r}\otimes Y)
\to A^{\otimes r}\otimes
{\rm colim}_K^{\mathcal{D}}\, Y$ 
is an equivalence for all $r>0$,
then there exists a colimit of $X$ in $\lmod_A(\mathcal{D})$
and the forgetful functor
$\pi:\lmod_A(\mathcal{D})\to\mathcal{D}$
preserves the colimit.
\end{lemma}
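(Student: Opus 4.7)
The plan is to realize this as an instance of the monadic colimit theorem. First, I would recall that the forgetful functor $\pi: \lmod_A(\mathcal{D}) \to \mathcal{D}$ is part of a monadic adjunction: it admits a left adjoint $F(N) = A \otimes N$, and by the quasi-categorical Barr-Beck-Lurie theorem the associated comparison functor identifies $\lmod_A(\mathcal{D})$ with the quasi-category of algebras for the monad $T = A \otimes (-)$ on $\mathcal{D}$. Under this identification, an $A$-module structure on an object $Z \in \mathcal{D}$ is a coherent system of action data involving the iterates $T^{r}(Z) \simeq A^{\otimes r} \otimes Z$ for all $r \ge 0$.

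Next, I would produce the candidate colimit together with its module structure. Set $Z = {\rm colim}_{K}^{\mathcal{D}} Y$, which exists by the existence hypothesis at $r = 0$. The levelwise action maps $A \otimes Y(k) \to Y(k)$ for $k \in K$ assemble into a natural transformation $A \otimes Y \to Y$ of diagrams $K \to \mathcal{D}$; passing to colimits yields a map ${\rm colim}_K(A \otimes Y) \to Z$ whose source the commutation hypothesis at $r = 1$ identifies with $A \otimes Z$, producing a candidate action $a: A \otimes Z \to Z$. To upgrade $(Z,a)$ to an object $\widetilde{Z} \in \lmod_A(\mathcal{D})$ equipped with a cocone under $X$, one must supply not only the associativity and unit axioms but the full tower of higher coherences required in the $\infty$-categorical setting; these are precisely what the hypotheses for $r \ge 2$ provide by ensuring that each iterate $A^{\otimes r} \otimes (-)$ commutes with ${\rm colim}_K Y$.

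To verify that $\widetilde{Z}$ is a colimit in $\lmod_A(\mathcal{D})$, I would test it against an arbitrary $M \in \lmod_A(\mathcal{D})$. The mapping space ${\rm Map}_{\lmod_A(\mathcal{D})}(\widetilde{Z}, M)$ can be expressed as the totalization of a cosimplicial object whose $r$-th term is ${\rm Map}_{\mathcal{D}}(A^{\otimes r} \otimes Z, M)$, with coface maps built from the monad structure and the action on $M$. Since ${\rm Map}_{\mathcal{D}}(-, M)$ sends colimits to limits and the hypotheses ensure $A^{\otimes r} \otimes Z \simeq {\rm colim}_K(A^{\otimes r} \otimes Y)$, each cosimplicial term is identified with ${\rm lim}_k {\rm Map}_{\mathcal{D}}(A^{\otimes r} \otimes Y(k), M)$; interchanging the totalization with the limit over $k$ recovers ${\rm lim}_k {\rm Map}_{\lmod_A(\mathcal{D})}(X(k), M)$, establishing the universal property.

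The main obstacle is the coherence bookkeeping: in the quasi-categorical setting the $A$-module structure on $Z$ is not a single pair of commuting squares but a tower of homotopy-coherent compatibilities, and one must check that the preservation hypothesis at each $r > 0$ assembles into the full coherent datum describing $\widetilde{Z}$. This is precisely why the lemma imposes commutation of $A^{\otimes r} \otimes (-)$ with ${\rm colim}_K Y$ for \emph{every} $r > 0$ rather than merely for $r = 1$, and the technical heart of the proof is the verification that these hypotheses suffice to build the bar-type data coherently.
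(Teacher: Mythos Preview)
Your monadic framing is sound and the result can indeed be proven along these lines, but you have left open exactly the step you yourself flag as ``the main obstacle.'' In a quasi-category one cannot construct an $A$-module structure on $Z = {\rm colim}_K^{\mathcal{D}} Y$ by first producing an action map $a: A\otimes Z \to Z$ from the $r=1$ hypothesis and then asserting that the $r\ge 2$ hypotheses supply the remaining coherences: there is no procedure for promoting level-by-level data to a coherent object of $\lmod_A(\mathcal{D})$. Your verification of the universal property then presupposes what must be shown, since the cosimplicial object $[r]\mapsto {\rm Map}_{\mathcal{D}}(A^{\otimes r}\otimes Z,M)$ has an extremal coface built from the $A$-action on $Z$ and is therefore only defined once $\widetilde{Z}$ already exists as a module. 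The monadic route can certainly be completed---for instance by first forming the simplicial object $[r]\mapsto {\rm colim}_k\, F(A^{\otimes r}\otimes Y(k))$ of free modules in $\lmod_A(\mathcal{D})$ (these colimits exist because $F$ is a left adjoint and the underlying colimits exist by hypothesis), checking that it becomes split after applying $\pi$, and invoking monadicity to produce its geometric realization---but that argument, or an equivalent one, is what is missing from your proposal.

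The paper takes a different route that sidesteps the coherence problem entirely. It works in the explicit model of \cite[\S4.2.2]{Lurie2}, realizing $\lmod_A(\mathcal{D})$ as a full subcategory of sections of a locally coCartesian fibration $q':\mathcal{N}\to N(\Delta)^{\rm op}$ whose fibers are all equivalent to $\mathcal{D}$ and whose transition functors along edges of $N(\Delta)^{\rm op}$ are precisely the operations $A^{\otimes r}\otimes(-)$. The colimit of $X$ is then built as a $q'$-left Kan extension $\overline{f}:K^{\vartriangleright}\times N(\Delta)^{\rm op}\to\mathcal{N}$: the commutation hypothesis is exactly what \cite[Prop.~4.3.1.10]{Lurie1} needs for the fiberwise colimit diagrams to be $q'$-colimit diagrams, and the dual of \cite[Lem.~3.2.2.9]{Lurie2} assembles these into a colimit diagram in the section category, which one checks lands in the full subcategory ${}^{\Delta}\lmod_{A'}(\mathcal{D})\simeq\lmod_A(\mathcal{D})$. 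The fibrational model thus handles all higher coherence automatically through the Kan extension machinery, at the price of a more hands-on argument than the conceptual picture you sketch.
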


\begin{proof}
We use the notation in \cite[\S4.2.2]{Lurie2}.
Let $\mathcal{D}^{\circledast}$ and 
$\mathcal{M}^{\circledast}$
be quasi-categories defined in
\cite[Notation~4.2.2.16]{Lurie2}.
We have maps $\mathcal{D}^{\circledast}\stackrel{q}{\to} 
\mathcal{M}^{\circledast}\stackrel{p}{\to} N(\Delta)^{\rm op}$,
where $p$ and $p\circ q$ are coCartesian fibrations
by \cite[Remark.~4.2.2.24]{Lurie2}.
Furthermore,
$q$ is a categorical fibration 
by \cite[Remark.~4.2.2.18]{Lurie2}
and
a locally coCartesian fibration 
by \cite[Lem.~4.2.2.19]{Lurie2}.
Note that there is an equivalence 
of quasi-categories 
$\mathcal{D}^{\circledast}_{[s]}\simeq
\mathcal{M}^{\circledast}_{[s]}\times\mathcal{D}$
and the restriction $q_{[s]}: \mathcal{D}^{\circledast}_{[s]}\to
\mathcal{M}^{\circledast}_{[s]}$
is the projection
for any $[s]\in N(\Delta)^{\rm op}$.

We have simplicial models of algebra and module objects
in quasi-categories
(see \cite[\S4.1.2 and \S4.2.2]{Lurie2}).
We have a full subcategory
${}^{\Delta}{\rm Alg}(\mathcal{M})$
of the quasi-category ${\rm Fun}_{N(\Delta)^{\rm op}}
(N(\Delta)^{\rm op},\mathcal{M}^{\circledast})$
which is equivalent to 
the quasi-category ${\rm Alg}(\mathcal{M})$
of algebra objects of $\mathcal{M}$
(see \cite[Def.~4.1.2.14 and Prop.~4.1.2.15]{Lurie2}).
We denote by $A': N(\Delta)^{\rm op}\to \mathcal{M}^{\circledast}$
the corresponding simplicial object of $\mathcal{M}^{\circledast}$
to $A\in {\rm Alg}(\mathcal{M})$.

We form a pullback diagram
\[ \xymatrix{
     \mathcal{N} \ar[r]^j \ar[d]_{q'} & 
     \mathcal{D}^{\circledast} \ar[d]^q \\
     N(\Delta)^{\rm op} \ar[r]^{A'} & \mathcal{M}^{\circledast},\\
}\]
where $q'$ is a locally coCartesian fibration and
a categorical fibration.
Note that the fiber $\mathcal{N}_{[n]}$
of $q'$ over $[n]$ is equivalent
to $\mathcal{D}$ for all $[n]\in N(\Delta)^{\rm op}$.
We have a full subcategory
${}^{\Delta}\lmod_{A'}(\mathcal{D})$
of ${\rm Fun}_{N(\Delta)^{\rm op}}
(N(\Delta)^{\rm op},\mathcal{N})$,
which is equivalent to 
$\lmod_A(\mathcal{D})$
(see \cite[Cor.~4.2.2.15]{Lurie2}).
An object $G$ of ${\rm Fun}_{N(\Delta)^{\rm op}}
(N(\Delta)^{\rm op},\mathcal{N})$
belongs to $\lmod_{A'}(\mathcal{D})$
if and only if 
the edge $(j\circ G)(\alpha^{\rm op}):
(j\circ G)([n])\to (j\circ G)([m])$
is $p\circ q$-coCartesian
for any convex map $\alpha: [m]\to [n]$ in
$\Delta$ such that $\alpha(m)=n$.


We denote by 
$f: K\times N(\Delta)^{\rm op}\to\mathcal{N}$
the map corresponding to the diagram
$X\in {\rm Fun}(K,\lmod_A(\mathcal{D}))$.
We let $g: K^{\vartriangleright}\times N(\Delta)^{\rm op}\to 
N(\Delta)^{\rm op}$ be the projection.
We have a commutative diagram
\[ \xymatrix{
    K\times N(\Delta)^{\rm op} \ar[r]^{f} 
    \ar@{^{(}->}[d] &
    \mathcal{N} \ar[d]^{q'}\\
    K^{\vartriangleright}\times N(\Delta)^{\rm op}
    \ar[r]^g&
    N(\Delta)^{\rm op},\\    
}\]
where the left vertical arrow is the inclusion.
We shall show that there is a $q'$-left Kan extension
$\overline{f}:K^{\vartriangleright}\times N(\Delta)^{\rm op}
\to \mathcal{N}$ 
which makes the whole diagram commutative,
and that the adjoint map gives rise to a
colimit diagram 
$K^{\vartriangleright}\to \lmod_{A'}(\mathcal{D})\simeq
\lmod_A(\mathcal{D})$.

\if0
We may assume that $K$ is a quasi-category.
We denote by $\infty$ the cone point of $K^{\vartriangleright}$.
For any $[n]\in N(\Delta)^{\rm op}$,
we shall show that the composite map
\[ F_{[n]}:(K\times N(\Delta)^{\rm op})_{/{(\infty,[n])}}
   \longrightarrow
   K\times N(\Delta)^{\rm op}
   \stackrel{f}{\longrightarrow}
   \mathcal{N}\]
admits a $q'$-colimit,
where the first map is the forgetful functor.
\fi

Let 
$f_{[n]}:
   K\to \mathcal{N}_{[n]}\simeq\mathcal{D}$
be the restriction of the map $f$
over $[n]\in N(\Delta)^{\rm op}$,
which is equivalent to $Y$.
Since $Y$ has a colimit in $\mathcal{D}$
by the assumption,
we obtain a colimit diagram
$\overline{f}_{[n]}: K^{\vartriangleright}\to 
\mathcal{N}_{[n]}\simeq\mathcal{D}$
that is an extension of $f$.

Let 
$\alpha:[n]\to [m]$ be an edge in $N(\Delta)^{\rm op}$.
Since $q'$ is a locally coCartesian fibration,
$\alpha$ induces a functor
$\alpha_!: \mathcal{N}_{[n]}\to
\mathcal{N}_{[m]}$.
The composition
$\alpha_!\circ f_{[n]}:
K\to \mathcal{N}_{[m]}\simeq \mathcal{D}$
is equivalent to
$A^{\otimes r}\otimes Y$ for some $r\ge 0$.
This implies that 
$\alpha_!\circ\overline{f}_{[n]}$
is a colimit diagram in $\mathcal{N}_{[m]}\simeq\mathcal{D}$
by the assumption
that the canonical map
${\rm colim}^{\mathcal{D}}_K(A^{\otimes r}\otimes Y)
\to A^{\otimes r}\otimes {\rm colim}_K^{\mathcal{D}}\,Y$
is an equivalence.
Hence we see that $i_{[n]}\circ\overline{f}_{[n]}$ 
is a $q'$-colimit diagram
by \cite[Prop.~4.3.1.10]{Lurie1},
where $i_{[n]}:\mathcal{N}_{[n]}\hookrightarrow
\mathcal{N}$ is the inclusion.

By the dual of \cite[Lem.~3.2.2.9(1)]{Lurie2},
there exists a $q'$-left Kan extension
$\overline{f}:K^{\vartriangleright}\times N(\Delta)^{\rm op}
\to \mathcal{N}$ of $f$
such that $q'\circ\overline{f}=g$. 
The restriction of $\overline{f}$
to $K^{\vartriangleright}\times\{[n]\}$
is equivalent to 
$i_{[n]}\circ\overline{f}_{[n]}$
for all $[n]\in N(\Delta)^{\rm op}$.

\if0
There is a categorical equivalence 
$(K\times N(\Delta)^{\rm op})_{/(\infty,[n])}\simeq
K\times (N(\Delta)^{\rm op})_{/[n]}$
of quasi-categories
and the inclusion map
$K\times\{{\rm id}_{[n]}\}\hookrightarrow
K\times (N(\Delta)^{\rm op})_{/[n]}$
is cofinal.
The composition
$K\times\{{\rm id}_{[n]}\}\hookrightarrow
K\times (N(\Delta)^{\rm op})_{/[n]}
\to
\mathcal{N}$
is equivalent to
$i_{[n]}\circ f_{[n]}$,
where the second map is $F_{[n]}$.  
We consider the composition
$h: (K\times (N(\Delta)^{\rm op})_{/[n]})^{\vartriangleright}\to
K^{\vartriangleright}\to\mathcal{N}$,
where the first map is induced by the projection
and the second map is $i_{[n]}\circ\overline{f}_{[n]}$.
By \cite[Prop.~4.3.1.7]{Lurie1},
we see that $h$ is $q'$-colimit.
Hence $F_{[n]}$ admits a $q'$-colimit.

Therefore
there exists a $q'$-left Kan extension
$\overline{f}: K^{\vartriangleright}\times 
N(\Delta)^{\rm op}\to \mathcal{N}$
of $f$ such that $q'\circ\overline{f}=g$
by \cite[Lem.~4.3.2.13]{Lurie1}.
\fi

We consider 
the adjoint map $K^{\vartriangleright}\to 
{\rm Fun}(N(\Delta)^{\rm op},\mathcal{N})$ 
of $f$.
By the dual of \cite[Lem.~3.2.2.9(2)]{Lurie2},
this map is a $(q')^{N(\Delta)^{\rm op}}$-colimit diagram,
where $(q')^{N(\Delta^{\rm op})}:
{\rm Fun}(N(\Delta)^{\rm op},\mathcal{N})\to
{\rm Fun}(N(\Delta)^{\rm op},N(\Delta)^{\rm op})$
is induced by $q'$.
Since $g$ is the projection,
we see that it factors through 
${\rm Fun}_{N(\Delta)^{\rm op}}(N(\Delta)^{\rm op},\mathcal{N})$
and we obtain a map
$\widehat{f}:K^{\vartriangleright}\to
{\rm Fun}_{N(\Delta)^{\rm op}}(N(\Delta)^{\rm op},\mathcal{N})$.
By \cite[Prop.~4.3.1.5(4)]{Lurie1},
we see that $\widehat{f}$
is a colimit diagram.

We shall show that $\widehat{f}$ factors through
$\lmod_{A'}(\mathcal{D})$.
Note that 
the restriction of $\widehat{f}$ to $K$
factors through $\lmod_{A'}(\mathcal{D})$.
Let $F=\widehat{f}(\infty)\in 
{\rm Fun}_{N(\Delta)^{\rm op}}(N(\Delta)^{\rm op},\mathcal{N})$,
where $\infty$ is the cone point of $K^{\vartriangleright}$.
Since $\overline{f}_{[n]}$ 
is a colimit diagram extending $Y$,
we have $F([n])\simeq{\rm colim}^{\mathcal{D}}_KY$
in $\mathcal{N}_{[n]}\simeq\mathcal{D}$
for any $[n]\in N(\Delta)^{\rm op}$.
Let $\alpha: [m]\to [n]$ be a convex map
in $\Delta$ such that $\alpha(m)=n$.
The induced functor
$\alpha_!: \mathcal{N}_{[n]}\to \mathcal{N}_{[m]}$
is identified with the identity functor
of $\mathcal{D}$.
This implies 
that $(j\circ F)(\alpha^{\rm op}): 
(j\circ F)([n])\to (j\circ F)([m])$
is a $p\circ q$-coCartesian edge. 
Hence $\widehat{f}$ factors through
the full subcategory ${}^{\Delta}\lmod_{A'}(\mathcal{D})$
and the map
$\widehat{f}: K^{\vartriangleright}\to 
{}^{\Delta}\lmod_{A'}(\mathcal{D})$ is a colimit diagram.

By the construction of $\widehat{f}$,
the composition
$\pi\circ \widehat{f}: K^{\vartriangleright}\to
\mathcal{D}$ is also a colimit diagram,
where $\pi:{}^{\Delta}\lmod_{A'}(\mathcal{D})\to
\mathcal{D}$ is the forgetful functor.
This completes the proof.
\qed
\end{proof}

\if0
\begin{proof}
We use notation in \cite[\S4.2.2]{Lurie2}.
Let $\mathcal{D}^{\circledast}$ and 
$\mathcal{M}^{\circledast}$
be quasi-categories defined in
\cite[Notation~4.2.2.16]{Lurie2}.
We have maps $\mathcal{D}^{\circledast}\stackrel{q}{\to} 
\mathcal{M}^{\circledast}\stackrel{p}{\to} N(\Delta)^{\rm op}$,
where $p$ and $p\circ q$ are coCartesian fibrations
(see \cite[Remark.~4.2.2.24]{Lurie2}),
and $q$ is a categorical fibration 
(see \cite[Remark.~4.2.2.18]{Lurie2})
and
a locally coCartesian fibration 
(see \cite[Lem.~4.2.2.19]{Lurie2}).
Note that there is an equivalence 
of quasi-categories 
$\mathcal{D}^{\circledast}_{[s]}\simeq
\mathcal{M}^{\circledast}_{[s]}\times\mathcal{D}$
and that the restriction $q_{[s]}: \mathcal{D}^{\circledast}_{[s]}\to
\mathcal{M}^{\circledast}_{[s]}$
of $q$ over $[s]$
is the projection
for any $[s]\in N(\Delta)^{\rm op}$.

We have simplicial models of algebras and modules
in quasi-categories
(see \cite[\S4.2.2]{Lurie2}).
We have a full subcategory
${}^{\Delta}{\rm Alg}(\mathcal{M})$
of ${\rm Fun}_{N(\Delta)^{\rm op}}
(N(\Delta)^{\rm op},\mathcal{M}^{\circledast})$
which is equivalent to 
the quasi-category ${\rm Alg}(\mathcal{M})$
of algebra objects of $\mathcal{M}$.
We denote by $A': N(\Delta)^{\rm op}\to \mathcal{M}^{\circledast}$
the corresponding simplicial object of $\mathcal{M}^{\circledast}$
to $A\in {\rm Alg}(\mathcal{M})$.
We also have a full subcategory
${}^{\Delta}\lmod_{A'}(\mathcal{D})$
of ${\rm Fun}_{N(\Delta)^{\rm op}}
(N(\Delta)^{\rm op},\mathcal{D}^{\circledast})$,
which is equivalent to 
$\lmod_A(\mathcal{D})$
by \cite[Cor.~4.2.2.15]{Lurie2}.

We denote by $f: K\times N(\Delta)^{\rm op}\to\mathcal{D}^{\circledast}$
the map corresponding to the diagram
$X\in {\rm Fun}(K,\lmod_A(\mathcal{D}))$.
We let $g: K^{\vartriangleright}\times N(\Delta)^{\rm op}\to 
\mathcal{M}^{\circledast}$ be the map given by
the projection
$K^{\vartriangleright}\times N(\Delta)^{\rm op}\to 
N(\Delta)^{\rm op}$ followed by
the map $A':N(\Delta)^{\rm op}\to \mathcal{M}^{\circledast}$.
We have a commutative diagram
\[ \xymatrix{
    K\times N(\Delta)^{\rm op} \ar[r]^{f} 
    \ar@{^{(}->}[d] &
    \mathcal{D}^{\circledast} \ar[d]^{q}\\
    K^{\vartriangleright}\times N(\Delta)^{\rm op}
    \ar[r]^g&
    \mathcal{M}^{\circledast},\\    
}\]
where the left vertical arrow is the inclusion.
We shall show that there is a $q$-left Kan extension
$\overline{f}:K^{\vartriangleright}\times N(\Delta)^{\rm op}
\to \mathcal{D}^{\circledast}$ of $f$.

We denote by $\infty$ the cone point of $K^{\vartriangleright}$.
For any $[n]\in N(\Delta)^{\rm op}$,
we shall show that the composite map
\[ F_{[n]}:K\times N(\Delta)^{\rm op}_{/{(\infty,[n])}}
   \longrightarrow
   K\times N(\Delta)^{\rm op}
   \stackrel{f}{\longrightarrow}
   \mathcal{D}^{\circledast}\]
has a $q$-colimit,
where the first map is the forgetful functor.
Let 
$f_{[n]}:
   K\to \mathcal{D}^{\circledast}_{[n]}$
and $g_{[n]}: K^{\vartriangleright}\to
\mathcal{M}^{\circledast}_{[n]}$
be the restrictions of the maps $f$ and $g$
over $[n]\in N(\Delta)^{\rm op}$, respectively.
We have a commutative diagram
\[ \xymatrix{
    K \ar[r]^{f_{[n]}} \ar@{^{(}->}[d]&
    \mathcal{D}^{\circledast}_{[n]} \ar[d]^{q_{[n]}}\\
    K^{\vartriangleright} \ar[r]^{g_{[n]}} &
    \mathcal{M}^{\circledast}_{[n]},\\
}\]
where the left vertical arrow is the inclusion
and $q_{[n]}$ is the restriction of $q$ over
$[n]\in N(\Delta)^{\rm op}$.
Note that the map $q_{[n]}$ is the projection
$\mathcal{D}^{\circledast}_{[n]}\simeq
\mathcal{M}^{\circledast}_{[n]}\times\mathcal{D}\to
\mathcal{M}^{\circledast}_{[n]}$
and the map $f_{[n]}$ followed by 
the projection $\pi_{[n]}:\mathcal{D}^{\circledast}_{[n]}\to
\mathcal{D}$ is equivalent to $Y$.
Since $Y$ has a colimit in $\mathcal{D}$
by the assumption,
we obtain an extension 
$\overline{f}_{[n]}: K^{\vartriangleright}\to 
\mathcal{D}^{\circledast}_{[n]}$
of $f$, which is a $q_{[n]}$-colimit.
Note that $\overline{f}_{[n]}$ is also a colimit 
diagram in $\mathcal{D}^{\circledast}_{[n]}$
since $g_{[n]}$ is a constant diagram at 
$A'([n])\in \mathcal{M}^{\circledast}$.

Let 
$\alpha:[n]\to [m]$ be an edge in $N(\Delta)^{\rm op}$.
We have the induced functor
$\alpha_!: \mathcal{D}^{\circledast}_{[n]}\to
\mathcal{D}^{\circledast}_{[m]}$.
The composition
$\alpha_!\circ f_{[n]}:
K\to \mathcal{D}^{\circledast}_{[m]}$ followed by
the projection
$\pi_{[m]}:\mathcal{D}^{\circledast}_{[m]}\to\mathcal{D}$
is equivalent to
$A^{\otimes r}\otimes Y$ for some $r\ge 0$.
This implies that 
$\alpha_!\circ\overline{f}_{[n]}$
is a colimit diagram in $\mathcal{D}^{\circledast}_{[m]}$
by the assumption
that the canonical map
${\rm colim}^{\mathcal{D}}_K(A^{\otimes r}\otimes Y)
\to A^{\otimes r}\otimes {\rm colim}_K^{\mathcal{D}}\,Y$
is an equivalence.
Hence we see that $i_{[n]}\circ\overline{f}_{[n]}$ 
is a $q$-colimit diagram
by \cite[Prop.~4.3.1.10]{Lurie1},
where $i_{[n]}:\mathcal{D}^{\circledast}_{[n]}\hookrightarrow
\mathcal{D}^{\circledast}$ is the inclusion.

Note that
there is an equivalence 
$(K\times N(\Delta)^{\rm op})_{/(\infty,[n])}\simeq
K\times (N(\Delta)^{\rm op})_{/[n]}$
of quasi-categories
and the inclusion
$K\times\{{\rm id}_{[n]}\}\hookrightarrow
K\times (N(\Delta)^{\rm op})_{/[n]}$
is cofinal.
The composition
$K\times\{{\rm id}_{[n]}\}\hookrightarrow
K\times (N(\Delta)^{\rm op})_{/[n]}
\stackrel{F_{[n]}}{\longrightarrow} 
\mathcal{D}^{\circledast}$
is equivalent to
$i_{[n]}\circ f_{[n]}$ and
the extension $i_{[n]}\circ\overline{f}_{[n]}$
is a $q$-colimit diagram.
Hence the map 
$F_{[n]}$ admits a $q$-colimit
by \cite[Prop.~4.3.1.8]{Lurie1}.
Therefore
there exists a $q$-left Kan extension
$\overline{f}: K^{\vartriangleright}\times 
N(\Delta)^{\rm op}\to \mathcal{D}^{\circledast}$
of $f$
by \cite[Lem.~4.3.2.13]{Lurie1}.

By the construction of $\overline{f}$,
we see that the restriction 
of $\overline{f}$ over $[n]$
is equivalent to $\overline{f}_{[n]}$
for all $[n]\in N(\Delta)^{\rm op}$.
We regard $\overline{f}$ as 
a map $K^{\vartriangleright}\to 
{\rm Fun}_{N(\Delta)^{\rm op}}(N(\Delta)^{\rm op},
\mathcal{D}^{\circledast})$. 
Since the composition $\pi_{[n]}\circ \overline{f}_{[n]}$ 
is a colimit diagram extending
$Y$ for any $[n]\in N(\Delta)^{\rm op}$,
we see that $\overline{f}$ factors through
the full subcategory ${}^{\Delta}\lmod_{A'}(\mathcal{D})$
and the composition
$\pi\circ \overline{f}$ is a colimit diagram,
where $\pi:{}^{\Delta}\lmod_{A'}(\mathcal{D})\to
\mathcal{D}$ is the forgetful functor.
This completes the proof.
\qed\end{proof}
\fi

\begin{proof}
[Proof of Proposition~\ref{prop:cotensor-product-comodule-structure}]
We shall apply Lemma~\ref{lem:general-convergence-colimit-criterion}.
We have the monoidal quasi-category
${}_B\bmod_B(\mathcal{C})^{\rm op}$,
the algebra object $\Sigma$
of ${}_B\bmod_B(\mathcal{C})^{\rm op}$,
and
the quasi-category ${}_B\bmod_C(\mathcal{C})^{\rm op}$
left tensored over ${}_B\bmod_B(\mathcal{C})^{\rm op}$.
By the bar construction,
we have a simplicial object
$B_{\bullet}(M,\Gamma,N)$
of $\lmod_{\Sigma}({}_B\bmod_C(\mathcal{C})^{\rm op})$.
By the assumption,
the canonical map
\[ {\rm colim}_{N(\Delta)^{\rm op}}
   B_{\bullet}(\Sigma^{\otimes_B r}\otimes_B
   M,\Gamma,N)\to
   \Sigma^{\otimes_B r}\otimes_B
   {\rm colim}_{N(\Delta)^{\rm op}}B_{\bullet}(M,\Gamma,N) \]
is an equivalence in
${}_B\bmod_C(\mathcal{C})^{\rm op}$
for all $r>0$.
By Lemma~\ref{lem:general-convergence-colimit-criterion},
there exists a colimit
of $B_{\bullet}(M,\Gamma,N)$ in
$\lmod_{\Sigma}({}_B\bmod_C(\mathcal{C})^{\rm op})$
and the colimit is created
in ${}_B\bmod_C(\mathcal{C})^{\rm op}$.
Hence we see that
the cosimplicial object
$C^{\bullet}(M,\Gamma,N)$
has a limit in $\lcomod_{\Sigma}({}_B\bmod_C(\mathcal{C}))$
and the underlying object of the limit is
$M\square_{\Gamma}N$
in ${}_B\bmod_C(\mathcal{C})$. 
\qed
\end{proof}

\subsection{Equivalence of quasi-categories of comodules}

Let $(\mathcal{C},\otimes,\mathbf{1})$ be a stable homotopy theory
and let $A$ be an algebra object of $\mathcal{C}$.
In this subsection we study the relationship
between the localization of $\mathcal{C}$ with respect
to $A$ and the quasi-category of $\Gamma(A)$-comodules
in $\mathcal{C}$.

We regard $A$ as a right $A$-module and
the map of right $A$-modules 
$\eta_R: A\simeq S\otimes A \to A\otimes A$ induces 
a right $\Gamma(A)$-comodule structure on $A$.
Since $\rcomod_{\Gamma(A)}(\mathcal{C})$ 
is left tensored over $\mathcal{C}$,
we have $X\otimes A\in\rcomod_{\Gamma(A)}(\mathcal{C})$
for any $X\in\mathcal{C}$.

Recall that 
we have a cosimplicial object
\[ C^{\bullet}(A,A\otimes A,M)\]
in $\mathcal{C}$ by cobar construction
for $M\in \lcomod_{\Gamma(A)}(\mathcal{C})$.
The totalization of $C^{\bullet}(A,A\otimes A,M)$
is $P(M)$.
In the same way as in \cite[Prop.~5.1]{Torii1},
we obtain an adjunction of functors
\[ A\otimes (-):
   \mathcal{C}\rightleftarrows \lcomod_{\Gamma(A)}(\mathcal{C}):
   P.\]

\if0
We denote by ${\rm Alg}(\mathcal{C})$
the $\infty$-category of algebra objects of $\mathcal{C}$.
For $A\in {\rm Alg}(\mathcal{C})$,
we denote by ${}_A{\rm BMod}_A(\mathcal{C})$
the $\infty$-category of $A$-$A$-bimodules in $\mathcal{C}$.
We have a coalgebra $A\otimes A$ in ${}_A{\rm BMod}_A(\mathcal{C})$.
We write $\Gamma(A)=(A,A\otimes A)$ for simplicity.
We denote by $\lcomod_{\Gamma(A)}({\mathcal{C}})$
the $\infty$-category of left $A\otimes A$-comodules
in $\lmod_A(\mathcal{C})$.
\fi

Let $C^{\bullet}: N(\Delta)\to \mathcal{C}$
be a cosimplicial object in $\mathcal{C}$.
We recall the Tot tower associated to $C^{\bullet}$.
For $r\ge 0$,
we denote by $\Delta^{\le r}$
the full subcategory of $\Delta$
spanned by $\{[0],[1],\ldots,[r]\}$.
We denote by
$C^{\bullet}|_{N(\Delta^{\le r})}$
the restriction of $C^{\bullet}$
to $N(\Delta^{\le r})$.
We recall that 
${\rm Tot}_r(C^{\bullet})$ 
is defined to be the limit
of $C^{\bullet}|_{N(\Delta^{\le r})}$
in $\mathcal{C}$. 
The inclusion $\Delta^{\le r}\hookrightarrow\Delta^{\le r+1}$
induces a map
${\rm Tot}_{r+1}(C^{\bullet})\to {\rm Tot}_r(C^{\bullet})$
for $r\ge 0$
and we obtain a tower
$\{{\rm Tot}_r C^{\bullet}\}_{r\ge 0}$.
Note that the limit of the tower is 
equivalent to ${\rm Tot}(C^{\bullet})$:
\[ {\rm Tot}(C^{\bullet})\simeq
   \lim_r{\rm Tot}_r(C^{\bullet}).\]
If there is a coaugmentation $D\to C^{\bullet}$,
then we obtain a map of towers
$c(D)\to \{{\rm Tot}_r(C^{\bullet})\}_{r\ge 0}$,
where $c(D)$ is the constant tower on $D$.

\if0
We recall that,
for a cosimplicial object $C^{\bullet}$ in $\mathcal{C}$,
we have an associated tower
\[ \cdots\to {\rm Tot}_r(C^{\bullet})\to
    \cdots\to
   {\rm Tot}_1(C^{\bullet})\to
   {\rm Tot}_0(C^{\bullet})\]
so that the limit of the tower is the totalization
of $C^{\bullet}$:
\[ \subrel{r}{\rm lim}\,{\rm Tot}_r(C^{\bullet})\simeq
   {\rm Tot}(C^{\bullet}).\]   

Let $B$ be another algebra object of $\mathcal{C}$.
For a left $B$-module $X\in\lmod_B(\mathcal{C})$
and a left $\Gamma(A)$-comodule $M\in \lcomod_{\Gamma(A)}(\mathcal{C})$,
the cobar complex
$C^{\bullet}(X\otimes A,A\otimes A,M)
\simeq X\otimes C^{\bullet}(A,A\otimes A,M)$
is a cosimplicial object in 
$\lmod_B(\mathcal{C})$.
Hence the limit
$(X\otimes A)\square_{A\otimes A}M$ 
of $C^{\bullet}(X\otimes A,A\otimes A, M)$
is a left $B$-module object in $\mathcal{C}$.
\fi

We denote by ${\rm Pro}(\mathcal{C})$
the quasi-category of pro-objects in $\mathcal{C}$
(see \cite[\S3]{Mathew}).
We have a fully faithful embedding
$\mathcal{C}\hookrightarrow {\rm Pro}(\mathcal{C})$.
We say that an object of ${\rm Pro}(\mathcal{C})$
is constant if it is equivalent to
an object in the image of the embedding
$\mathcal{C}\hookrightarrow{\rm Pro}(\mathcal{C})$. 

\begin{lemma}\label{lemma:splitting-tensor-cosimplicial}
For any $M\in \lcomod_{\Gamma(A)}(\mathcal{C})$,
the cosimplicial object
$A\otimes C^{\bullet}(A,A\otimes A,M)$
is split,
and hence 
the tower $\{{\rm Tot}^r(A\otimes C^{\bullet}(A,A\otimes A,M))\}$
associated to the cosimplicial object
$A\otimes C^{\bullet}(A,A\otimes A,M)$
is equivalent to the constant object $M$
in ${\rm Pro}(\mathcal{C})$.
\end{lemma}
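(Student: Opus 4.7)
The plan is to exhibit $A\otimes C^{\bullet}(A,A\otimes A,M)$ as a split augmented cosimplicial object in $\mathcal{C}$ with augmentation $M$, where the augmentation map is the left $\Gamma(A)$-coaction $\psi\colon M\to A\otimes M = A\otimes C^0(A,A\otimes A,M)$. Once this is done, the pro-constancy follows formally: for a split augmented cosimplicial object the augmentation is a deformation retract of each partial totalization, so the tower $\{{\rm Tot}^r(A\otimes C^{\bullet})\}_{r\ge 0}$ is levelwise equivalent to the constant tower $c(M)$, hence equivalent to $M$ in ${\rm Pro}(\mathcal{C})$.

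To produce the splitting, first simplify the terms. Using the $A$-bimodule identification $(A\otimes A)^{\otimes_A n}\simeq A^{\otimes(n+1)}$ and the unit identities $A\otimes_A(-)\simeq(-)\simeq(-)\otimes_A A$, one obtains
\[
A\otimes C^n(A,A\otimes A,M)\;\simeq\;A^{\otimes(n+1)}\otimes M.
\]
Under this identification, $d^0$ reduces to insertion of the unit $\eta\colon\mathbf{1}\to A$ between the outer extra copy of $A$ and the first factor of $A^{\otimes n}$ (coming from the right $\Gamma(A)$-coaction $\eta_R$ on $A$), $d^i$ for $1\le i\le n$ reduces to insertion of the unit at position $i+1$ (coming from the comultiplication of the coalgebra $A\otimes A$ in ${}_A\bmod_A(\mathcal{C})$, which after simplification is also unit insertion), and $d^{n+1}={\rm id}\otimes\psi$ comes from the left $\Gamma(A)$-coaction on $M$.

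Now define the extra codegeneracy $s^{-1}_n\colon A^{\otimes(n+1)}\otimes M\to A^{\otimes n}\otimes M$ for $n\ge 1$ by multiplying the first two $A$-factors via $\mu\colon A\otimes A\to A$, and define $s^{-1}_0\colon A\otimes M\to M$ to be the left $A$-action on $M$, which is available because a left $\Gamma(A)$-comodule is automatically a left $A$-module. The identity $s^{-1}_n\circ d^0={\rm id}$ for $n\ge 1$ is the algebra unit axiom $\mu\circ(\eta\otimes{\rm id})={\rm id}$; for $n=0$ it is the counit axiom for the comodule $M$ (the counit of the coalgebra $A\otimes A$ in ${}_A\bmod_A(\mathcal{C})$ is the multiplication $\mu$, so the composite of the coaction $M\to(A\otimes A)\otimes_A M\simeq A\otimes M$ with the $A$-action $A\otimes M\to M$ is ${\rm id}_M$). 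The remaining extra codegeneracy identities with the other $d^i$ and $s^j$ then follow from associativity of $\mu$.

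The main obstacle is the careful bookkeeping needed to identify, at the level of cosimplicial structure maps and not merely levelwise, the cobar complex $A\otimes C^{\bullet}(A,A\otimes A,M)$ defined via relative tensor products over $A$ with the simplified cosimplicial object on $A^{\otimes(n+1)}\otimes M$ whose cofaces and codegeneracies are built from $\eta$, $\mu$ of $A$ and from $\psi$. Once this identification is set up, the verification of the extra codegeneracy identities is routine, and the conclusion about the pro-equivalence of the Tot tower to the constant object $M$ is a formal consequence of the standard theory of split cosimplicial objects.
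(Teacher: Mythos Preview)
Your proof is correct and is essentially the same argument as the paper's, just unpacked. The paper compresses everything into a single isomorphism of cosimplicial objects
\[
A\otimes C^{\bullet}(A,A\otimes A,M)\;\cong\;C^{\bullet}(A\otimes A,\,A\otimes A,\,M),
\]
coming from $A\otimes(A\otimes_A-)\simeq(A\otimes A)\otimes_A-$, and then cites the standard fact that $C^{\bullet}(\Gamma,\Gamma,M)$ is split with augmentation $M$ via the counit $\epsilon$ on the leftmost $\Gamma$; your explicit extra codegeneracies $s^{-1}_n$ built from the multiplication $\mu\colon A\otimes A\to A$ are exactly what that counit becomes under your identification with $A^{\otimes(n+1)}\otimes M$, so the two arguments coincide once the bookkeeping is unwound.
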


\begin{proof}
We have an isomorphism of cosimplicial objects
\[ A\otimes C^{\bullet}(A,A\otimes A,M)\cong
   C^{\bullet}(A\otimes A, A\otimes A, M).\]
The lemma follows from the fact that
$C^{\bullet}(A\otimes A,A\otimes A, M)$
is a split cosimplicial object.
\qed\end{proof}

A full subcategory $\mathcal{I}\subset \mathcal{C}$
is said to be an ideal if $X\otimes Y\in\mathcal{I}$
whenever $X\in \mathcal{C}$ and $Y\in\mathcal{I}$
(cf.~{\cite[Definition~2.16]{Mathew}}).
A full subcategory $\mathcal{D}\subset \mathcal{C}$
is said to be thick if $\mathcal{D}$
is closed under finite limits and colimits and under retracts.
If, furthermore,
$\mathcal{D}$ is an ideal,
we say that $\mathcal{D}$ is a thick tensor ideal
(cf.~{\cite[Definition~3.16]{Mathew}}).
Given a collection of objects in $\mathcal{C}$,
the thick tensor ideal generated by them
is the smallest thick tensor ideal
containing the collection.
Let $A$ be an algebra object of $\mathcal{C}$.
An object $X\in \mathcal{C}$
is said to be $A$-nilpotent if
$X$ belongs to the thick tensor ideal
generated by $A$.

\if0
\begin{example}\rm
If $A$ is a Landweber exact $A_{\infty}$-ring spectrum
with finite height,
then $A$
is descendable by Hopkins-Ravenel~\cite[Chapter~8]{Ravenel2}.  
\end{example}
\fi

\begin{lemma}\label{lemma:A-nilpotent-pro-constancy}
Let $A$ be an algebra object of $\mathcal{C}$.
If the unit $\mathbf{1}$ is $A$-nilpotent,
then the tower associated to the cosimplicial object
$C^{\bullet}(A,A\otimes A, M)$
is equivalent to the constant object
$P(M)$ in ${\rm Pro}(\mathcal{C})$
for any $M\in \lcomod_{\Gamma(A)}(\mathcal{C})$.
\end{lemma}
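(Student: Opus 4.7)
The plan is to introduce the full subcategory
\[
\mathcal{D} = \{X \in \mathcal{C} : \text{the pro-object }
\{X \otimes {\rm Tot}^r C^{\bullet}(A, A\otimes A, M)\}_{r \geq 0}
\text{ is equivalent in } {\rm Pro}(\mathcal{C}) \text{ to a constant}\}
\]
of $\mathcal{C}$, to show that $\mathcal{D}$ is a thick tensor ideal containing $A$, and to invoke the $A$-nilpotence of $\mathbf{1}$ to conclude that $\mathbf{1} \in \mathcal{D}$. Writing $T^r = {\rm Tot}^r C^{\bullet}(A, A\otimes A, M)$, the membership $A \in \mathcal{D}$ will follow from Lemma~\ref{lemma:splitting-tensor-cosimplicial}: since $A \otimes (-)$ preserves finite limits (by stability together with its colimit-preservation), one has $A \otimes T^r \simeq {\rm Tot}^r(A \otimes C^{\bullet}(A, A\otimes A, M))$, and the previous lemma identifies the right-hand tower as pro-equivalent to the constant object $M$ in ${\rm Pro}(\mathcal{C})$.

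To verify that $\mathcal{D}$ is a thick tensor ideal I will use that the constant embedding $c: \mathcal{C} \hookrightarrow {\rm Pro}(\mathcal{C})$ is fully faithful and left adjoint to $\lim$, and hence preserves colimits; in particular, the full subcategory of constant pro-objects is closed under the formation of cofibers in the stable $\infty$-category ${\rm Pro}(\mathcal{C})$. For the ideal property, any $Y \in \mathcal{C}$ induces by levelwise application an endofunctor $Y \otimes (-)$ of ${\rm Pro}(\mathcal{C})$ that extends the tensor functor on $\mathcal{C}$ and commutes with $c$; so if $\{X \otimes T^r\} \simeq c(Z_X)$, then $\{Y \otimes X \otimes T^r\} \simeq c(Y \otimes Z_X)$, giving $Y \otimes X \in \mathcal{D}$. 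For closure under cofibers, given a cofiber sequence $X_1 \to X_2 \to X_3$ in $\mathcal{C}$ with $X_1, X_2 \in \mathcal{D}$, the induced map of towers $\{X_1 \otimes T^r\} \to \{X_2 \otimes T^r\}$ corresponds, via the chosen pro-equivalences, to a map of constants, which by full faithfulness of $c$ lifts to a map $f: Z_{X_1} \to Z_{X_2}$ in $\mathcal{C}$; then $\{X_3 \otimes T^r\} \simeq c({\rm cof}(f))$ by preservation of cofibers. Closure under retracts follows from idempotent-completeness of the stable $\infty$-category $\mathcal{C}$.

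With $\mathcal{D}$ established as a thick tensor ideal containing $A$, the $A$-nilpotence of $\mathbf{1}$ forces $\mathbf{1} \in \mathcal{D}$, so there is a pro-equivalence $c(Z) \simeq \{T^r\}$ for some $Z \in \mathcal{C}$. Applying the right adjoint $\lim: {\rm Pro}(\mathcal{C}) \to \mathcal{C}$ yields $Z \simeq \lim_r T^r = P(M)$, so the witnessing constant is $c(P(M))$ and $\{T^r\}$ is pro-equivalent to the constant $P(M)$, as desired. The main technical step will be the closure of $\mathcal{D}$ under cofibers, which depends on transporting a map of pro-constant towers back to a genuine map in $\mathcal{C}$ via the fully faithful embedding $c$; once this pro-categorical bookkeeping is set up, the rest of the argument is a formal consequence of stability of $\mathcal{C}$ and the adjunction $c \dashv \lim$.
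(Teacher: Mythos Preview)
Your proposal is correct and follows essentially the same approach as the paper: define the class $\mathcal{D}$ of objects $X$ for which the tower $\{X\otimes{\rm Tot}^r C^{\bullet}(A,A\otimes A,M)\}$ is pro-constant, verify that $\mathcal{D}$ is a thick tensor ideal containing $A$ (via the splitting lemma), and conclude $\mathbf{1}\in\mathcal{D}$ by $A$-nilpotence. The paper's proof is terser---it simply asserts that $\mathcal{I}$ is a thick tensor ideal---whereas you supply the pro-categorical details (full faithfulness of $c$, the adjunction $c\dashv\lim$, closure under cofibers and retracts); your final identification of the constant as $P(M)$ via $\lim$ is also explicit where the paper leaves it implicit.
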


\begin{proof}
Let $\mathcal{I}$ be the class
of objects $X$ in $\mathcal{C}$ such that
the tower associated to
the cosimplicial object $X\otimes C^{\bullet}(A,A\otimes A,M)$
is equivalent to a constant object in ${\rm Pro}(\mathcal{C})$.
We see that $\mathcal{I}$ is a thick tensor ideal
of $\mathcal{C}$ and 
contains $A$ by Lemma~\ref{lemma:splitting-tensor-cosimplicial}.
Hence $\mathcal{I}$ contains the unit $\mathbf{1}$
by the assumption.
This implies that the tower associated to
$C^{\bullet}(A,A\otimes A,M)$
is equivalent to the constant object
$\lim_{N(\Delta)} C^{\bullet}(A,A\otimes A,M)\simeq P(M)$
in ${\rm Pro}(\mathcal{C})$.
\qed\end{proof}

For any $X\in\mathcal{C}$ 
and $M\in \lcomod_{\Gamma(A)}(\mathcal{C})$,
we have an equivalence of cosimplicial objects
$X\otimes C^{\bullet}(A,A\otimes A,M)\simeq
C^{\bullet}(X\otimes A,A\otimes A,M)$.
This induces a natural map
\[ X\otimes P(M)\longrightarrow
   (X\otimes A){\square}_{A\otimes A}M.\]

\begin{proposition}
\label{prop:APM=M}
Let $A$ be an algebra object of $\mathcal{C}$.
If the unit $\mathbf{1}$ is $A$-nilpotent,
then the natural map
$X\otimes P(M)\to
   (X\otimes A){\square}_{A\otimes A}M$
is an equivalence for any $X\in\mathcal{C}$
and $M\in {\rm Comod}_{\Gamma(A)}(\mathcal{C})$.
\end{proposition}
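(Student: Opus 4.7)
The plan is to combine the pro-constancy of the Tot tower from Lemma~\ref{lemma:A-nilpotent-pro-constancy} with the fact that $X\otimes(-)$ is exact, in order to transfer the equivalence across $X\otimes(-)$. Concretely, I would first observe that both sides of the desired equivalence arise as limits of Tot towers: by definition $P(M)=\lim_r {\rm Tot}_r\, C^{\bullet}(A,A\otimes A,M)$, and similarly $(X\otimes A)\square_{A\otimes A}M = \lim_r {\rm Tot}_r\, C^{\bullet}(X\otimes A,A\otimes A,M)$, where the latter cosimplicial object is equivalent to $X\otimes C^{\bullet}(A,A\otimes A,M)$ via the isomorphism already noted before the proposition.

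Next, I would apply Lemma~\ref{lemma:A-nilpotent-pro-constancy} to conclude that the tower $\{{\rm Tot}_r\, C^{\bullet}(A,A\otimes A,M)\}_{r\ge 0}$ is equivalent to the constant pro-object $P(M)$ in ${\rm Pro}(\mathcal{C})$. Since $X\otimes(-):\mathcal{C}\to\mathcal{C}$ extends levelwise to a functor ${\rm Pro}(\mathcal{C})\to{\rm Pro}(\mathcal{C})$, this equivalence is carried to an equivalence between the constant pro-object on $X\otimes P(M)$ and the tower $\{X\otimes {\rm Tot}_r\, C^{\bullet}(A,A\otimes A,M)\}_{r\ge 0}$.

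The crucial step—and the main potential obstacle—is identifying $X\otimes {\rm Tot}_r\, C^{\bullet}(A,A\otimes A,M)$ with ${\rm Tot}_r\, C^{\bullet}(X\otimes A,A\otimes A,M)$ for each finite $r$. Here I would use that $\mathcal{C}$ is a stable homotopy theory, so $X\otimes(-)$ preserves colimits by hypothesis and is therefore exact; in particular, it preserves finite limits. Since ${\rm Tot}_r$ is a limit over $N(\Delta^{\le r})$, which has only finitely many nondegenerate simplices, it is a finite limit in the stable setting, so the natural comparison map $X\otimes {\rm Tot}_r\, C^{\bullet}\to {\rm Tot}_r(X\otimes C^{\bullet})$ is an equivalence for every $r$.

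Combining these steps, the tower $\{{\rm Tot}_r\, C^{\bullet}(X\otimes A,A\otimes A,M)\}_{r\ge 0}$ is pro-equivalent to the constant pro-object on $X\otimes P(M)$. Passing to limits in $\mathcal{C}$ (the limit functor ${\rm Pro}(\mathcal{C})\to\mathcal{C}$ sends pro-equivalences to equivalences and sends a constant pro-object to its value) yields
\[
  X\otimes P(M)\;\simeq\;\lim_r {\rm Tot}_r\, C^{\bullet}(X\otimes A,A\otimes A,M)\;=\;(X\otimes A)\,\square_{A\otimes A}\,M,
\]
and the map appearing in the statement is precisely this comparison map, completing the proof. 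The only delicate point in the whole argument is justifying that ${\rm Tot}_r$ is a finite limit commuted with by $X\otimes(-)$; once stability of $\mathcal{C}$ is invoked this is automatic.
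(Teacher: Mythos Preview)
Your proof is correct and follows essentially the same approach as the paper's. The paper's argument is terser—it simply asserts that pro-constancy of the Tot tower for $C^{\bullet}(A,A\otimes A,M)$ implies pro-constancy of the tower for $X\otimes C^{\bullet}(A,A\otimes A,M)$ at $X\otimes P(M)$—whereas you make explicit the justification that $X\otimes(-)$ commutes with each finite ${\rm Tot}_r$ via exactness in a stable category.
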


\begin{proof}
By Lemma~\ref{lemma:A-nilpotent-pro-constancy},
the tower associated to the cosimplicial object
$C^{\bullet}(A,A\otimes A,M)$
is equivalent to the constant object
$P(M)$ in ${\rm Pro}(\mathcal{C})$.
This implies that
the tower
\[ \{{\rm Tot}^r(X\otimes C^{\bullet}(A,A\otimes A,M))\} \]
associated to the cosimplicial object
$X\otimes C^{\bullet}(A,A\otimes A,M)$
is also equivalent to the constant
object $X\otimes P(M)$
in ${\rm Pro}(\mathcal{C})$.
By the equivalence of cosimplicial objects
$X\otimes C^{\bullet}(A,A\otimes A,M)\simeq
C^{\bullet}(X\otimes A,A\otimes A,M)$,
we obtain the equivalence
$X\otimes P(M)\simeq 
   (X\otimes A){\square}_{A\otimes A}M$.
\qed\end{proof}

\begin{corollary}
\label{cor:primitive-decomposition--comodules}
Let $A$ be an algebra object of $\mathcal{C}$.
If the unit $\mathbf{1}$ is $A$-nilpotent,
then the counit map
\[ A\otimes P(M)\to M\]
is an equivalence for any $M\in \lcomod_{\Gamma(A)}(\mathcal{C})$.
\end{corollary}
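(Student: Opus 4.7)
The plan is to specialize Proposition~\ref{prop:APM=M} at $X = A$. That proposition directly supplies an equivalence
\[
  A \otimes P(M) \stackrel{\simeq}{\longrightarrow} (A \otimes A) \square_{A \otimes A} M,
\]
so the task reduces to identifying the right-hand side canonically with $M$ and to checking that the resulting composite is the counit $\varepsilon_M$ of the adjunction $A \otimes (-) \dashv P$.

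For the identification, observe that the cobar cosimplicial object defining $(A \otimes A) \square_{A \otimes A} M$ has $n$th term
\[
  (A \otimes A) \otimes_A (A \otimes A)^{\otimes_A n} \otimes_A M \;\simeq\; A \otimes A^{\otimes n} \otimes M,
\]
so one obtains a canonical isomorphism of cosimplicial objects
\[
  C^{\bullet}(A \otimes A,\, A \otimes A,\, M) \;\cong\; A \otimes C^{\bullet}(A,\, A \otimes A,\, M).
\]
Lemma~\ref{lemma:splitting-tensor-cosimplicial} then says that the right-hand side is a split cosimplicial object whose totalization is canonically equivalent to $M$, with coaugmentation the left $\Gamma(A)$-comodule structure map $\rho_M \colon M \to A \otimes M$. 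Composing yields an equivalence $A \otimes P(M) \simeq M$.

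The remaining point is to verify that this composite equivalence coincides with the counit $\varepsilon_M$. I expect this to follow from naturality and the triangle identities of the adjunction: both maps are natural transformations $A \otimes P(-) \to {\rm id}$, and the split extra-degeneracy providing the equivalence $(A \otimes A)\square_{A\otimes A} M \simeq M$ is precisely the $\Gamma(A)$-comodule datum that governs the counit via the bar--cobar formalism underlying $A \otimes (-) \dashv P$. I anticipate this last coherence check to be the main obstacle, since it must be performed at the level of quasi-categories rather than merely of homotopy categories; however, it is purely formal once the cosimplicial splitting and Proposition~\ref{prop:APM=M} are in hand, and the corollary then follows at once.
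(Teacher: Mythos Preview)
Your approach is essentially identical to the paper's: specialize Proposition~\ref{prop:APM=M} at $X=A$ and then invoke the equivalence $(A\otimes A)\square_{A\otimes A}M\simeq M$. The paper's proof is in fact only those two sentences; it does not pause to verify that the resulting composite is literally the counit, treating this as implicit in the construction of the natural map preceding Proposition~\ref{prop:APM=M}, so your final coherence check is a scruple beyond what the paper supplies.
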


\begin{proof}
By Proposition~\ref{prop:APM=M},
we have a natural equivalence
$A\otimes P(M)\simeq (A\otimes A)\square_{A\otimes A}M$.
The corollary follows from the fact that
$(A\otimes A){\square}_{A\otimes A}M\simeq M$.
\qed\end{proof}

We consider the localization of $\mathcal{C}$
with respect to $A$.
A morphism $f: X\to Y$ in $\mathcal{C}$
is said to be an $A$-equivalence
if $A\otimes f$ is an equivalence.
We denote by $L_A\mathcal{C}$
the localization of $\mathcal{C}$
with respect to the class of 
$A$-equivalences.

The following theorem is a slight generalization
of \cite[Prop.~3.21]{Mathew}.

\begin{theorem}
\label{theorem:general-equivalence-comodules}
Let $A$ be an algebra object of $\mathcal{C}$.
If the unit $\mathbf{1}$ is $A$-nilpotent,
then $L_A\mathcal{C}$ is equivalent to
$\lcomod_{\Gamma(A)}(\mathcal{C})$.
We have
an adjoint equivalence
\[ A\otimes (-): L_A\mathcal{C}\rightleftarrows
   \lcomod_{\Gamma(A)}(\mathcal{C}):
   P,\]
and we can identify 
the functor $A\otimes(-):\mathcal{C}\to
\lcomod_{\Gamma(A)}(\mathcal{C})$
with the localization 
$\mathcal{C}\to L_A\mathcal{C}$.
\end{theorem}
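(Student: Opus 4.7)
The plan is to exhibit $\lcomod_{\Gamma(A)}(\mathcal{C})$ as a reflective localization of $\mathcal{C}$, with the reflection being exactly the $A$-localization. All the analytic work is already in Corollary~\ref{cor:primitive-decomposition--comodules}; what remains is formal manipulation of the adjunction $A\otimes(-) \dashv P$.

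First I would observe that Corollary~\ref{cor:primitive-decomposition--comodules} says exactly that the counit $A \otimes P(M) \to M$ is an equivalence for every $M \in \lcomod_{\Gamma(A)}(\mathcal{C})$. By a standard fact about adjunctions between quasi-categories, this is equivalent to saying that the right adjoint $P$ is fully faithful, so $P$ identifies $\lcomod_{\Gamma(A)}(\mathcal{C})$ with a reflective subcategory of $\mathcal{C}$.

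Next I would identify this subcategory with $L_A\mathcal{C}$ by checking the two defining properties of an $A$-localization. For the first, the unit $\eta_X : X \to P(A \otimes X)$ is an $A$-equivalence: applying $A \otimes (-)$ and using the triangle identity yields a composite $A\otimes X \to A\otimes P(A\otimes X) \to A\otimes X$ equal to the identity, and the second map, being the counit at $A\otimes X$, is an equivalence by Corollary~\ref{cor:primitive-decomposition--comodules}, so $A\otimes \eta_X$ is also an equivalence. For the second, every object of the form $P(M)$ is $A$-local: given any $A$-equivalence $f:X\to Y$, the adjunction gives a commutative square
\[
\xymatrix{
\mathrm{Map}_{\mathcal{C}}(Y,P(M)) \ar[r] \ar[d]_{\simeq} & \mathrm{Map}_{\mathcal{C}}(X,P(M)) \ar[d]^{\simeq} \\
\mathrm{Map}_{\lcomod_{\Gamma(A)}(\mathcal{C})}(A\otimes Y,M) \ar[r] & \mathrm{Map}_{\lcomod_{\Gamma(A)}(\mathcal{C})}(A\otimes X,M),
}
\]
and the bottom horizontal map is an equivalence because $A\otimes f$ is.

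Finally I would conclude. The functor $A\otimes(-):\mathcal{C}\to \lcomod_{\Gamma(A)}(\mathcal{C})$ sends $A$-equivalences to equivalences (the forgetful functor to $\mathcal{C}$ detects equivalences), hence factors through the localization $L_A\mathcal{C}$. The two properties established above show that this induced functor $L_A\mathcal{C}\to \lcomod_{\Gamma(A)}(\mathcal{C})$ is left adjoint to $P$ and that both unit and counit of the induced adjunction are equivalences, so it is an adjoint equivalence. There is no real obstacle here beyond Corollary~\ref{cor:primitive-decomposition--comodules}: the nilpotence hypothesis on $\mathbf{1}$ is used exactly once, in guaranteeing the counit is an equivalence, after which everything reduces to the formal structure of a reflective localization.
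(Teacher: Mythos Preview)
Your proof is correct and follows essentially the same approach as the paper: both use Corollary~\ref{cor:primitive-decomposition--comodules} to show that the counit is an equivalence, hence $P$ is fully faithful, and then identify the resulting reflective localization with $L_A\mathcal{C}$. The paper packages the last step slightly more tersely---it just notes that $A\otimes f$ is an equivalence in $\lcomod_{\Gamma(A)}(\mathcal{C})$ if and only if $f$ is an $A$-equivalence, which immediately identifies the localization---whereas you spell out the two defining properties of $A$-localization explicitly; but the content is the same.
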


\begin{proof}
We have an adjoint pair of functors
$A\otimes (-): \mathcal{C}\rightleftarrows
   \lcomod_{\Gamma(A)}(\mathcal{C}):
   P$.
Clearly,
$A\otimes f$ is an equivalence
in $\lcomod_{\Gamma(A)}(\mathcal{C})$
if and only if $f$ is an $A$-equivalence
for any morphism $f$ in $\mathcal{C}$.
Hence it suffices to show that
the right adjoint $P$ is fully faithful.
The counit map
$\epsilon: A\otimes P(M)\to M$
is an equivalence 
for any $M\in \lcomod_{\Gamma(A)}(\mathcal{C})$
by Corollary~\ref{cor:primitive-decomposition--comodules}.
Hence we see that $P$
is fully faithful.
\qed\end{proof}



\section{Comodules in the quasi-category of spectra}
\label{sec:comodule-spectra}

In this section we study quasi-category
of comodules in spectra.
Using the Bousfield-Kan spectral sequence
and the results in \cite{Hovey-Strickland},
we show that 
the quasi-category of comodules associated
to a Landweber exact $\mathbb{S}$-algebra depends 
only on its height.
We also show that the $E(n)$-local category
is equivalent to the quasi-category
of comodules over the coalgebra $E(n)\otimes E(n)$.
In \cite{Torii1} we considered the model category
of $F_n$-modules in the category of 
discrete symmetric $\mathbb{G}_n$-spectra,
where $\mathbb{G}_n$ is the extended Morava stabilizer group
and $F_n$ is a discrete model of the Morava $E$-theory spectrum
$E_n$.
We show that the category of $F_n$-modules
in the discrete symmetric $\mathbb{G}_n$-spectra
models the $K(n)$-local category.

\subsection{Cotensor product and
its derived functor in algebraic setting} 

In this subsection
we recall some properties of the category of comodules
over a coalgebra in algebraic setting.
We study the derived functor of cotensor product of comodules
and show that the derived functor can be described
by the cobar complex in some situations.
The content in this section is not new.
Our main reference is \cite[Appendix~A1.2]{Ravenel1}.

Let ${\rm Ab}_*$ be the category
of graded abelian groups.
Let $A$ be a monoid object in ${\rm Ab}_*$.
We denote by ${}_A{\rm BMod}_A({\rm Ab}_*)$
the category of $A$-$A$-bimodules in ${\rm Ab}_*$.
The category ${}_A{\rm Bmod}_A({\rm Ab}_*)$
is a monoidal category with
the tensor product $\otimes_A$
and the unit $A$.
We denote by 
${}_A{\rm BMod}_A({\rm Ab}_*)^{\rm op}$
the opposite monoidal category.
A coalgebra in ${}_A{\rm BMod}_A({\rm Ab}_*)$
is defined to be a monoid object 
in ${}_A{\rm BMod}_A({\rm Ab}_*)^{\rm op}$.
In other word,
a coalgebra $\Gamma$
is an $A$-$A$-bimodule
equipped with maps
\[ \begin{array}{rl}
     \psi:& \Gamma\longrightarrow \Gamma\otimes_A\Gamma,\\[2mm]
     \epsilon:&\Gamma\longrightarrow A\\
\end{array}\]
in ${}_A{\rm BMod}_A({\rm Ab}_*)$ satisfying
the coassociativity and counit conditions.

We denote by ${}_A{\rm CoAlg}_A({\rm Ab}_*)$
the category of coalgebras in ${}_A{\rm BMod}_A({\rm Ab}_*)$.
By definition,
we have an equivalence
\[ {}_A{\rm CoAlg}_A({\rm Ab}_*)\simeq
   {\rm Alg}({}_A{\rm BMod}_A({\rm Ab}_*)^{\rm op})^{\rm op}.\]

We denote by
${\rm LMod}_A({\rm Ab}_*)$ 
the category of left $A$-modules
and by ${\rm RMod}_A({\rm Ab}_*)$
the category of right $A$-modules,
respectively.
Let $\Gamma\in {}_A{\rm CoAlg}_A({\rm Ab}_*)$.
A left $\Gamma$-comodule is defined to be a left $A$-module
$M$ equipped with a map
\[ \psi: M\longrightarrow \Gamma\otimes_A M\]
in ${\rm LMod}_A({\rm Ab}_*)$ satisfying
the coassociativity and counit conditions.
A right $\Gamma$-comodule is defined in
the similar fashion.
We denote by
${\rm LComod}_{(A,\Gamma)}({\rm Ab}_*)$
the category of left $\Gamma$-comodules
and by
${\rm RComod}_{(A,\Gamma)}({\rm Ab}_*)$
the category of right $\Gamma$-comodules,
respectively.

The following lemma is obtained 
in the same way as in \cite[Thm.~A1.1.3 and Lem.~A1.2.2]{Ravenel1}.

\begin{lemma}
If $\Gamma$ is flat as a right $A$-module,
then ${\rm LComod}_{(A,\Gamma)}({\rm Ab}_*)$
is an abelian category with enough injectives.
\end{lemma}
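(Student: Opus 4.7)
The plan is to follow the classical strategy for comodule categories over Hopf algebroids, as in Ravenel's Appendix~A1. The argument has two independent parts: constructing the abelian structure, and then producing enough injectives via an adjoint functor argument. Throughout, the flatness of $\Gamma$ as a right $A$-module is the pivotal hypothesis.

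For the abelian structure, I would show that the forgetful functor
\[ U: {\rm LComod}_{(A,\Gamma)}({\rm Ab}_*)\longrightarrow {\rm LMod}_A({\rm Ab}_*) \]
creates kernels, cokernels, and arbitrary direct sums. Given a morphism $f: M\to N$ of $\Gamma$-comodules, the cokernel $C$ in ${\rm LMod}_A$ inherits a comodule structure because $\Gamma\otimes_A(-)$ is right exact, so the composite $N\to \Gamma\otimes_A N\to \Gamma\otimes_A C$ factors through $C$. For the kernel $K$ of $f$ in ${\rm LMod}_A$, one needs the composition $K\hookrightarrow M\to \Gamma\otimes_A M$ to land in $\Gamma\otimes_A K$; this requires $\Gamma\otimes_A K$ to be the kernel of $\Gamma\otimes_A M\to \Gamma\otimes_A N$, which is exactly the statement that $\Gamma$ is flat as a right $A$-module. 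Direct sums are similarly transported across $U$. A short verification then shows that the induced morphisms really are comodule maps and that the standard abelian category axioms hold.

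For enough injectives, I would introduce the extended (cofree) comodule functor
\[ \Gamma\otimes_A(-): {\rm LMod}_A({\rm Ab}_*)\longrightarrow {\rm LComod}_{(A,\Gamma)}({\rm Ab}_*), \]
where the coaction on $\Gamma\otimes_A M$ is given by $\psi\otimes 1_M$. A direct check, using the counit $\epsilon$ of $\Gamma$, shows that this is right adjoint to $U$, with unit the coaction $\psi_N: N\to \Gamma\otimes_A N$ and counit $\epsilon\otimes 1: \Gamma\otimes_A M\to M$. Since by the previous paragraph $U$ is exact, the right adjoint $\Gamma\otimes_A(-)$ preserves injective objects. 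Given any comodule $N$, choose an embedding $i: U(N)\hookrightarrow I$ in ${\rm LMod}_A$ with $I$ injective (available since graded modules over a graded ring have enough injectives). Then the composition
\[ N\xrightarrow{\psi_N} \Gamma\otimes_A N\xrightarrow{1\otimes i}\Gamma\otimes_A I \]
is a comodule map. The first arrow is a monomorphism because $(\epsilon\otimes 1)\circ\psi_N={\rm id}_N$, and the second arrow is a monomorphism precisely because $\Gamma$ is right $A$-flat. The target $\Gamma\otimes_A I$ is injective in ${\rm LComod}_{(A,\Gamma)}({\rm Ab}_*)$, so $N$ embeds in an injective comodule.

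The only genuine obstacle is bookkeeping: verifying that the flatness of $\Gamma$ as a right $A$-module is really the correct one-sided hypothesis (for left comodules, one needs $\Gamma\otimes_A(-)$ to preserve kernels on left $A$-modules, which is left flatness of the functor $\Gamma\otimes_A(-)$, i.e.\ right flatness of $\Gamma$). Everything else is a standard exercise once the adjunction $U\dashv \Gamma\otimes_A(-)$ is in place, and the proof parallels verbatim the one in \cite[Thm.~A1.1.3, Lem.~A1.2.2]{Ravenel1} up to the minor modifications needed for the graded and noncommutative setting.
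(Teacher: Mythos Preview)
Your proposal is correct and follows exactly the classical argument the paper intends: the paper gives no proof of its own, but simply states that the lemma is obtained in the same way as in \cite[Thm.~A1.1.3 and Lem.~A1.2.2]{Ravenel1}, which is precisely the argument you outline. Your write-up is in fact more detailed than the paper's treatment.
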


In the following of this subsection
we assume that a coalgebra $\Gamma\in {}_A{\rm CoAlg}_A({\rm Ab}_*)$
is flat as a right $A$-module and a left $A$-module.
Hence we can do homological algebra
in ${\rm LComod}_{(A,\Gamma)}({\rm Ab}_*)$.
We abbreviate ${\rm Hom}_{\lcomod_{(A,\Gamma)}({\rm Ab}_*)}(-,-)$
as ${\rm Hom}_{\Gamma}(-,-)$.
For a left $\Gamma$-comodule $M$,
we define
\[ {\rm Ext}^i_{\Gamma}(M,-)\]
to be the $i$th right derived functor
of
\[ {\rm Hom}_{\Gamma}(M,-):
   {\rm LComod}_{(A,\Gamma)}({\rm Ab}_*)\longrightarrow {\rm Ab}_*.\]

For $M\in {\rm RComod}_{(A,\Gamma)}({\rm Ab}_*)$
and $N\in {\rm LComod}_{(A,\Gamma)}({\rm Ab}_*)$,
we denote by
$M\square_{\Gamma} N$
the cotensor product of $M$ and $N$ over $\Gamma$
(see, for example, \cite[Definition~A1.1.4]{Ravenel1}).
We consider the functor
\[ M\square_{\Gamma}(-):
   {\rm LComod}_{(A,\Gamma)}({\rm Ab}_*)\longrightarrow
   {\rm Ab}_*.\]
We define
\[ {\rm Cotor}^i_{\Gamma}(M,-) \]
to be the $i$th right derived functor
of $M\square_{\Gamma}(-)$.
Note that if $M$ is flat as a right $A$-module,
then ${\rm Cotor}^0_{\Gamma}(M,N)\cong M\square_{\Gamma}N$
since $M\square_{\Gamma}(-)$ is left exact in this case.

Let $M$ be a left $\Gamma$-comodule
that is finitely generated and projective as a left $A$-module.
There is a right $\Gamma$-comodule structure
on ${\rm Hom}_A(M,A)$ and 
we have a natural isomorphism
\[ {\rm Hom}_{\Gamma}(M,N)\cong
   {\rm Hom}_A(M,A)\square_{\Gamma}N\]
for any $N\in {\rm LComod}_{(A,\Gamma)}({\rm Ab}_*)$
(cf.~\cite[Lem.~A1.1.6]{Ravenel1}).
This implies that
there is a natural isomorphism
\[ {\rm Ext}^i_{\Gamma}(M,N)\cong
   {\rm Cotor}^i_{\Gamma}({\rm Hom}_A(M,A),N)\]
for any $i\ge 0$.   
In particular,
we have a natural isomorphism
\[ {\rm Ext}^i_{\Gamma}(A,N)\cong
   {\rm Cotor}^i_{\Gamma}(A,N)\]
for any $N\in {\rm LComod}_{(A,\Gamma)}({\rm Ab}_*)$ and $i\ge 0$.

For a right $\Gamma$-comodule $M$ and  
a left $\Gamma$-comodule $N$,
we have a cosimplicial object 
\[ C^{\bullet}(M,\Gamma,N)\]
in ${\rm Ab}_*$ obtained by the cobar construction.
In particular,
we have
\[ C^r(M,\Gamma,N)=M\otimes_A\Gamma^{\otimes_A r}\otimes_AN\]
for $r\ge 0$.
The cobar complex $C^*(M,\Gamma,N)$
is the associated cochain complex.
The normalized cobar complex
$\overline{C}{}^*(M,\Gamma,N)$
is a subcomplex of
$C^*(M,\Gamma,N)$ that is given by
\[ \overline{C}^r(M,\Gamma,N)=
   M\otimes_A\overline{\Gamma}^{\otimes_A r}\otimes_A N,\]
for $r\ge 0$,
where $\overline{\Gamma}=\ker\epsilon$.

We say that a left $\Gamma$-comodule $N$
is relatively injective if
$N$ is a direct summand of
$\Gamma\otimes_A N'$ as a left $\Gamma$-comodule
for some left $A$-module $N'$.
For a left $\Gamma$-comodule $N$,
the map $\psi: N\to \Gamma\otimes_AN$
induces an augmentation
$N\to C^*(\Gamma,\Gamma,N)$.
This gives a resolution
of $N$ in ${\rm LComod}_{(A,\Gamma)}({\rm Ab}_*)$
by relative injectives.
Note that the resolution
is split in ${\rm LMod}_A({\rm Ab}_*)$.
The splitting is given by
\[ \epsilon\otimes 1^{\otimes r}\otimes 1:
   \Gamma\otimes_A\Gamma^{\otimes_A r}\otimes_A N
   \longrightarrow
   A\otimes_A\Gamma^{\otimes_A r}\otimes_A N\cong
   \Gamma\otimes_A\Gamma^{\otimes_A (r-1)}\otimes_AN.\] 
Similarly,
we have a resolution
$N\to \overline{C}{}^*(\Gamma,\Gamma,N)$
of $N$ in ${\rm LComod}_{(A,\Gamma)}({\rm Ab}_*)$
by relative injectives
that is split in ${\rm LMod}_A({\rm Ab}_*)$.
By the proof of \cite[Lemma~A1.2.9]{Ravenel1},
the cobar complex
$C^*(\Gamma,\Gamma,N)$
is cochain homotopy equivalent
to the normalized cobar complex
$\overline{C}{}^*(\Gamma,\Gamma,N)$.
Since $C^*(M,\Gamma,N)\cong
M\square_{\Gamma}C^*(\Gamma,\Gamma,N)$
and
$\overline{C}{}^*(M,\Gamma,N)\cong
M\square_{\Gamma}\overline{C}{}^*(\Gamma,\Gamma,N)$,
this implies that
\[ H^*(C^*(M,\Gamma,N))\cong
   H^*(\overline{C}{}^*(M,\Gamma,N))\]
for any $M\in {\rm RComod}_{(A,\Gamma)}({\rm Ab}_*)$
and $N\in {\rm LComod}_{(A,\Gamma)}({\rm Ab}_*)$.

The following proposition
is obtained in the same way as in \cite[Cor.~A1.2.12]{Ravenel1}.

\begin{proposition}
If $M$ is flat as a right $A$-module,
then
\[ H^*(C^*(M,\Gamma,N))\cong
   {\rm Cotor}_{\Gamma}^*(M,N).\]
In particular, we have
\[ H^*(C^*(A,\Gamma,N))\cong
   {\rm Ext}_{\Gamma}^*(A,N).\]
\end{proposition}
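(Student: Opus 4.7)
The strategy is to realize the cobar complex $C^*(M,\Gamma,N)$ as $M\square_\Gamma$ applied to the cobar resolution $C^*(\Gamma,\Gamma,N)$ of $N$, and then show that this relatively injective resolution is acyclic for $M\square_\Gamma(-)$ when $M$ is flat over $A$, so that it computes the derived functor ${\rm Cotor}^*_\Gamma(M,-)$. The first of the two assertions will then follow from the standard fact that $R^*F$ can be computed on any $F$-acyclic resolution, and the second assertion is the specialization $M=A$, combined with the isomorphism ${\rm Ext}^*_\Gamma(A,N)\cong{\rm Cotor}^*_\Gamma(A,N)$ already noted in the subsection.

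First I would verify the natural isomorphism $M\square_\Gamma(\Gamma\otimes_A X)\cong M\otimes_A X$ for any $X\in{\rm LMod}_A({\rm Ab}_*)$, obtained by identifying $M\square_\Gamma(\Gamma\otimes_A X)$ with its defining equalizer and collapsing one $\Gamma$-factor using the counit $\epsilon:\Gamma\to A$. Applying this termwise yields $M\square_\Gamma C^*(\Gamma,\Gamma,N)\cong C^*(M,\Gamma,N)$ as cosimplicial abelian groups. Next I would establish the flatness-splitness lemma: if $M$ is flat as a right $A$-module, then $M\square_\Gamma(-)$ preserves short exactness of any short exact sequence of $\Gamma$-comodules that splits as a sequence of $A$-modules. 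This is a snake-lemma argument applied to the equalizer presentation
\[
M\square_\Gamma L=\ker\bigl(\psi_M\otimes 1-1\otimes\psi_L:M\otimes_A L\to M\otimes_A\Gamma\otimes_A L\bigr),
\]
using that $M\otimes_A(-)$ is exact by flatness.

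From these two ingredients I would deduce that every relatively injective comodule $J=\Gamma\otimes_A X$ satisfies ${\rm Cotor}^{>0}_\Gamma(M,J)=0$ when $M$ is flat over $A$: the cobar resolution $J\to C^*(\Gamma,\Gamma,J)$ is split exact as a sequence of left $A$-modules via $\epsilon\otimes 1^{\otimes r}\otimes 1$, so by the flatness-splitness lemma the complex $M\square_\Gamma C^*(\Gamma,\Gamma,J)$ is exact in positive degrees, and comparison with an injective resolution of $J$ via the usual double-complex / dimension-shifting argument forces the higher ${\rm Cotor}$ groups to vanish. Consequently the cobar resolution of $N$ consists of ${\rm Cotor}_\Gamma(M,-)$-acyclic comodules, and the standard acyclic-resolution theorem identifies $H^*(M\square_\Gamma C^*(\Gamma,\Gamma,N))=H^*(C^*(M,\Gamma,N))$ with ${\rm Cotor}^*_\Gamma(M,N)$. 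The main obstacle is exactly this acyclicity step, because relatively injective comodules are not in general injective in ${\rm LComod}_{(A,\Gamma)}({\rm Ab}_*)$, so one cannot invoke the derived-functor machinery for $M\square_\Gamma(-)$ directly and must instead leverage the $A$-split structure of the cobar resolution together with flatness of $M$.
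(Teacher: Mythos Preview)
Your overall strategy --- realizing $C^*(M,\Gamma,N)$ as $M\square_\Gamma$ applied to the cobar resolution of $N$, showing the terms of that resolution are $M\square_\Gamma$-acyclic, and invoking the acyclic-resolution theorem --- is sound and is one of the standard routes to this result. The paper itself gives no proof beyond citing \cite[Cor.~A1.2.12]{Ravenel1}; Ravenel's argument proceeds instead by showing that any two $A$-split resolutions by relative injectives are chain homotopy equivalent as complexes of comodules, so that the additive functor $M\square_\Gamma$ yields the same cohomology on each. Either route works, but your execution of the acyclicity step contains a genuine error.

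Your ``flatness-splitness lemma'' is false. If $M\square_\Gamma(-)$ preserved exactness of every $A$-split short exact sequence of comodules, then applied to the cobar resolution of an \emph{arbitrary} comodule $N$ --- which is always $A$-split, via the very contracting homotopy $\epsilon\otimes 1^{\otimes r}\otimes 1$ you invoke --- it would give $H^{>0}(C^*(M,\Gamma,N))=0$ for all $N$, forcing ${\rm Cotor}^{>0}_\Gamma(M,-)$ to vanish identically. Concretely, take $A=k$ a field, $\Gamma=k[x]/(x^2)$ with $x$ primitive, and $M=N=k$: everything is flat and every short exact sequence of $k$-modules is split, yet ${\rm Cotor}^*_\Gamma(k,k)\cong k[y]$ is polynomial on a class in cohomological degree~$1$. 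The snake-lemma computation you sketch produces only the six-term sequence; the $A$-module splitting $N\cong N'\oplus N''$ is not respected by the coaction $\psi_N$, so the difference map $\psi_M\otimes 1-1\otimes\psi_N$ is merely block upper-triangular and the connecting homomorphism need not vanish. The fix bypasses this lemma entirely: for $J=\Gamma\otimes_A X$, choose an injective resolution $X\to I^*$ in ${\rm LMod}_A({\rm Ab}_*)$; since $\Gamma\otimes_A(-)$ is exact (right-$A$-flatness of $\Gamma$) and is right adjoint to the exact forgetful functor, $\Gamma\otimes_A I^*$ is an injective resolution of $J$ in comodules; applying $M\square_\Gamma$ together with your identification $M\square_\Gamma(\Gamma\otimes_A-)\cong M\otimes_A(-)$ yields $M\otimes_A I^*$, which is exact because $M$ is flat. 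Hence ${\rm Cotor}^{>0}_\Gamma(M,J)=0$, and since retracts of acyclics are acyclic, all relative injectives are $M\square_\Gamma$-acyclic. With this in hand your final step goes through and the proof is complete.
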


\subsection{Bousfield-Kan spectral sequences}
\label{sec:Bousfield-Kan-SS}

In this subsection 
we work in the quasi-category
of spectra $\sp$ and study
the Bousfield-Kan spectral sequence abutting to
the homotopy groups of cotensor products of comodules.

Note that $\sp$ is a presentable stable symmetric monoidal 
category in which the tensor product commutes 
with all colimits separately 
in each variable.
We use $\otimes$ for the tensor product in $\sp$
instead of $\wedge$.
We denote by $\mathbb{S}$ the sphere spectrum
that is the monoidal unit.

We would like to compute the homotopy groups
of cotensor products of comodules. 
Since a cotensor product of comodules is a limit of  
a cosimplicial object,
we have the Bousfield-Kan spectral sequence
abutting to the homotopy groups of
the cotensor product.

First,
we recall the Bousfield-Kan spectral sequence
associated to a cosimplicial object in $\sp$.
Let $X^{\bullet}:N(\Delta)\to\sp$
be a cosimplicial object in $\sp$.
Since the quasi-category $\sp$ of spectra
is the underlying quasi-category
of the combinatorial simplicial model category
$\Sigma\sp$ of symmetric spectra,
we can take a cosimplicial object
$Y^{\bullet}: \Delta\to \Sigma\sp^{\circ}$
such that $N(Y^{\bullet})\simeq X^{\bullet}$ 
by \cite[Prop.~4.2.4.4.]{Lurie1},
where $\Sigma\sp^{\circ}$
is the simplicial full subcategory 
of $\Sigma\sp$ consisting of
objects that are both fibrant and cofibrant.
Then the limit ${\rm lim}_{N(\Delta)}\,X^{\bullet}$
in $\sp$ is represented by the homotopy limit
${\rm holim}_{\Delta}\, Y^{\bullet}$.

We recall that 
${\rm Tot}_r(X^{\bullet})$ 
is defined to be the limit
of $X^{\bullet}|_{N(\Delta^{\le r})}$
in $\sp$ for $r\ge 0$,
where  
$\Delta^{\le r}$
is the full subcategory of $\Delta$
spanned by $\{[0],[1],\ldots,[r]\}$.
The inclusion $\Delta^{\le r}\hookrightarrow
\Delta^{\le r+1}$ induces
a map ${\rm Tot}_{r+1}(X^{\bullet})\to
{\rm Tot}_r(X^{\bullet})$ for $r\ge 0$.
We have a tower
$\{{\rm Tot}_rX^{\bullet}\}_{r\ge 0}$
and the limit of the tower is 
equivalent to ${\rm Tot}(X^{\bullet})$:
\[ {\rm Tot}(X^{\bullet})\simeq
   \lim_r{\rm Tot}_r(X^{\bullet}).\]

Let $F_r(X^{\bullet})$ be the fiber of the map
${\rm Tot}_r(X^{\bullet})\to {\rm Tot}_{r-1}(X^{\bullet})$
for $r\ge 0$,
where ${\rm Tot}_{-1}(X^{\bullet})=0$.
Associated to the tower
$\{{\rm Tot}_r(Y^{\bullet})\}_{r\ge 0}$,
by applying the homotopy groups,
we obtain the Bousfield-Kan spectral sequence
\[ E_1^{s,t}\cong \pi_{t-s}F_s(X^{\bullet})
   \Longrightarrow \pi_{t-s}{\rm Tot}(X^{\bullet})\]
(see \cite[Ch.~IX, \S4]{Bousfield-Kan}).
We can identify the $E_2$-page
of the spectral sequence with 
the cohomotopy groups
of the cosimplicial graded abelian group
$\pi_*(X^{\bullet})$:
\[ E_2^{s,t}\cong \pi^s\pi_t(X^{\bullet}) \]
(see \cite[Ch.~X, \S7]{Bousfield-Kan}).

Next,
we construct a spectral sequence
that computes the homotopy groups
of cotensor products of comodules.
Let $A$ be an algebra object of $\sp$
and $\Gamma$ a coalgebra object 
of ${}_A\bmod_A(\sp)$.
Recall that the cotensor product 
$M\square_{\Gamma}N$
is defined to be the limit of
the cosimplicial object 
$C^{\bullet}(M,\Gamma,N)$ 
for a right $\Gamma$-comodule $M$
and a left $\Gamma$-comodule $N$.
Hence we obtain the Bousfield-Kan spectral sequence
abutting to the homotopy groups
of the cotensor product $M\square_{\Gamma}N$:
\[ E_2^{s,t}\Longrightarrow
   \pi_{t-s}(M\square_{\Gamma}N),\]
where the $E_2$-page is given by
\[ E_2^{s,t}\cong \pi^s\pi_t\,C^{\bullet}(M,\Gamma,N).\]

For a spectrum $X\in {\rm Sp}$,
we write $X_*$ for the homotopy groups $\pi_*X$
for simplicity. 
Now we suppose that
$\Gamma_*$ is flat as a left $A_*$-module
and a right $A_*$-module.
Since $C^r(M,\Gamma,N)\simeq
M\otimes_A\overbrace{\Gamma\otimes_A\cdots\otimes_A\Gamma}^r\otimes_AN$
for all $r\ge 0$,
we see that
\[ \pi_*C^{\bullet}(M,\Gamma,N)\cong
    C^{\bullet}(M_*,\Gamma_*,N_*)\]
if $M_*$ is a flat right $A_*$-module
or $N_*$ is a flat left $A_*$-module.

\begin{proposition}
If $M_*$ is a flat right $A_*$-module
or $N_*$ is a flat left $A_*$-module,
then we have the Bousfield-Kan spectral sequence
abutting to the homotopy groups
of the cotensor product
$M\square_{\Gamma}N$\mbox{\rm :}
\[ E_2^{s,t}\Longrightarrow
   \pi_{t-s}(M\square_{\Gamma}N).\]
The $E_2$-page of the spectral sequence is given by
\[ E_2^{s,t}\cong\,{\rm Cotor}^s_{\Gamma_*}(M_*,N_*)_t.\]
\end{proposition}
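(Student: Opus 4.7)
The plan is to directly apply the general Bousfield--Kan spectral sequence of \S\ref{sec:Bousfield-Kan-SS} to the cosimplicial spectrum $C^{\bullet}(M,\Gamma,N)$ whose limit is by definition $M\square_{\Gamma}N$. This immediately yields a conditionally convergent spectral sequence
\[
  E_2^{s,t}\cong\pi^s\pi_t\,C^{\bullet}(M,\Gamma,N)\Longrightarrow\pi_{t-s}(M\square_{\Gamma}N),
\]
so the entire content of the proposition is the identification of the $E_2$-page with $\mathrm{Cotor}^s_{\Gamma_*}(M_*,N_*)_t$. This identification breaks into an algebraic part and a Künneth-type homotopical part.

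First I would identify $\pi_*$ of each level of the cobar object. Since $C^r(M,\Gamma,N)\simeq M\otimes_A\Gamma^{\otimes_A r}\otimes_A N$ and $\Gamma_*$ is assumed flat over $A_*$ on both sides, an iterated application of the Künneth spectral sequence for the relative tensor product $\otimes_A$ collapses to a natural isomorphism $\pi_*(\Gamma^{\otimes_A r})\cong \Gamma_*^{\otimes_{A_*} r}$. Combining this with the hypothesis that either $M_*$ is flat as a right $A_*$-module or $N_*$ is flat as a left $A_*$-module gives a natural isomorphism
\[
  \pi_*\,C^r(M,\Gamma,N)\;\cong\; M_*\otimes_{A_*}\Gamma_*^{\otimes_{A_*} r}\otimes_{A_*}N_*\;=\; C^r(M_*,\Gamma_*,N_*)
\]
for every $r\ge 0$. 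The coface and codegeneracy maps of the topological cobar construction are built from the coaction maps of $M,N$ and the comultiplication of $\Gamma$, so passing to homotopy groups yields the algebraic cobar coface and codegeneracy maps of $C^{\bullet}(M_*,\Gamma_*,N_*)$. This gives an isomorphism of cosimplicial graded abelian groups $\pi_* C^{\bullet}(M,\Gamma,N)\cong C^{\bullet}(M_*,\Gamma_*,N_*)$.

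Next I would translate cohomotopy of a cosimplicial graded abelian group into cohomology of the associated cochain complex, obtaining
\[
  E_2^{s,t}\;\cong\;\pi^s\,C^{\bullet}(M_*,\Gamma_*,N_*)_t\;\cong\; H^s\bigl(C^{*}(M_*,\Gamma_*,N_*)\bigr)_t.
\]
Under the flatness hypothesis, the algebraic proposition recalled at the end of the previous subsection identifies $H^s C^{*}(M_*,\Gamma_*,N_*)$ with $\mathrm{Cotor}^s_{\Gamma_*}(M_*,N_*)$. In the case where $N_*$ is flat rather than $M_*$, one uses the symmetric statement by swapping the roles of left and right comodules (equivalently, by working with the cobar resolution of $M$ by relative injectives instead of $N$), which the algebraic subsection sets up in parallel.

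The main obstacle is the Künneth step: showing that the iterated relative smash product $\pi_*(\Gamma^{\otimes_A r})\cong \Gamma_*^{\otimes_{A_*} r}$ and that this isomorphism is compatible with the cosimplicial structure maps. Once the $A_*$-flatness of $\Gamma_*$ is invoked to kill the higher Tor terms level by level, everything reduces to a naturality check on the bar structure, which is formal. The convergence of the spectral sequence is inherited from the general construction for cosimplicial spectra and requires no separate argument here.
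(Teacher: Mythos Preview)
Your proposal is correct and follows exactly the approach the paper takes: the paragraph immediately preceding the proposition establishes the isomorphism $\pi_*C^{\bullet}(M,\Gamma,N)\cong C^{\bullet}(M_*,\Gamma_*,N_*)$ under the flatness hypotheses, and the proposition is then stated without further proof as the combination of this with the general Bousfield--Kan $E_2$-identification and the algebraic Cotor proposition from the previous subsection. Your treatment is in fact more explicit than the paper's, which simply asserts the K\"unneth-type identification and leaves the symmetric case (when $N_*$ rather than $M_*$ is flat) implicit.
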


Now we regard $A$ as a right $A$-module 
and suppose $A$ is a right $\Gamma$-comodule
via $\eta_R$.
In this case $A_*$ is a right $\Gamma_*$-comodule
via $\eta_{R*}$.
Then we can regard $A_*$ as a left $\Gamma_*$-comodule
by using the isomorphism
$A_*\cong {\rm Hom}_{A_*}(A_*,A_*)$
of left $A_*$-modules,
where ${\rm Hom}_{A_*}(A_*,A_*)$
is the graded abelian group of
graded homomorphisms of right $A_*$-modules.
Hence we can form
${\rm Ext}_{\Gamma_*}^*(A_*,N_*)$
for any left $\Gamma$-comodule $N$.

We consider the Bousfield-Kan spectral sequence
associated to $C^{\bullet}(A,\Gamma,N)$.
Note that the limit of $C^{\bullet}(A,\Gamma,N)$
is $P(N)=A\square_{\Gamma}N$.

\begin{corollary}
We assume that the right $A$-module $A$
is a right $\Gamma$-comodule via $\eta_R$.
Then we have the Bousfield-Kan spectral sequence
abutting to the homotopy groups
of $P(N)$\mbox{\rm :}
\[ E_2^{s,t}\Longrightarrow \pi_{t-s}P(N),\]
where the $E_2$-page is given by
\[ E_2^{s,t}\cong{\rm Ext}_{\Gamma_*}^s(A_*,N_*)_t.\]
\end{corollary}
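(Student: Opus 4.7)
The plan is to derive this corollary as a direct specialization of the preceding proposition to $M = A$, followed by an algebraic rewriting of the $E_2$-page. First I would check that the flatness hypothesis of the proposition is satisfied: taking $M = A$ as a right $\Gamma$-comodule via $\eta_R$, the homotopy groups $A_*$ form a free (hence flat) right $A_*$-module of rank one, so the proposition applies and produces a Bousfield-Kan spectral sequence
\[ E_2^{s,t}\cong \mathrm{Cotor}^s_{\Gamma_*}(A_*,N_*)_t \Longrightarrow \pi_{t-s}(A\square_\Gamma N). \]
Since $P(N) = A\square_\Gamma N$ by definition, the abutment is already in the required form.

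Next, to rewrite the $E_2$-page in terms of $\mathrm{Ext}$, I would invoke the algebraic identification from the earlier subsection on cotensor products in the abelian category of $\Gamma_*$-comodules. Recall that whenever $M$ is a left $\Gamma_*$-comodule which is finitely generated projective as a left $A_*$-module, there is a natural isomorphism $\mathrm{Hom}_{\Gamma_*}(M,-) \cong \mathrm{Hom}_{A_*}(M,A_*)\square_{\Gamma_*}(-)$ of left-exact functors on $\lcomod_{(A_*,\Gamma_*)}(\ab)$. Passing to right derived functors yields $\mathrm{Ext}^*_{\Gamma_*}(M,-)\cong \mathrm{Cotor}^*_{\Gamma_*}(\mathrm{Hom}_{A_*}(M,A_*),-)$. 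Specializing to $M = A_*$ and using $\mathrm{Hom}_{A_*}(A_*,A_*)\cong A_*$ as right $\Gamma_*$-comodules supplies the identification $\mathrm{Cotor}^s_{\Gamma_*}(A_*,N_*) \cong \mathrm{Ext}^s_{\Gamma_*}(A_*,N_*)$, completing the computation of the $E_2$-page.

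The only compatibility I expect to require any care is verifying that the right $\Gamma_*$-comodule structure on $A_*$ transported through the isomorphism $A_*\cong \mathrm{Hom}_{A_*}(A_*,A_*)$ agrees with the structure induced by $\eta_{R*}\colon A_*\to \Gamma_*$, since the former is what appears in the algebraic $\mathrm{Ext}$--$\mathrm{Cotor}$ comparison while the latter is what the topological spectral sequence of the proposition produces. This is a diagram chase reducing to the fact that both structure maps arise from $\eta_R$ by adjunction, and I would verify it by unwinding the definitions. Once this matching is pinned down, the corollary is immediate from combining the proposition with the algebraic identification.
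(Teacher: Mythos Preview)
Your proposal is correct and follows essentially the same route as the paper: specialize the preceding proposition to $M=A$ (using that $A_*$ is free of rank one over itself, hence flat), identify the abutment with $P(N)$, and then invoke the algebraic identification ${\rm Cotor}^*_{\Gamma_*}(A_*,-)\cong{\rm Ext}^*_{\Gamma_*}(A_*,-)$ established earlier in the paper. The paper in fact gives no explicit proof of the corollary, so your write-up, including the remark on matching the two $\Gamma_*$-comodule structures on $A_*$, is if anything more careful than what the paper records.
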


In the following of this subsection
we study the relationship between
Bousfield-Kan spectral sequences
and Adams spectral sequences.

For an $\mathbb{S}$-algebra $A$,
we have the coalgebra $A\otimes A$ in 
${}_A{\rm BMod}_A(\sp)$.
We write $\Gamma(A)=(A,A\otimes A)$ for simplicity
and we call $A\otimes A$-comodules 
$\Gamma(A)$-comodules interchangeably.
We can regard $A$ as a left $\Gamma(A)$-comodule
via $\eta_L :A\simeq A\otimes \mathbb{S}
\stackrel{{\rm id}_A\otimes u}{\longrightarrow} A\otimes A$ and
as a right $\Gamma(A)$-comodule via
$\eta_R: A\simeq \mathbb{S}\otimes A
\stackrel{u\otimes {\rm id}_A}{\longrightarrow} 
A\otimes A$,
where $u: \mathbb{S}\to A$ is the unit map.

By Theorem~\ref{thm:base-change-comodules},
we have a left $\Gamma(A)$-comodule $A\otimes X$
for any $X\in\sp$.
We consider the cobar construction
\[ C^{\bullet}(A,A\otimes A,A\otimes X), \]
where $X\in\sp$.
Note that we have a coaugmentation
$X\to C^{\bullet}(A,A\otimes A,A\otimes X)$,
which is given by
$X\simeq \mathbb{S}\otimes X
\stackrel{u\otimes {\rm id}_X}{\longrightarrow} 
A\otimes X\simeq
C^0(A,A\otimes A,A\otimes X)$.
This induces a map
\[ X\to P(A\otimes X)
   =\, {\rm lim}_{N(\Delta)}\,
   C^{\bullet}(A,A\otimes A,A\otimes X).\]

We have an equivalence
$C^{\bullet}(A,A\otimes A,A\otimes X)\simeq
C^{\bullet}(A,A\otimes A,A)\otimes X$.
We see that the cobar construction
$C^{\bullet}(A,A\otimes A,A)$ is the Amitsur complex
in $\sp$ given by
\[ C^r(A,A\otimes A,A)\simeq
   \overbrace{A\otimes\cdots\otimes A}^{r+1}\]
for any $r\ge 0$
with the usual structure maps.
The Bousfield-Kan spectral sequence of
the cobar construction
$C^{\bullet}(A,A\otimes A,A\otimes X)$
is related to
the $A$-Adams spectral sequence of $X$.
Although this may be well-known to experts,
we briefly review this relation  
for the reader's convenience
(see, for example, \cite[\S2.1]{MNJ}). 

The coaugmented cosimplicial object
$X\to C^{\bullet}(A,A\otimes A,A\otimes X)$ induces 
a tower 
\[ \{{\rm Tot}_rC^{\bullet}(A,A\otimes A,A\otimes X)\}_{r\ge 0},\]
and a map of towers
$c(X)\to \{{\rm Tot}_rC^{\bullet}(A,A\otimes A,A\otimes X)\}_{r\ge 0}$
for any $X\in \sp$.
This tower is related to the $A$-Adams tower of $X$.

Let $\overline{A}$ be the fiber of
the unit map $u:\mathbb{S}\to A$.
We have a canonical map
$\overline{A}\to \mathbb{S}$.
For $r\ge 0$,
we set 
\[ T_r(A,X)= \overbrace{\overline{A}\otimes\cdots
             \otimes\overline{A}}^r\otimes X,\]
where we understand $\overline{A}^{\otimes 0}=\mathbb{S}$.
Using the canonical map $\overline{A}\to \mathbb{S}$,
we define a map $T_{r+1}(A,X)\to T_r(A,X)$ for $r\ge 0$
by
\[ T_{r+1}(A,X)\simeq \overline{A}\otimes\overline{A}^{\otimes r}
\otimes X\to
S\otimes \overline{A}^{\otimes r}\otimes X 
\simeq T_r(A,X). \]
With these maps,
we obtain a tower
$\{T_r(A,X)\}_{r\ge 0}$
and a map $\{T_r(A,X)\}_{r\ge 0}\to c(X)$
of towers.

Let $G_r(A,X)$ be the cofiber of
the map $T_{r+1}(A,X)\to T_r(A,X)$
for $r\ge 0$.
Associate to the tower
$\{T_r(A,X)\}_{r\ge 0}$,
by applying the homotopy groups,
we obtain the $A$-Adams spectral sequence of $X$.
The $E_1$-page of the spectral sequence
is given by
\[ E_1^{s,t}\cong\pi_{t-s}G_s(A,X)\]
(see, for example, \cite[Ch.~2.2]{Ravenel1}).

We set $C^{\bullet}=C^{\bullet}(A,A\otimes A,A\otimes X)$. 
By \cite[Prop.~2.14]{MNJ},
the cofiber of the map $T_{r+1}(A,X)\to X$ is equivalent
to ${\rm Tot}_r(C^{\bullet})$ for all $r\ge 0$.
Hence we obtain a natural cofiber sequence of towers
\[ \{T_{r+1}(A,X)\}_{r\ge 0}\to
   c(X)\to
   \{{\rm Tot}_r(C^{\bullet})\}_{r\ge 0}.\]
In particular,
we see that $G_r(A,X)$ is equivalent to 
the fiber $F_r(C^{\bullet})$
of the map ${\rm Tot}_r(C^{\bullet})\to
{\rm Tot}_{r-1}(C^{\bullet})$.
Comparing the spectral sequences,
we see that 
the $A$-Adams spectral sequence of $X$
coincides with the Bousfield-Kan spectral sequence
associated to the cobar construction
$C^{\bullet}(A,A\otimes A,A\otimes X)$. 

We recall that the map
$X\to P(A\otimes X)$ is an $A$-nilpotent
completion in ${\rm Ho}(\sp)$ in the sense of 
Bousfield~\cite{Bousfield},
where ${\rm Ho}(\sp)$ is the stable homotopy category
of spectra.

Let $R$ be a ring spectrum in ${\rm Ho}(\sp)$.
A spectrum $W$ is said to be $R$-nilpotent
if $W$ lies in the thick ideal of ${\rm Ho}(\sp)$
generated by $R$.
An $R$-nilpotent resolution of a spectrum $Z$ is a 
tower $\{W_r\}_{r\ge 0}$ equipped with a map of towers
$c(Z)\to \{W_r\}_{r\ge 0}$ in ${\rm Ho}(\sp)$
such that $W_r$ is $R$-nilpotent for all $r\ge 0$
and the map 
\[ {\rm colim}_r\, {\rm Hom}_{{\rm Ho}(\sp)}(W_r,N)
   \to {\rm Hom}_{{\rm Ho}(\sp)}(Z,N) \]
is an isomorphism for any $R$-nilpotent spectrum $N$.
An $R$-nilpotent completion of $Z$ is defined to be
the map $Z\to {\rm holim}_r W_r$ for an $R$-nilpotent
resolution $\{W_r\}_{r\ge 0}$ of $Z$. 

We shall show that the tower
$\{{\rm Tot}_r(C^{\bullet})\}_{r\ge 0}$
is an $A$-nilpotent resolution of $X$,
where $C^{\bullet}=C^{\bullet}(A,A\otimes A,A\otimes X)$.
For any $r\ge 0$,
the fiber $F_r(C^{\bullet})$ of the map
${\rm Tot}_r(C^{\bullet})\to {\rm Tot}_{r-1}(C^{\bullet})$
is equivalent to $G_r(A,X)$.
Since 
$G_r(A,X)\simeq A\otimes \overline{A}^{\otimes r}\otimes X$
is a left $A$-module,
$G_r(A,X)$ is $A$-nilpotent for all $r\ge 0$.
By induction on $r$ and the fact that
${\rm Tot}_0(C^{\bullet})=A\otimes X$,
we see that ${\rm Tot}_r(C^{\bullet})$
is $A$-nilpotent for all $r\ge 0$.

Recall that the fiber of the map $X\to {\rm Tot}_r(C^{\bullet})$
is equivalent to $T_{r+1}(A,X)$ for all $r\ge 0$.
The map $\overline{A}\to \mathbb{S}$ 
is null in ${\rm Ho}(\sp)$
after tensoring with $A$
since the unit map
$\mathbb{S}\to A$ has a left inverse
after tensoring with $A$.
Hence we see that the map $T_{r+1}(A,X)\to T_r(A,X)$ is 
null in ${\rm Ho}(\sp)$ after tensoring with $A$.
This implies that the map
${\rm colim}_r\, {\rm Hom}_{\rm Ho(\sp)}
 ({\rm Tot}_r(C^{\bullet}),A\otimes Y)
\to {\rm Hom}_{{\rm Ho}(\sp)}(X,A\otimes Y)$
is an isomorphism for any spectrum $Y$.
Since the class of $A$-nilpotent spectra coincides
with the thick subcategory generated 
by the class $\{A\otimes Z|\ Z\in\sp\}$,
we see that the map
${\rm colim}_r\, {\rm Hom}_{\rm Ho(\sp)}
 ({\rm Tot}_r(C^{\bullet}),N)
\to {\rm Hom}_{{\rm Ho}(\sp)}(X,N)$
is an isomorphism for any $A$-nilpotent spectrum $N$.

Therefore,
the tower $\{{\rm Tot}_r(C^{\bullet})\}_{r\ge 0}$ 
is an $A$-nilpotent resolution of $X$. 
Since $P(A\otimes X)\simeq {\rm lim}_r\,{\rm Tot}_r(C^{\bullet})$,
we see that the map
$X\to P(A\otimes X)$
is an $A$-nilpotent completion in ${\rm Ho}(\sp)$.

\subsection{Complex oriented spectra}

In this subsection we study
quasi-categories of comodules over coalgebras
associated to Landweber exact $\mathbb{S}$-algebras.
We show that the quasi-category of comodules
over the coalgebra associated to a Landweber exact 
$\mathbb{S}$-algebra depends only on the height
of the underlying $MU_*$-algebra.

Let $MU$ be the complex cobordism spectrum.
The coefficient ring of $MU$
is a polynomial ring over the ring $\mathbb{Z}$ of integers
with infinite many variables
\[ \pi_*MU=\mathbb{Z}[x_1,x_2,\ldots] \]
with degree $|x_i|=2i$ for $i\ge 1$.
We assume that the Chern numbers of $x_{p^n-1}$ are all divisible by
$p$ for all positive integers $n$ and all prime numbers $p$.
In this case
the ideals $I_{p,n}=(p,x_{p-1},\ldots,x_{p^{n-1}-1})$ 
are invariant and independent of the choice of generators.
We set $I_{p,0}=(0)$ and $I_{p,\infty}=\subrel{n\ge 0}{\cup}\,I_{p,n}$.
The ideals $I_{p,n}$ for $0\le n\le \infty$ and all primes $p$
are the only invariant prime ideals in $MU_*$
(see \cite{Landweber}).

For a graded commutative $MU_*$-algebra $R_*$,
we say that $R_*$ is Landweber exact if
$p,x_{p-1},\ldots, x_{p^n-1},\ldots$ is a regular sequence
in $R_*$ for all prime numbers $p$.  

If $E^*(-)$ is a complex oriented cohomology theory
represented by a spectrum $E$,
then there is a ring spectrum map $f: MU\to E$
in the stable homotopy category ${\rm Ho}(\sp)$ 
of spectra.
We say $E$ is Landweber exact if
$E_*$ is a graded commutative ring
and Landweber exact via the graded ring homomorphism
$f_*: MU_*\to E_*$.

We consider an $\mathbb{S}$-algebra that is Landweber exact.

\begin{definition}\rm
We say that $A$ is Landweber exact $\mathbb{S}$-algebra if
$A$ is an $\mathbb{S}$-algebra spectrum
equipped with a map $f: MU\to A$ of ring spectra
in ${\rm Ho}(\sp)$ 
such that 
$A_*$ is a graded commutative ring and
Landweber exact via the graded ring homomorphism
$f_*: MU_*\to A_*$.
\end{definition}

Let $p$ be a prime number
and let $\mathbb{S}_{(p)}$ be the localization of 
the sphere spectrum $\mathbb{S}$ at $p$.
We can consider a Landweber exact $\mathbb{S}_{(p)}$-algebra 
in the same way.
If $A$ is a Landweber exact $\mathbb{S}$-algebra,
then the localization $A_{(p)}$ 
is a Landweber exact $\mathbb{S}_{(p)}$-algebra
at any prime number $p$.

\begin{example}\rm
For any prime number $p$ and 
any positive integer $n$,
the Johnson-Wilson spectrum $E(n)$ at $p$
is a complex oriented Landweber exact spectrum.
By \cite[Proposition~4.1]{Baker-Jeanneret},
$E(n)$  
admits an $MU_{(p)}$-algebra spectrum structure.
Hence, in particular,
$E(n)$ is a Landweber exact $\mathbb{S}_{(p)}$-algebra.
\end{example}


\begin{definition}\rm
For a Landweber exact graded commutative ring
$A_*$, we denote by ${\rm ht}_p\,A_*$
the height of $A_*$ at a prime $p$
in the sense of 
\cite[Definition~7.2]{Hovey-Strickland},
that is,
the largest number $n$ such that
$A_*/I_{p,n}$ is nonzero, or $\infty$
if $A_*/I_{p,n}$ is nonzero for all $n$.
For a Landweber exact $\mathbb{S}$-algebra $A$,
we denote by ${\rm ht}_p\,A$
the height of $A_*$ at $p$.
\end{definition}

\if0
Let $BP$ be the Brown-Peterson spectrum at a prime number $p$.
Note that $BP$ is a Landweber exact
$MU_{(p)}$-algebra of height $\infty$
by \cite[\S2]{Lazarev}.
We denote by $\xi: MU_{(p)}\to BP$
the map of $\mathbb{S}$-algebra.
We have a map $i:BP\to MU_{(p)}$ of ring spectra
such that the composition
$BP\stackrel{i}{\to} MU_{(p)}
\stackrel{\xi}{\to} BP$ is homotopic to the identity.
\fi

For Landweber exact 
$\mathbb{S}$-algebras $E$ and $F$,
we have an isomorphism
\[ F_*(E)\cong F_*\otimes_{MU_*}MU_*(MU)\otimes_{MU_*}E_*.\]
By abuse of notation,
for graded commutative Landweber exact $MU_{*}$-algebras
$A_*$ and $B_*$,
we set $B_*(A)=B_*\otimes_{MU_*}MU_*(MU)\otimes_{MU_*}A_*$.
We denote by $\Gamma(A_*)$
the pair $(A_*,A_*(A))$,
which forms a graded Hopf algebroid 
(see \cite[Appendix~A.1]{Ravenel1}).
We can consider
the categories of graded $\Gamma(A_*)$-comodules
$\lcomod_{\Gamma(A_*)}(\ab)$
which is an abelian category 
since $\Gamma(A_*)$ is a flat Hopf algebroid.

The canonical map $MU_*(MU)\to A_*(A)$ induces 
a map of graded Hopf algebroids
$\Phi(A): \Gamma(MU_*)\to\Gamma(A_*)$.
We consider the functor
\[ \Phi(A)_*:
   \lcomod_{\Gamma(MU_*)}(\ab)\longrightarrow
   \lcomod_{\Gamma(A_*)}(\ab)\]
given by $\Phi(A)_*(M_*)=A_*\otimes_{MU_*}M_*$
for $M_*\in\lcomod_{\Gamma(MU_*)}(\ab)$.
The functor $\Phi(A)_*$
has the right adjoint 
$\Phi(A)^*: \lcomod_{\Gamma(A_*)}(\ab)\to
   \lcomod_{\Gamma(MU_*)}(\ab)$ 
given by
\[ \Phi(A)^*(N_*)=MU_*(A)\square_{A_*(A)}N_* \]
for $N_*\in\lcomod_{\Gamma(A_*)}(\ab)$
(see \cite[Lem.~2.4 and Remark after its proof]{Hovey-Strickland}).
Let $\mathcal{T}_{A_*}$ be
the class  of all graded $\Gamma(MU_*)$-comodules
$M_*$ such that 
$A_*\otimes_{MU_*}M_*$ is trivial. 
By \cite[Thm.~2.5]{Hovey-Strickland},
the adjoint pair $(\Phi(A)_*,\Phi(A)^*)$ 
induces an adjoint equivalence
of categories between
$\lcomod_{\Gamma(A_*)}(\ab)$ and
the localization of 
$\lcomod_{\Gamma(MU_*)}$
with respect to $\mathcal{T}_{A_*}$.

Let $A_*$ and $B_*$ be graded commutative
Landweber exact $MU_*$-algebras.
We recall that $B_*(A)=B_*\otimes_{MU_*}MU_*(MU)\otimes_{MU_*}A_*$.
Note that $B_*(A)$ is a graded left $\Gamma(B_*)$-comodule
and a graded right $\Gamma(A_*)$-comodule.
We can define a functor
\[ G_{B_*,A_*}: \lcomod_{\Gamma(A_*)}(\ab)
           \longrightarrow
           \lcomod_{\Gamma(B_*)}(\ab)\]
which assigns to a graded left $\Gamma(A_*)$-comodule 
$M_*$ the graded left $\Gamma(B_*)$-comodule 
$G_{B_*,A_*}(M_*)$ given by
\[ G_{B_*,A_*}(M_*)=B_*(A)\square_{A_*(A)}M_*.\]

\begin{lemma}
\label{lem:GBA-equivalence}
If\, ${\rm ht}_pA_*=\,{\rm ht}_pB_*$ for all primes $p$,
then the functor $G_{B_*,A_*}$ gives an equivalence
of categories between
$\lcomod_{\Gamma(A_*)}(\ab)$
and $\lcomod_{\Gamma(B_*)}(\ab)$.
\end{lemma}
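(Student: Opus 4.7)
The plan is to deduce the statement from Theorem~2.5 of \cite{Hovey-Strickland}, cited just above the lemma, which realises $\lcomod_{\Gamma(R_*)}(\ab)$ as the Serre quotient of $\lcomod_{\Gamma(MU_*)}(\ab)$ by the full subcategory $\mathcal{T}_{R_*}$ for any Landweber exact $MU_*$-algebra $R_*$. The argument will proceed in three steps: first, show that the localising subcategories $\mathcal{T}_{A_*}$ and $\mathcal{T}_{B_*}$ coincide under the height hypothesis; second, conclude that the two comodule categories are equivalent via the composite $\Phi(B)_* \circ \Phi(A)^*$; third, identify this composite with $G_{B_*,A_*}$.

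The coincidence $\mathcal{T}_{A_*} = \mathcal{T}_{B_*}$ rests on the Landweber filtration theorem: every finitely presented $\Gamma(MU_*)$-comodule admits a filtration whose subquotients are suspensions of $MU_*/I_{p,n}$. For a Landweber exact $R_*$, applying $R_* \otimes_{MU_*}(-)$ turns such a subquotient into $R_*/I_{p,n}R_*$, and this vanishes if and only if $n > {\rm ht}_p R_*$. Hence, among finitely presented comodules, membership in $\mathcal{T}_{R_*}$ depends only on the sequence of heights $\{{\rm ht}_p R_*\}_p$, and by passing to filtered colimits the same is true of all of $\mathcal{T}_{R_*}$. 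Under the hypothesis ${\rm ht}_p A_* = {\rm ht}_p B_*$ for every prime $p$, this forces $\mathcal{T}_{A_*} = \mathcal{T}_{B_*}$.

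With the two subcategories identified, Hovey-Strickland's theorem supplies an equivalence $\Phi(B)_* \circ \Phi(A)^*:\lcomod_{\Gamma(A_*)}(\ab)\xrightarrow{\ \simeq\ }\lcomod_{\Gamma(B_*)}(\ab)$. Unravelling the definitions,
\[
    \Phi(B)_* \Phi(A)^*(M_*) = B_* \otimes_{MU_*} \bigl( MU_*(A) \square_{A_*(A)} M_* \bigr),
\]
and the Landweber exactness of $B_*$ makes $B_* \otimes_{MU_*}(-)$ exact on the relevant diagrams of $\Gamma(MU_*)$-comodules, so it commutes with the cotensor product, yielding
\[
    \bigl( B_* \otimes_{MU_*} MU_*(A) \bigr) \square_{A_*(A)} M_* = B_*(A) \square_{A_*(A)} M_* = G_{B_*,A_*}(M_*),
\]
which is the required identification.

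The main obstacle should be the first step: extending the height-based vanishing criterion for $R_* \otimes_{MU_*}(-)$ from finitely presented comodules to all of $\lcomod_{\Gamma(MU_*)}(\ab)$. This requires that $\mathcal{T}_{R_*}$ is generated under filtered colimits by its finitely presented members, which is standard and already used in \cite{Hovey-Strickland}, \cite{Hovey2} and \cite{Naumann}. Once this is in place, the remaining manipulations — invoking the localisation equivalence and passing the tensor product through the cotensor product — are essentially formal.
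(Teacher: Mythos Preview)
Your proposal is correct and follows essentially the same route as the paper: identify $G_{B_*,A_*}$ with the composite $\Phi(B)_*\Phi(A)^*$, use the height hypothesis to conclude $\mathcal{T}_{A_*}=\mathcal{T}_{B_*}$, and then invoke the localisation equivalence from \cite[Thm.~2.5]{Hovey-Strickland}. The only difference is that the paper cites \cite[Thm.~7.3]{Hovey-Strickland} directly for the equality of torsion theories rather than sketching the Landweber filtration argument, and it asserts the identification $G_{B_*,A_*}=\Phi(B)_*\Phi(A)^*$ without unwinding the tensor--cotensor interchange.
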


\begin{proof}
Note that the functor $G_{B_*,A_*}$
is the composition  
$\Phi(B)_*\Phi(A)^*$.
The functor $\Phi(A)^*$
induces an equivalence 
of categories from 
$\lcomod_{\Gamma(A_*)}(\ab)$
to the localization of 
$\lcomod_{\Gamma(MU_*)}(\ab)$
with respect to $\mathcal{T}_{A_*}$
and $\Phi(B)_*$
induces an equivalence of categories
from the localization of 
$\lcomod_{\Gamma(MU_*)}(\ab)$
with respect to $\mathcal{T}_{B_*}$
to $\lcomod_{\Gamma(B_*)}(\ab)$.
By \cite[Thm.~7.3]{Hovey-Strickland},
the assumption that
$A_*$ and $B_*$ have the same heights for all $p$
implies that
$\mathcal{T}_{A_*}=\mathcal{T}_{B_*}$.
Hence we see that $G_{B_*,A_*}$
gives an equivalence of categories. 
\qed\end{proof}

\begin{lemma}
\label{lem:landweber-commutes-cotensor}
Let $A, B, C$ be Landweber exact $\mathbb{S}$-algebras.
We assume that ${\rm ht}_pA={\rm ht}_pB={\rm ht}_pC$ 
for all primes $p$. 
Then, for any $\Gamma(A)$-comodule $M$,
the canonical map
\[ C\otimes ((B\otimes A)\square_{A\otimes A}M)
   \longrightarrow
   (C\otimes B\otimes A)\subrel{A\otimes A}{\square}M\]
is an equivalence.
\end{lemma}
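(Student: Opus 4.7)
The plan is to compare both sides via the Bousfield-Kan spectral sequences of \S\ref{sec:Bousfield-Kan-SS} applied to the cobar construction, reducing the claim to flatness of $C_*$ over $MU_*$.

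First, I would observe that $C\otimes(-)$ commutes degreewise with the cobar. There is a natural equivalence of cosimplicial spectra
\[ C\otimes C^{\bullet}(B\otimes A,\, A\otimes A,\, M)\;\simeq\; C^{\bullet}(C\otimes B\otimes A,\, A\otimes A,\, M), \]
and since $C\otimes(-)$ preserves finite limits it commutes with each partial totalization $\mathrm{Tot}_r$. The canonical comparison map of the lemma is therefore the natural map $C\otimes \lim_r \mathrm{Tot}_r \to \lim_r (C\otimes \mathrm{Tot}_r)$ associated to the Bousfield-Kan Tot tower of the cobar.

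Second, I would identify the $E_2$-pages of the two Bousfield-Kan spectral sequences. Landweber exactness of $A$ and $B$ makes each layer of the cobar $MU_*$-flat and yields, via the Künneth isomorphism for Landweber exact spectra, an identification $(C\otimes B\otimes A)_*\cong C_*\otimes_{MU_*}B_*(A)$. The $E_2$-page of the right-hand Bousfield-Kan spectral sequence is then
\[ E_2^{s,t}(\mathrm{RHS})\;\cong\; \mathrm{Cotor}^{s}_{A_*(A)}\bigl(C_*\otimes_{MU_*}B_*(A),\, M_*\bigr)_t, \]
and flatness of $C_*$ over $MU_*$ (coming from Landweber exactness of $C$) together with flat base change identifies this with $C_*\otimes_{MU_*}\mathrm{Cotor}^{s}_{A_*(A)}(B_*(A), M_*)_t$. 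Smashing the Bousfield-Kan tower for $(B\otimes A)\square_{A\otimes A}M$ with $C$ and applying the same flatness yields a spectral sequence abutting to the homotopy of the left-hand side whose $E_2$-page is the same expression. The canonical comparison is a map of spectral sequences inducing this identification on $E_2$, hence it is an isomorphism on the abutment provided both spectral sequences converge.

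\textbf{Main obstacle.} The most delicate point is the convergence of the Bousfield-Kan spectral sequence after smashing with $C$, equivalently the commutation of $C\otimes(-)$ with the inverse limit defining the cotensor product. The shared-height hypothesis is essential here: it places $B_*(A)$ and $C_*\otimes_{MU_*}B_*(A)$ in the subcategory of Landweber-exact $\Gamma(MU_*)$-comodules on which Hovey-Strickland's base change (Lemma~\ref{lem:GBA-equivalence}) applies, and it is what ensures that the cobar tower is sufficiently well-behaved, in the spirit of Lemma~\ref{lemma:A-nilpotent-pro-constancy} and Proposition~\ref{prop:APM=M}, for the smash product with $C$ to commute with its limit.
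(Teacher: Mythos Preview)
Your approach has two concrete errors and leaves the main obstacle unresolved.

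First, Landweber exactness of $C$ does \emph{not} mean that $C_*$ is flat over $MU_*$; it only means that $C_*\otimes_{MU_*}(-)$ is exact on $MU_*(MU)$-comodules. For instance, $E(n)_*$ is Landweber exact but not flat over $MU_*$. So your ``flat base change'' step, pulling $C_*\otimes_{MU_*}(-)$ through the cohomology of the cobar complex, is not justified: the terms and cohomology of that complex are not obviously $MU_*(MU)$-comodules in the relevant sense. Relatedly, the identification $(C\otimes B\otimes A)_*\cong C_*\otimes_{MU_*}B_*(A)$ is false. Landweber exactness of $C$ gives $\pi_*(C\otimes X)\cong C_*\otimes_{MU_*}MU_*(X)$, not $C_*\otimes_{MU_*}\pi_*(X)$. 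The correct formula, used in the paper, is $(C\otimes B)_*(A)\cong C_*(B)\otimes_{B_*}B_*(A)$, where $C_*(B)=C_*\otimes_{MU_*}MU_*(MU)\otimes_{MU_*}B_*$.

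Second, and more fundamentally, you correctly flag convergence as the crux but do not actually deal with it. The paper's proof sidesteps the convergence problem entirely by showing that the Bousfield--Kan spectral sequence \emph{collapses} on the $s=0$ line. The key observation is that the cobar complex $C^*(A_*(A),A_*(A),M_*)$ is a complex of left $\Gamma(A_*)$-comodules which is acyclic in positive degrees; applying the exact equivalence $G_{R_*,A_*}$ of Lemma~\ref{lem:GBA-equivalence} (this is precisely where the equal-height hypothesis enters) sends it to the cobar complex $C^*(R_*(A),A_*(A),M_*)$, which is therefore also acyclic in positive degrees. Hence $\mathrm{Cotor}^s_{A_*(A)}(R_*(A),M_*)=0$ for $s>0$, the spectral sequence degenerates, and $\pi_*((R\otimes A)\square_{A\otimes A}M)\cong R_*(A)\square_{A_*(A)}M_*$ for any Landweber exact $R$ of the same height. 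Applying this with $R=B$ and $R=C\otimes B$ (Landweber exact via $MU\to B\to C\otimes B$) and using flatness of $C_*(B)$ over $B_*$ (not of $C_*$ over $MU_*$) then finishes the comparison by a direct calculation of homotopy groups. Your references to Lemma~\ref{lemma:A-nilpotent-pro-constancy} and Proposition~\ref{prop:APM=M} are off target: those require the unit to be $A$-nilpotent, which is not assumed here.
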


\begin{proof}
Let $R$ be a Landweber exact 
$\mathbb{S}$-algebra
which has the same height at all $p$ as $A$.
First, we consider the homotopy groups
of the cotensor product
$(R\otimes A)\square_{A\otimes A}M$.
The Bousfield-Kan spectral sequence
abutting to the homotopy groups
of $(R\otimes A)\square_{A\otimes A}M$
has the $E_2$-page given by
\[ E_2^{s,t}\cong{\rm Cotor}^s_{A_*(A)}(R_*(A),M_*)_t.\]

We have 
\[ H^s(C(A_*(A),A_*(A),M_*))=0\]
for $s>0$.
Note that the cobar complex
$C^*(A_*(A),A_*(A),M_*)$ is a cochain complex
in the abelian category $\lcomod_{\Gamma(A_*)}(\ab)$.
Applying the functor $G_{R_*,A_*}$
to $C^*(C(A_*(A),A_*(A),M_*))$,
we obtain
\[ H^s(C(R_*(A),A_*(A),M_*))=0\]
for $s>0$ by Lemma~\ref{lem:GBA-equivalence}.
Hence the Bousfield-Kan spectral sequence
abutting
to the homotopy groups
of $(R\otimes A)\square_{A\otimes A}M$
collapses from the $E_2$-page
and we obtain an isomorphism
\[ \pi_*((R\otimes A)\square_{A\otimes A}M)\cong
   R_*(A)\square_{A_*(A)}M_*.\]

In particular,
since $C\otimes B$ is Landweber exact
through the map
$MU\to B\to C\otimes B$ in ${\rm Ho}(\sp)$,
we obtain an isomorphism
\[ \pi_*((C\otimes B\otimes A)\square_{A\otimes A}M)\cong
   (C\otimes B)_*(A)\square_{A_*(A)}M_*.\]  
Since we have an isomorphism
\[ (C\otimes B)_*(A)\cong
   C_*(B)\otimes_{B_*}B_*(A),\]
we obtain an isomorphism
\[ \pi_*((C\otimes B\otimes A)\square_{A\otimes A}M))\cong
   C_*(B)\otimes_{B_*}(B_*(A)\square_{A_*(A)}M_*).\]

On the other hand,
we have isomorphisms
\[ \pi_*(C\otimes ((B\otimes A)\square_{A\otimes A}M))
   \cong
   C_*(B)\otimes_{B_*}\pi_*((B\otimes A)\square_{A\otimes A}M).\] 
and
\[ \pi_*((B\otimes A)\square_{A\otimes A}M)
   \cong
   B_*(A)\square_{A_*(A)}M_*.\]
Hence we see that the canonical map
$C\otimes (B\otimes A)\square_{A\otimes A}M\to
(C\otimes B\otimes A)\square_{A\otimes A}M$
induces an isomorphism of homotopy groups. 
This completes the proof.
\qed\end{proof}

\begin{corollary}
\label{cor:comodule-extension-Landweber-exact}
Let $A$ and $B$ be Landweber exact
$\mathbb{S}$-algebras.
We assume that ${\rm ht}_pA={\rm ht}_pB$ for all primes $p$.
For any left $\Gamma(A)$-comodule $M$,
the left $\Gamma(B)$-comodule structure
on $B$ induces a left $\Gamma(B)$-comodule structure on
$(B\otimes A)\square_{A\otimes A}M$.
\end{corollary}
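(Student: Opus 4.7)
The plan is to apply Proposition~\ref{prop:cotensor-product-comodule-structure} with the monoidal base replaced by ${}_B\bmod_B(\sp)^{\rm op}$. Take $\Sigma=B\otimes B$ as coalgebra object of ${}_B\bmod_B(\sp)$, $\Gamma=A\otimes A$ as coalgebra object of ${}_A\bmod_A(\sp)$, and view $B\otimes A$ as a $(\Sigma,\Gamma)$-bicomodule: the right $\Gamma(A)$-comodule structure is obtained from $\eta_R: A\to A\otimes A$ by tensoring with $B$ on the left, while the left $\Gamma(B)$-comodule structure comes from $\eta_L : B\to B\otimes B$ by tensoring with $A$ on the right. Because these two coactions are applied on disjoint tensor factors, they commute and endow $B\otimes A$ with the required bicomodule structure in ${}_B\bmod_A(\sp)$.

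Next, I would verify the hypothesis of Proposition~\ref{prop:cotensor-product-comodule-structure}, namely that for every $r>0$ the canonical map
\[
\Sigma^{\otimes_B r}\otimes_B\bigl((B\otimes A)\square_{A\otimes A}M\bigr)
\longrightarrow
\bigl(\Sigma^{\otimes_B r}\otimes_B(B\otimes A)\bigr)\square_{A\otimes A}M
\]
is an equivalence in ${}_B\bmod(\sp)$. Using the identification of bimodules $(B\otimes B)^{\otimes_B r}\simeq B^{\otimes(r+1)}$ and the collapse $B^{\otimes(r+1)}\otimes_B(B\otimes A)\simeq B^{\otimes r}\otimes (B\otimes A)$, the above map is identified with
\[
B^{\otimes r}\otimes \bigl((B\otimes A)\square_{A\otimes A}M\bigr)
\longrightarrow
(B^{\otimes r}\otimes B\otimes A)\square_{A\otimes A}M.
\]
Setting $C=B^{\otimes r}$, which is a Landweber exact $\mathbb{S}$-algebra via the composition $MU\to B\to B^{\otimes r}$ and has the same height at every prime as $A$ and $B$, Lemma~\ref{lem:landweber-commutes-cotensor} applies and delivers precisely this equivalence.

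Having verified the hypothesis, Proposition~\ref{prop:cotensor-product-comodule-structure} furnishes a lift of the left $\Sigma$-coaction on $B\otimes A$ to a left $\Sigma=B\otimes B$-coaction on $(B\otimes A)\square_{A\otimes A}M$, which is exactly the required left $\Gamma(B)$-comodule structure. The only genuine content, beyond bookkeeping of bimodule and bicomodule structures, is the cotensor/tensor exchange provided by Lemma~\ref{lem:landweber-commutes-cotensor}; the possible obstacle is making sure that the standing height hypothesis is preserved under $C\mapsto B^{\otimes r}$, which is immediate from Landweber exactness of the tensor product of Landweber exact algebras.
\qed
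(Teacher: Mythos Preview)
Your proposal is correct and follows essentially the same route as the paper. The paper's proof also reduces to showing that $B^{\otimes r}\otimes((B\otimes A)\square_{A\otimes A}M)\to(B^{\otimes r}\otimes B\otimes A)\square_{A\otimes A}M$ is an equivalence via Lemma~\ref{lem:landweber-commutes-cotensor} with $C=B^{\otimes r}$; the only cosmetic difference is that the paper cites Lemma~\ref{lem:general-convergence-colimit-criterion} directly rather than its repackaging as Proposition~\ref{prop:cotensor-product-comodule-structure}.
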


\begin{proof}
For $r>0$,
$B^{\otimes r}$ is a Landweber exact $\mathbb{S}$-algebra
and has the same height at all $p$ as $B$.
By Lemma~\ref{lem:landweber-commutes-cotensor},
the canonical map
$B^{\otimes r}\otimes ((B\otimes A)\square_{A\otimes A}M)
\to
(B^{\otimes r}\otimes B\otimes A)\square_{A\otimes A}M$
is an equivalence for all $r>0$.
Applying Lemma~\ref{lem:general-convergence-colimit-criterion}
for the simplicial object
$B_{\bullet}(B\otimes A,A\otimes A,M)$
in $\lmod_{B\otimes B}(\lmod_B(\sp)^{\rm op})$,
we see that the cosimplicial object
$C^{\bullet}(B\otimes A,A\otimes A,M)$
has a limit in $\lcomod_{\Gamma(B)}(\sp)$
and the forgetful functor
$\lcomod_{\Gamma(B)}(\sp)\to \lmod_B(\sp)$
preserves the limit.
\qed\end{proof}

Using Corollary~\ref{cor:comodule-extension-Landweber-exact},
we can define a functor 
\[ F_{B,A}: {\rm Comod}_{\Gamma(A)}({\rm Sp})\longrightarrow
   {\rm Comod}_{\Gamma(B)}({\rm Sp})\]
by assigning to 
$M\in \lcomod_{\Gamma(A)}({\rm Sp})$
the $\Gamma(B)$-comodule $F_{B,A}(M)$ given by 
\[ F_{B,A}(M)=(B\otimes A)\subrel{A\otimes A}{\square}M.\]

\begin{theorem}
\label{thm:equiv-comodule-at-p}
Let $A$ and $B$ be Landweber exact $\mathbb{S}$-algebras.
We assume that $A$ and $B$ have the same height at all $p$.
Then the functor $F_{B,A}$ gives an equivalence of quasi-categories
\[ \lcomod_{\Gamma(A)}({\rm Sp})\simeq
     \lcomod_{\Gamma(B)}({\rm Sp}).\]
\end{theorem}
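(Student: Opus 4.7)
The plan is to construct an explicit quasi-inverse $F_{A,B}$ symmetrically and to verify that both compositions reduce to the identity. I would define $F_{A,B}: \lcomod_{\Gamma(B)}(\sp)\to \lcomod_{\Gamma(A)}(\sp)$ by $F_{A,B}(N)=(A\otimes B)\square_{B\otimes B}N$. Since the hypothesis ``same height at every prime'' is symmetric in $A$ and $B$, Corollary~\ref{cor:comodule-extension-Landweber-exact} with the roles of $A$ and $B$ reversed shows that $F_{A,B}$ is well-defined.

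The key intermediate step is to compute the effect of $F_{B,A}$ on homotopy groups. The Bousfield-Kan spectral sequence for $F_{B,A}(M)=(B\otimes A)\square_{A\otimes A}M$ has $E_2^{s,t}\cong \mathrm{Cotor}^{s}_{A_*(A)}(B_*(A),M_*)_t$, since $B_*(A)$ is flat over $A_*$ by Landweber exactness. The algebraic functor $G_{B_*,A_*}$ is an equivalence of abelian categories by Lemma~\ref{lem:GBA-equivalence}, hence exact; applying it termwise to the cobar resolution of $M_*$ as a $\Gamma(A_*)$-comodule produces a resolution of $G_{B_*,A_*}(M_*)$. The spectral sequence will therefore collapse at $E_2$, yielding a natural isomorphism $\pi_*F_{B,A}(M)\cong G_{B_*,A_*}(M_*)$. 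Combined with the fact that the forgetful functor $\lcomod_{\Gamma(A)}(\sp)\to\sp$ is conservative and that $G_{B_*,A_*}$ is an equivalence, this makes $F_{B,A}$ conservative.

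Next, I would construct a natural transformation $\eta_M: M\to F_{A,B}(F_{B,A}(M))$ from the identification $M\simeq (A\otimes A)\square_{A\otimes A}M$ together with the $\mathbb{S}$-algebra unit of $B$, which furnishes a map $A\otimes A\to A\otimes B\otimes A$ assembling into a map of cosimplicial objects. Iterating the previous paragraph and invoking Lemma~\ref{lem:landweber-commutes-cotensor} to pass $\pi_*$ through the outer cotensor should identify $\pi_*F_{A,B}(F_{B,A}(M))$ with $G_{A_*,B_*}\circ G_{B_*,A_*}(M_*)\cong M_*$, and a direct check that $\pi_*(\eta_M)$ realizes this isomorphism will, via conservativity, force $\eta_M$ to be an equivalence. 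The symmetric argument proves $F_{B,A}\circ F_{A,B}\simeq \mathrm{id}$, so $F_{B,A}$ is the desired equivalence.

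The main obstacle will be the coherent construction of $\eta_M$ as a natural transformation in the quasi-categorical setting and the verification that it implements the algebraic equivalence on homotopy. The algebraic input $G_{A_*,B_*}\circ G_{B_*,A_*}\simeq \mathrm{id}$ is immediate from Lemma~\ref{lem:GBA-equivalence}, but lifting it to the level of spectra requires careful bookkeeping of the two nested cobar constructions and the Fubini-type interchange of cotensor products on homotopy provided by Lemma~\ref{lem:landweber-commutes-cotensor}.
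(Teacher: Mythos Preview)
Your proposal follows essentially the same route as the paper. The paper's proof is extremely terse—it records the identification $\pi_*F_{B,A}(M)\cong G_{B_*,A_*}(M_*)$ (established in the proof of Lemma~\ref{lem:landweber-commutes-cotensor}) and then simply asserts that $F_{B,A}$ is an equivalence because $G_{B_*,A_*}$ is (Lemma~\ref{lem:GBA-equivalence}); your plan to build the symmetric $F_{A,B}$, construct the comparison $\eta$, and verify it on $\pi_*$ via conservativity is precisely the argument the paper leaves implicit.
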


\begin{proof}
For any left $\Gamma(A)$-comodule $M$,
we have 
\[ \pi_*F_{B,A}(M)\cong B_*(A)\square_{A_*(A)}M_*.\]
Since the functor
$G_{B_*,A_*}= B_*(A)\square_{A_*(A)}(-)$
gives an equivalence of categories
by Lemma~\ref{lem:GBA-equivalence},
we see that
the functor $F_{B,A}$ gives an equivalence 
of quasi-categories between
$\lcomod_{\Gamma(A)}(\sp)$
and $\lcomod_{\Gamma(B)}(\sp)$.
\qed\end{proof}

\begin{proposition}
\label{prop:compatibility-extension-AandB}
Let $A$ and $B$ be Landweber exact $\mathbb{S}$-algebras.
We assume that ${\rm ht}_pA={\rm ht}_pB$ for all primes $p$.
Then the following diagram is commutative
\[ \xymatrix{
    & \sp \ar[dl]_{A\otimes (-)} \ar[dr]^{B\otimes(-)} & \\
    \lcomod_{\Gamma(A)} \ar[rr]^{F_{B,A}}
    && \lcomod_{\Gamma(B)}. \\
    }\]
\end{proposition}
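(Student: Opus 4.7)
The plan is to construct a natural comparison map $\phi_X : B\otimes X \to F_{B,A}(A\otimes X) = (B\otimes A)\square_{A\otimes A}(A\otimes X)$ and show it is an equivalence via the Bousfield-Kan spectral sequence, largely parallel to the argument already used for Lemma~\ref{lem:landweber-commutes-cotensor}.

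First I would construct $\phi_X$ from a coaugmentation of the cobar construction. The cosimplicial object $C^\bullet(B\otimes A,A\otimes A,A\otimes X)$ has $C^0 = B\otimes A\otimes X$, and the map $1_B\otimes\eta\otimes 1_X : B\otimes X \to B\otimes A\otimes X$ is a valid coaugmentation: the two coface maps $C^0\rightrightarrows C^1$ agree on the image of $B\otimes X$, both sending $b\otimes x$ to $b\otimes 1\otimes 1\otimes x$. Hence $\phi_X$ factors through the limit $F_{B,A}(A\otimes X)$ in $\lmod_B(\sp)$. By Corollary~\ref{cor:comodule-extension-Landweber-exact} this limit is created in $\lcomod_{\Gamma(B)}(\sp)$, and I would promote $\phi_X$ to a $\Gamma(B)$-comodule map by checking that the augmented cosimplicial object lives in $\lcomod_{\Gamma(B)}(\sp)$ levelwise, the $\Gamma(B)$-coaction on each $C^r$ being induced from the canonical $\Gamma(B)$-coaction on the leftmost factor $B\otimes A$.

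Next I would run the Bousfield-Kan spectral sequence of \S\ref{sec:Bousfield-Kan-SS} to compute $\pi_*F_{B,A}(A\otimes X)$. Its $E_2$-page is ${\rm Cotor}^s_{A_*(A)}(B_*(A),A_*X)_t$. The argument for collapse mirrors Lemma~\ref{lem:landweber-commutes-cotensor}: the complex $C^*(A_*(A),A_*(A),A_*X)$ is exact in positive cohomological degrees (being a split resolution of $A_*X$ in $\lcomod_{\Gamma(A_*)}(\ab)$), and applying the exact equivalence $G_{B_*,A_*}$ of Lemma~\ref{lem:GBA-equivalence} levelwise recovers $C^*(B_*(A),A_*(A),A_*X)$ (using the cofree-comodule identity $B_*(A)\square_{A_*(A)}(A_*(A)\otimes_{A_*}V)\cong B_*(A)\otimes_{A_*}V$). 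Hence $E_2^{s,t}=0$ for $s>0$ and $E_2^{0,t}\cong G_{B_*,A_*}(A_*X)_t$.

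To identify $G_{B_*,A_*}(A_*X)$ with $B_*X$, I would invoke transitivity $G_{B_*,A_*}\circ G_{A_*,MU_*}=G_{B_*,MU_*}$, which follows from the decomposition $G=\Phi(-)_*\Phi(-)^*$ together with the equal-height hypothesis $\mathcal{T}_{A_*}=\mathcal{T}_{B_*}$ (so $\Phi(B)_*$ vanishes on the comonadic cofiber of $\Phi(A)^*\Phi(A)_*\to \mathrm{id}$). Since $A_*X = \Phi(A)_*(MU_*X)=G_{A_*,MU_*}(MU_*X)$ and $G_{B_*,MU_*}(MU_*X)=B_*\otimes_{MU_*}MU_*X=B_*X$, we obtain $E_2^{0,t}\cong (B_*X)_t=\pi_t(B\otimes X)$. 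The edge homomorphism is $\pi_*\phi_X$, so $\phi_X$ is a $\pi_*$-isomorphism and therefore an equivalence; naturality in $X$ is built into the construction.

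The main obstacle is the first step: upgrading the spectrum-level coaugmentation to one that manifestly takes values in $\lcomod_{\Gamma(B)}(\sp)$, so that the resulting $\phi_X$ is genuinely a natural map of $\Gamma(B)$-comodules rather than merely of $B$-modules. I expect this to be handled by combining the construction in Corollary~\ref{cor:comodule-extension-Landweber-exact} with the coherence criterion of Lemma~\ref{lem:general-convergence-colimit-criterion} applied to the augmented cosimplicial object; once that is done, the collapse of the Bousfield-Kan spectral sequence proceeds as sketched.
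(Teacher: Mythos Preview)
Your proposal is correct and follows essentially the same strategy as the paper: construct a natural comparison map and verify it is an equivalence via the Bousfield-Kan spectral sequence computation already carried out in Lemma~\ref{lem:landweber-commutes-cotensor}.

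The one substantive difference is in how the comparison map is built, and this bears directly on the obstacle you flag. Rather than coaugmenting the cobar complex for $(B\otimes A)\square_{A\otimes A}(A\otimes X)$ directly, the paper first identifies $B\otimes X$ with a cotensor product over $\Gamma(\mathbb{S})$: since $B\otimes\mathbb{S}$ is an extended right $\Gamma(\mathbb{S})$-comodule, one has $B\otimes X\simeq (B\otimes\mathbb{S})\square_{\mathbb{S}\otimes\mathbb{S}}(\mathbb{S}\otimes X)$. The unit $\mathbb{S}\to A$ then induces a map of cosimplicial objects $C^\bullet(B\otimes\mathbb{S},\mathbb{S}\otimes\mathbb{S},\mathbb{S}\otimes X)\to C^\bullet(B\otimes A,A\otimes A,A\otimes X)$ which is levelwise a map of $\Gamma(B)$-comodules (the coaction on both sides comes from the leftmost $B$), so the induced map on limits is automatically a $\Gamma(B)$-comodule map. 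This neatly sidesteps the promotion problem you identify; your plan to invoke Corollary~\ref{cor:comodule-extension-Landweber-exact} and Lemma~\ref{lem:general-convergence-colimit-criterion} on the augmented diagram would also work, but the paper's route is cleaner. On the homotopy-group identification, your transitivity argument $G_{B_*,A_*}(A_*X)\cong B_*X$ spells out what the paper leaves implicit in the sentence ``we see that $f$ induces an isomorphism of homotopy groups.''
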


\begin{proof}
Since $B\otimes \mathbb{S}\simeq 
B\otimes_{\mathbb{S}}\mathbb{S}\otimes \mathbb{S}$
is an extended right $\Gamma(\mathbb{S})$-comodule,
we have a natural equivalence
\[ B\otimes X\simeq 
   (B\otimes \mathbb{S})
   \square_{\mathbb{S}\otimes \mathbb{S}}(\mathbb{S}\otimes X) \]
for any $X\in\sp$.
The unit map
$\mathbb{S}\to A$ induces
a natural map
\[ (B\otimes \mathbb{S})
    \square_{\mathbb{S}\otimes \mathbb{S}}(\mathbb{S}\otimes X)
   \longrightarrow
   (B\otimes A)\square_{A\otimes A}(A\otimes X)\]
and hence we obtain a natural map
\[ f: B\otimes X
   \longrightarrow
   (B\otimes A)\square_{A\otimes A}(A\otimes X).\]
Note that $f$ is a map
of left $\Gamma(B)$-comodule.
Since 
\[ \pi_*((B\otimes A)\square_{A\otimes A}(A\otimes X))\cong
   B_*(A)\square_{A_*(A)}A_*(X),\]
we see that $f$ induces an isomorphism
of homotopy groups. 
This completes the proof.
\qed\end{proof}

\if0
Now we consider the global case.
Let $A$ be a Landweber exact $\mathbb{S}$-algebra.
The localization $A_{(p)}$ 
is a Landweber exact $\mathbb{S}_{(p)}$-algebra
at any prime number $p$.
We denote by ${\rm ht}_pA$
the height of the localization $A_{(p)}$ at $p$.

Let $\mathcal{T}$ be a graded proper hereditary torsion 
theory  of graded $MU_*(MU)$-comodules
and let $\mathcal{T}^{(p)}$ be the class
of $p$-torsion comodules in $\mathcal{T}$.
By \cite[Lem.~7.1]{Hovey-Strickland},
$\mathcal{T}=\oplus_p\mathcal{T}^{(p)}$,
and furthermore,
there is a one-to-one correspondence between
graded hereditary torsion theories 
of graded $p$-torsion $MU_*(MU)$-comodules
and graded proper hereditary torsion
theories of graded $BP_*(BP)$-comodules.

For a graded commutative 
Landweber exact $MU_*$-algebra $A_*$,
We denote by $\mathcal{T}_{A_*}$
the class of graded $\Gamma(MU_*)$-comodules $M_*$ such that
$A_*\otimes_{MU_*}M_*$ is trivial.
Then $\mathcal{T}_{A_*}$ is a graded hereditary torsion
theory of graded $\Gamma(MU_*)$-comodules.
We set $A_*(A)=A_*\otimes_{MU_*}MU_*(MU)\otimes_{MU_*}A_*$.
Then the pair $\Gamma(A_*)=(A_*,A_*(A))$
is a graded Hopf algebroid.
By \cite[Thm.~2.5]{Hovey-Strickland},
we see that the category of 
graded $\Gamma(A_*)$-comodules 
is equivalent to the localization
of the category of graded $\Gamma(MU_*)$-comodules
with respect to $\mathcal{T}_{A_*}$.
If $A_*$ is nonzero,
then $\mathcal{T}_{A_*}$ is proper
and $\mathcal{T}_{A_*}=\oplus_p \mathcal{T}_{A_*}^{(p)}$,
and furthermore, $\mathcal{T}_{A_*}^{(p)}$
corresponds to the graded  proper hereditary torsion
theory $\mathcal{T}_{ {\rm ht}_pA_*}$
of graded $\Gamma(BP_*)$-comodules for all $p$
by \cite[Thm.~7.3 and its proof]{Hovey-Strickland}.

\begin{theorem}
Let $A$ and $B$ be Landweber exact $\mathbb{S}$-algebras.
We assume that ${\rm ht}_pA={\rm ht}_pB$
for all primes $p$.
There is an equivalence of quasi-categories
\[ \lcomod_{\Gamma(A)}(\sp)\simeq
     \lcomod_{\Gamma(B)}(\sp).\]
\end{theorem}

\begin{proof}
We consider
the cobar complex 
$C^*(B_*(A),A_*(A),M_*)$
for a $\Gamma(A)$-comodule $M$.
We have an isomorphism
$A_*\otimes_{MU_*}C^*(MU_*(A),A_*(A),M_*)\cong
C^*(A_*(A),A_*(A),M_*)$.
Since we have 
$H^s(C(A_*(A),A_*(A),M_*))=0$
for $s>0$,
we see that
$C(MU_*(A),A_*(A),M_*)$
lies in the graded hereditary torsion
theory $\mathcal{T}_{A_*}$.
The assumption
that ${\rm ht}_pA_*={\rm ht}_pB_*$ for all $p$
implies that $\mathcal{T}_{A_*}=\mathcal{T}_{B_*}$.
Hence we obtain that
$H^s(C(B_*(A),A_*(A),M_*))=0$
for $s>0$.

We consider the Bousfield-Kan spectral sequence
abutting to the homotopy groups
of $(B\otimes A)\square_{A\otimes A}M$.
Since the $E_2$-page is given by
$E_2^{s,t}=H^s(C(B_*(A),A_*(A),M_*))$,
we see that it 
collapses from the $E_2$-page and
we have 
$\pi_*((B\otimes A)\square_{A\otimes A}M)\cong
B_*(A)\square_{A_*(A)}M_*$.

We consider the functor
\[ F_{B,A}:\lcomod_{\Gamma(A)}(\sp)\longrightarrow
          \lcomod_{\Gamma(B)}(\sp)\]
given by
$F_{B,A}(M)=(B\otimes A)\square_{A\otimes A}M$.
\qed\end{proof}
\fi

\subsection{The $E(n)$-local category}

In this subsection 
we study the quasi-category of $E(n)$-local spectra,
where $E(n)$ is the $n$th Johnson-Wilson spectrum
at a prime $p$.
We show that the quasi-category of $E(n)$-local spectra
is equivalent to the quasi-category of comodules
over the coalgebra $A\otimes A$
for any Landweber exact $\mathbb{S}_{(p)}$-algebra
of height $n$ at $p$.

In this subsection we fix a non-negative integer $n$ 
and a prime number $p$.
Let $L_n\sp$ be the Bousfield localization
of the quasi-category of spectra
with respect to the $n$th Johnson-Wilson spectrum $E(n)$
at $p$.
We denote by $L_n:\sp\to\sp$ the associated localization functor.
The quasi-category $L_n\sp$
is a stable homotopy theory
with the tensor product in $\sp$
and the unit $L_n\mathbb{S}$,
where $L_n\mathbb{S}$ is 
the $E(n)$-localization of the sphere spectrum $\mathbb{S}$.

We recall that $E(n)$ is a Landweber exact 
$\mathbb{S}_{(p)}$-algebra
with height ${\rm ht}_p\,E(n)=n$.
For simplicity,
we set
$\Gamma(n)=(E(n),E(n)\otimes E(n))$.
The functor
\[ E(n)\otimes (-):\sp\to 
   \lcomod_{\Gamma(n)}(\sp) \]
factors through $L_n\sp$ and
we obtain a functor
\[ E(n)\otimes (-):
   L_n\sp\longrightarrow
   \lcomod_{\Gamma(n)}(\sp).\]

Since any $N\in \lmod_{E(n)}(\sp)$
is $E(n)$-local,
we see that 
$P(M)$ lies in $L_n\sp$
for any $M\in\lcomod_{\Gamma(n)}(\sp)$,
and we obtain an adjunction of functors
\[ E(n)\otimes (-): L_n\sp
   \rightleftarrows
   \lcomod_{\Gamma(n)}(\sp): P.\]      

\begin{proposition}
\label{prop:E(n)-local-equivalence-comodule}
The pair of functors 
\[ E(n)\otimes (-): L_n\sp\rightleftarrows
   \lcomod_{\Gamma(n)}(\sp):
   P \]
is an adjoint equivalence.
\end{proposition}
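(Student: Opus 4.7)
The plan is to apply Theorem~\ref{theorem:general-equivalence-comodules} to the stable homotopy theory $\mathcal{C}=L_n\sp$ with algebra object $A=E(n)$, and then to identify the resulting equivalence with the one asserted in the proposition. This requires verifying three things: (i) the quasi-category of $\Gamma(E(n))$-comodules formed internally to $L_n\sp$ coincides with $\lcomod_{\Gamma(n)}(\sp)$; (ii) the further localization $L_{E(n)}(L_n\sp)$ is just $L_n\sp$ itself; and (iii) the unit $L_n\mathbb{S}$ of $L_n\sp$ is $E(n)$-nilpotent in $L_n\sp$.

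I expect (i) and (ii) to be essentially formal. Every left $E(n)$-module object is $E(n)$-local, so in particular the $E(n)$-$E(n)$-bimodule $E(n)\otimes E(n)$ is already $E(n)$-local and represents the same coalgebra object whether it is formed in $\sp$ or in the monoidal localization $L_n\sp$ (where the tensor product is $L_n$ of the $\sp$-tensor product). Since every $\Gamma(n)$-comodule has an underlying $E(n)$-module structure, it lies in $L_n\sp$, and the two quasi-categories of comodules are naturally equivalent. Point (ii) is then tautological, because a morphism between $E(n)$-local objects is an $E(n)$-equivalence if and only if it is already an equivalence in $L_n\sp$.

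The main obstacle is (iii): showing that $L_n\mathbb{S}$ lies in the thick tensor ideal of $L_n\sp$ generated by $E(n)$. This is precisely the descendability of $E(n)$ in $L_n\sp$ in the sense of~\cite{Mathew}. It is a deep theorem, equivalent to the existence of a horizontal vanishing line on the $E_\infty$-page of the $E(n)$-based Adams spectral sequence in $L_n\sp$, and follows from the Hopkins--Ravenel smashing theorem. Granting (iii), Theorem~\ref{theorem:general-equivalence-comodules} supplies an adjoint equivalence
\[ E(n)\otimes(-): L_{E(n)}(L_n\sp) \rightleftarrows \lcomod_{\Gamma(E(n))}(L_n\sp): P, \]
which, via the identifications from (i) and (ii), is exactly the asserted adjoint equivalence between $L_n\sp$ and $\lcomod_{\Gamma(n)}(\sp)$.
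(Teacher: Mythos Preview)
Your proposal is correct and follows essentially the same route as the paper: apply Theorem~\ref{theorem:general-equivalence-comodules} with $\mathcal{C}=L_n\sp$ and $A=E(n)$, the key input being the Hopkins--Ravenel nilpotence result that $L_n\mathbb{S}$ lies in the thick tensor ideal generated by $E(n)$ (the paper cites \cite[Theorem~5.3]{Hovey-Sadofsky} and \cite[Chapter~8]{Ravenel2} for this). You are more explicit than the paper about the identifications (i) and (ii), which the paper leaves implicit; the paper's surrounding discussion (that every $E(n)$-module is $E(n)$-local, so $P(M)\in L_n\sp$) is in effect the justification for (i), and (ii) is tautological as you note.
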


\begin{proof}
By \cite[Theorem~5.3]{Hovey-Sadofsky},
the unit $L_n\mathbb{S}$ is $E(n)$-nilpotent
(see, also, \cite[Chapter~8]{Ravenel2}).
By Theorem~\ref{theorem:general-equivalence-comodules},
we obtain the proposition.
\qed\end{proof}

Let $A$ be a Landweber exact $\mathbb{S}_{(p)}$-algebra
with ${\rm ht}_p\,A=n$.
Since $A$ is Bousfield equivalent 
to $E(n)$ by \cite[Corollary~1.12]{Hovey},
we have a canonical equivalence
of stable homotopy theories
\[ L_A\sp\simeq L_n\sp, \]
where $L_A\sp$
is the Bousfield localization of $\sp$
with respect to $A$.
In the same way as $E(n)$,
we have an adjunction of functors
\[ A\otimes (-):
   L_A\sp\rightleftarrows
   \lcomod_{\Gamma(A)}(\sp): P.\]

Recall that we have the functor
\[ F_{A,E(n)}:\lcomod_{\Gamma(n)}(\sp)\to\lcomod_{\Gamma(A)}(\sp) \]
given by
\[ F_{A,E(n)}(M)=(A\otimes E(n)){\square}_{E(n)\otimes E(n)}M\]
for $M\in \lcomod_{\Gamma(n)}(\sp)$.
By Proposition~\ref{prop:compatibility-extension-AandB},
we see that there is a commutative diagram
of quasi-categories
\[ \begin{array}{ccc}
    L_n\sp &\stackrel{E(n)\otimes(-)}
    {\hbox to 15mm{\rightarrowfill}} &
    \lcomod_{\Gamma(n)}(\sp)\\[1mm]
    \bigg\downarrow & & 
    \phantom{\mbox{$\scriptstyle F_{A,E(n)}$}}
    \bigg\downarrow
    \mbox{$\scriptstyle F_{A,E(n)}$}\\[1mm]
    L_A\sp & \stackrel{A\otimes (-)}
    {\hbox to 15mm{\rightarrowfill}} &
    \lcomod_{\Gamma(A)}(\sp),\\
   \end{array}\] 
where the left vertical arrow is the canonical equivalence.

\begin{theorem}
\label{thm:Ln-equivalent-comodules}
If $A$ is a Landweber exact $\mathbb{S}_{(p)}$-algebra
of height $n$ at $p$,
then the pair of functors
\[ A\otimes (-): L_A{\rm Sp}\rightleftarrows
{\rm Comod}_{A\otimes A}({\rm Sp}):P \]
is an adjoint equivalence.
\end{theorem}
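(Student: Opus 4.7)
The plan is to deduce the theorem by combining three earlier equivalences via the commutative diagram displayed immediately before the theorem. The three ingredients are: the equivalence $L_n\sp\simeq\lcomod_{\Gamma(n)}(\sp)$ from Proposition~\ref{prop:E(n)-local-equivalence-comodule}; the equivalence $F_{A,E(n)}:\lcomod_{\Gamma(n)}(\sp)\xrightarrow{\simeq}\lcomod_{\Gamma(A)}(\sp)$ from Theorem~\ref{thm:equiv-comodule-at-p}, which applies because ${\rm ht}_p A = n = {\rm ht}_p E(n)$ and both are Landweber exact $\mathbb{S}_{(p)}$-algebras (and their heights at primes different from $p$ both vanish); and the canonical equivalence $L_n\sp\simeq L_A\sp$ of Bousfield localizations, justified by the fact that a Landweber exact spectrum of height $n$ at $p$ is Bousfield equivalent to $E(n)$ by \cite[Corollary~1.12]{Hovey}.

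With these in hand, I would invoke the commutative diagram
\[ \xymatrix{
   L_n\sp \ar[r]^{E(n)\otimes(-)} \ar[d]_{\simeq} &
   \lcomod_{\Gamma(n)}(\sp) \ar[d]^{F_{A,E(n)}} \\
   L_A\sp \ar[r]^{A\otimes(-)} &
   \lcomod_{\Gamma(A)}(\sp)
  }\]
recorded in Proposition~\ref{prop:compatibility-extension-AandB}. The top arrow, the right arrow, and the left arrow are all equivalences of quasi-categories by the three ingredients above. Hence by the two-out-of-three property for equivalences in $\mathrm{Cat}_\infty$, the bottom arrow $A\otimes(-): L_A\sp\to\lcomod_{\Gamma(A)}(\sp)$ is also an equivalence of quasi-categories.

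It remains to promote this to an adjoint equivalence with right adjoint $P$. The adjunction $(A\otimes(-),P)$ on $L_A\sp$ is already constructed in the paragraph preceding Proposition~\ref{prop:E(n)-local-equivalence-comodule}, exactly as in the $E(n)$ case: $P(M)$ lies in $L_A\sp$ because any $A$-module is $A$-local, and the totalization description of $P$ makes it right adjoint to $A\otimes(-)$. Since the left adjoint in an adjunction is an equivalence if and only if the adjunction is an adjoint equivalence (in which case the right adjoint is an inverse), the conclusion follows.

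The only step that requires any care is verifying that the commutative diagram of Proposition~\ref{prop:compatibility-extension-AandB} truly specializes to the present situation in a manner compatible with the chosen adjunctions; this is essentially formal because $F_{A,E(n)}$ is constructed as a cotensor product that transports primitives correctly, so the right adjoints match up under the equivalences as well. I expect no substantive obstacle beyond ensuring this bookkeeping is in place.
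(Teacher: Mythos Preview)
Your proposal is correct and follows essentially the same approach as the paper: the paper's proof is the single sentence ``The theorem follows from Theorem~\ref{thm:equiv-comodule-at-p} and Proposition~\ref{prop:E(n)-local-equivalence-comodule},'' which is precisely the two-out-of-three argument you spell out using the commutative diagram established just before the theorem. Your additional remarks about the heights at primes $q\neq p$ vanishing (since $q$ is invertible in a $p$-local ring) and about promoting the equivalence to an adjoint equivalence are correct elaborations of points the paper leaves implicit.
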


\begin{proof}
The theorem follows
from Theorem~\ref{thm:equiv-comodule-at-p}
and Proposition~\ref{prop:E(n)-local-equivalence-comodule}.
\qed\end{proof}

\subsection{Connective cases}

In this subsection 
we consider the quasi-category of comodules
over $A\otimes A$ for a connective $\mathbb{S}$-algebra $A$.
We show that the quasi-category of connective $A$-local spectra
is equivalent to the quasi-category
of connective comodules over $A\otimes A$ under
some conditions.

We say that a spectrum $X$ is connective
if $\pi_iX=0$ for all $i<0$.
We denote by $\sp^{\ge 0}$
the full subcategory of $\sp$
consisting of connective objects.
In this subsection 
we let $A$ be a connective $\mathbb{S}$-algebra
and assume that $A_*(A)$
is flat as a left and right $A_*$-module.

We consider the condition that
the multiplication map induces an isomorphism
$\pi_0(A\otimes A)\stackrel{\cong}{\to}\pi_0A$.
Note that there is an isomorphism $\pi_0(A\otimes A)\cong
\pi_0A\otimes\pi_0A$.

Let $R$ be a (possibly non-commutative) ring with identity $1$.
The core $cR$ of $R$ is defined to be the subring
\[ cR=\{r\in R|\ 
   \mbox{\rm $r\otimes 1=1\otimes r$ in $R\otimes R$} 
   \}\]
(see \cite[6.4]{Bousfield}).
The core $cR$ is a commutative ring
and the multiplication map 
$cR\otimes cR\to cR$ is an isomorphism.
We see that, if the multiplication map
induces an isomorphism $\pi_0(A\otimes A)\stackrel{\cong}{\to}
\pi_0A$, then $c\pi_0A=\pi_0A$ and, in particular,
$\pi_0A$ is a commutative ring.

\begin{lemma}
\label{lemma:connective-primitive-connective}
Let $M$ be a connective left $\Gamma(A)$-comodule in $\sp$.
If the multiplication map
$A\otimes A\to A$ induces an isomorphism
$\pi_0(A\otimes A)\stackrel{\cong}{\to}
   \pi_0A$,
then $P(M)$ is connective.
\end{lemma}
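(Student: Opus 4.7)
The plan is to prove that $\pi_i P(M) = 0$ for all $i < 0$ by analyzing the tower of partial totalizations $\{{\rm Tot}_r(C^{\bullet})\}_{r\ge 0}$ of the cobar complex $C^{\bullet}=C^{\bullet}(A, A\otimes A, M)$, whose limit is $P(M)$, and by using the normalized cobar complex to extract the right amount of connectivity.

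First, I would show that the fiber $\overline{A\otimes A}$ of the counit $\epsilon\colon A\otimes A\to A$ is $1$-connective. This is immediate from the long exact sequence in homotopy associated to the fiber sequence $\overline{A\otimes A}\to A\otimes A\to A$, using that $A$ and $A\otimes A$ are connective together with the hypothesis that $\pi_0(A\otimes A)\to \pi_0 A$ is an isomorphism. Since the relative tensor product $\otimes_A$ over the connective $\mathbb{S}$-algebra $A$ is additive in connectivities (via the bar-construction model), the iterated product $\overline{A\otimes A}^{\otimes_A s}$ is $s$-connective, and consequently the normalized cobar term
\[ \overline{C}^{\,s}\simeq A\otimes_A \overline{A\otimes A}^{\otimes_A s}\otimes_A M \]
is $s$-connective.

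Next I would exploit the standard fiber sequence
\[ \Omega^s\,\overline{C}^{\,s}\longrightarrow {\rm Tot}_s(C^{\bullet})\longrightarrow {\rm Tot}_{s-1}(C^{\bullet}) \]
in the Tot tower. Since $\Omega^s$ applied to an $s$-connective spectrum is $0$-connective, each fiber is $0$-connective, and induction starting from ${\rm Tot}_0\simeq M$ shows that every ${\rm Tot}_r(C^{\bullet})$ is $0$-connective. Crucially, because $\overline{C}^{\,s+1}$ is $(s+1)$-connective, the fiber $\Omega^{s+1}\overline{C}^{\,s+1}$ has $\pi_{-1}=0$, which forces the transition map $\pi_0\,{\rm Tot}_{s+1}\to \pi_0\,{\rm Tot}_s$ to be surjective for every $s\ge 0$; hence the tower $\{\pi_0\,{\rm Tot}_r\}$ is Mittag--Leffler.

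Finally, the Milnor short exact sequence
\[ 0\to {\rm lim}^1\,\pi_{i+1}\,{\rm Tot}_r\to \pi_i P(M)\to {\rm lim}\,\pi_i\,{\rm Tot}_r\to 0 \]
delivers the conclusion. For $i\le -2$ both outer terms vanish from the uniform $0$-connectivity of the ${\rm Tot}_r$; for $i=-1$ the ${\rm lim}$ term vanishes for the same reason while the ${\rm lim}^1$-term vanishes by the Mittag--Leffler property established above. The main obstacle is precisely the case $i=-1$, which is invisible to a naive connectivity count on the partial totalizations: it is resolved only by extracting the extra unit of connectivity provided by the hypothesis $\pi_0(A\otimes A)\cong \pi_0 A$, which upgrades $\overline{C}^{\,s}$ from $(s-1)$- to $s$-connective and thereby supplies the Mittag--Leffler property.
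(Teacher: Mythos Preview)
Your argument is correct. One small point: to conclude that $\overline{A\otimes A}$ is $1$-connective you implicitly use that the multiplication $A\otimes A\to A$ is split (by $\eta_L$ or $\eta_R$), so that $A\otimes A\simeq A\oplus\overline{A\otimes A}$ as spectra; the hypothesis $\pi_0(A\otimes A)\cong\pi_0A$ then kills $\pi_0\overline{A\otimes A}$. This is routine but worth making explicit, since the long exact sequence alone does not give it.

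Your route differs from the paper's in a useful way. The paper argues on the algebraic side: using the standing flatness hypothesis on $A_*(A)$ it identifies the $E_2$-page of the Bousfield--Kan spectral sequence with the cohomology of the \emph{algebraic} normalized cobar complex $\overline{C}^{\,*}(A_*,A_*(A),M_*)$, observes that $\overline{\Gamma}_*=\ker(\epsilon_*)$ lives in positive degrees, deduces $E_2^{s,t}=0$ for $t<s$, and then invokes \cite[Lem.~X.7.3]{Bousfield-Kan} for convergence. You instead stay at the spectrum level: the $1$-connectivity of $\overline{A\otimes A}$ and additivity of connectivity under $\otimes_A$ give $s$-connectivity of the spectral normalized layers (hence already an $E_1$ vanishing line), and you replace the Bousfield--Kan reference by an explicit Milnor/Mittag--Leffler argument. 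Your approach is more self-contained and does not use the flatness of $A_*(A)$, so it is strictly more general; the paper's approach has the advantage of tying directly into the $E_2$-description in terms of ${\rm Ext}$ already set up in \S\ref{sec:Bousfield-Kan-SS}.
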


\begin{proof}
We have the Bousfield-Kan spectral sequence
\[ E_2^{s,t}\cong\pi^s\pi_tC^{\bullet}(A,A\otimes A,M)
   \Longrightarrow \pi_{t-s}P(M).\]
This is an upper half plane spectral sequence.
Since $A_*(A)$
is flat as a left and right $A_*$-module,
there is an isomorphism
\[ \pi_*C^r(A,A\otimes A,M)\cong
   C^r(A_*,A_*(A),M_*)\]
for any $r\ge 0$.
Hence we obtain an isomorphism
\[ E_2^{s,*}\cong
   H^s(\overline{C}{}^*(A_*,A_*(A),M_*)).\]
Let $\overline{\Gamma}$
be the kernel of the map $A_*(A)\to A_*$
induced by the multiplication.
By the assumption
that $\pi_0(A\otimes A)\stackrel{\cong}{\to}\pi_0(A)$,
$\overline{\Gamma}_t=0$ for $t\le 0$.
This implies that
\[ \overline{C}{}^s(A_*,A_*(A),M_*)_t=0\]
for $t<s$.
Hence $E_2^{s,t}=0$ for $t<s$.
The lemma follows from
\cite[Lemma~X.7.3]{Bousfield-Kan}.
\qed\end{proof}

We recall that $L_A\sp$
is the Bousfield localization of $\sp$
with respect to $A$
and $L_A:\sp\to \sp$ is the associated
localization functor and that 
we have the adjoint pair of functors
\[ A\otimes (-): L_A\sp\rightleftarrows 
   \lcomod_{\Gamma(A)}(\sp): P.\] 
We denote by $L_A{\rm Sp}^{\ge 0}$
the full subcategory of $L_A{\rm Sp}$
consisting of connective objects.
We also denote by 
$\lcomod_{\Gamma(A)}(\sp)^{\ge 0}$
the full subcategories of $\lcomod_{\Gamma(A)}(\sp)$
consisting of connective objects.

By Lemma~\ref{lemma:connective-primitive-connective},
we see that the functor 
$P: \lcomod_{\Gamma(A)}(\sp)\to L_A\sp$ restricted to
the full subcategory $\lcomod_{\Gamma(A)}(\sp)^{\ge 0}$
factors through $L_A\sp^{\ge 0}$
when the multiplication map
induces an isomorphism
$\pi_0(A\otimes A)\stackrel{\cong}{\to} \pi_0 A$.
Hence we obtain the following corollary.

\begin{corollary}
\label{cor:connective-adjunction}
If the multiplication map
induces an isomorphism
$\pi_0(A\otimes A)\stackrel{\cong}{\to}\pi_0A$,
then there is an adjunction of functors
\[ A\otimes (-):
   L_A\sp^{\ge 0}\rightleftarrows
   \lcomod_{\Gamma(A)}(\sp)^{\ge 0}:
   P.\]
\end{corollary}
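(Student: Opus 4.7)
The plan is to deduce the corollary formally from the adjunction $A\otimes(-): L_A\sp \rightleftarrows \lcomod_{\Gamma(A)}(\sp): P$ already established in the subsection on equivalences of quasi-categories of comodules, by checking that both functors preserve connectivity under the stated hypothesis. Since $L_A\sp^{\ge 0}$ and $\lcomod_{\Gamma(A)}(\sp)^{\ge 0}$ are defined as the full subcategories of connective objects, and full subcategory inclusions preserve mapping spaces, any adjunction on the ambient pair that restricts to functors between the full subcategories automatically gives an adjunction between the full subcategories.

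First, I would verify that $A\otimes(-)$ carries $L_A\sp^{\ge 0}$ into $\lcomod_{\Gamma(A)}(\sp)^{\ge 0}$. This is immediate from the standing assumption of the subsection that $A$ is a connective $\mathbb{S}$-algebra, together with the fact that the smash product of two connective spectra is connective: for $X\in L_A\sp^{\ge 0}$, the underlying spectrum of $A\otimes X$ is connective, so $A\otimes X$ lies in $\lcomod_{\Gamma(A)}(\sp)^{\ge 0}$.

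Next, I would verify that $P$ carries $\lcomod_{\Gamma(A)}(\sp)^{\ge 0}$ into $L_A\sp^{\ge 0}$. This is exactly the content of Lemma~\ref{lemma:connective-primitive-connective}: under the assumption that the multiplication map induces an isomorphism $\pi_0(A\otimes A)\stackrel{\cong}{\to}\pi_0 A$, the Bousfield--Kan spectral sequence computing $\pi_*P(M)$ has $E_2^{s,t}=0$ for $t<s$, and hence $P(M)$ is connective whenever $M$ is. Since we already know that $P(M)\in L_A\sp$ for every $M$, we conclude that $P(M)\in L_A\sp^{\ge 0}$.

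Finally, I would assemble the conclusion: for $X\in L_A\sp^{\ge 0}$ and $M\in\lcomod_{\Gamma(A)}(\sp)^{\ge 0}$, the mapping space identities
\[
  {\rm Map}_{L_A\sp^{\ge 0}}(X, P(M)) \simeq {\rm Map}_{L_A\sp}(X, P(M))
  \simeq {\rm Map}_{\lcomod_{\Gamma(A)}(\sp)}(A\otimes X, M)
  \simeq {\rm Map}_{\lcomod_{\Gamma(A)}(\sp)^{\ge 0}}(A\otimes X, M)
\]
hold naturally in $X$ and $M$, where the outer equivalences are by fullness and the middle one is the ambient adjunction. There is no real obstacle here: the entire content sits in Lemma~\ref{lemma:connective-primitive-connective}, and the corollary is a formal consequence of restricting an adjunction along full subcategory inclusions that are preserved by both adjoints.
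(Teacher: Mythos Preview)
Your proposal is correct and matches the paper's approach: the paper simply observes that Lemma~\ref{lemma:connective-primitive-connective} forces $P$ to land in $L_A\sp^{\ge 0}$, from which the corollary follows formally by restricting the ambient adjunction. Your write-up is a bit more explicit about the (trivial) connectivity check for $A\otimes(-)$ and the mapping-space formalism, but the substance is identical.
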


We would like to show that
the pair of functors $(A\otimes (-),P)$
is an adjoint equivalence
under some conditions.
In order to show that $A\otimes (-)$
is fully faithful,
we have to show that the unit map
$X\to P(A\otimes X)$ is an equivalence
for any $X\in L_A\sp^{\ge 0}$.
We recall that the map $X\to P(A\otimes X)$
is an $A$-nilpotent completion in ${\rm Ho}(\sp)$
(see \S\ref{sec:Bousfield-Kan-SS}). 
The relation between the $A$-localization
and the $A$-nilpotent completion
was studied by Bousfield~\cite{Bousfield}. 

\begin{lemma}
\label{lem:connective-adjoint-fully-faithful}
We assume that $\pi_0A$ is isomorphic to
$\mathbb{Z}/n\mathbb{Z}$ 
for some $n\ge 2$
or $\mathbb{Z}[J^{-1}]$ for some set $J$ of primes.
Then the functor $A\otimes (-):
L_A\sp^{\ge 0}\to \lcomod_{\Gamma(A)}(\sp)^{\ge 0}$
is fully faithful.
\end{lemma}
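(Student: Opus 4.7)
The plan is to identify the unit map of the adjunction with a classical construction and then invoke Bousfield's work. To show that $A\otimes(-)$ is fully faithful, it suffices to show that the unit map $X \to P(A\otimes X)$ is an equivalence for every connective $A$-local spectrum $X$. I would first recall from \S\ref{sec:Bousfield-Kan-SS} that the tower $\{\mathrm{Tot}_r C^{\bullet}(A,A\otimes A, A\otimes X)\}$ is an $A$-nilpotent resolution of $X$, so that $P(A\otimes X) \simeq \lim_r \mathrm{Tot}_r C^{\bullet}$ is precisely the $A$-nilpotent completion $X^{\wedge}_A$ of $X$ in $\mathrm{Ho}(\mathrm{Sp})$, in the sense of Bousfield~\cite{Bousfield}.

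Next I would compare $A$-localization with $A$-nilpotent completion. Since $X$ is already $A$-local by assumption, it suffices to show that the natural map $L_A X \to X^{\wedge}_A$ is an equivalence when $X$ is connective. This is exactly the statement controlled by the arithmetic nature of $\pi_0 A$. By Bousfield's theorems in \cite{Bousfield} (see in particular the results in \S6 on ring spectra with cores of the form $\mathbb{Z}/n\mathbb{Z}$ or $\mathbb{Z}[J^{-1}]$), for a connective $\mathbb{S}$-algebra $A$ whose core $c\pi_0 A = \pi_0 A$ is $\mathbb{Z}/n\mathbb{Z}$ with $n \ge 2$ or $\mathbb{Z}[J^{-1}]$ for a set of primes $J$, the $A$-nilpotent completion and $A$-localization agree on connective spectra. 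Under the hypothesis of the lemma, the multiplication $\pi_0(A\otimes A)\cong \pi_0 A \otimes \pi_0 A \to \pi_0 A$ must be an isomorphism (noting that both $\mathbb{Z}/n\mathbb{Z}$ and $\mathbb{Z}[J^{-1}]$ are solid rings, so $\pi_0 A = c\pi_0 A$ automatically), which places us squarely in Bousfield's framework.

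Combining these two steps, for any $X \in L_A\mathrm{Sp}^{\ge 0}$ the unit map factors as
\[
X \simeq L_A X \xrightarrow{\simeq} X^{\wedge}_A \simeq P(A\otimes X),
\]
where the first equivalence holds by $A$-locality and the second by Bousfield's theorem. Hence the unit is an equivalence, and by the triangle identities together with the fact that a right adjoint detects equivalences of unit maps, $A\otimes(-)$ is fully faithful as a functor between $L_A\mathrm{Sp}^{\ge 0}$ and $\lcomod_{\Gamma(A)}(\sp)^{\ge 0}$.

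The main obstacle is locating the precise form of Bousfield's theorem that applies here: one needs to know that for the prescribed arithmetic types of $\pi_0 A$, the $A$-nilpotent completion agrees with the $A$-localization on connective spectra. The restriction on $\pi_0 A$ to $\mathbb{Z}/n\mathbb{Z}$ or $\mathbb{Z}[J^{-1}]$ is exactly what guarantees, via Bousfield's analysis of solid rings and their associated localizations, that no lim${}^1$ or convergence pathology obstructs this agreement; a general connective $\mathbb{S}$-algebra need not have this property, which is why the hypothesis is formulated this way.
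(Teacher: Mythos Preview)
Your proposal is correct and follows essentially the same approach as the paper: reduce to showing the unit map $X\to P(A\otimes X)$ is an equivalence, identify $P(A\otimes X)$ with the $A$-nilpotent completion via \S\ref{sec:Bousfield-Kan-SS}, and then invoke Bousfield's Theorems~6.5 and~6.6 to conclude that $A$-localization and $A$-nilpotent completion agree on connective spectra under the stated arithmetic hypotheses on $\pi_0A$. Your remark that $\mathbb{Z}/n\mathbb{Z}$ and $\mathbb{Z}[J^{-1}]$ are solid rings (so $\pi_0(A\otimes A)\to\pi_0A$ is an isomorphism and Corollary~\ref{cor:connective-adjunction} applies) is exactly what the paper notes at the outset of its proof.
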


\begin{proof}
Note that the multiplication map
induces an isomorphism
$\pi_0(A\otimes A)\stackrel{\cong}{\to}\pi_0(A)$
under the assumption
and hence we have the adjunction of functors
$A\otimes (-):L_A\sp^{\ge 0}\rightleftarrows 
\lcomod_{\Gamma(A)}(\sp)^{\ge 0}:P$ 
by Corollary~\ref{cor:connective-adjunction}.

We have to show that 
the unit map $X\to P(A\otimes X)$
of the adjunction $(A\otimes (-), P)$
is an equivalence for any $X\in L_A\sp^{\ge 0}$.
By \cite[Thm.~6.5 and 6.6]{Bousfield},
the $A$-localization $Y\to L_AY$ 
is equivalent to the $A$-nilpotent completion
$Y\to P(A\otimes Y)$
for any connective spectrum $Y$
under the assumption.
This implies that the map $X\to P(A\otimes X)$
is an equivalence for any $X\in L_A\sp^{\ge 0}$.
\qed\end{proof}

In order to show that the left adjoint $A\otimes (-)$
is essentially surjective,
we need the following lemma.
 
\begin{lemma}\label{lem:key-essential-surjective-connective}
We assume that the multiplication map induces an isomorphism
$\pi_0(A\otimes A)\stackrel{\cong}{\to}
   \pi_0A$.
Let $M$ be a connective left $\Gamma(A)$-comodule.
If $P(M)\simeq 0$,
then $M\simeq 0$.
\end{lemma}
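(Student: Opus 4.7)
The plan is to argue by contradiction: assume $M\not\simeq 0$ and exhibit a nonzero class in $\pi_{t_0}(P(M))$, where $t_0\ge 0$ is the smallest integer with $\pi_{t_0}(M)\ne 0$ (which exists by connectivity of $M$). The strategy is to work with the Tot tower $\{T_r\}_{r\ge 0}$ of the cobar cosimplicial spectrum $C^{\bullet}=C^{\bullet}(A,A\otimes A,M)$, so that $T_0=M$ and $P(M)\simeq\lim_r T_r$, and to show that the edge map $\pi_{t_0}(P(M))\to\pi_{t_0}(M)=\pi_{t_0}(T_0)$ is surjective.

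The key input is a connectivity estimate on the layers of this tower. Let $F_r=\mathrm{fib}(T_r\to T_{r-1})$, and let $\overline{\Gamma}=\mathrm{fib}(A\otimes A\to A)$ be the fiber of the multiplication (i.e.\ the counit of the coalgebra structure on $A\otimes A$). The standard analysis of the Tot tower of a cosimplicial spectrum identifies $F_r\simeq\Omega^r\overline{C}^r$, where $\overline{C}^r\simeq\overline{\Gamma}^{\otimes_A r}\otimes_A M$ is the normalized cobar spectrum. The hypothesis $\pi_0(A\otimes A)\stackrel{\cong}{\to}\pi_0 A$ forces $\overline{\Gamma}$ to be $1$-connective, and since $A_*(A)$ is flat over $A_*$, the submodule $\overline{A_*(A)}\subset A_*(A)$ is a direct summand (the counit is split by the unit $\eta_R$) and hence also flat; iterated Künneth isomorphisms then give
\[ \pi_n(\overline{C}^r)\;\cong\;\bigl(\overline{A_*(A)}^{\otimes_{A_*} r}\otimes_{A_*} M_*\bigr)_n,\]
which vanishes for $n<r+t_0$ because each of the $r$ tensor factors from $\overline{A_*(A)}$ contributes at least one degree and $M_*$ is $t_0$-connective. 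In particular $\pi_{t_0-1}(F_r)=\pi_{t_0+r-1}(\overline{C}^r)=0$ for every $r\ge 1$.

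Feeding this vanishing into the long exact sequence of the fiber sequence $F_r\to T_r\to T_{r-1}$ makes each transition map $\pi_{t_0}(T_r)\twoheadrightarrow\pi_{t_0}(T_{r-1})$ surjective; by induction on $r$, so is $\pi_{t_0}(T_r)\twoheadrightarrow\pi_{t_0}(T_0)=\pi_{t_0}(M)$. Hence $\{\pi_{t_0}(T_r)\}$ is a Mittag-Leffler tower, whence the projection $\lim_r\pi_{t_0}(T_r)\twoheadrightarrow\pi_{t_0}(M)$ is surjective. Combining this with the Milnor short exact sequence surjection $\pi_{t_0}(P(M))\twoheadrightarrow\lim_r\pi_{t_0}(T_r)$ produces a surjection $\pi_{t_0}(P(M))\twoheadrightarrow\pi_{t_0}(M)\ne 0$, contradicting $P(M)\simeq 0$.

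The main obstacle is justifying the identification $F_r\simeq\Omega^r\overline{C}^r$ together with the Künneth computation $\pi_*(\overline{C}^r)\cong\overline{A_*(A)}^{\otimes_{A_*} r}\otimes_{A_*} M_*$; this is where the flatness hypothesis on $A_*(A)$ (and the splitting of $\Gamma\to A$) enters critically. Once this connectivity estimate $\pi_{t_0-1}(F_r)=0$ is in hand, the remainder is a routine Mittag-Leffler and Milnor exact sequence argument.
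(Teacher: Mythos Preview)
Your proof is correct and follows essentially the same strategy as the paper: prove the contrapositive by showing that the edge map $\pi_{t_0}(P(M))\to\pi_{t_0}(M)$ is surjective, with the key input being the $1$-connectivity of $\overline{\Gamma}$ forced by the hypothesis $\pi_0(A\otimes A)\cong\pi_0 A$. The paper packages this through the Bousfield--Kan spectral sequence (showing $E_2^{s,t}=0$ for $t-s<t_0$, so $E_\infty^{0,t_0}=E_2^{0,t_0}\cong\pi_{t_0}M$, then invoking the convergence lemma \cite[IX.5.4]{Bousfield-Kan} to identify $E_\infty^{0,t_0}$ with $\mathrm{Im}(\pi_{t_0}P(M)\to\pi_{t_0}M)$), whereas you unpack this into a direct Tot-tower argument using the Milnor sequence and Mittag--Leffler. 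These are equivalent formulations of the same mechanism; your version is arguably more elementary in that it avoids the spectral-sequence convergence machinery, at the cost of needing the layer identification $F_r\simeq\Omega^r\overline{C}^r$ explicitly. One small remark: that spectrum-level identification is cleanest once you note that the codegeneracies of the cobar cosimplicial object are split (by the unit maps), but for your purposes it suffices to know $\pi_{t-r}(F_r)\cong N^r(\pi_t C^\bullet)=(\overline{A_*(A)}^{\otimes_{A_*} r}\otimes_{A_*}M_*)_t$, which is exactly the $E_1$-page description in \cite[X.6--7]{Bousfield-Kan} combined with K\"unneth.
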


\begin{proof}
We shall show that $P(M)\not\simeq 0$
if $M\not\simeq 0$.
Suppose that
$\pi_iM=0$ for $i<n$ and
$\pi_nM\neq 0$.
Let $E_r^{s,t}$ be the Bousfield-Kan
spectral sequence abutting to
$\pi_*P(M)$.
We have
$E_2^{s,*}\cong H^s(\overline{C}^*(A_*,A_*(A),M_*))$.
By the assumption,
$E_2^{s,t}=0$ for $t-s<n$
and $E_2^{0,n}\cong \pi_nM$.
In particular, we have
$E_{\infty}^{0,n}\neq 0$ and 
$\subrel{r}{\rm lim}^1 E_r^{s,s+n}=0$
for all $s\ge 0$.
By \cite[Lemma~IX.5.4]{Bousfield-Kan},
$E_{\infty}^{0,n}\cong
{\rm Im}(\pi_nP(M)\to \pi_nM)$.
Hence we obtain
$\pi_nP(M)\neq 0$.
\qed\end{proof}

\begin{theorem}
\label{thm:connective-case-adjoint-equivalence}
Let $A$ be a connective $\mathbb{S}$-algebra
such that $A_*(A)$ is flat as a left and right $A_*$-module.
We assume that $\pi_0A$ is isomorphic to
$\mathbb{Z}/n$ for some $n\ge 2$
or $\mathbb{Z}[J^{-1}]$ for some set $J$ of primes.
Then there is an adjoint equivalence
of quasi-categories
\[ A\otimes (-):
     L_A\sp^{\ge 0}\rightleftarrows
     \lcomod_{\Gamma(A)}(\sp)^{\ge 0}:
     P.\]
\end{theorem}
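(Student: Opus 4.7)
The plan is to upgrade the adjoint pair of Corollary~\ref{cor:connective-adjunction} to an adjoint equivalence by showing that both its unit and counit are equivalences. The unit $\eta_X\colon X \to P(A\otimes X)$ is already handled by Lemma~\ref{lem:connective-adjoint-fully-faithful}, so only the counit $\epsilon_M\colon A\otimes P(M) \to M$ needs to be shown to be an equivalence for every $M \in \lcomod_{\Gamma(A)}(\sp)^{\ge 0}$. My strategy is to form the cofibre $C = \mathrm{cofib}(\epsilon_M)$ in the stable quasi-category $\lcomod_{\Gamma(A)}(\sp)$ and apply Lemma~\ref{lem:key-essential-surjective-connective} as a conservativity statement for $P$: it suffices to verify that $C$ is connective and satisfies $P(C)\simeq 0$, for then $C\simeq 0$ and $\epsilon_M$ is an equivalence.

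The vanishing $P(C)\simeq 0$ is formal. As a right adjoint between stable quasi-categories commuting with suspension, $P$ preserves cofibres, so $P(C)\simeq\mathrm{cofib}(P(\epsilon_M))$; and the triangle identity $P(\epsilon_M)\circ\eta_{P(M)} = \mathrm{id}_{P(M)}$ together with $\eta_{P(M)}$ being an equivalence (by full faithfulness of $A\otimes(-)$) forces $P(\epsilon_M)$ to be an equivalence. For the connectivity of $C$, the forgetful functor $\lcomod_{\Gamma(A)}(\sp) \to \sp$ preserves colimits and hence cofibres, so the cofibre long exact sequence applied to $A\otimes P(M) \to M \to C$, together with the connectivity of $A\otimes P(M)$ and $M$, reduces the problem to the surjectivity of $\pi_0(A\otimes P(M)) \to \pi_0 M$. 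I factor this map through the coaugmentation $\rho\colon P(M) \to M = C^0(A, A\otimes A, M)$: the module-unit axiom for $M$ gives $\epsilon_M\circ(u\otimes\mathrm{id}_{P(M)}) = \rho$, where $u\colon\mathbb{S}\to A$ is the unit map, so it suffices to show that $\pi_0\rho$ is surjective. But $\pi_0\rho$ is the edge homomorphism of the Bousfield-Kan spectral sequence from Section~\ref{sec:Bousfield-Kan-SS}, and the analysis in the proof of Lemma~\ref{lemma:connective-primitive-connective} shows that the hypothesis $\pi_0(A\otimes A)\cong\pi_0 A$ forces $E_2^{s,t} = 0$ for $t<s$; in particular, every differential $d_r$ out of $E_2^{0,0} = \pi_0 M$ lands in a zero group, so the edge map $\pi_0 P(M) \twoheadrightarrow E_\infty^{0,0} = \pi_0 M$ is surjective.

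With $C$ connective and $P(C)\simeq 0$, Lemma~\ref{lem:key-essential-surjective-connective} gives $C\simeq 0$ and hence the adjoint equivalence. The main obstacle is the connectivity check for $C$, which is where the hypothesis on $\pi_0 A$ feeds into the proof beyond what is needed to set up the adjunction in Corollary~\ref{cor:connective-adjunction}: it is this hypothesis that simultaneously pins down $E_2^{0,0}$ as all of $\pi_0 M$ and forces the targets of the outgoing differentials to vanish, so that the coaugmentation $\rho$ is surjective on $\pi_0$. Everything else in the argument is formal manipulation with the adjunction and the standard preservation properties of exact functors between stable quasi-categories.
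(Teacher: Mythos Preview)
Your proof is correct and follows essentially the same approach as the paper: form the cofibre of the counit, show that $P$ kills it via the triangle identity and full faithfulness of the left adjoint, and then invoke Lemma~\ref{lem:key-essential-surjective-connective}. Your additional verification that the cofibre $C$ is connective (via surjectivity of the edge homomorphism $\pi_0 P(M)\to\pi_0 M$) fills in a detail the paper leaves implicit---the paper applies Lemma~\ref{lem:key-essential-surjective-connective} directly to the cofibre without checking its connectivity, relying tacitly on the fact that the proof of that lemma works verbatim for any bounded-below comodule.
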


\begin{proof}
By Corollary~\ref{cor:connective-adjunction},
we have the adjunction of functors
$A\otimes (-):
     L_A\sp^{\ge 0}\rightleftarrows
     \lcomod_{\Gamma(A)}(\sp)^{\ge 0}:
     P$.
By Lemma~\ref{lem:connective-adjoint-fully-faithful},
the left adjoint $A\otimes (-)$
is fully faithful.
Hence it suffices to show that 
$A\otimes (-)$ is essentially surjective.

Let $M\in {\rm Comod}_{\Gamma(A)}(\sp)^{\ge 0}$.
By the counit of the adjunction,
we have a map
$\varepsilon: A\otimes P(M)\to M$.
Let $N$ be the cofiber of $\varepsilon$.
Since the unit map $P(M)\to P(A\otimes P(M))$
is an equivalence,
we see that $P(N)\simeq 0$.
By Lemma~\ref{lem:key-essential-surjective-connective},
we obtain $N\simeq 0$ and 
hence $A\otimes P(M)\simeq M$.
This shows that the left adjoint 
$A\otimes (-)$ is essentially surjective.
\qed\end{proof}

In the following of this subsection
we shall consider some examples.

First, we consider the complex cobordism spectrum $MU$.
The spectrum
$MU$ admits a commutative $\mathbb{S}$-algebra structure
by \cite{May}. 
Since $\pi_*MU=\mathbb{Z}[x_1,x_2,\ldots]$
with $|x_i|=2i$ for $i>0$,
$MU$ is a connective commutative $\mathbb{S}$-algebra,
the multiplication map induces an
isomorphism $\pi_0(MU\otimes MU)\stackrel{\cong}{\to}\pi_0MU$,
and $c\pi_0MU\cong\mathbb{Z}$.
Note that the localization $L_{MU}X$ coincides with $X$
for a connective spectrum $X$ by \cite[Thm.~3.1]{Bousfield}
and hence $L_{MU}\sp^{\ge 0}$ is equivalent to
$\sp^{\ge 0}$.
By Theorem~\ref{thm:connective-case-adjoint-equivalence},
we obtain the following corollary.

\begin{corollary}
[{cf.~\cite[6.1.2]{Hess}}]
There is an adjoint equivalence
  \[ MU\otimes (-):
     \sp^{\ge 0}\rightleftarrows
     \lcomod_{\Gamma(MU)}(\sp)^{\ge 0}:
     P.\]
\end{corollary}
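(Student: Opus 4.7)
The plan is to verify the hypotheses of Theorem~\ref{thm:connective-case-adjoint-equivalence} for $A = MU$ and then apply it directly, with the extra step of identifying $L_{MU}\sp^{\ge 0}$ with $\sp^{\ge 0}$.

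First I would collect the structural facts about $MU$ needed as input. By \cite{May}, $MU$ admits a commutative $\mathbb{S}$-algebra structure, and since $\pi_*MU = \mathbb{Z}[x_1, x_2, \ldots]$ with $|x_i| = 2i > 0$, $MU$ is connective. Classically $MU_*(MU) \cong MU_*[b_1, b_2, \ldots]$ is a polynomial algebra, hence free (and thus flat) as both a left and right $MU_*$-module. Finally, $\pi_0 MU \cong \mathbb{Z}$, which is of the form $\mathbb{Z}[J^{-1}]$ with $J = \emptyset$, so the ring-theoretic hypothesis of Theorem~\ref{thm:connective-case-adjoint-equivalence} is satisfied.

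Next I would dispose of the localization functor on the left-hand side. By \cite[Thm.~3.1]{Bousfield}, as recalled in the paragraph preceding the corollary, the canonical map $X \to L_{MU} X$ is an equivalence for every connective spectrum $X$, so the inclusion $\sp^{\ge 0} \hookrightarrow \sp$ factors through $L_{MU}\sp$ and induces an equivalence of quasi-categories $\sp^{\ge 0} \simeq L_{MU}\sp^{\ge 0}$. Under this equivalence, the functor $MU \otimes (-)\colon \sp^{\ge 0} \to \lcomod_{\Gamma(MU)}(\sp)^{\ge 0}$ corresponds to the one appearing in Theorem~\ref{thm:connective-case-adjoint-equivalence}.

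With these identifications in place, applying Theorem~\ref{thm:connective-case-adjoint-equivalence} to $A = MU$ yields directly the desired adjoint equivalence
\[
MU \otimes (-) : \sp^{\ge 0} \rightleftarrows \lcomod_{\Gamma(MU)}(\sp)^{\ge 0} : P.
\]
I do not anticipate any substantial obstacle here: the corollary is a straightforward specialization of the preceding theorem, and all hypotheses have been explicitly recorded in the discussion leading up to the statement. The only point worth flagging is that one must invoke Bousfield's result to see that the localization $L_{MU}$ is invisible on connective spectra, without which the left-hand side would not simplify to $\sp^{\ge 0}$.
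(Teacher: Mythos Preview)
Your proposal is correct and follows essentially the same approach as the paper: the paper verifies the hypotheses of Theorem~\ref{thm:connective-case-adjoint-equivalence} for $A=MU$ (connectivity, flatness of $MU_*(MU)$, and $\pi_0 MU\cong\mathbb{Z}$), invokes \cite[Thm.~3.1]{Bousfield} to identify $L_{MU}\sp^{\ge 0}\simeq\sp^{\ge 0}$, and then applies the theorem directly. Your write-up is slightly more explicit about why $MU_*(MU)$ is flat, but otherwise matches the paper's argument.
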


\if0
\begin{proof}
Since $\pi_0(MU)\cong \mathbb{Z}$,
the multiplication map
induces an isomorphism
$\pi_0(MU\otimes MU)\stackrel{\cong}{\to}
\pi_0(MU)$.
The corollary follows from the fact that
$L_{MU}{\rm Sp}^{\ge 0}\simeq {\rm Sp}^{\rm 0}$
by \cite[]{Bousfield}.
\qed\end{proof}
\fi

Next, we consider
the Brown-Peterson spectrum $BP$ at a prime $p$.
The spectrum $BP$ admits an $\mathbb{S}$-algebra structure by
\cite[\S2]{Lazarev}.
The coefficient ring of $BP$ is a polynomial
ring over the ring $\mathbb{Z}_{(p)}$ 
of integers localized at $p$ with
infinite many variables
\[ \pi_*BP = \mathbb{Z}_{(p)}[v_1,v_2,\ldots],\]
with degree $|v_i|=2(p^i-1)$
for $i\ge 1$.
Hence $BP$ is a connective $\mathbb{S}$-algebra,
the multiplication map induces an
isomorphism $\pi_0(BP\otimes BP)\stackrel{\cong}{\to}\pi_0BP$,
and $c\pi_0BP\cong\mathbb{Z}_{(p)}$.
Note that the localization $L_{BP}X$ coincides with 
the $p$-localization $X_{(p)}$
for a connective spectrum $X$ by \cite[Thm.~3.1]{Bousfield}
and hence $L_{BP}\sp^{\ge 0}$ is equivalent to
the full subcategory $\sp_{(p)}^{\ge 0}$
of $p$-local spectra in ${\rm Sp}^{\ge 0}$.
By Theorem~\ref{thm:connective-case-adjoint-equivalence},
we obtain the following corollary.

\begin{corollary}
There is an adjoint equivalence
  \[ BP\otimes (-):
     \sp_{(p)}^{\ge 0}\rightleftarrows
     \lcomod_{\Gamma(BP)}(\sp)^{\ge 0}: P.\]
\end{corollary}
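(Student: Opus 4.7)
The plan is to verify that $BP$ satisfies the hypotheses of Theorem~\ref{thm:connective-case-adjoint-equivalence} and then identify $L_{BP}\sp^{\ge 0}$ with $\sp_{(p)}^{\ge 0}$. First, I would collect the structural facts about $BP$ that are already noted in the paragraph preceding the statement: $BP$ admits an $\mathbb{S}$-algebra structure by \cite[\S2]{Lazarev}, its homotopy ring $\mathbb{Z}_{(p)}[v_1,v_2,\ldots]$ is concentrated in nonnegative even degrees so $BP$ is connective, and the flatness of $BP_*(BP)$ as a left and right $BP_*$-module is classical (it is a polynomial algebra over $BP_*$ on the generators $t_i$).

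Next I would check the $\pi_0$-hypothesis of Theorem~\ref{thm:connective-case-adjoint-equivalence}. Reading off degree zero from $\pi_*BP=\mathbb{Z}_{(p)}[v_1,v_2,\ldots]$ gives $\pi_0 BP\cong\mathbb{Z}_{(p)}$, which is of the form $\mathbb{Z}[J^{-1}]$ for $J$ the set of primes different from $p$. Thus the hypothesis ``$\pi_0 A\cong\mathbb{Z}/n$ or $\mathbb{Z}[J^{-1}]$'' is satisfied. Consequently Theorem~\ref{thm:connective-case-adjoint-equivalence} applies and yields an adjoint equivalence
\[ BP\otimes(-) : L_{BP}\sp^{\ge 0} \rightleftarrows \lcomod_{\Gamma(BP)}(\sp)^{\ge 0} : P. \]

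Finally, I would identify the source of this equivalence with $\sp_{(p)}^{\ge 0}$. By \cite[Thm.~3.1]{Bousfield}, the Bousfield localization $L_{BP}X$ of a connective spectrum $X$ agrees with its $p$-localization $X_{(p)}$. Therefore the full subcategory $L_{BP}\sp^{\ge 0}\subset L_{BP}\sp$ coincides with $\sp_{(p)}^{\ge 0}$, and substituting this identification into the equivalence above gives the corollary. There is no real obstacle here since every hypothesis of Theorem~\ref{thm:connective-case-adjoint-equivalence} is visibly met by $BP$; the only step that uses a nontrivial input is the identification $L_{BP}\sp^{\ge 0}\simeq\sp_{(p)}^{\ge 0}$, which is handled by the cited theorem of Bousfield and is entirely parallel to the $MU$ case just treated.
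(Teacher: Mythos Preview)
Your proposal is correct and follows essentially the same route as the paper: the paragraph preceding the corollary verifies connectivity, the flatness of $BP_*(BP)$, that $\pi_0 BP\cong\mathbb{Z}_{(p)}$, and invokes \cite[Thm.~3.1]{Bousfield} to identify $L_{BP}\sp^{\ge 0}$ with $\sp_{(p)}^{\ge 0}$, then appeals to Theorem~\ref{thm:connective-case-adjoint-equivalence}. Your write-up simply makes these checks explicit.
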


Finally,
we consider the mod $p$
Eilenberg-Mac~Lane spectrum $H\mathbb{F}_p$ 
for a prime $p$.   
We know that
$H\mathbb{F}_p$ is a connective commutative $\mathbb{S}$-algebra.
Since $\pi_0H\mathbb{F}_p\cong\mathbb{F}_p$,
we see that the multiplication induces 
an isomorphism $\pi_0(H\mathbb{F}_p\otimes H\mathbb{F}_p)
\stackrel{\cong}{\to}\pi_0H\mathbb{F}_p$ and
$c\pi_0H\mathbb{F}_p\cong\mathbb{F}_p$. 
If $X$ is a connective spectrum,
then $L_{H\mathbb{F}_p}X$ is equivalent
to the $p$-completion of $X$ by \cite[Thm.~3.1]{Bousfield},
and hence 
$L_{H\mathbb{F}_p}\isp^{\ge 0}$ is 
equivalent to the full subcategory $(\isp_p^{\wedge})^{\ge 0}$
of $p$-complete spectra in $\isp^{\ge 0}$.
By Theorem~\ref{thm:connective-case-adjoint-equivalence},
we obtain the following corollary.

\begin{corollary}
[{cf.~\cite[6.1.1]{Hess}}]
There is an adjoint equivalence
  \[ H\mathbb{F}_p\otimes (-):
     (\sp_{p}^{\wedge})^{\ge 0}\rightleftarrows
     \lcomod_{\Gamma(H\mathbb{F}_p)}(\isp)^{\ge 0}: P.\]
\end{corollary}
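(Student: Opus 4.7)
The plan is to derive this corollary as a direct application of Theorem~\ref{thm:connective-case-adjoint-equivalence} combined with Bousfield's identification of the $H\mathbb{F}_p$-localization on connective spectra with $p$-completion. First I would verify the hypotheses of Theorem~\ref{thm:connective-case-adjoint-equivalence}: the spectrum $H\mathbb{F}_p$ is a connective commutative $\mathbb{S}$-algebra, its coefficient ring $(H\mathbb{F}_p)_*\cong\mathbb{F}_p$ is a field (so every module is automatically flat, and in particular $(H\mathbb{F}_p)_*(H\mathbb{F}_p)$ is flat as both a left and right module), and $\pi_0H\mathbb{F}_p\cong\mathbb{F}_p=\mathbb{Z}/p$ with $p\ge 2$, which is exactly one of the two allowed forms of $\pi_0A$ in the theorem.

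With the hypotheses in hand, Theorem~\ref{thm:connective-case-adjoint-equivalence} immediately furnishes an adjoint equivalence of quasi-categories
\[ H\mathbb{F}_p\otimes(-):L_{H\mathbb{F}_p}\sp^{\ge 0}\rightleftarrows \lcomod_{\Gamma(H\mathbb{F}_p)}(\sp)^{\ge 0}:P. \]
It then remains to identify the source with $(\sp_{p}^{\wedge})^{\ge 0}$. For this I would invoke \cite[Thm.~3.1]{Bousfield}, which asserts that for any connective spectrum $X$ the $H\mathbb{F}_p$-localization map $X\to L_{H\mathbb{F}_p}X$ coincides with the Bousfield-Kan $p$-completion $X\to X_p^{\wedge}$. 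Consequently an object of $\sp^{\ge 0}$ is $H\mathbb{F}_p$-local precisely when it is $p$-complete, giving the equivalence $L_{H\mathbb{F}_p}\sp^{\ge 0}\simeq (\sp_p^{\wedge})^{\ge 0}$. Combining this with the adjoint equivalence above yields the desired statement.

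The proof is essentially a verification of hypotheses, so there is no substantial obstacle; the only subtle point is to make sure that the identification $L_{H\mathbb{F}_p}\sp^{\ge 0}\simeq(\sp_p^{\wedge})^{\ge 0}$ is compatible with the adjunction produced by Theorem~\ref{thm:connective-case-adjoint-equivalence}. This compatibility is automatic because the left adjoint $H\mathbb{F}_p\otimes(-)$ factors through the localization, and the right adjoint $P$ lands in connective spectra by Lemma~\ref{lemma:connective-primitive-connective}, which together identify both sides of the adjunction with subcategories already appearing in the framework of Theorem~\ref{thm:connective-case-adjoint-equivalence}.
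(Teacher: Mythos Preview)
Your proposal is correct and follows essentially the same approach as the paper: verify the hypotheses of Theorem~\ref{thm:connective-case-adjoint-equivalence} (connectivity, flatness of the dual Steenrod algebra over the field $\mathbb{F}_p$, and $\pi_0H\mathbb{F}_p\cong\mathbb{Z}/p$), then invoke \cite[Thm.~3.1]{Bousfield} to identify $L_{H\mathbb{F}_p}\sp^{\ge 0}$ with $(\sp_p^{\wedge})^{\ge 0}$. Your explicit mention of the flatness hypothesis is a slight improvement in exposition over the paper's discussion, which leaves that check implicit.
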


\subsection{A model of the $K(n)$-local category}

Let $K(n)$ be the $n$th Morava $K$-theory spectrum at a prime $p$
and $\mathbb{G}_n$ the $n$th Morava stabilizer group. 
In this subsection
we show that the category of module objects
over $F_n$ in the $K(n)$-local 
discrete symmetric $\mathbb{G}_n$-spectra
models the $K(n)$-local category,
where $F_n$ is a discrete model of the
$n$th Morava $E$-theory spectrum. 

The $K(n)$-local category is the Bousfield localization
of the stable homotopy category 
of spectra with respect to $K(n)$.
It is known that the $K(n)$-local categories
for various $n$ and $p$
are fundamental building blocks of the stable homotopy 
category of spectra.
Thus it is important to understand 
the $K(n)$-local category.

Let $E_n$ be the $n$th Morava $E$-theory spectrum at $p$.
The Morava $E$-theory spectrum $E_n$
is a commutative ring spectrum
in the stable homotopy category of spectra
and $\mathbb{G}_n$ is identified with the group of
multiplicative automorphisms of $E_n$.
By Goerss-Hopkins \cite{Goerss-Hopkins},
it was shown that the commutative ring spectrum structure
on $E_n$ can be lifted to a unique $E_{\infty}$-ring
spectrum structure up to homotopy.
Furthermore,
it was shown that
$\mathbb{G}_n$ acts on $E_n$
in the category of $E_{\infty}$-ring spectra.
There is a $K(n)$-local $E_n$-based Adams spectral sequence 
abutting to the homotopy groups of the $K(n)$-local sphere
whose $E_2$-page is the continuous cohomology groups
of $\mathbb{G}_n$ with coefficients 
in the homotopy groups of $E_n$.
This suggests that
the $K(n)$-local sphere may be 
the $\mathbb{G}_n$-homotopy fixed points of $E_n$.
Motivated by this observation,
Devinatz-Hopkins \cite{Devinatz-Hopkins} 
constructed a $K(n)$-local $E_{\infty}$-ring spectrum
$E_n^{dhU}$ for any open subgroup $U$ of $\mathbb{G}_n$,
which has expected properties of
the homotopy fixed points spectrum.  

Davis \cite{Davis}
constructed a discrete $\mathbb{G}_n$-spectrum 
$F_n$ which is defined by
\[ F_n=\, {\rm colim}_{U}\, E_n^{dhU},\]
where $U$ ranges over the open subgroups of $\mathbb{G}_n$.
The spectrum $F_n$ is a discrete model of $E_n$
and actually we can recover $E_n$ from $F_n$
by the $K(n)$-localization as
\[ L_{K(n)}F_n\simeq E_n.\]
Furthermore,
Behrens-Davis \cite{Behrens-Davis}
upgraded the discrete $\mathbb{G}_n$-spectrum $F_n$
to a commutative monoid object
in the category $\ssp(\mathbb{G}_n)$
of discrete symmetric $\mathbb{G}_n$-spectra,
and showed that the unit map
$L_{K(n)}\mathbb{S}\to F_n$ is a consistent
$K(n)$-local $\mathbb{G}_n$-Galois extension.

\if0
Let $\ssp(\mathbb{G}_n)$
be the category of discrete symmetric $\mathbb{G}_n$-spectra.
In \cite{Torii1}
we showed that $\ssp(\mathbb{G}_n)$
is a proper combinatorial symmetric monoidal
$\ssp$-model category satisfying the monoid axiom. 
We denote by $\ssp(\mathbb{G}_n)_{K(n)}$
the left Bousfield localization
of $\ssp(\mathbb{G}_n)$
with respect to $K(n)$.

Let $\lmod_{F_n}(\isp(\mathbb{G}_n)_{K(n)})$
be the underlying quasi-category
of the simplicial model category
$\lmod_{F_n}(\ssp(\mathbb{G}_n)_{K(n})$.
In \cite{}
we showed that
$\lmod_{F_n}(\isp(\mathbb{G}_n)_{K(n)})$
is equivalent to
the quasi-category of comodules
\[ \lmod_{F_n}(\isp(\mathbb{G}_n)_{K(n)})\simeq
   \comod_{(E_n, L_{K(n)}(E_n\wedge E_n))}(\isp_{K(n)}).\]

We have an adjunction of quasi-categories
\[ \isp_{K(n)}\rightleftarrows \lmod_{E_n}(\isp_{K(n)})\]
where the left adjoint is given by smashing with $E_n$
and the right adjoint is the forgetful functor.
This induces a map of quasi-categories
\[ \isp_{K(n)}\longrightarrow
   {\rm Comod}_{(E_n,\Gamma)}(\isp_{K(n)}),\]
where $\Gamma$ is the monad associated
to the adjunction. 

Since we can regard $E_n$ as a commutative algebra object
of $\isp_{K(n)}$,
we have the augmented cosimplicial object 
$\mathrm{C}^{\bullet}$ in 
${\rm CAlg}(\isp_{K(n)})$ given by
\[ \mathrm{C}^r\simeq 
   L_{K(n)}(\overbrace{E_n\wedge\cdots\wedge E_n}^{r+1})\]
and $\mathrm{C}^{-1}\simeq L_{K(n)}S$
with the usual structure maps.
Applying the functor
${\rm CAlg}(\isp_{K(n)})\to {\rm LM}(\isp_{K(n)})$,
we obtain an cosimplicial object
\[ \isp_{K(n)}\longrightarrow \lmod_{C^{\bullet}}(\isp_{K(n)}).\]
\fi

We can give a model structure on 
the category $\ssp(\mathbb{G}_n)$
of discrete symmetric $\mathbb{G}_n$-spectra
and consider the left Bousfield localization 
$\ssp(\mathbb{G}_n)_{K(n)}$
with respect to $K(n)$
(see \cite{Behrens-Davis}).
The category $\ssp(\mathbb{G}_n)_{K(n)}$ is a left proper,
combinatorial, $\ssp$-model category. 

The unit map $\mathbb{S}\to F_n$ induces   
a symmetric monoidal $\ssp$-Quillen
adjunction  
\[ {\rm Ex}: \ssp_{K(n)}
             \rightleftarrows 
             \lmod_{F_n}(\ssp(\mathbb{G}_n)_{K(n)}):{\rm Re},\]
where $\ssp_{K(n)}$ is the left Bousfield localization
of the category $\ssp$ of symmetric spectra
with respect to $K(n)$.
\if0
Let $\ssp$ be the category of symmetric spectra
(see \cite{HSS}).
where 
We give $\ssp$ the stable model structure
and let $\ssp_{K(n)}$ be the left Bousfield
localization of $\ssp$
with respect to $K(n)$.
The category $\ssp_{K(n)}$ is a left proper combinatorial
symmetric monoidal $\ssp$-model category
satisfying the monoid axiom.
\fi
In \cite{Torii1}
we showed that
the total left derived functor
$\mathbb{L}{\rm Ex}$ of ${\rm Ex}$
is fully faithful
as an $\Ho(\isp)$-enriched functor.
In this subsection
we shall show that the adjunction
is actually a Quillen equivalence and 
hence we can regard $\lmod_{F_n}(\ssp(\mathbb{G}_n)_{K(n)})$
as a model of the $K(n)$-local category.

We denote by $\isp_{K(n)}$ the underlying quasi-category
of the simplicial model category $\ssp_{K(n)}$.
The quasi-category $\isp_{K(n)}$ is a stable homotopy theory
with the tensor product $L_{K(n)}(-\otimes -)$
and the unit $L_{K(n)}\mathbb{S}$.
Since we can regard $E_n$ as an algebra
object of $\isp_{K(n)}$,
we can consider the coalgebra
$L_{K(n)}(E_n\otimes E_n)$ in 
${}_{E_n}\bmod_{E_n}(\isp_{K(n)})$ and
the quasi-category of left
$\Gamma(E_n)$-comodules 
\[ \lcomod_{\Gamma(E_n)}(\isp_{K(n)}),\]
where $\Gamma(E_n)=(E_n,L_{K(n)}(E_n\otimes E_n))$.

\begin{proposition}
We have an equivalence of quasi-categories
\[ L_{K(n)}(E_n\otimes(-)):
   \isp_{K(n)}\stackrel{\simeq}{\longrightarrow}
   \lcomod_{\Gamma(E_n)}(\isp_{K(n)}).\] 
\end{proposition}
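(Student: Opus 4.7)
The plan is to apply Theorem~\ref{theorem:general-equivalence-comodules} with the stable homotopy theory $\mathcal{C} = \isp_{K(n)}$ (with tensor product $L_{K(n)}(-\otimes-)$ and unit $L_{K(n)}\mathbb{S}$) and the algebra object $A = E_n$. Under the hypotheses of that theorem, the functor $A\otimes(-): L_A\mathcal{C} \to \lcomod_{\Gamma(A)}(\mathcal{C})$ is an adjoint equivalence, and unwinding definitions this reads as $L_{K(n)}(E_n \otimes (-)): L_{E_n}\isp_{K(n)} \to \lcomod_{\Gamma(E_n)}(\isp_{K(n)})$. Two things must be verified: that $L_{E_n}\isp_{K(n)} \simeq \isp_{K(n)}$, so the source of the functor is $\isp_{K(n)}$ itself, and that the unit $L_{K(n)}\mathbb{S}$ is $E_n$-nilpotent in $\isp_{K(n)}$.

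For the first point, I would show that every $E_n$-equivalence in $\isp_{K(n)}$ is already an equivalence. Concretely, if $X\in \isp_{K(n)}$ satisfies $L_{K(n)}(E_n\otimes X)\simeq 0$, then because $K(n)$ is a module spectrum over $E_n$ (it is built from $E_n$ by killing the regular sequence $p, u_1, \ldots, u_{n-1}$ and inverting $u_n$), we obtain $K(n)\otimes X \simeq 0$, which forces $X\simeq 0$ since $X$ is $K(n)$-local. Hence $E_n$-equivalences in $\isp_{K(n)}$ coincide with equivalences.

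For the second and more substantive point, the $E_n$-nilpotence of $L_{K(n)}\mathbb{S}$ in $\isp_{K(n)}$ is the $K(n)$-local analog of the $E(n)$-nilpotence of $L_n\mathbb{S}$ that powers the proof of Proposition~\ref{prop:E(n)-local-equivalence-comodule}. It is a consequence of the Hopkins--Ravenel smash product theorem (\cite[Chapter~8]{Ravenel2}), which exhibits $L_{K(n)}\mathbb{S}$ as built from $E_n$-modules via finitely many cofiber sequences and retracts. The membership in the thick tensor ideal generated by $E_n$ can be verified at the level of the homotopy category $\Ho(\isp_{K(n)})$, where the classical statement applies directly.

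Once both ingredients are in place, Theorem~\ref{theorem:general-equivalence-comodules} yields the desired adjoint equivalence, with inverse the primitives functor $P$. The main obstacle is the second step: although the relevant chromatic input is classical, one must take some care to express the smash product theorem as a thick-tensor-ideal statement inside the quasi-category $\isp_{K(n)}$ so that the hypothesis of Theorem~\ref{theorem:general-equivalence-comodules} is cleanly satisfied. Everything else is formal bookkeeping within the framework developed in the earlier sections.
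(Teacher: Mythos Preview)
Your proposal is correct and follows essentially the same approach as the paper: apply Theorem~\ref{theorem:general-equivalence-comodules} with $\mathcal{C}=\isp_{K(n)}$ and $A=E_n$, verifying (i) that $E_n$-equivalences in $\isp_{K(n)}$ are equivalences and (ii) that $L_{K(n)}\mathbb{S}$ is $E_n$-nilpotent. The only minor difference is in the reference for (ii): the paper cites \cite[Prop.~A.3]{Devinatz-Hopkins} directly for $E_n$-nilpotence of $L_{K(n)}\mathbb{S}$, whereas you invoke the Hopkins--Ravenel smash product theorem; the latter literally gives $E(n)$-nilpotence of $L_n\mathbb{S}$, and passing to the $K(n)$-local statement for $E_n$ requires a short extra argument (applying $L_{K(n)}$ and relating $L_{K(n)}E(n)$ to $E_n$), which is exactly what the Devinatz--Hopkins reference packages.
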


\begin{proof}
We shall apply 
Theorem~\ref{theorem:general-equivalence-comodules}
for the stable homotopy theory $\isp_{K(n)}$
and the algebra object $E_n$.
The unit object $L_{K(n)}\mathbb{S}$
is $E_n$-nilpotent in $\isp_{K(n)}$
by \cite[Prop.~A.3]{Devinatz-Hopkins}.
Note that a map $f: X\to Y$ in $\isp_{K(n)}$
is an equivalence if and only if
$L_{K(n)}(E_n\otimes f)$ is an equivalence
since $K(n)\otimes E_n$ is a wedge of copies of $K(n)$.
Hence $L_{E_n}(\isp_{K(n)})\simeq \isp_{K(n)}$
and the proposition follows from
Theorem~\ref{theorem:general-equivalence-comodules}.
\qed
\end{proof}


We have an adjunction of quasi-categories
\begin{equation}\label{adjunction:ispkn-modenispkn}
\isp_{K(n)}\rightleftarrows \lmod_{E_n}(\isp_{K(n)}),
\end{equation}
where the left adjoint is given by smashing with $E_n$
in $\sp_{K(n)}$
and the right adjoint is the forgetful functor.
\if0
By \cite[Prop.~10.10]{Mathew},
adjunction~(\ref{adjunction:ispkn-modenispkn})
is comonadic,
and hence we have an equivalence of quasi-categories
\[ \isp_{K(n)}\stackrel{\simeq}{\longrightarrow}
   \comod_{(E_n,\Theta)}(\isp_{K(n)}),\] 
where $\Theta$ is the comonad associated to
adjunction~(\ref{adjunction:ispkn-modenispkn}).
\fi
By Theorem~\ref{thm:comparibility-of-definition-of-comodules},
we have an equivalence of quasi-categories
\[ \lcomod_{\Gamma(E_n)}(\isp_{K(n)})\simeq
   \lcomod_{\Theta}(\lmod_{E_n}(\isp_{K(n)})^{\rm op})^{\rm op},\] 
where $\Theta$ is the comonad associated to
adjunction~(\ref{adjunction:ispkn-modenispkn}).
Hence we see that the forgetful functor
$\lmod_{E_n}(\isp_{K(n)})\to \sp_{K(n)}$
exhibits the quasi-category $\sp_{K(n)}$ as comonadic
over $\lmod_{E_n}(\isp_{K(n)})$
(see also \cite[Prop.~10.10]{Mathew}).

Let $\isp(\mathbb{G}_n)_{K(n)}$
be the underlying quasi-categories
of $\ssp(\mathbb{G}_n)_{K(n)}$.
We can regard $F_n$ as an algebra
object of $\isp(\mathbb{G}_n)_{K(n)}$,
and form a quasi-category 
\[ \lmod_{F_n}(\isp(\mathbb{G}_n)_{K(n)}) \]
of left modules objects over $F_n$
in $\isp(\mathbb{G}_n)_{K(n)}$.
Note that 
$\lmod_{F_n}(\isp(\mathbb{G}_n)_{K(n)})$
is equivalent to the underlying quasi-category
of the symmetric monoidal $\ssp$-model category
$\lmod_{F_n}(\ssp(\mathbb{G}_n)_{K(n)})$.
The adjunction $({\rm Ex},{\rm Re})$
of the $\ssp$-model categories
induces an adjunction of the underlying quasi-categories
\[ {\mathcal Ex}:\isp_{K(n)}\rightleftarrows
    \lmod_{F_n}(\isp(\mathbb{G}_n)_{K(n)}):{\mathcal Re}.\]

Let $U: \ssp(\mathbb{G}_n)_{K(n)}\to\ssp_{K(n)}$
be the forgetful functor.
We can regard $UF_n$ as a commutative monoid
object in $\ssp_{K(n)}$,
and the unit map $\mathbb{S}\to UF_n$ induces
a symmetric monoidal $\ssp$-Quillen adjunction 
\[ 
\ssp_{K(n)}\rightleftarrows\lmod_{UF_n}(\ssp_{K(n)}),
\]
where the left adjoint is given by smashing
with $UF_n$ and the right adjoint is given by
the forgetful functor.
This induces an adjunction
of the underlying quasi-categories
\begin{equation}\label{adjunction-sspkn-modufnsspkn} 
\isp_{K(n)}\rightleftarrows\lmod_{UF_n}(\isp_{K(n)}).
\end{equation}
Hence we can consider the quasi-category
of comodules
\[ \comod_{(UF_n,\Theta)}(\isp_{K(n)}) \]
associated to the adjunction and 
a map of quasi-categories
\[ {\rm Coex}:\isp_{K(n)}\longrightarrow
   \comod_{(UF_n,\Theta)}(\isp_{K(n)}).\] 

In \cite{Torii1}
we showed that
there is an equivalence of quasi-categories
\[ \lmod_{F_n}(\isp(\mathbb{G}_n)_{K(n)})\simeq
   \comod_{(UF_n,\Theta)}(\isp_{K(n)}),\]
and there is an equivalence of functors
\[ {\mathcal Ex}\simeq {\rm Coex}\]
under this equivalence.

Since the canonical map $UF_n\to E_n$ of commutative
algebras is a weak equivalence in $\ssp_{K(n)}$,
we have a Quillen equivalence  
\[ \lmod_{UF_n}(\ssp_{K(n)})\stackrel{\simeq}{\longrightarrow} 
   \lmod_{E_n}(\ssp_{K(n)}),\]
and hence we obtain an equivalence of
the underlying quasi-categories
\[ \lmod_{UF_n}(\isp_{K(n)})\stackrel{\simeq}{\longrightarrow} 
   \lmod_{E_n}(\isp_{K(n)}).\]
Under this equivalence,
we can identify two adjunctions 
(\ref{adjunction:ispkn-modenispkn})
and (\ref{adjunction-sspkn-modufnsspkn}),
and hence the forgetful functor
$\lmod_{UF_n}(\isp_{K(n)})\to \isp_{K(n)}$
exhibits $\isp_{K(n)}$ as comonadic
over $\lmod_{UF_n}(\isp_{K(n)})$,
that is,
the functor ${\rm Coex}$ is an equivalence
of quasi-categories.

The adjunction $({\mathcal Ex},{\mathcal Re})$
of quasi-categories
induces an adjunction of the homotopy categories
\[ {\rm Ho}(\isp_{K(n)})\rightleftarrows
   {\rm Ho}(\lmod_{F_n}(\isp(\mathbb{G}_n)_{K(n)})),\]
which is identified with the derived adjunction
of the Quillen adjunction
$({\rm Ex},{\rm Re})$.
Since the functor ${\rm Coex}$
is equivalent to ${\mathcal Ex}$ and
is an equivalence of quasi-categories,
the total left derived functor
$\mathbb{L}{\rm Ex}$ is an equivalence of categories.
Hence we obtain the following theorem.

\begin{theorem}
\label{thm:model-K(n)-local-category}
The adjunction
\[ {\rm Ex}: \ssp_{K(n)}
             \rightleftarrows 
             {\rm Mod}_{F_n}(\ssp(\mathbb{G}_n)_{K(n)}):{\rm Re} \]
is a Quillen equivalence
and hence the category
${\rm Mod}_{F_n}(\ssp(\mathbb{G}_n)_{K(n)})$
models the $K(n)$-local category.
\end{theorem}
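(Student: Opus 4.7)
The plan is to show the Quillen adjunction is a Quillen equivalence by passing to underlying quasi-categories and showing the induced adjunction there is an equivalence of quasi-categories; since this implies the derived adjunction on homotopy categories is an equivalence of categories, the Quillen pair is then a Quillen equivalence. So I would reduce the statement to proving that the functor $\mathcal{Ex}:\isp_{K(n)}\to \lmod_{F_n}(\isp(\mathbb{G}_n)_{K(n)})$ between underlying quasi-categories is an equivalence.

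First I would invoke the identification from \cite{Torii1} that gives an equivalence $\lmod_{F_n}(\isp(\mathbb{G}_n)_{K(n)})\simeq \comod_{(UF_n,\Theta)}(\isp_{K(n)})$ under which $\mathcal{Ex}$ corresponds to the canonical functor ${\rm Coex}$ associated to the adjunction~(\ref{adjunction-sspkn-modufnsspkn}). Then, since $UF_n\to E_n$ is a weak equivalence of commutative algebras in $\ssp_{K(n)}$, the induced Quillen equivalence $\lmod_{UF_n}(\ssp_{K(n)})\simeq \lmod_{E_n}(\ssp_{K(n)})$ identifies the adjunctions (\ref{adjunction:ispkn-modenispkn}) and (\ref{adjunction-sspkn-modufnsspkn}) at the quasi-category level, hence the associated comonads and their quasi-categories of comodules. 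Thus it suffices to show that the canonical functor $\isp_{K(n)}\to \lcomod_{\Gamma(E_n)}(\isp_{K(n)})$, i.e.\ $L_{K(n)}(E_n\otimes(-))$, is an equivalence of quasi-categories.

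For this final step I would apply Theorem~\ref{theorem:general-equivalence-comodules} in the stable homotopy theory $(\isp_{K(n)},L_{K(n)}(-\otimes -),L_{K(n)}\mathbb{S})$ with $A=E_n$. The two inputs needed are: (i) $L_{K(n)}\mathbb{S}$ is $E_n$-nilpotent in $\isp_{K(n)}$, which is Proposition~A.3 of Devinatz--Hopkins; and (ii) the Bousfield localization $L_{E_n}\isp_{K(n)}\simeq \isp_{K(n)}$, which follows because $K(n)\otimes E_n$ is a wedge of suspensions of $K(n)$, so $E_n$-equivalences in $\isp_{K(n)}$ coincide with equivalences. Theorem~\ref{theorem:general-equivalence-comodules} then produces the desired adjoint equivalence.

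The potentially delicate point is the transition from the quasi-category statement back to the Quillen equivalence statement, but this is purely formal: a Quillen adjunction between combinatorial simplicial model categories is a Quillen equivalence if and only if the induced adjunction on underlying quasi-categories is an adjoint equivalence, equivalently if and only if the total derived functor is an equivalence of homotopy categories. Since $\mathcal{Ex}$ is the left adjoint in an adjunction of quasi-categories, showing $\mathcal{Ex}$ is an equivalence forces $\mathcal{Re}$ to be its inverse, and descending to $\Ho$ gives exactly the derived equivalence characterizing a Quillen equivalence. The main conceptual obstacle is the identification in step one with the comodule quasi-category, which has already been handled in \cite{Torii1} and reinterpreted here via Theorem~\ref{thm:comparibility-of-definition-of-comodules}; the rest is assembly of ingredients already in place in this section.
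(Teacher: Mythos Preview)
Your proposal is correct and follows essentially the same route as the paper: identify $\mathcal{Ex}$ with ${\rm Coex}$ via \cite{Torii1}, transport along the weak equivalence $UF_n\to E_n$ to reduce to the $E_n$-comodule comparison, and then invoke Theorem~\ref{theorem:general-equivalence-comodules} using Devinatz--Hopkins nilpotence and the fact that $E_n$-equivalences in $\isp_{K(n)}$ are equivalences. The paper presents these ingredients in a slightly different order (establishing the $E_n$-comodule equivalence first, then assembling), but the logic and the inputs are the same.
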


\section{Proof of 
Proposition~\ref{prop:existence-final-object-sections}}
\label{sec:proof-theorem-construction-RF}

In this section we prove 
Proposition~\ref{prop:existence-final-object-sections}
stated in \S\ref{subsec:opposite-coCartesian-fibrations},
which is technical but important for
constructing a canonical map
between opposite coCartesian fibrations.
First, we give some basic examples of inner anodyne maps
and study opposite marked anodyne maps.
In \S\ref{subsec:ODeltan}
we introduce a marked simplicial set
$\widetilde{\mathcal{O}}(\Delta^n)^+$
in which the underlying simplicial set is 
$\widetilde{\mathcal{O}}(\Delta^n)$
and study inclusions of subcomplexes
of the marked simplicial sets
$\widetilde{\mathcal{O}}(\Delta^n)^+$ and 
$(\widetilde{\mathcal{O}}(\Delta^n)^+\times(\Delta^{\{0\}})^{\flat})
\cup
(\widetilde{\mathcal{O}}(\Delta^n)^{\flat}\times(\Delta^1)^{\flat})$.
In \S\ref{subsec:proof-Prop-1}
we give a proof of 
Proposition~\ref{prop:existence-final-object-sections}.

\subsection{Examples of inner anodyne maps}
\label{subsec:inner-anodyne-maps}

In this subsection 
we give some basic examples of inner anodyne maps.

A map of simplicial sets is said to be inner anodyne
if it has the left lifting property with respect
to all inner fibrations.
The class of inner anodyne maps 
is the smallest weakly saturated class of morphisms
generated by
all horn inclusions
$\Lambda^n_i\hookrightarrow \Delta^n$
for $0<i<n$.

For a sequence $i_1,\ldots,i_k$ of integers
such that $0\le i_1<\ldots<i_k\le n$,
we denote by
$\Lambda^n(i_1,\ldots,i_k)$ the subcomplex
$\bigcup_{i\neq i_1,\ldots,i_k}d_i\Delta^{n-1}$
of $\Delta^n$.

\begin{lemma}
\label{lemma:fundamental-extension-lemmaI}
The inclusion 
$\Lambda^n(i_1,\ldots,i_k)\hookrightarrow \Delta^n$
is an inner anodyne map
for $k>0$ and $0<i_1<\ldots<i_k<n$.
\end{lemma}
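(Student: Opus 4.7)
The plan is to proceed by nested induction, outer on the dimension $n$ and inner on $k$. The inner base case $k=1$ is immediate, since the hypothesis $0 < i_1 < n$ makes $\Lambda^n(i_1)$ literally the inner horn $\Lambda^n_{i_1}$, whose inclusion into $\Delta^n$ is inner anodyne by definition. The outer base cases ($n \leq 2$) reduce to this inner base case, since no configuration with $k \geq 2$ and $0 < i_1 < \cdots < i_k < n$ is possible.

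For the inductive step $k \geq 2$, I will fill in the missing faces one at a time, keeping $d_{i_1}$ for last. Concretely, consider the filtration
\[ \Lambda^n(i_1,\ldots,i_k) = Y_0 \subset Y_1 \subset \cdots \subset Y_{k-1} = \Lambda^n(i_1), \qquad Y_s := Y_{s-1} \cup d_{i_{s+1}}\Delta^{n-1}, \]
so that the final inclusion $Y_{k-1} \hookrightarrow \Delta^n$ is handled by the inner base case, and it remains to show each $Y_{s-1} \hookrightarrow Y_s$ is inner anodyne. Each such step is a pushout along the inclusion
\[ d_{i_{s+1}}\Delta^{n-1} \cap Y_{s-1} \;\hookrightarrow\; d_{i_{s+1}}\Delta^{n-1}, \]
so by the saturation of inner anodyne maps under pushout it is enough to show this intersection inclusion is inner anodyne.

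The key computation is to identify the intersection. Under the canonical order-preserving identification $d_{i_{s+1}}\Delta^{n-1} \cong \Delta^{n-1}$, I expect the intersection to coincide with $\Lambda^{n-1}(i_1,\, i_{s+2}-1,\, i_{s+3}-1, \ldots,\, i_k - 1)$: the index $i_1$ stays put because $i_1 < i_{s+1}$, while the remaining $i_j$ (for $j \geq s+2$) all exceed $i_{s+1}$ and each shifts down by one under the face identification. The required inequalities $0 < i_1 < i_{s+2}-1 < \cdots < i_k - 1 < n-1$ then follow from $0 < i_1 < \cdots < i_k < n$ together with the spacing $i_{j+1} \geq i_j + 1$ (in particular $i_{s+2} - 1 \geq i_{s+1} > i_1$). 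Once this identification is established, the outer induction hypothesis on $n-1$, applied to the sequence $(i_1, i_{s+2}-1, \ldots, i_k-1)$, which has $k-s \geq 1$ entries lying strictly between $0$ and $n-1$, completes the step.

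The one point requiring genuine care is the combinatorial bookkeeping in the previous paragraph: tracking which face indices of $\Delta^n$ survive in the intersection, how they reindex after the identification with $\Delta^{n-1}$, and verifying that the boundary inequalities transfer so that the outer inductive hypothesis is applicable. Aside from this verification, the argument is a routine assembly of pushouts and compositions of inner anodyne maps.
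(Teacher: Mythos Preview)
Your argument is correct and follows essentially the same strategy as the paper: fill in the missing faces one at a time, identify each intersection with a generalized inner horn in $\Delta^{n-1}$, and invoke induction. The only difference is that the paper adds the single face $d_{i_k}$ (the one with the \emph{largest} missing index) and then recurses on $k$, whereas you add $d_{i_2},\ldots,d_{i_k}$ in increasing order and recurse on $n$; the paper's choice makes the bookkeeping simpler, since with all remaining missing indices below $i_k$ the intersection is just $\Lambda^{n-1}(i_1,\ldots,i_{k-1})$ with no shifts to track.
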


\begin{proof}
We shall prove the lemma by induction on $k$.
When $k=1$, the inclusion
is the map $\Lambda^n_i\hookrightarrow\Delta^n$
for $0<i=i_1<n$ and hence
it is an inner anodyne map.
Suppose the lemma holds for $k-1$
and we shall prove the lemma for $k$.
The subcomplex 
$\Lambda^n(i_1,\ldots,i_k)\cap d_{i_k}\Delta^{n-1}$
of $d_{i_k}\Delta^{n-1}$
is isomorphic to the subcomplex
$\Lambda^{n-1}(i_1,\ldots,i_{k-1})$ of $\Delta^{n-1}$.
By the hypothesis of induction,
the inclusion
$\Lambda^{n-1}(i_1,\ldots,i_{k-1})\hookrightarrow\Delta^{n-1}$
is an inner anodyne map.
By the cobase change of the inclusion 
$\Lambda^{n-1}(i_1,\ldots,i_{k-1})\hookrightarrow\Delta^{n-1}$
along the map
$\Lambda^{n-1}(i_1,\ldots,i_{k-1})\cong
\Lambda^n(i_1,\ldots,i_k)\cap d_{i_k}\Delta^{n-1}
\hookrightarrow\Lambda^n(i_1,\ldots,i_k)$,
we see that the inclusion
$\Lambda^n(i_1,\ldots,i_k)\hookrightarrow
\Lambda^n(i_1,\ldots,i_{k-1})$
is an inner anodyne map.  
By the hypothesis of induction,
the inclusion
$\Lambda^n(i_1,\ldots,i_{k-1})\hookrightarrow \Delta^n$
is an inner anodyne map.
Hence the composition
$\Lambda^n(i_1,\ldots,i_k)\hookrightarrow
\Lambda^n(i_1,\ldots,i_{k-1})\hookrightarrow \Delta^n$
is also an inner anodyne map.
\qed\end{proof}

\begin{lemma}
\label{lemma:fundamental-extension-lemmaII}
The inclusion 
$\Lambda^n(0,i_1,\ldots,i_k)\hookrightarrow\Delta^n$
is an inner anodyne map 
for $k>0$ and $1<i_1<\ldots<i_k<n$.
\end{lemma}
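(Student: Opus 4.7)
The plan is to reduce the claim to Lemma~\ref{lemma:fundamental-extension-lemmaI} by attaching the missing face $d_0\Delta^{n-1}$. Since $\Lambda^n(0,i_1,\ldots,i_k)\cup d_0\Delta^{n-1}=\Lambda^n(i_1,\ldots,i_k)$, the inclusion factors as
\[
\Lambda^n(0,i_1,\ldots,i_k)\hookrightarrow \Lambda^n(i_1,\ldots,i_k)\hookrightarrow\Delta^n,
\]
and the second inclusion is inner anodyne by Lemma~\ref{lemma:fundamental-extension-lemmaI} thanks to the hypothesis $0<i_1<\cdots<i_k<n$.

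To handle the first inclusion, I would identify $d_0\Delta^{n-1}$ with $\Delta^{n-1}$ via the order-preserving vertex bijection $\{0,\ldots,n-1\}\to\{1,\ldots,n\}$, $j\mapsto j+1$, and invoke the simplicial identity $d_0d_j=d_{j+1}d_0$ to recognize the intersection as
\[
d_0\Delta^{n-1}\cap\Lambda^n(0,i_1,\ldots,i_k)\cong\Lambda^{n-1}(i_1-1,\ldots,i_k-1).
\]
The hypothesis $1<i_1<\cdots<i_k<n$ translates to $0<i_1-1<\cdots<i_k-1<n-1$, so another application of Lemma~\ref{lemma:fundamental-extension-lemmaI} shows that the inclusion $\Lambda^{n-1}(i_1-1,\ldots,i_k-1)\hookrightarrow d_0\Delta^{n-1}\cong\Delta^{n-1}$ is itself inner anodyne.

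Since inner anodyne maps are closed under cobase change, the pushout square
\[
\xymatrix{
\Lambda^{n-1}(i_1-1,\ldots,i_k-1)\ar[r]\ar[d] & d_0\Delta^{n-1}\ar[d]\\
\Lambda^n(0,i_1,\ldots,i_k)\ar[r] & \Lambda^n(i_1,\ldots,i_k)
}
\]
shows that the first inclusion is inner anodyne, and composing with the second yields the desired conclusion. The only delicate step, and where I would take most care, is the combinatorial bookkeeping to justify the displayed identification of the intersection; this also illuminates why $k>0$ is required, since for $k=0$ the intersection would collapse to all of $d_0\Delta^{n-1}$, consistent with the fact that $\Lambda^n_0\hookrightarrow\Delta^n$ is an outer-horn inclusion and hence not inner anodyne.
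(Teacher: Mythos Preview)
Your proposal is correct and follows essentially the same approach as the paper's own proof: both factor the inclusion through $\Lambda^n(i_1,\ldots,i_k)$, identify $d_0\Delta^{n-1}\cap\Lambda^n(0,i_1,\ldots,i_k)$ with $\Lambda^{n-1}(i_1-1,\ldots,i_k-1)$, and then apply Lemma~\ref{lemma:fundamental-extension-lemmaI} twice together with closure of inner anodyne maps under cobase change and composition. Your write-up is in fact slightly more detailed, spelling out the simplicial identity $d_0d_j=d_{j+1}d_0$ behind the index shift and commenting on the role of the hypothesis $k>0$.
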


\begin{proof}
The subcomplex $\Lambda^n(0,i_1,\ldots,i_k)\cap d_0\Delta^{n-1}$
of $d_0\Delta^{n-1}$
is isomorphic to the subcomplex
$\Lambda^{n-1}(i_1-1,\ldots,i_k-1)$ of $\Delta^{n-1}$.
Since $0<i_1-1<\cdots<i_k-1<n-1$,
the inclusion $\Lambda^{n-1}(i_1-1,\ldots,i_k-1)
\hookrightarrow \Delta^{n-1}$
is an inner anodyne map
by Lemma~\ref{lemma:fundamental-extension-lemmaI}.
By the cobase change of 
$\Lambda^{n-1}(i_1-1,\ldots,i_k-1)
\hookrightarrow \Delta^{n-1}$
along the map
$\Lambda^{n-1}(i_1-1,\ldots,i_k-1)\cong
\Lambda^n(0,i_1,\ldots,i_k)\cap d_0\Delta^{n-1}
\hookrightarrow \Lambda^n(0,i_1,\ldots,i_k)$,
we see that 
the inclusion
$\Lambda^n(0,i_1,\ldots,i_k)\hookrightarrow
\Lambda^n(i_1,\ldots,i_k)$
is an inner anodyne map.
Since
the inclusion
$\Lambda^n(i_1,\ldots,i_k)\hookrightarrow
\Delta^n$ is an inner anodyne map
by Lemma~\ref{lemma:fundamental-extension-lemmaI},
the composition
$\Lambda^n(0,i_1,\ldots,i_k)\hookrightarrow
\Lambda^n(i_1,\ldots,i_k)\hookrightarrow
\Delta^n$ is also an inner anodyne map.
\qed\end{proof}

%

\if0
Let $n,m$ be positive integers.
We let $\mathcal{I}$ be the set of 
sequences of integers
$I=(i_1,\ldots,i_m)$ with $0\le i_1\le \cdots\le i_m\le n$.
For $I\in\mathcal{I}$, we let
$A_I$ be the $(n+m)$-simplex of $\Delta^n\times \Delta^m$
spanned by
\[ \{(0,0),\cdots,(i_1,0),(i_1,1),
   \cdots, (i_2,1), (i_2,2),\cdots,
   (i_m,m-1), (i_m,m),\cdots, (n,m)\}.\]
We give $\mathcal{I}$ the lexicographic order. 
We set 
\[ \begin{array}{rcl}
     B_I&=&\partial(\Delta^n\times\Delta^m)\cup
       (\bigcup_{J\le I}A_{J}),\\[2mm]
     C_I&=&\partial(\Delta^n\times\Delta^m)\cup
       (\bigcup_{J<I}A_{J}).
   \end{array}\]

\begin{lemma}
The inclusion $\partial(\Delta^n\times\Delta^m)\hookrightarrow
C_{(n,\ldots,n)}$ is inner anodyne.
\end{lemma}

\begin{proof}
It suffices to show that 
the inclusion $C_I\hookrightarrow B_I$ is
inner anodyne for any $I\in\mathcal{I}$
with $I<(n,\ldots,n)$.

First,
we shall show that
the inclusion $C_{(0,\ldots,0)}\hookrightarrow
B_{(0,\ldots,0)}$ is inner anodyne.
Note that $C_{(0,\ldots,0)}=\partial(\Delta^n\times\Delta^m)$. 
We have $\partial(\Delta^n\times\Delta^m)\cap
A_{(0,\ldots,0)}\cong\Lambda^{n+m}_{m}$.
Since the map $C_{(0,\ldots,0)}\hookrightarrow
B_{(0,\ldots,0)}$ is obtained 
from the inner anodyne map
$\Lambda^{n+m}_m\hookrightarrow\Delta^{n+m}$
by the pushout along the map
$\Lambda^{n+m}_m\cong C_{(0,\ldots,0)}\cap 
A_{(0,\ldots,0)}\to C_{(0,\ldots,0)}$,
we see that $C_{(0,\ldots,0)}\hookrightarrow
B_{(0,\ldots,0)}$ is inner anodyne.

Next,
we shall show that $C_I\hookrightarrow B_I$
is inner anodyne for $I\in\mathcal{I}$
with $(0,\ldots,0)<I<(n,\ldots,n)$.
We have $C_I\cap A_I\cong$
{\color{red} We have to complete the proof.}
\qed
\end{proof}
\fi

\subsection{Opposite marked anodyne maps}
\label{subsec:opposite-marked-anodyne}

In this subsection we study 
opposite marked anodyne maps.

A marked simplicial set is a
pair $(K,\mathcal{E})$,
where $K$ is a simplicial set
and $\mathcal{E}$ is a set of edges of $K$
that contains all degenerate edges.
A map of marked simplicial sets 
$(K,\mathcal{E})\to (L,\mathcal{E}')$
is a map of simplicial set $f:K\to L$
such that $f(\mathcal{E})\subset \mathcal{E}'$.
We denote by ${\rm sSet}^{+}$
the category of marked simplicial sets.

For a simplicial set $K$,
we denote by $K^{\flat}$
the marked simplicial set $(K,s_0(K_0))$,
where $s_0(K_0)$ is the set of all degenerate edges of $K$,
and by $K^{\sharp}$
the marked simplicial set $(K,K_1)$,
where $K_1$ is the set of all edges of $K$.

For a marked simplicial set $(K,\mathcal{E})$,
we have the opposite marked simplicial set
$(K,\mathcal{E})^{\rm op}=(K^{\rm op},\mathcal{E}^{\rm op})$,
where $K^{\rm op}$ is the opposite simplicial set of $K$
and $\mathcal{E}^{\rm op}$ is the corresponding
set of edges of $K^{\rm op}$.

We say that a map of marked simplicial sets
$K\to L$ is an opposite marked anodyne map
if the opposite $K^{\rm op}\to L^{\rm op}$
is a marked anodyne map
defined in \cite[Def.~3.1.1.1]{Lurie1}.
The class of opposite marked anodyne maps
in ${\rm sSet}^+$
is the smallest weakly saturated class of morphisms
with the following properties:
\begin{enumerate}
\item
For each $0<i<n$,
the inclusion $(\Lambda^n_i)^{\flat}\hookrightarrow
(\Delta^n)^{\flat}$ is opposite marked anodyne.
\item
For every $n>0$,
the inclusion
\[ (\Lambda^n_0,(\Lambda^n_0)_1\cap\mathcal{E})\hookrightarrow
   (\Delta^n,\mathcal{E}) \]
is opposite marked anodyne, where $\mathcal{E}$
denotes the set of all degenerate edges of $\Delta^n$
together with the initial edge $\Delta^{\{0,1\}}$.
\item
The inclusion
\[ (\Lambda^2_1)^{\sharp}\coprod_{(\Lambda^2_1)^{\flat}}
   (\Delta^2)^{\flat}\hookrightarrow
   (\Delta^2)^{\sharp} \]
is opposite marked anodyne. 
\item
For every Kan complex $K$,
the map
$K^{\flat}\to K^{\sharp}$ is opposite marked anodyne.  
\end{enumerate}

\begin{lemma}
\label{lem:marked-anodyne-simplex}
The inclusion 
$(\Lambda^n_0)^{\sharp}\hookrightarrow
(\Delta^n)^{\sharp}$
is opposite marked anodyne for $n>0$.
\end{lemma}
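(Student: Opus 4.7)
The plan is to exhibit the inclusion $(\Lambda^n_0)^{\sharp}\hookrightarrow(\Delta^n)^{\sharp}$ as a pushout of the generating opposite marked anodyne maps listed in \S\ref{subsec:opposite-marked-anodyne}. The main move is to apply generator (2) to the inclusion $(\Lambda^n_0,\mathcal{E}_0)\hookrightarrow(\Delta^n,\mathcal{E}_0)$, where $\mathcal{E}_0$ consists of the degenerate edges together with $\Delta^{\{0,1\}}$, and then to push this out along the natural inclusion $(\Lambda^n_0,\mathcal{E}_0)\hookrightarrow(\Lambda^n_0)^{\sharp}$. By closure of the opposite marked anodyne class under cobase change, this yields an opposite marked anodyne inclusion
\[
(\Lambda^n_0)^{\sharp}\hookrightarrow(\Delta^n,\mathcal{F}),
\]
where $\mathcal{F}$ consists of the degenerate edges, $\Delta^{\{0,1\}}$, and every edge of $\Lambda^n_0\subset\Delta^n$.

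For $n=1$ this is already the conclusion (since $\Lambda^1_0=\{0\}$ has no non-degenerate edges), and for $n\geq 3$ every edge of $\Delta^n$ is contained in some face other than $d_0\Delta^n$, hence lies in $\Lambda^n_0$, so $\mathcal{F}$ is the full edge set and the target coincides with $(\Delta^n)^{\sharp}$. Thus a single pushout argument suffices in these ranges.

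The hard case is $n=2$, because after the pushout above the edge $\Delta^{\{1,2\}}$ is still unmarked. Generator (3) propagates markings only in the composition direction, from $(d_2\sigma,d_0\sigma)$ to $d_1\sigma$, whereas here $d_2\sigma=\Delta^{\{0,1\}}$ and $d_1\sigma=\Delta^{\{0,2\}}$ are marked and one needs to produce a marking on $d_0\sigma=\Delta^{\{1,2\}}$. The plan is to attach an auxiliary $3$-simplex along one face of $\Delta^2$, built using pushouts of the inner anodyne horn extensions of \S\ref{subsec:inner-anodyne-maps}, whose remaining $2$-faces admit enough marked composable pairs that iterating generator (3) propagates a marking onto $\Delta^{\{1,2\}}$; the auxiliary material is then eliminated by a further pushout, producing the anodyne inclusion $(\Delta^2,\mathcal{F})\hookrightarrow(\Delta^2)^{\sharp}$. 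Composing the two stages gives the result; the main obstacle is precisely in arranging the $n=2$ auxiliary configuration so that the composition direction of generator (3) can be made to bear on the reverse-direction marking that is required.
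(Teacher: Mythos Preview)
Your treatment of $n=1$ and $n\geq 3$ is correct and matches the paper's argument exactly: for $n\geq 3$ every edge of $\Delta^n$ already lies in $\Lambda^n_0$, so the single pushout of generator~(2) along $(\Lambda^n_0,\mathcal{E}_0)\to(\Lambda^n_0)^{\sharp}$ lands directly in $(\Delta^n)^{\sharp}$.

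The $n=2$ case, however, is not proved. You correctly diagnose that after the pushout one is left with the inclusion $(\Delta^2,\mathcal{F})\hookrightarrow(\Delta^2)^{\sharp}$ where only $\Delta^{\{1,2\}}$ is unmarked, and that generator~(3) propagates markings in the wrong direction for this. But your proposed fix---attaching an auxiliary $3$-simplex via inner anodyne pushouts, iterating generator~(3), then ``eliminating'' the auxiliary material---does not work. Pushouts of monomorphisms only enlarge the complex; there is no way to collapse back to $(\Delta^2)^{\sharp}$ by a further pushout, and arranging a retract in the arrow category would require a map from the enlarged complex back to $\Delta^2$, which does not exist once you have glued on nondegenerate $3$-cells. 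More fundamentally, the ``2-out-of-3'' step $(\Lambda^2_0)^{\sharp}\coprod_{(\Lambda^2_0)^{\flat}}(\Delta^2)^{\flat}\hookrightarrow(\Delta^2)^{\sharp}$ is not in the weakly saturated class generated by (1)--(3) alone: generator~(4), the Kan-complex condition, is precisely what encodes 2-out-of-3 for marked edges.

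The paper handles $n=2$ by citing \cite[Cor.~3.1.1.7]{Lurie1}, whose proof uses generator~(4) via the nerve of a contractible groupoid. That corollary gives exactly the map $(\Lambda^2_0)^{\sharp}\coprod_{(\Lambda^2_0)^{\flat}}(\Delta^2)^{\flat}\hookrightarrow(\Delta^2)^{\sharp}$ as opposite marked anodyne; composing with the pushout of generator~(2) (as you do) finishes the case. You should replace your auxiliary-simplex sketch with this citation.
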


\begin{proof}
When $n=1$, the lemma holds by 
property 2 of the class of opposite marked anodyne maps.
We consider the case $n=2$. 
By \cite[Cor.~3.1.1.7]{Lurie1},
the inclusion
$(\Lambda^2_0)^{\sharp}\coprod_{(\Lambda^2_0)^{\flat}}(\Delta^2)^{\flat}
\hookrightarrow (\Delta^2)^{\sharp}$
is opposite marked anodyne.
The inclusion $(\Lambda^2_0,(\Lambda^2_0)_1\cap\mathcal{E})
\hookrightarrow
(\Delta^2,\mathcal{E})$ is opposite marked anodyne
by property 2 of the class of opposite marked anodyne maps,
where $\mathcal{E}$ is the set of edges of $\Delta^2$
consisting of all degenerate edges together with $\Delta^{\{0,1\}}$.
Taking the pushout of
$(\Lambda^2_0,(\Lambda^2_0)_1\cap\mathcal{E})\hookrightarrow
(\Delta^2,\mathcal{E})$
along the map $(\Lambda^2_0,(\Lambda^2_0)_1\cap\mathcal{E})
\to (\Lambda^2_0)^{\sharp}$,
we see that
the inclusion $(\Lambda^2_0)^{\sharp}\hookrightarrow
(\Lambda^2_0)^{\sharp}\coprod_{(\Lambda^2_0)^{\flat}}(\Delta^2)^{\flat}$
is opposite marked anodyne.
Hence the composition
$(\Lambda^2_0)^{\sharp}\hookrightarrow
(\Lambda^2_0)^{\sharp}\coprod_{(\Lambda^2_0)^{\flat}}(\Delta^2)^{\flat}
\hookrightarrow (\Delta^2)^{\sharp}$
is also opposite marked anodyne.

Now we consider the case $n\ge 3$.
The inclusion $(\Lambda^n_0,(\Lambda^n_0)_1\cap\mathcal{E})
\hookrightarrow
(\Delta^n,\mathcal{E})$ is opposite marked anodyne
by property 2 of the class of opposite marked anodyne maps,
where $\mathcal{E}$ is the set of edges of $\Delta^n$
consisting of all degenerate edges together with $\Delta^{\{0,1\}}$.
Taking the pushout of
$(\Lambda^n_0,(\Lambda^n_0)_1\cap\mathcal{E})
\hookrightarrow
(\Delta^n,\mathcal{E})$
along the map
$(\Lambda^n_0,(\Lambda^n_0)_1\cap\mathcal{E})\to
(\Lambda^n_0)^{\sharp}$,
we see that 
the inclusion
$(\Lambda^n_0)^{\sharp}\hookrightarrow
(\Delta^n)^{\sharp}$ is 
opposite marked anodyne.
\qed\end{proof}

\begin{lemma}
\label{lemma:extension_the_first_step}
Let $K=(\Delta^n\times
\partial\Delta^{1})\cup(\Lambda^n_0\times\Delta^1)$
be the subcomplex of $\Delta^n\times \Delta^1$
for $n\ge 1$.
Let $\mathcal{E}$ be the set of edges of 
$\Delta^n\times\Delta^1$
consisting of all degenerate edges
together with $\Delta^{\{0,1\}}\times\Delta^{\{0\}}$.
The inclusion
$(K, K_1\cap \mathcal{E})
\hookrightarrow
(\Delta^n\times\Delta^1,\mathcal{E})$
is an opposite marked anodyne map.
\if
Let $r: Z\to T$ be an inner fibration.
Suppose we have a commutative diagram
\[ \begin{array}{ccc}
     K & \stackrel{f}{\longrightarrow} & Z\\[1mm]
     \mbox{$\scriptstyle i$}\big\downarrow
     \phantom{\mbox{$\scriptstyle i$}} & &
     \phantom{\mbox{$\scriptstyle r$}}
     \big\downarrow\mbox{$\scriptstyle r$}\\[1mm]
     \Delta^n\times\Delta^1 &
     \stackrel{g}{\longrightarrow} & T\\
   \end{array}\]
in ${\rm sSet}$,
where $i$ is the inclusion.
If $f(\Delta^{\{0,1\}}\times\Delta^{\{0\}})$
is an $r$-coCartesian edge,
then there exists a map
$h:\Delta^n\times\Delta^1\to Z$ such that
$ih=f$ and $rh=g$.
\fi
\end{lemma}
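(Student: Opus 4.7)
The plan is to prove the lemma by an explicit filtration of $\Delta^n\times\Delta^1$ that attaches to $K$ one non-degenerate missing simplex at a time, each attachment being realized as a pushout along one of the generating types of opposite marked anodyne inclusions recalled in \S\ref{subsec:opposite-marked-anodyne}. For $0\le j\le n$ let $\sigma_j$ be the non-degenerate $(n+1)$-simplex with vertex set $\{(0,0),(1,0),\ldots,(j,0),(j,1),(j+1,1),\ldots,(n,1)\}$, and for $1\le j\le n$ let $\tau_j=d_j\sigma_j=d_j\sigma_{j-1}$ be the $n$-simplex $\{(0,0),\ldots,(j-1,0),(j,1),\ldots,(n,1)\}$. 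A direct inspection of the two pieces of $K$ shows that every non-degenerate simplex of $\Delta^n\times\Delta^1$ not contained in $K$ is either one of these $\sigma_j$ or $\tau_j$, or is a face of one of them; in particular $\tau_1=d_1\sigma_0$ will enter together with $\sigma_0$.

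I intend to process the missing simplices in the order
\[ \sigma_0,\ \tau_n,\ \sigma_n,\ \tau_{n-1},\ \sigma_{n-1},\ \ldots,\ \tau_2,\ \sigma_2,\ \sigma_1,\]
omitting any $\tau_j$ that has already been absorbed as a face of a previously attached simplex (so that for $n=1$ the list collapses to $\sigma_0,\sigma_1$ and for $n=2$ to $\sigma_0,\tau_2,\sigma_2,\sigma_1$). For the first step I plan to show that $\sigma_0\cap K$ consists of every face of $\sigma_0$ except $d_1\sigma_0=\tau_1$, whence $\sigma_0\cap K=\Lambda^{n+1}_1$; because the marked edge $\Delta^{\{0,1\}}\times\Delta^{\{0\}}$ uses the vertex $(1,0)\notin\sigma_0$, the induced marking on $\sigma_0$ is flat, and the attachment is opposite marked anodyne by a cobase change of property~(1). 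For each subsequent attachment of an $(n+1)$-simplex $\sigma_j$ with $j\ge 1$ or of an $n$-simplex $\tau_j$ with $j\ge 2$, I shall check that the intersection with the current subcomplex is exactly the horn obtained by removing the face that drops the vertex $(0,0)$, namely $\Lambda^{n+1}_0$ or $\Lambda^n_0$. In each such case the initial edge $v_0\to v_1$ of the simplex being attached is the edge $(0,0)\to(1,0)=\Delta^{\{0,1\}}\times\Delta^{\{0\}}$, which is precisely the marked edge of $\mathcal{E}$, so that property~(2) applies and the step is opposite marked anodyne.

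The main technical obstacle, and the step that takes the most care to write out, is the combinatorial bookkeeping at each stage: one must verify that $\tau_j$ is not accidentally absorbed into any previously attached simplex (so that the horn really has only the $d_0$ face missing), and conversely that at the moment $\sigma_j$ is attached both $\tau_j=d_j\sigma_j$ and $\tau_{j+1}=d_{j+1}\sigma_j$ are already present (so that the horn is genuinely a complete $\Lambda^{n+1}_0$). These verifications reduce to projecting each face to $\Delta^n$ and checking whether the projection lies in $\Lambda^n_0$ or covers a single level of $\Delta^1$; the order above is designed precisely so that this always works out. Since the class of opposite marked anodyne maps is weakly saturated, composing the finite chain of opposite marked anodyne pushouts produces the inclusion $(K,K_1\cap\mathcal{E})\hookrightarrow(\Delta^n\times\Delta^1,\mathcal{E})$, completing the proof.
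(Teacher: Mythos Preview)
Your filtration is correct and differs from the paper's in an interesting way.  The paper attaches the $(n+1)$-simplices in the monotone order $L(0),L(1),\ldots,L(n)$ (your $\sigma_0,\sigma_1,\ldots,\sigma_n$).  With that order the intersection $L(i)\cap\overline{L}(i-1)$ for $0<i<n$ is not a standard horn: both $d_0L(i)$ and $d_{i+1}L(i)=\tau_{i+1}$ are missing, so the paper invokes its auxiliary Lemma~\ref{lemma:fundamental-extension-lemmaII} to show that $\Lambda^{n+1}(0,i+1)\hookrightarrow\Delta^{n+1}$ is inner anodyne.  Thus all of $K\hookrightarrow\overline{L}(n-1)$ is handled as a single inner anodyne composite, and only the final attachment $L(n)$ uses the marked edge via property~(2).

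Your non-monotone order $\sigma_0,\tau_n,\sigma_n,\tau_{n-1},\sigma_{n-1},\ldots,\tau_2,\sigma_2,\sigma_1$ trades that auxiliary lemma for more bookkeeping: by inserting each $\tau_j$ just before $\sigma_j$, every intersection is a genuine standard horn ($\Lambda^{n+1}_1$ once, then $\Lambda^n_0$ or $\Lambda^{n+1}_0$ with marked initial edge thereafter), so only the generating types~(1) and~(2) are needed.  The price is that the marked property~(2) is used $2n-1$ times rather than once, and the verification that each horn is exactly $\Lambda_0$ requires the case-checks you outline.  Both arguments are sound; yours is more self-contained, the paper's is shorter given that Lemma~\ref{lemma:fundamental-extension-lemmaII} is needed elsewhere anyway.
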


\begin{proof}
Put $L(i)=(\Delta^{\{0,\ldots,i\}}\times\Delta^{\{0\}})
\star(\Delta^{\{i,\ldots,n\}}\times\Delta^{\{1\}})$
for $0\le i\le n$.
We set $\overline{L}(i)= K\cup (\cup_{j=0}^i L(j))$
for $0\le i\le n$.
Note that $\overline{L}(n)=\Delta^n\times\Delta^1$. 

First, we show that
the inclusion $K\hookrightarrow \overline{L}(n-1)$
is inner anodyne.
Since $L(0)\cap K$ is isomorphic to $\Lambda^{n+1}_1$
in $L(0)\cong \Delta^{n+1}$,
we see that the inclusion $K\hookrightarrow \overline{L}(0)$
is inner anodyne.
For $0<i<n$,
since $L(i)\cap \overline{L}(i-1)$
is isomorphic to $\Lambda^{n+1}(0,i+1)$
in $L(i)\cong\Delta^{n+1}$,
we see that the inclusion
$\overline{L}(i-1)\hookrightarrow
\overline{L}(i)$ is inner anodyne
by Lemma~\ref{lemma:fundamental-extension-lemmaII}.
Hence the composition
$K\hookrightarrow \overline{L}(0)\hookrightarrow
\cdots \hookrightarrow\overline{L}(n-1)$
is also inner anodyne.

Since the class of inner anodyne maps
is stable under the opposite,
the inclusion $K^{\rm op}\hookrightarrow
\overline{L}(n-1)^{\rm op}$ is also inner anodyne.
By \cite[Remark~3.1.1.4]{Lurie1},
we see that 
$K^{\flat}\hookrightarrow 
\overline{L}(n-1)^{\flat}$
is an opposite marked anodyne map.
This implies that
the inclusion
$(K,K_1\cap \mathcal{E})\hookrightarrow
 (\overline{L}(n-1),\overline{L}(n-1)_1\cap\mathcal{E})$
is opposite marked anodyne.

Now we consider the inclusion
$\overline{L}(n-1)\hookrightarrow\overline{L}(n)$.
We see that
$L(n)\cap\overline{L}(n-1)$ is isomorphic to $\Lambda^{n+1}_0$
in $L(n)\cong \Delta^{n+1}$.
We can identify
$(L(n),L(n)_1\cap\mathcal{E})$
with $(\Delta^{n+1},\mathcal{E}')$,
where $\mathcal{E}'$ is the set of edges of $\Delta^{n+1}$
consisting of all degenerate edges together with
$\Delta^{\{0,1\}}$.
Since the map 
$(\Lambda^{n+1}_0,(\Lambda^{n+1}_0)_1\cap\mathcal{E}')
\hookrightarrow (\Delta^{n+1},\mathcal{E}')$
is opposite marked anodyne,
we see that 
$(\overline{L}(n-1),\overline{L}(n-1)_1\cap \mathcal{E})
\to 
(\overline{L}(n),\mathcal{E})$
is opposite marked anodyne.

Therefore,
the composition
$(K,K_1\cap \mathcal{E})\hookrightarrow
(\overline{L}(n-1),\overline{L}(n-1)_1\cap\mathcal{E})
\hookrightarrow
(\overline{L}(n),\mathcal{E})$
is also an opposite marked anodyne map.
This completes the proof.
\qed\end{proof}

\subsection{The marked simplicial set 
$\widetilde{\mathcal{O}}(\Delta^n)^+$}
\label{subsec:ODeltan}

In this subsection
we introduce a marked simplicial set
$\widetilde{\mathcal{O}}(\Delta^n)^+$
in which the underlying simplicial set is 
$\widetilde{\mathcal{O}}(\Delta^n)$.
We study inclusions of subcomplexes of
the marked simplicial sets
$\widetilde{\mathcal{O}}(\Delta^n)^+$ and
$(\widetilde{\mathcal{O}}(\Delta^n)^+\times (\Delta^{\{0\}})^{\flat})\cup
(\widetilde{\mathcal{O}}(\Delta^n)^{\flat}\times (\Delta^1)^{\flat})$.

Let $\widetilde{\mathcal{E}}$
be the set of edges of $\widetilde{\mathcal{O}}(\Delta^n)$
consisting of all non-degenerate edges together
with edges $ij\to ik$ for $0\le i\le j\le k\le n$.
We regard the pair $(\widetilde{\mathcal{O}}(\Delta^n),
\widetilde{\mathcal{E}})$ as a marked simplicial set.
For a subcomplex $K$ of $\widetilde{\mathcal{O}}(\Delta^n)$,
we set $\widetilde{\mathcal{E}}_K=\widetilde{\mathcal{E}}\cap K_1$
and denote by $K^+$
the marked simplicial set $(K,\widetilde{\mathcal{E}}_K)$.

For $n>0$,
we let $M_n$ be the subcomplex of 
$\widetilde{\mathcal{O}}(\Delta^n)$
that contains all non-degenerate $k$-simplexes 
for $0\le k\le n$
except for the $n$-simplex corresponding
to $nn\to \cdots \to 0n$.

\begin{lemma}
\label{lem:opposite-makred-anodyne-partial-M}
The inclusion
$\widetilde{\mathcal{O}}(\partial\Delta^n)^+
\hookrightarrow M_n^+$
is an opposite marked anodyne map for all $n>0$.
\end{lemma}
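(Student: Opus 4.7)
The plan is to exhibit the inclusion as an iterated composition of opposite marked anodyne attachments of the non-degenerate $n$-simplices of $\widetilde{\mathcal{O}}(\Delta^n)$ that lie in $M_n \setminus \widetilde{\mathcal{O}}(\partial\Delta^n)$. A non-degenerate simplex of $\widetilde{\mathcal{O}}(\Delta^n)$ lies in this difference precisely when its image under the projection $\widetilde{\mathcal{O}}(\Delta^n) \to \Delta^n$ is all of $[n]$, with the sole exception of the excluded $n$-simplex $\sigma_0 = ((n,n) < (n-1,n) < \cdots < (0,n))$. The non-degenerate $n$-simplices of $\widetilde{\mathcal{O}}(\Delta^n)$ correspond to monotone lattice paths in the diagram~(\ref{eq:twisted-arroes-diagram}) from a diagonal vertex $(k,k)$ to $(0,n)$; those to be attached are precisely the paths with $k < n$, and any such path necessarily contains at least one horizontal step of the form $ij \to i(j{+}1)$.

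I would totally order these $n$-simplices $\sigma$ and assign to each a face index $m(\sigma)\in\{0,1,\ldots,n\}$ such that, at the moment $\sigma$ is attached, every face $d_j\sigma$ with $j\neq m(\sigma)$ already belongs to the subcomplex built so far, while $d_{m(\sigma)}\sigma$ is introduced precisely at this step. When $0<m(\sigma)<n$, the attachment is the pushout of the inner horn inclusion $\Lambda^n_{m(\sigma)}\hookrightarrow\Delta^n$, which is inner anodyne and therefore opposite marked anodyne by Lemma~\ref{lemma:fundamental-extension-lemmaI}. When $m(\sigma)=0$, the ordering would be chosen so that the initial edge $\Delta^{\{0,1\}}\subset\sigma$ is precisely a horizontal step of the path (available because $k<n$ forces at least one horizontal move, which may be placed first); this edge is marked in $\widetilde{\mathcal{E}}$, so the corresponding horn inclusion is opposite marked anodyne by property~2 of the generating class. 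The index $m(\sigma)=n$ will be avoided by the choice of ordering. Lower-dimensional full-image simplices, which are codimension-$\geq 2$ faces of some $n$-simplex, appear as sub-faces of the newly introduced $(n-1)$-face $d_{m(\sigma)}\sigma$ at some step, so they are attached automatically along the way rather than requiring separate handling.

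The hard part will be the combinatorial bookkeeping: constructing an ordering of the lattice paths together with an index assignment $\sigma\mapsto m(\sigma)$ satisfying (i) at each step exactly one $(n-1)$-face of $\sigma$ is new, (ii) all remaining faces of $\sigma$ have already been attached, and (iii) whenever $m(\sigma)=0$, the initial edge of $\sigma$ is horizontal. An ordering based on lexicographic comparison of the move sequences of the lattice paths (say, with horizontal moves prioritized early), together with a compatible choice of $m(\sigma)$ — for example, taking $m(\sigma)$ to be the position of the first place where $\sigma$ disagrees with its immediate predecessor in the ordering — should yield the required properties, but verifying the face-compatibilities for all lattice paths from $(k,k)$ with $k<n$ is where the main technical effort lies.
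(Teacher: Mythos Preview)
Your plan has a genuine obstruction for $n\ge 3$: condition (i) cannot be satisfied by \emph{any} ordering. A single horn attachment $\Lambda^n_{m(\sigma)}\hookrightarrow\Delta^n$ contributes exactly one new $n$-simplex and exactly one new $(n-1)$-simplex, so if only such attachments are used the number of new $(n-1)$-cells must equal the number of new $n$-cells. But already for $n=3$ one counts $7$ full-image $3$-simplices in $M_3\setminus\widetilde{\mathcal{O}}(\partial\Delta^3)$ (the lattice paths from $(k,k)$ to $(0,3)$ with $k<3$) against $8$ full-image $2$-simplices; for instance $(2,3)\to(1,3)\to(0,3)=d_0\sigma_0$ is full-image and must be added even though $\sigma_0$ itself is excluded from $M_3$. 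No bijection is possible, so at least one $n$-simplex must be attached with more than one $(n-1)$-face missing, and your scheme of a single index $m(\sigma)$ per simplex cannot succeed.

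The paper's proof confronts this directly. It orders the $n$-simplices first by the level $i$ of their starting vertex $(i,i)$ and then lexicographically within each level, and shows that at each step the intersection of the incoming simplex with what has been built is a \emph{generalized} horn $\Lambda^n(\alpha_1,\ldots,\alpha_k)$ or $\Lambda^n(0,\alpha_1,\ldots,\alpha_k)$, where the $\alpha_t$ record the ``up-then-right'' corners of the path. Lemmas~\ref{lemma:fundamental-extension-lemmaI} and~\ref{lemma:fundamental-extension-lemmaII} are stated precisely to show these generalized horn inclusions are inner anodyne. The initial simplex $B_0=(00\to 01\to\cdots\to 0n)$ and the terminal path $l_i'$ at each level are the only ones requiring a $\Lambda^n_0$-horn, handled respectively by Lemma~\ref{lem:marked-anodyne-simplex} (all edges of $B_0$ are horizontal, hence marked) and by property~2 of opposite marked anodyne maps (the initial edge $ii\to i(i{+}1)$ is marked). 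Your lexicographic instinct and your identification of the horizontal edges as the marked ones are both on target; the missing ingredient is allowing several $(n-1)$-faces to be absent simultaneously and invoking the generalized-horn lemmas rather than single horns.
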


\begin{proof}
First, we consider the case $n=1$.
We let $B_0$ be the $1$-simplex corresponding 
to $00\to 01$.
The subcomplex $B_0\cap \widetilde{\mathcal{O}}(\partial\Delta^1)$
is isomorphic to $\Lambda^1_0$ in $B_0\cong\Delta^1$.
The inclusion $(\Lambda^1_0)^{\sharp}\hookrightarrow
(\Delta^1)^{\sharp}$ is opposite marked anodyne
by Lemma~\ref{lem:marked-anodyne-simplex}.
Taking the pushout of $(\Lambda^1_0)^{\sharp}\hookrightarrow
(\Delta^1)^{\sharp}$ along
the map $(\Lambda^1_0)^{\sharp}\cong
B_0^+\cap \widetilde{\mathcal{O}}(\partial\Delta^1)^+
\to \widetilde{\mathcal{O}}(\partial\Delta^1)^+$,
we see that 
the inclusion
$\widetilde{\mathcal{O}}(\partial\Delta^1)^+\hookrightarrow 
M_1^+$
is opposite marked anodyne.

Next, we consider the case $n=2$.
Let $B_0$ be the $2$-simplex in 
$\widetilde{\mathcal{O}}(\Delta^2)$ corresponding
to $00\to 01\to 02$.
The subcomplex $B_0^+\cap 
\widetilde{\mathcal{O}}(\partial\Delta^2)^+$
is isomorphic to $(\Lambda^2_0)^{\sharp}$ in 
$B_0^+\cong (\Delta^2)^{\sharp}$.
By Lemma~\ref{lem:marked-anodyne-simplex},
the inclusion $(\Lambda^2_0)^{\sharp}\hookrightarrow
(\Delta^2)^{\sharp}$ is opposite marked anodyne. 
Taking the pushout
of $(\Lambda^2_0)^{\sharp}\hookrightarrow
(\Delta^2)^{\sharp}$ along
the map
$(\Lambda^2_0)^{\sharp}\cong
B_0^+\cap\widetilde{\mathcal{O}}(\partial\Delta^2)^+
\to \widetilde{\mathcal{O}}(\partial\Delta^2)^+$,
we obtain an opposite marked anodyne map
$\widetilde{\mathcal{O}}(\partial\Delta^2)^+\hookrightarrow
\widetilde{\mathcal{O}}(\partial\Delta^2)^+\cup B_0^+$.
Let $B_1(0)$ be the $2$-simplex in 
$\widetilde{\mathcal{O}}(\Delta^2)$ corresponding
to $11\to 01\to 02$.
The subcomplex $B_1(0)\cap 
(\widetilde{\mathcal{O}}(\partial\Delta^2)\cup B_0)$
is isomorphic to $\Lambda^2_1$ in $\Delta^2$.
The inclusion $(\Lambda^2_1)^{\flat}\hookrightarrow
(\Delta^2)^{\flat}$ is opposite marked anodyne.
Taking the pushout of 
$(\Lambda^2_1)^{\flat}\hookrightarrow
(\Delta^2)^{\flat}$
along the map
$(\Lambda^2_1)^{\flat}\to 
\widetilde{\mathcal{O}}(\partial\Delta^2)^+\cup B_0^+$,
we obtain an opposite marked anodyne map
$\widetilde{\mathcal{O}}(\partial\Delta^2)^+\cup B_0^+
\to \widetilde{\mathcal{O}}(\partial\Delta^2)^+\cup
B_0^+\cup B_1(0)^+$. 
Let $B_1(1)$ be the $2$-simplex in 
$\widetilde{\mathcal{O}}(\Delta^2)$ corresponding
to $11\to 12\to 02$.
The subcomplex $B_1(1)\cap 
(\widetilde{\mathcal{O}}(\partial\Delta^2)\cup B_0\cup
B_1(0))$
is isomorphic to $\Lambda^2_0$ in $\Delta^2$.
The inclusion
$(\Lambda^2_0,(\Lambda^2_0)_1\cap\mathcal{E}')\hookrightarrow
(\Delta^2,\mathcal{E}')$ is opposite marked anodyne,
where $\mathcal{E}'$ is the set of edges of $\Delta^2$
consisting of all degenerate edges together
with $\Delta^{\{0,1\}}$. 
Taking the pushout of 
$(\Lambda^2_0,(\Lambda^2_0)_1\cap\mathcal{E}')\hookrightarrow
(\Delta^2,\mathcal{E}')$
along the map
$(\Lambda^2_0,(\Lambda^2_0)_1\cap\mathcal{E}')
\cong B_1(1)^+\cap
(\widetilde{\mathcal{O}}(\partial\Delta^2)^+\cup B_0^+\cup B_1(0)^+)
\to \widetilde{\mathcal{O}}(\partial\Delta^2)^+\cup B_0^+\cup
B_1(0)^+$,
we see that
the inclusion
$\widetilde{\mathcal{O}}(\partial\Delta^2)^+\cup B_0^+\cup
B_1(0)^+\to
\widetilde{\mathcal{O}}(\partial\Delta^2)^+\cup B_0^+\cup
B_1(0)^+\cup B_1(1)^+$ is opposite marked anodyne.
Hence the composition
$\widetilde{\mathcal{O}}(\partial\Delta^2)^+\to
\widetilde{\mathcal{O}}(\partial\Delta^2)^+\cup B_0^+\to\cdots
\to 
\widetilde{\mathcal{O}}(\partial\Delta^2)^+\cup 
B_0^+\cup B_1(0)^+\cup B_1(1)^+=M_2^+$
is also opposite marked anodyne.

Now we assume $n\ge 3$.
In this case we note that all edges of 
$\widetilde{\mathcal{O}}(\Delta^n)$
is included in $\widetilde{\mathcal{O}}(\partial\Delta^n)$.
Let $B_0^+$ be the $n$-simplex in 
$\widetilde{\mathcal{O}}(\Delta^n)^+$ corresponding
to $00\to 01\to\cdots\to 0n$.
The subcomplex $B_0^+\cap 
\widetilde{\mathcal{O}}(\partial\Delta^n)^+$
of $B_0^+$ is isomorphic to
$(\Lambda^n_0)^{\sharp}$ in $B_0^+\cong (\Delta^n)^{\sharp}$.
By Lemma~\ref{lem:marked-anodyne-simplex},
the inclusion 
$(\Lambda^n_0)^{\sharp}\hookrightarrow
(\Delta^n)^{\sharp}$ is opposite marked anodyne.
Taking the pushout of
$(\Lambda^n_0)^{\sharp}\hookrightarrow
(\Delta^n)^{\sharp}$
along the map
$(\Lambda^n_0)^{\sharp}\cong
B_0^+\cap 
\widetilde{\mathcal{O}}(\partial\Delta^n)^+\to
\widetilde{\mathcal{O}}(\partial\Delta^n)^+$,
we see that
the inclusion 
$\widetilde{\mathcal{O}}(\partial\Delta^n)^+
\hookrightarrow
\widetilde{\mathcal{O}}(\partial\Delta^n)^+\cup B_0^+$
is opposite marked anodyne.

For $0\le i<n$,
we let $\mathcal{L}(i)$ be the set of all paths from $ii$ to $0n$ 
in diagram~(\ref{eq:twisted-arroes-diagram}).
To a path $l\in \mathcal{L}(i)$,
we assign a sequence of integers
$J(l)=(j_i,j_{i-1},\ldots,j_1)$
with $i\le j_i\le j_{i-1}\le\cdots\le j_1\le n$
such that $l$ is depicted as 
\[ \begin{array}{ccccccccccccccc}
   & & & & & &          &   &      &   &0j_1&\to&\cdots&\to&0n  \\ 
   & & & & & &          &   &      &   &\uparrow & & & &    \\ 
   & & & &          &   & & &\cdots&\to&1j_1& & & & \\
   & & & &          &   & &\cdots& & & & & & &  \\
   & & & &(i-1)j_i  &\to&\cdots& & & & & & & & \\
   & & & &\uparrow& & & & & & & & & & \\
   ii&\to&\cdots&\to&ij_i& & & & & & & & & & \\
\end{array}\]    
We give $\{J(l)|\, l\in \mathcal{L}(i)\}$
the lexicographic order,
and write $l<l'$ if $J(l)< J(l')$.
This gives rise to a total order on $\mathcal{L}(i)$.
For example,
the path $ii\to \cdots\to 0i\to\cdots\to 0n$
is the smallest 
and the path $ii\to \cdots\to in\to\cdots\to 0n$
is the largest.
For $l\in \mathcal{L}(i)$,
we denote by $B(l)$ the $n$-simplex
in $\widetilde{\mathcal{O}}(\Delta^n)$
corresponding to $l$.
Note that $\mathcal{L}(0)$ consists of a unique
element $l_0$ and that 
$B(l_0)=B_0$.
We set $B_i=\cup_{l\in\mathcal{L}(i)}B(l)$
and $\overline{B}_i=\widetilde{\mathcal{O}}(\partial\Delta^n)\cup
\bigcup_{j=0}^iB_j$.
We shall show that the inclusion 
$\overline{B}_{i-1}^+\hookrightarrow
\overline{B}_i^+$
is opposite marked anodyne
for $0<i<n$.

For $0<i<n$ and $l\in \mathcal{L}(i)$,
we set 
$\overline{B}(l)=\overline{B}_{i-1}\cup
\bigcup_{l'\le l}B(l')$ and
$\overline{B}(l)^{\circ}=
\overline{B}_{i-1}\cup
\bigcup_{l'<l}B(l')$.
It suffices to show that the inclusion
$\overline{B}(l)^{\circ +}\hookrightarrow
\overline{B}(l)^+$ 
is opposite marked anodyne
for all $l\in \mathcal{L}(i)$.

Let $l_i$ be the path $ii\to \cdots\to 0i\to \cdots\to 0n$
for $0<i<n$.
The subcomplex 
$B(l_i)\cap \overline{B}_{i-1}$ of $B(l_i)$
is isomorphic to
$\Lambda^n_i$ of $\Delta^n$. 
The inclusion
$(\Lambda^n_i)^{\flat}\hookrightarrow
(\Delta^n)^{\flat}$ is opposite marked anodyne.
Taking the pushout of
$(\Lambda^n_i)^{\flat}\hookrightarrow (\Delta^n)^{\flat}$
along the map
$(\Lambda^n_i)^{\flat}\cong
B(l_i)^{\flat}\cap \overline{B}_{i-1}^+\to \overline{B}_{i-1}^+$,
we see that the inclusion
$\overline{B}_{i-1}^+\hookrightarrow
\overline{B}_{i-1}^+\cup B(l_i)^+$ 
is opposite marked anodyne.

Let $l_i'$ be the path 
$ii\to \cdots\to in\to\cdots\to 0n$.
We take $l\in\mathcal{L}(i)$ 
such that $l_i<l<l_i'$.
Let $\{\alpha_1,\ldots,\alpha_k\}\
(0<\alpha_1<\ldots<\alpha_k<n)$ be the set of integers
such that 
the sub-path $l(\alpha_t-1)\to l(\alpha_t)\to l(\alpha_t+1)$
of $l$ is depicted as
\[ \begin{array}{ccc}
     a,b & \to & a,b+1\\[1mm]
     \big\uparrow & & \\[1mm]
     a+1,b & & \\
   \end{array}\]
for $t=1,\ldots,k$.
We consider the subcomplex  $B(l)\cap
\overline{B}(l)^{\circ}$ of $B(l)$.
There are two cases. 
(1)\,If the first edge of $l$ is $ii\to (i-1)i$,
then the subcomplex  $B(l)\cap
\overline{B}(l)^{\circ}$ of $B(l)$
is isomorphic to the subcomplex
$\Lambda^n(\alpha_1,\ldots,\alpha_k)$
of $\Delta^n$.
Since the inclusion 
$\Lambda^n(\alpha_1,\ldots,\alpha_k)\hookrightarrow
\Delta^n$ is inner anodyne 
by Lemma~\ref{lemma:fundamental-extension-lemmaI},
the inclusion 
$(\Lambda^n(\alpha_1,\ldots,\alpha_k))^{\flat}\hookrightarrow
(\Delta^n)^{\flat}$ is opposite marked anodyne. 
Taking the pushout of
$(\Lambda^n(\alpha_1,\ldots,\alpha_k))^{\flat}\hookrightarrow
(\Delta^n)^{\flat}$
along the map
$(\Lambda^n(\alpha_1,\ldots,\alpha_k))^{\flat}
\cong B(l)^{\flat}\cap \overline{B}(l)^{\circ +}\to
\overline{B}(l)^{\circ +}$,
we see that the inclusion
$\overline{B}(l)^{\circ +}\hookrightarrow
\overline{B}(l)^+$ is opposite marked anodyne.
(2)\,If the first edge of $l$ is $ii\to i(i+1)$,
then the subcomplex  $B(l)\cap
\overline{B}(l)^{\circ}$ of $B(l)$
is isomorphic to the subcomplex
$\Lambda^n(0,\alpha_1,\ldots,\alpha_k)$
of $\Delta^n$.
Note that $\alpha_1>1$ 
in this case.
Since the inclusion 
$\Lambda^n(0,\alpha_1,\ldots,\alpha_k)\hookrightarrow
\Delta^n$ is inner anodyne 
by Lemma~\ref{lemma:fundamental-extension-lemmaII},
the inclusion 
$(\Lambda^n(0,\alpha_1,\ldots,\alpha_k))^{\flat}\hookrightarrow
(\Delta^n)^{\flat}$ is opposite marked anodyne. 
Taking the pushout of
$(\Lambda^n(0,\alpha_1,\ldots,\alpha_k))^{\flat}\hookrightarrow
(\Delta^n)^{\flat}$
along the map
$(\Lambda^n(0,\alpha_1,\ldots,\alpha_k))^{\flat}
\cong B(l)^{\flat}\cap \overline{B}(l)^{\circ +}\to
\overline{B}(l)^{\circ +}$,
we see that the inclusion
$\overline{B}(l)^{\circ +}\hookrightarrow
\overline{B}(l)^+$ is opposite marked anodyne.

Finally,
we shall show that
$\overline{B}(l_i')^{\circ +}\hookrightarrow
\overline{B}(l_i')^+$ is opposite marked anodyne for $0<i<n$.
The subcomplex 
$B(l_i')\cap \overline{B}(l_i')^{\circ}$
is isomorphic to 
the subcomplex 
$\Lambda^n_0$
of $\Delta^n$.
Note that $ii\to i(i+1)$ is a marked edge,
which corresponds to $\Delta^{\{0,1\}}$
under the isomorphism
$\Lambda^n_0\cong B(l_i')\cap \overline{B}(l_i')^{\circ}$.
The inclusion 
$(\Lambda^n_0,(\Lambda^n_0)\cap\mathcal{E}')
\hookrightarrow
(\Delta^n,\mathcal{E}')$
is opposite marked anodyne,
where $\mathcal{E}'$ is the set of edges of $\Delta^n$
consisting of all degenerate edges together
with $\Delta^{\{0,1\}}$.
Taking the pushout of
$(\Lambda^n_0,(\Lambda^n_0)\cap\mathcal{E}')
\hookrightarrow
(\Delta^n,\mathcal{E}')$ along
the map
$(\Lambda^n_0,(\Lambda^n_0)\cap\mathcal{E}')
\to B(l_i')^+\cap \overline{B}(l_i')^{\circ +} \to
\overline{B}(l_i')^{\circ +}$,
we see that the inclusion
$\overline{B}(l_i')^{\circ +}\hookrightarrow
\overline{B}(l_i')^+$ is opposite marked anodyne.
This completes the proof.
\qed\end{proof}

\if0
Let $h$ be a map $\widetilde{\mathcal{O}}(\Delta^n)\to S$
such that $h(ii)\to \cdots \to h(0i)$
is a totally degenerate simplex in $S$ for 
all $0\le i\le n$.
Suppose we have a map
$\partial\Delta^n\to RY$ that 
is represented by a map
$y:\widetilde{\mathcal{O}}(\partial\Delta^n)\to Y$
such that $qy$ is the restriction 
of $h$ to $\widetilde{\mathcal{O}}(\partial\Delta^n)$.
We can regard $y$ as a map of marked simplicial sets
$y: \widetilde{\mathcal{O}}(\partial\Delta^n)^+\to Y^{\natural}$,
where $Y^{\natural}$ is the marked simplicial set $(Y,\mathcal{E})$,
where $\mathcal{E}$ is the set of all $q$-coCartesian edges of $Y$. 
Since $RY\to S$ is a coCartesian
fibration,
we can extend $y$ to a map of marked simplicial sets
$M_n^+\to Y^{\natural}$
by Lemma~\ref{lem:opposite-makred-anodyne-partial-M}
and the dual of \cite[Prop.~3.1.1.6]{Lurie1}.
\fi

For $n>0$, we let
\[ \begin{array}{rcl}
     \widetilde{A}&=&(\widetilde{\mathcal{O}}(\Delta^n)\times 
     \Delta^{\{0\}})\cup 
     (\widetilde{\mathcal{O}}(\partial\Delta^n)
     \times \Delta^1),\\[2mm] 
     \widetilde{B}&=&
     \widetilde{A}\cup (M_n\times\Delta^{\{1\}}),\\[2mm]
     \widetilde{C}&=&\widetilde{A}\cup (M_n
     \times\Delta^1)\\
   \end{array}\]
be the subcomplexes
of 
$\widetilde{\mathcal{O}}(\Delta^n)\times\Delta^1$.
We denote by
$(\widetilde{\mathcal{O}}(\Delta^n)\times\Delta^1)^+$
the marked simplicial set
$(\widetilde{\mathcal{O}}(\Delta^n)^+\times
(\Delta^{\{0\}})^{\flat})\cup
(\widetilde{\mathcal{O}}(\Delta^n)^{\flat}\times
(\Delta^1)^{\flat})$.
For a subcomplex $K$ 
of $\widetilde{\mathcal{O}}(\Delta^n)\times\Delta^1$,
we denote by $K^+$
the subcomplex of the marked simplicial set
$(\widetilde{\mathcal{O}}(\Delta^n)\times\Delta^1)^+$
in which the underlying simplicial set is $K$.

\begin{lemma}
\label{lem:opp-mark-anodyne-B-C}
The inclusion
$\widetilde{B}^+\hookrightarrow\widetilde{C}^+$
is an opposite marked anodyne map. 
\end{lemma}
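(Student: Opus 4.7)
The plan is to exhibit $\widetilde{B}^+\hookrightarrow\widetilde{C}^+$ as a transfinite composition of pushouts of opposite marked anodyne maps, obtained by attaching the top-dimensional cells $B(l)\times\Delta^1$ of $M_n\times\Delta^1$ one at a time. A direct computation gives
\[
\widetilde{B}\cap(M_n\times\Delta^1)=(M_n\times\partial\Delta^1)\cup(\widetilde{\mathcal{O}}(\partial\Delta^n)\times\Delta^1),
\]
so $\widetilde{B}^+\hookrightarrow\widetilde{C}^+$ is a pushout of $[(M_n\times\partial\Delta^1)\cup(\widetilde{\mathcal{O}}(\partial\Delta^n)\times\Delta^1)]^+\hookrightarrow(M_n\times\Delta^1)^+$, and it suffices to establish that the latter inclusion is opposite marked anodyne.

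The filtration I will use enumerates the paths $l\in\mathcal{L}(i)$ for $0\le i\le n-1$ in the lexicographic order on their column sequences $J(l)$ already introduced in the proof of Lemma~\ref{lem:opposite-makred-anodyne-partial-M}, attaching $B(l)\times\Delta^1$ at each stage. A combinatorial check parallel to the computations of $B(l_i)\cap\overline{B}_{i-1}$, $B(l)\cap\overline{B}(l)^{\circ}$, and $B(l_i')\cap\overline{B}(l_i')^{\circ}$ done there shows that the intersection of $B(l)\times\Delta^1$ with the previously-attached subcomplex equals $(B(l)\times\partial\Delta^1)\cup(\Lambda\times\Delta^1)$, where $\Lambda\subset\partial B(l)\cong\partial\Delta^n$ is exactly the horn arising in that proof: an inner horn of the form $\Lambda^n_i$, $\Lambda^n(\alpha_1,\ldots,\alpha_k)$, or $\Lambda^n(0,\alpha_1,\ldots,\alpha_k)$ in most cases, and the outer horn $\Lambda^n_0$ for the two distinguished paths $l=l_0\in\mathcal{L}(0)$ and $l=l_i'\in\mathcal{L}(i)$ with $1\le i\le n-1$.

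For each $l$ with inner horn $\Lambda$, the inclusion $(B(l)\times\partial\Delta^1)\cup(\Lambda\times\Delta^1)\hookrightarrow B(l)\times\Delta^1$ is the pushout-product of the inner anodyne map $\Lambda\hookrightarrow B(l)$ (via Lemmas~\ref{lemma:fundamental-extension-lemmaI} or~\ref{lemma:fundamental-extension-lemmaII}) with the boundary inclusion $\partial\Delta^1\hookrightarrow\Delta^1$, hence inner anodyne, hence opposite marked anodyne with the induced flat marking. For the two distinguished cases with $\Lambda=\Lambda^n_0$, I invoke Lemma~\ref{lemma:extension_the_first_step} directly: its marked edge $\Delta^{\{0,1\}}\times\Delta^{\{0\}}$ corresponds, under the identification $B(l)\cong\Delta^n$, to the edge $(v_0,0)\to(v_1,0)$ where $v_0\to v_1$ is the first edge of $l$, namely the horizontal edge $00\to 01$ or $ii\to i(i+1)$. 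Because this edge lies in $\widetilde{\mathcal{E}}$, it is marked in $(\widetilde{\mathcal{O}}(\Delta^n)\times\Delta^1)^+$, so the attaching map respects the marking and Lemma~\ref{lemma:extension_the_first_step} supplies the required opposite marked anodyne extension.

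The main obstacle is the bookkeeping required to verify that, at each stage of the filtration, the intersection of $B(l)\times\Delta^1$ with the previously-attached subcomplex factors precisely as $(B(l)\times\partial\Delta^1)\cup(\Lambda\times\Delta^1)$ with $\Lambda$ as stated; this amounts to tracking which faces of $B(l)$ lie in $\widetilde{\mathcal{O}}(\partial\Delta^n)$ and which are shared with some $B(l')$ for $l'<l$, a verification that follows the same case analysis on whether $l$ begins with a vertical or horizontal edge used in Lemma~\ref{lem:opposite-makred-anodyne-partial-M}. One must also check that the markings produced by the pushouts agree with the induced marking from $(\widetilde{\mathcal{O}}(\Delta^n)\times\Delta^1)^+$; this is straightforward, since all target marked edges (namely horizontal edges crossed with $\Delta^{\{0\}}$, together with degenerate edges) lie in $B(l)\times\Delta^{\{0\}}\subset B(l)\times\partial\Delta^1$, and hence are already marked before each attachment.
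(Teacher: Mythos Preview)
Your proposal is correct and follows essentially the same approach as the paper's proof. The paper also filters the inclusion by attaching the cells $B(l)\times\Delta^1$ in the lexicographic order on $\bigcup_i\mathcal{L}(i)$, identifies each attachment with an inclusion of the form $(\Delta^n\times\partial\Delta^1)\cup(\Lambda\times\Delta^1)\hookrightarrow\Delta^n\times\Delta^1$ where $\Lambda$ is precisely the horn $B(l)\cap\overline{B}(l)^{\circ}$ from Lemma~\ref{lem:opposite-makred-anodyne-partial-M}, and then invokes \cite[Cor.~2.3.2.4]{Lurie1} with Lemmas~\ref{lemma:fundamental-extension-lemmaI}--\ref{lemma:fundamental-extension-lemmaII} for the inner cases and Lemma~\ref{lemma:extension_the_first_step} for the two $\Lambda^n_0$ cases. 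Your only organizational differences are the preliminary pushout reduction to $[(M_n\times\partial\Delta^1)\cup(\widetilde{\mathcal{O}}(\partial\Delta^n)\times\Delta^1)]^+\hookrightarrow(M_n\times\Delta^1)^+$ and the explicit cross-reference back to the horn computations already done in Lemma~\ref{lem:opposite-makred-anodyne-partial-M}; both are harmless and the substance is identical.
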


\if0
\begin{lemma}
\label{lemma:extension_X_to_MtimesD1}
Suppose we have 
a map $\widetilde{w}:\widetilde{B}\to X$ 
covering $\overline{h}$ such that 
the restriction to 
$\widetilde{\mathcal{O}}(\Delta^n)\times\Delta^{\{0\}}$
is $x$. 
There exists an extension of $\widetilde{w}$ to 
$\widetilde{B}\cup (M\times\Delta^1)$
covering $\overline{h}$.
\end{lemma}
\fi

\begin{proof}
We use the notation in the proof of
Lemma~\ref{lem:opposite-makred-anodyne-partial-M}.
Recall that $B_0$ is the $n$-simplex in 
$\widetilde{\mathcal{O}}(\Delta^n)$ corresponding
to $00\to 01\to\cdots\to 0n$.
Since the subcomplex $B_0\cap 
\widetilde{\mathcal{O}}(\partial\Delta^n)$
of $B_0$ is isomorphic to
$\Lambda^n_0$ in $\Delta^n$,
the subcomplex
$\widetilde{B}\cap (B_0\times\Delta^1)$
of $B_0\times\Delta^1$
is isomorphic to 
the subcomplex 
$(\Delta^n\times\partial\Delta^1)\cup 
(\Lambda^n_0\times\Delta^1)$ 
of $\Delta^n\times\Delta^1$.
Since $00\to 01$ is a marked edge of
$\widetilde{\mathcal{O}}(\Delta^n)^+$,
we see that
the inclusion
$\widetilde{B}^+\hookrightarrow
\widetilde{B}^+\cup (B_0\times\Delta^1)^+$
is opposite marked anodyne  
by using Lemma~\ref{lemma:extension_the_first_step}.

We set 
$C_i=\widetilde{B}\cup (\overline{B}_i\times \Delta^1)$.
We shall show that
the inclusion $C_{i-1}^+\hookrightarrow C_i^+$
is opposite marked anodyne
for $0<i<n$.
For this purpose,
it suffices to show that
the inclusion
$C_{i-1}^+\cup (\overline{B}(l)^{\circ}\times\Delta^1)^+
\hookrightarrow
C_{i-1}^+\cup (\overline{B}(l)\times\Delta^1)^+$
is opposite marked anodyne
for all $l\in\mathcal{L}(i)$.

Recall that $l_i$ is the path $ii\to \cdots\to 0i\to \cdots\to 0n$
and that the subcomplex 
$B(l_i)\cap \overline{B}_{i-1}$ of $B(l_i)$
is isomorphic to
$\Lambda^n_i$ of $\Delta^n$.
This implies that
$(B(l_i)\times \Delta^1)\cap C_{i-1}$
is isomorphic to
$(\Lambda^n_i\times \Delta^1)\cup
(\Delta^n\times\partial\Delta^1)$.
The inclusion
$(\Lambda^n_i\times \Delta^1)\cup
(\Delta^n\times\partial\Delta^1)
\hookrightarrow (\Delta^n\times \Delta^1)$
is inner anodyne for $0<i<n$
by \cite[Cor.~2.3.2.4]{Lurie1}.
This implies that
$(\Lambda^n_i\times \Delta^1)^{\flat}\cup
(\Delta^n\times\partial\Delta^1)^{\flat}
\hookrightarrow (\Delta^n\times \Delta^1)^{\flat}$
is opposite marked anodyne. 
Hence we see that
the inclusion $C_{i-1}^+\hookrightarrow
C_{i-1}^+\cup (B(l_i)\times\Delta^1)^+$
is opposite marked anodyne.

We take $l\in\mathcal{L}(i)$ 
such that $l_i<l<l_i'$,
where $l_i'$ is the path 
$ii\to \cdots\to in\to\cdots\to 0n$.
Let $\{\alpha_1,\ldots,\alpha_k\}\
(0<\alpha_1<\ldots<\alpha_k<n)$ be the set of integers
such that 
the sub-path $l(\alpha_t-1)\to l(\alpha_t)\to l(\alpha_t+1)$
is
$a+1,b\to a,b\to a,b+1$
for some $a,b$.

Recall that 
the subcomplex  $B(l)\cap
\overline{B}(l)^{\circ}$ of $B(l)$
is isomorphic to the subcomplex
$\Lambda^n(\alpha_1,\ldots,\alpha_k)$
of $\Delta^n$,
if the first edge of $l$ is $ii\to (i-1)i$.
This implies that
$(B(l)\times \Delta^1)\cap
(C_{i-1}\cup (\overline{B}(l)^{\circ }\times \Delta^1))$
is isomorphic to
$(\Lambda^n(\alpha_1,\ldots,\alpha_k)\times\Delta^1)\cup
(\Delta^n\times\partial\Delta^1)$.
In this case 
the inclusion
$(\Lambda^n(\alpha_1,\ldots,\alpha_k)\times\Delta^1)\cup
(\Delta^n\times\partial\Delta^1)\hookrightarrow
\Delta^n\times\Delta^1$ is 
inner anodyne by Lemma~\ref{lemma:fundamental-extension-lemmaI} 
and \cite[Cor.~2.3.2.4]{Lurie1}.
This implies that
$(\Lambda^n(\alpha_1,\ldots,\alpha_k)\times\Delta^1)^{\flat}\cup
(\Delta^n\times\partial\Delta^1)^{\flat}\hookrightarrow
(\Delta^n\times\Delta^1)^{\flat}$ is 
opposite marked anodyne.
Hence we see that 
$C_{i-1}^+\cup 
(\overline{B}(l)^{\circ}\times\Delta^1)^+
\hookrightarrow C_{i-1}^+\cup 
(\overline{B}(l)\times\Delta^1)^+$ 
is opposite marked anodyne
in this case. 

If the first edge of $l$ is $ii\to i(i+1)$,
then the subcomplex  $B(l)\cap
\overline{B}(l)^{\circ}$ of $B(l)$
is isomorphic to the subcomplex
$\Lambda^n(0,\alpha_1,\ldots,\alpha_k)$
of $\Delta^n$,
where $\alpha_1>1$.
This implies that
$(B(l)\times\Delta^1)\cap
(C_{i-1}\cup (\overline{B}(l)^{\circ }\times(\Delta^1)))$
is isomorphic to
$(\Lambda^n(0,\alpha_1,\ldots,\alpha_k)\times\Delta^1)\cup
(\Delta^n\times \partial\Delta^1)$.
In this case 
the inclusion
$(\Lambda^n(0,\alpha_1,\ldots,\alpha_k)\times\Delta^1)\cup
(\Delta^n\times\partial\Delta^1)\hookrightarrow
\Delta^n\times\Delta^1$ is 
inner anodyne by Lemma~\ref{lemma:fundamental-extension-lemmaII}
and \cite[Cor.~2.3.2.4]{Lurie1}.
This implies that
$(\Lambda^n(0,\alpha_1,\ldots,\alpha_k)\times\Delta^1)^{\flat}\cup
(\Delta^n\times\partial\Delta^1)^{\flat}\hookrightarrow
(\Delta^n\times\Delta^1)^{\flat}$ is 
opposite marked anodyne.
Hence we see that 
$C_{i-1}^+\cup (\overline{B}(l)^{\circ }\times\Delta^1)^+
\hookrightarrow C_{i-1}^+\cup 
(\overline{B}(l)\times\Delta^1)^+$ 
is also opposite marked anodyne
in this case. 

Finally,
we shall show that 
$C_{i-1}^+\cup (\overline{B}(l_i')^{\circ}\times\Delta^1)^+
\hookrightarrow 
C_{i-1}^+\cup (\overline{B}(l_i')\times\Delta^1)^+$
is opposite marked anodyne.
Since the subcomplex 
$B(l_i')\cap \overline{B}(l_i')^{\circ}$
is isomorphic to the subcomplex 
$\Lambda^n_0$
of $\Delta^n$,
the subcomplex $(B(l_i')\times\Delta^1)\cap
(C_{i-1}\cup (\overline{B}(l_i')^{\circ }\times \Delta^1))$
is isomorphic to 
$(\Lambda^n_0\times\Delta^1)\cup
(\Delta^n\times\partial\Delta^1)$.
Since $ii\to i(i+1)$ is a marked edge of 
$\widetilde{\mathcal{O}}(\Delta^n)$,
we see that 
$C_{i-1}^+\cup (\overline{B}(l_i')^{\circ}\times\Delta^1)^+
\hookrightarrow 
C_{i-1}^+\cup (\overline{B}(l_i')\times\Delta^1)^+$
is opposite marked anodyne
by using Lemma~\ref{lemma:extension_the_first_step}.
This completes the proof.
\qed\end{proof}

\subsection{Proof of 
Proposition~\ref{prop:existence-final-object-sections}}
\label{subsec:proof-Prop-1}

In this subsection we give a proof of 
Proposition~\ref{prop:existence-final-object-sections}.
For this purpose,
we show that the map $\pi_X:\mathcal{R}\to RX$
has right lifting property with respect to
the maps $\partial\Delta^n\hookrightarrow\Delta^n$
for $n>0$
if the final vertex $\Delta^{\{n\}}$ goes to
an object of $\mathcal{R}^0$.

Let $p: X\to S$ and $q: Y\to S$ be coCartesian fibrations
over a quasi-category $S$.
Suppose we have a map $G: Y\to X$ over $S$
such that
$G_s$ admits a left adjoint $F_s$ for all $s\in S$.

We recall that
\[  \mathcal{R}= RX\times_{H({\rm Fun}(\Delta^{\{0\}},X))}
       H({\rm Fun}^S(\Delta^1,X))
   \times_{H({\rm Fun}(\Delta^{\{1\}},X))}RY.\]
We have the projection map
$\pi_X: \mathcal{R}\to RX$.

We identify objects of $RX$with objects of $X$.
For $x\in X$ with $s=p(x)$,
we have an object $(x,u_x,F_s(x))$
of $\mathcal{R}$ over $s$,
where $u_x: x\to G_sF_s(x)$
is the unit map of the adjunction
$(F_s,G_s)$ at $x$. 

The following is a key lemma.

\begin{lemma}
\label{lem:key-lemma-lifiting-R}
Suppose we have a commutative diagram
\[ \xymatrix{
    \partial\Delta^n \ar[r]^{f} \ar@{^{(}->}[d]&
    \mathcal{R} \ar[d]^{\pi_X}\\
    \Delta^n \ar@{.>}[ur]\ar[r]^g&  RX\\
}\]
for $n>0$,
where the left vertical arrow is the inclusion.
We put $x=g(\Delta^{\{n\}})$ and $s=p(x)$.
If $f(\Delta^{\{n\}})=(x,u_x,F_s(x))$,
then there exists a dotted arrow $\Delta^n\to \mathcal{R}$
making the whole diagram commutative.
\end{lemma}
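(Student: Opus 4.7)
The plan is to unpack the lifting problem using the fibre-product definition of $\mathcal{R}$, extend over most of $\widetilde{\mathcal{O}}(\Delta^n)$ and $\widetilde{\mathcal{O}}(\Delta^n) \times \Delta^1$ via the combinatorial lemmas of \S\ref{subsec:ODeltan}, and close by invoking the initial-object characterisation of the adjunction unit at the last vertex.

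First I will unpack the data. The map $g$ is represented by $\bar{g} \colon \widetilde{\mathcal{O}}(\Delta^n) \to X$, and the two factors of $f$ projecting onto $RY$ and $H(\mathrm{Fun}^S(\Delta^1, X))$ yield maps $y_0 \colon \widetilde{\mathcal{O}}(\partial\Delta^n) \to Y$ and $w_0 \colon \widetilde{\mathcal{O}}(\partial\Delta^n) \times \Delta^1 \to X$ satisfying $w_0|_{\Delta^{\{0\}}} = \bar{g}|_{\partial\Delta^n}$ and $w_0|_{\Delta^{\{1\}}} = G \circ y_0$. The hypothesis $f(\Delta^{\{n\}}) = (x, u_x, F_s(x))$ specifies that $y_0(nn) = F_s(x)$ and that $w_0|_{\{(nn)\} \times \Delta^1} = u_x$. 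The task is then to construct extensions $y \colon \widetilde{\mathcal{O}}(\Delta^n) \to Y$ and $w \colon \widetilde{\mathcal{O}}(\Delta^n) \times \Delta^1 \to X$ satisfying the same compatibilities together with the defining conditions of $RY$ and $H(\mathrm{Fun}^S(\Delta^1, X))$.

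Next I will apply the combinatorial lemmas. Since $q \colon Y \to S$ is a coCartesian fibration, the marked simplicial set with its coCartesian edges marked has the right lifting property against opposite marked anodyne inclusions (by passing to opposites and using the Cartesian model structure on marked simplicial sets over $S^{\mathrm{op}}$). Applying this to Lemma~\ref{lem:opposite-makred-anodyne-partial-M} produces an extension $y_1 \colon M_n \to Y$ of $y_0$. Defining $w_1 \colon \widetilde{B} \to X$ by $w_0$ on $\widetilde{A}$ and $G \circ y_1$ on $M_n \times \Delta^{\{1\}}$, the analogous lifting property for $p \colon X \to S$ combined with Lemma~\ref{lem:opp-mark-anodyne-B-C} extends $w_1$ to $w_2 \colon \widetilde{C} \to X$.

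The remaining task is to fill the unique missing $n$-simplex $\sigma$ of $\widetilde{\mathcal{O}}(\Delta^n)$, namely the column $nn \to (n-1)n \to \cdots \to 0n$, together with the prism $\sigma \times \Delta^1$. As the column covers a totally degenerate $n$-simplex of $S$, all this data lies in the fibres $Y_s$ and $X_s$ over $s = p(x)$: the bottom of the prism is $\tau := \bar{g}|_\sigma$, an $n$-simplex of $X_s$ with $\tau(0) = x$; the sides of the prism are prescribed by $w_2$; the initial $\Delta^1$-edge is $u_x$; and the boundary $\partial\sigma \to Y_s$ satisfies $\sigma(0) = F_s(x)$. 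This combined data reorganises as a map $\partial\Delta^n \to x/G_s$ (the comma quasi-category of pairs $(y', \alpha \colon x \to G_s(y'))$) sending the $0$-vertex to the pair $(F_s(x), u_x)$, and an extension to $\Delta^n \to x/G_s$ is exactly what is required. Since $(F_s, G_s)$ is an adjunction with unit $u$, the pair $(F_s(x), u_x)$ is an initial object of $x/G_s$, and any map $\partial\Delta^n \to \mathcal{C}$ sending the $0$-vertex to an initial object extends to $\Delta^n \to \mathcal{C}$ (via the fact that the coslice under an initial object is a trivial Kan fibration over the ambient quasi-category). The main obstacle will be this last translation: one must carefully reorganise the prism $\sigma \times \Delta^1$ in $X_s$ together with $\sigma$ in $Y_s$ into a single $n$-simplex of $x/G_s$, matching the combinatorics of $\widetilde{\mathcal{O}}$ with the comma-category structure and verifying that all boundary compatibilities (from $\bar g$, $w_2$, and $u_x$) glue correctly under the translation.
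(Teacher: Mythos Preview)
Your first two steps coincide with the paper's argument: you unpack the data correctly and invoke Lemmas~\ref{lem:opposite-makred-anodyne-partial-M} and \ref{lem:opp-mark-anodyne-B-C} to reduce to the final column simplex, exactly as the paper does.

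The gap is in the last step. The remaining problem is, as you say, to extend $y_1|_{\partial\sigma}\colon\partial\Delta^n\to Y_s$ and $w_2\colon(\Delta^n\times\{0\})\cup(\partial\Delta^n\times\Delta^1)\to X_s$ compatibly. But this data does \emph{not} reorganise as a map $\partial\Delta^n\to x/G_s$: the bottom of the prism is the full simplex $\tau\colon\Delta^n\to X_s$, so the vertical edge over the $i$-th vertex is an arrow $\tau(i)\to G_s(y_1(i))$ with source $\tau(i)$, not $x$. A point of $x/G_s$ has source fixed at $x$, so the only vertex that lands there is the $0$-th one. What you actually face is a \emph{relative} lifting problem for $\mathrm{ev}_0\colon\mathrm{Fun}(\Delta^1,X_s)\times_{X_s}Y_s\to X_s$ along $\tau$, not an extension problem inside a fixed comma category. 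Your proposed appeal to ``initial object in $x/G_s$'' addresses only the fibre over $x$ and does not by itself solve the relative problem; even if you upgrade to the statement that $(u_x,F_sx)$ is $\mathrm{ev}_0$-initial (which is true and follows from the adjunction), you still have to contend with the extra constraint imposed by the $d_0$-face of $\partial\Delta^n$, which already lies in the target and must be matched.

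The paper isolates exactly this remaining problem as a separate lemma and proves it by a different mechanism: it passes to the biCartesian fibration $\mathcal{M}\to\Delta^1$ encoding the adjunction $(F_s,G_s)$, embeds the data into $\Delta^n\times\Delta^2\to\mathcal{M}$ (with the three $\Delta^2$-levels corresponding to $X_s$, $X_s$, $Y_s$), and fills by a sequence of inner-anodyne, coCartesian, Cartesian, and marked-anodyne extensions. Your comma-category idea is conceptually appealing, but as written the translation fails; to rescue it you would need something closer to the relative-initial-object argument, which is roughly as much work as the paper's correspondence proof.
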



\if0
{\color{red}
\begin{remark}\rm
Any edge of $\mathcal{X}$ with the vertex 
$(x,u,F_s(x))$ 
as target  
is $\pi_X$-Cartesian
for any $x\in X$.
\end{remark}

\begin{remark}\rm
In particular,
the vertex $(x,u,F_s(x))$ of $\mathcal{X}$
is a terminal object of $\pi_X^{-1}(x)$
for any $x\in X$.
\end{remark}
}
\fi

\begin{proof}
[Proof of Proposition~\ref{prop:existence-final-object-sections}]
By Lemma~\ref{lem:key-lemma-lifiting-R},
the map $\pi_X^0:\mathcal{R}^0\to RX$
has the right lifting property with respect
to the maps $\partial\Delta^n\hookrightarrow \Delta^n$
for all $n\ge 0$. 
Hence
$\pi_X^0$ is a trivial Kan fibration.
\if0
Using Lemma~\ref{lem:key-lemma-lifiting-R},
by induction on the skeleton of $RX$,
we see that there is a section
$t_0\in {\rm Fun}_{RX}(RX,\mathcal{R})$
such that 
$t_0(x)\simeq (x,u_s,F_s(x))$
for any $x\in X_s$.
We shall show that $t_0$
is a final object of ${\rm Fun}_{RX}(RX,\mathcal{R})$.

Suppose we have a map
$f: \partial\Delta^n\to {\rm Fun}_{RX}(RX,\mathcal{R})$
for $n>0$ such that $f(\Delta^{\{n\}})=t_0$.
We would like an extension of $f$ to $\Delta^n$. 
We regard $f$ as a map
$f:\partial\Delta^n\times RX\to \mathcal{R}$
over $RX$.
We shall construct a map
\[ f_m: (\partial\Delta^n\times RX)\cup
(\Delta^n\times (RX)^{(m)})\to\mathcal{R} \] 
over $RX$,
which is an extension of $f$, 
by induction on $m$,
where $(RX)^{(m)}$ is the $m$-skeleton of $RX$.

First, we shall construct a
map $(\partial\Delta^n\times RX)\cup
(\Delta^n\times (RX)^{(0)})\to\mathcal{R}$
over $RX$, which is an extension of $f$.
For any $x\in X$ with $s=p(x)$,
we have a commutative diagram
\[ \xymatrix{
    \partial\Delta^n \ar[r]^{f(x)} \ar@{^{(}->}[d] &
    \mathcal{R} \ar[d]\\
    \Delta^n \ar@{.>}[ur]  \ar[r]^{c(x)}& RX, 
}\]
where $c(x)$ is the constant map at $x$ and
$f(x)$ is the map obtained from $f$ by evaluation at $x$.
Since $f(x)(\Delta^{\{n\}})=(x,u_s,F_s(x))$,
we obtain a dotted arrow, which makes the whole diagram
commutative by Lemma~\ref{lem:key-lemma-lifiting-R}.  
Hence we obtain a map
$f_0: (\partial\Delta^n\times RX)\cup
      (\Delta^n\times (RX)^{(0)})\longrightarrow \mathcal{R}$
over $RX$.

We shall construct a map $f_m$
by assuming that we have a map $f_{m-1}$ for $m>0$.
Let $e:\Delta^m\to RX$ be a non-degenerate $m$-simple
of $RX$.
We have a map
$\partial({\rm id}_{\Delta^n}\times e):
\partial(\Delta^n\times\Delta^m)\to
(\partial\Delta^n\times RX)\cup (\Delta^n\times (RX)^{(m-1)})$.
We consider a commutative diagram
\[ \xymatrix{
    \partial(\Delta^n\times\Delta^m) \ar[r] \ar@{^{(}->}[d] &
    \mathcal{R} \ar[d]\\
    \Delta^n\times \Delta^m \ar@{.>}[ur]  \ar[r]& RX, 
}\]
where the upper horizontal arrow is the composition of 
the map $\partial({\rm id}_{\Delta^n\times e})$
followed by $f_{m-1}$,
and the bottom horizontal arrow
is the composition of the projection
$\Delta^n\times\Delta^m\to\Delta^m$
followed by $e$.
\fi
\qed\end{proof}

In order to prove Lemma~\ref{lem:key-lemma-lifiting-R},
we consider the following situation.

Let $h$ be a map $\widetilde{\mathcal{O}}(\Delta^n)\to S$
for $n>0$ such that $h(ii)\to \cdots \to h(0i)$
is a totally degenerate simplex in $S$ for 
all $0\le i\le n$.
We set $\overline{h}=h\pi$,
where $\pi:\widetilde{\mathcal{O}}(\Delta^n)\times\Delta^1
\to\widetilde{\mathcal{O}}(\Delta^n)$ is the projection.

Let $X^{\natural}$ be the marked simplicial set
in which the simplicial set is $X$ and the set of
marked edges consists of all $p$-coCartesian edges. 
Suppose that we have
an $n$-simplex in $RX$ that is represented by
$g:\widetilde{\mathcal{O}}(\Delta^n)\to X$ 
covering $h$.
Note that we can regard $g$ as a map
of marked simplicial sets 
$\widetilde{\mathcal{O}}(\Delta^n)^+\to X^{\natural}$.

Furthermore,
we suppose that we have a map
$\partial\Delta^n\to \mathcal{R}$ that 
is represented by a triple of maps $(g',k,f)$,
where $g': \widetilde{\mathcal{O}}(\partial\Delta^n)\to X$,
$k:\widetilde{\mathcal{O}}(\partial\Delta^n)\to 
{\rm Fun}^S(\Delta^1,X)$,
and 
$f:\widetilde{\mathcal{O}}(\partial\Delta^n)\to Y$.
We assume that $g'$ is the restriction of $g$.
Then the maps $g'$, $k$, and $f$ cover $h$, respectively. 

Let $Y^{\natural}$ be the marked simplicial set
defined in the same way as $X^{\natural}$.
We can regard $f$ as a map of marked simplicial sets
$\widetilde{\mathcal{O}}(\partial\Delta^n)^+\to Y^{\natural}$.
There is an extension $\widetilde{f}$ of $f$
to $M_n^+$ covering $h$
by Lemma~\ref{lem:opposite-makred-anodyne-partial-M}. 

We recall that
$\widetilde{A}$, $\widetilde{B}$, and
$\widetilde{C}$ are subcomplexes of 
$\widetilde{\mathcal{O}}(\Delta^n)\times\Delta^1$ given by
$\widetilde{A}=(\widetilde{\mathcal{O}}(\Delta^n)\times 
     \Delta^{\{0\}})\cup 
     (\widetilde{\mathcal{O}}(\partial\Delta^n)
     \times \Delta^1)$,
$\widetilde{B}=\widetilde{A}\cup (M_n\times\Delta^{\{1\}})$,
$\widetilde{C}=\widetilde{A}\cup (M_n\times\Delta^1)$.
Using the maps $g$, $k$, and $G(\widetilde{f})$, 
we obtain a map of marked simplicial sets
$\widetilde{B}^+\to X^{\natural}$
over $\overline{h}$.
Furthermore,
by Lemma~\ref{lem:opp-mark-anodyne-B-C},
we can extend this map to a map of marked simplicial sets
$w: \widetilde{C}^+\to X^{\natural}$
covering $\overline{h}$.

Let $D$ be the $n$-simplex of 
$\widetilde{\mathcal{O}}(\Delta^n)$ corresponding
to $nn\to \cdots\to 0n$.
By restricting $w$
to $(D\times\Delta^{\{0\}})\cup
(\partial D\times\Delta^1)$,
we obtain a map $v: (D\times\Delta^{\{0\}})\cup
(\partial D\times\Delta^1)\to X_s$,
where $s=h(nn)$.
We denote by $g_D$ the restriction of $g$
to $D$ and by $\widetilde{f}_{\partial D}$ 
the restriction of $\widetilde{f}$
to $\partial D$.
Note that the restriction of $v$
to $D\times\Delta^{\{0\}}$ is identified with $g_D$
and that the restriction of $v$
to $\partial D\times\Delta^{\{1\}}$ is 
$G_s(\widetilde{f}_{\partial D})$.

We would like to have 
maps 
$\widetilde{f}_D: D\to Y_s$
and $\overline{v}:D\times\Delta^1\to X_s$
such that 
$\widetilde{f}_D$ is an extension of 
$\widetilde{f}_{\partial D}$,
$\overline{v}$ is an extension of $v$,
and the restriction of $\overline{v}$ to 
$D\times\Delta^{\{1\}}$
is $G(\widetilde{f}_D)$.
Hence,
in order to prove Lemma~\ref{lem:key-lemma-lifiting-R},
it suffices to prove the following lemma.


\if0
\begin{lemma}
\label{lem:inductive-construction-lemma}
Let $x=g(nn),y=f(nn)$ and $s=h(nn)$.
If $y=F_s(x)$ and $k(nn)=u_x: x\to G_sF_s(x)$
is the unit map of the adjunction
$(F_s,G_s)$ at $x$,
then 
there exist maps
$\widetilde{f}_D: D\to Y_s$
and $\overline{v}:D\times\Delta^1\to X_s$
such that 
$\widetilde{f}_D$ is an extension of 
$\widetilde{f}_{\partial D}$
covering $h$,
$\overline{v}$ is an extension of $v$
covering $\overline{h}$,
and the restriction of $\overline{v}$ to 
$D\times\Delta^{\{1\}}$
is $G(\widetilde{f}_D)$.
\if0
If we have a map
$w: \widetilde{A}\to X$
such that 
the restriction to $\{nn\}\times\Delta^1$ is $u$,
the restriction to 
$\widetilde{\mathcal{O}}(\Delta^n)\times\Delta^{\{0\}}$
is $x$, 
and
the restriction to 
$\widetilde{\mathcal{O}}(\partial\Delta^n)\times\Delta^{\{1\}}$
is $G(y)$,
then
there exist 
$\overline{y}: \widetilde{\mathcal{O}}(\Delta^n)\to Y$
and $\overline{w}:
\widetilde{\mathcal{O}}(\Delta^n)\times\Delta^1\to X$
such that 
$\overline{y}$ is an extension of $y$
covering $h$
and represents an $n$-simplex in $RY$,
$\overline{w}$ is an extension of $w$
covering $\overline{h}$,
and the restriction of $\overline{w}$ to 
$\widetilde{\mathcal{O}}(\Delta^n)\times\Delta^{\{1\}}$
is $G(\overline{y})$.
\fi
\end{lemma}

\begin{proof}
By the adjunction $(F_s,G_s)$,
we obtain a map
$\widehat{v}:(D\times\Delta^{\{0\}})\cup (\partial D\times\Delta^1)\to Y_s$
such that 
the restriction of $\widehat{v}$
to $D\times\Delta^{\{0\}}$
is $F_s(g_D)$,
the restriction 
to $\partial D\times \Delta^{\{1\}}$
is $f_{\partial D}$,
and the restriction to 
$\{nn\}\times\Delta^1$ 
is an equivalence.
By \cite[Prop.~2.4.1.8]{Lurie1},
we obtain an extension $\widetilde{v}$ of $\widehat{v}$
to $D\times\Delta^1$.
We let $f_D$ be  
the restriction of this extension
to $D\times\Delta^{\{1\}}$.
Using $\widetilde{f}$ and $f_D$,
we obtain an extension $\overline{f}$
of $f$ to $\widetilde{\mathcal{O}}(\Delta^n)$
covering $h$.
We can verify that $\overline{f}$ represents
an $n$-simplex in $RY$.

Using the fact that 
the restriction of $\widetilde{v}$ to
$D\times\Delta^{\{0\}}$
is $F_s(g_D)$,
by the adjunction $(F_s,G_s)$,
we obtain a map
$\overline{v}:D\times\Delta^1\to X_s$
from the map $\widetilde{v}$
such that $\overline{v}$
is an extension of $v$ and
the restriction to $D\times\Delta^{\{1\}}$
is $G_s(f_D)$.
Using $\widetilde{w}$ and $\overline{v}$,
we obtain an extension $\overline{w}$
of $w$ to $\widetilde{\mathcal{O}}(\Delta^n)\times\Delta^1$.
We can verify that $\overline{w}$ covers $\overline{h}$
and that the restriction  
$\widetilde{\mathcal{O}}(\Delta^n)\times\Delta^{\{1\}}$
is $G(\overline{f})$.
\if0
Let $B_{00}$ be
be the $n$-simplex of $\widetilde{\mathcal{O}}(\Delta^n)$
spanned by $00\to01\to\cdots\to nn$.
The subcomplex
$B_{00}\cap \widetilde{\mathcal{O}}(\partial\Delta^n)$
of $B_{00}$ is isomorphic to
$\Lambda^n_0$ in $\Delta^n$.
Since $y(00)\to y(01)$ is a $q$-coCartesian edge,
there exists an extension $y_{00}$ of $y$ to 
$\widetilde{\mathcal{O}}(\partial\Delta^n)\cup B_{00}$ 
by \cite[Remark~2.4.1.4]{Lurie1}.
Then we can extend $w$ to 
a map $w_{00}'$ on 
$A\cup (B_{00}\times\Delta^{\{1\}})$
by setting
$w_{00}'|_{B_{00}\times \Delta^{\{1\}}}=Gy_{00}$.
The subcomplex $(B_{00}\times\Delta^1)\cap 
(A\cup (B_{00}\times\Delta^{\{1\}})$ in
$B_{00}\times\Delta^1$
is isomorphic to 
$(\Delta^n\times\partial\Delta^1)\cup
(\Lambda^n_0\times\Delta^1)$
in $\Delta^n\times\Delta^1$.
Since $x(00)\to x(01)$ is a $p$-coCartesian edge,
there is an extension $w_{00}$ of $w_{00}'$ to
$A\cup (B_{00}\times\Delta^1)$
by Lemma~\ref{lemma:extension_the_first_step}.

Let $B_{11}(1)$ be the $n$-simplex of 
$\widetilde{\mathcal{O}}(\Delta^n)$
spanned by $11\to 01\to\cdots\to nn$.
The subcomplex $B_{11}(1)\cap 
(\widetilde{\mathcal{O}}(\partial\Delta^n)\cup B_{00})$
of $B_{11}(1)$ is isomorphic to 
$\Lambda^n_1$ in $\Delta^n$.
Since $q:Y\to S$ is an inner fibration,
there exists an extension $y_{11}(1)$ of $y_{00}$
to $\widetilde{\mathcal{O}}(\partial\Delta^n)\cup 
B_{00}\cup B_{11}(1)$.
\fi
\qed\end{proof}
\fi

\begin{lemma}
Let $L:\mathcal{C}\rightleftarrows\mathcal{D}: R$ be
an adjunction of quasi-categories.
Suppose we have maps
$f: (\Delta^n\times\Delta^{\{0\}})\cup
(\partial\Delta^n\times\Delta^1)
\to \mathcal{C}$ and
$g: \partial\Delta^n\to \mathcal{D}$ for $n>0$
such that 
$Rg= f|_{\partial\Delta^n\times \Delta^{\{1\}}}$.
We put $c=f(\Delta^{\{0\}}\times \Delta^{\{0\}})$
and $d=g(\Delta^{\{0\}})$.
If $g(d)=L(c)$ and
$f(\Delta^{\{0\}}\times \Delta^1)$ is the unit
map $c\to RL(c)$
of the adjunction $(L,R)$ at
$c$, 
then
there exist maps
$F: \Delta^n\times \Delta^1\to \mathcal{C}$
and 
$G: \Delta^n\to \mathcal{D}$
such that
$F$ is an extension of $f$,
$G$ is an extension of $g$,
and
$RG=F|_{\Delta^n\times\Delta^{\{1\}}}$. 
\end{lemma}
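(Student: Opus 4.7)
The plan is to reduce the simultaneous extension problem to a single lifting problem against the bifibration representation of the adjunction, and then solve this lifting problem by a shuffle filtration. By the straightening--unstraightening equivalence \cite[\S5.2]{Lurie1}, the adjunction $L \dashv R$ corresponds to a simplicial set $\pi: \mathcal{M} \to \Delta^1$ that is simultaneously a coCartesian fibration (with transport functor $L$) and a Cartesian fibration (with transport $R$), where $\mathcal{M}_0 \simeq \mathcal{C}$ and $\mathcal{M}_1 \simeq \mathcal{D}$. Under this correspondence, the unit $u_c: c \to RL(c)$ is realized as a coCartesian edge of $\pi$ from $c \in \mathcal{C}$ to $L(c) \in \mathcal{D}$ over the unique edge $0 \to 1$ of $\Delta^1$.

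First I would assemble the data $(f,g)$ into a map $\tilde f: A \to \mathcal{M}$ over $\Delta^1$, where $A = (\Delta^n \times \Delta^{\{0\}}) \cup (\partial\Delta^n \times \Delta^1)$. The restriction $\tilde f|_{\Delta^n \times \{0\}}$ is $f_0 := f|_{\Delta^n \times \{0\}}$, landing in $\mathcal{M}_0 \simeq \mathcal{C}$; the restriction $\tilde f|_{\partial\Delta^n \times \{1\}}$ is $g$, landing in $\mathcal{M}_1 \simeq \mathcal{D}$; and $\tilde f|_{\partial\Delta^n \times \Delta^1}$ is the unique extension over $\Delta^1$ whose Cartesian projection to $\mathcal{C}$ recovers $f|_{\partial\Delta^n \times \Delta^1}$ and whose $\{1\}$-fiber restriction is $g$. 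The hypothesis $Rg = f|_{\partial\Delta^n \times \Delta^{\{1\}}}$ ensures consistency, and the hypothesis $f(\Delta^{\{0\}} \times \Delta^1) = u_c$ guarantees that $\tilde f(\Delta^{\{0\}} \times \Delta^1)$ is a coCartesian edge of $\pi$.

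Next I would extend $\tilde f$ to $\tilde F: \Delta^n \times \Delta^1 \to \mathcal{M}$ over $\Delta^1$ by filtering $\Delta^n \times \Delta^1$ by the $(n+1)$-dimensional shuffle simplices $L(0), L(1), \ldots, L(n)$, attached in the reverse order $L(n), L(n-1), \ldots, L(0)$. A direct combinatorial calculation (paralleling the proof of Lemma~\ref{lemma:extension_the_first_step}) shows that, for each $k$, the intersection of $L(k)$ with the subcomplex already constructed equals the horn $\Lambda^{n+1}_{k} L(k)$: for $k \ge 1$ this is an inner horn, which can be filled using the quasi-category structure of $\mathcal{M}$; for $k = 0$ it is a left horn whose first edge $(0,0) \to (0,1)$ is the coCartesian edge $u_c$, so it is filled using the coCartesian lifting property of $\pi$. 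From the resulting $\tilde F$ I would extract $G := \tilde F|_{\Delta^n \times \{1\}}: \Delta^n \to \mathcal{D}$ extending $g$, and define $F: \Delta^n \times \Delta^1 \to \mathcal{C}$ as the Cartesian projection of $\tilde F$, which extends $f$ and satisfies $F|_{\Delta^n \times \Delta^{\{1\}}} = RG$ automatically from the bifibration structure.

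The main obstacle is the careful bookkeeping needed to assemble $\tilde f$ from the separate pieces $f$ and $g$, in particular using the Cartesian structure of $\pi$ to identify inter-fiber edges of $\mathcal{M}$ with the $\mathcal{C}$-morphisms provided by $f$ together with the $\mathcal{D}$-endpoints provided by $g$; once this is done, the shuffle-filtration computation is a combinatorial exercise of the same flavor as those in \S\ref{subsec:opposite-coCartesian-fibrations} and \S\ref{subsec:opposite-marked-anodyne}.
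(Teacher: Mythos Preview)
Your approach is correct and takes a genuinely different route from the paper. Both arguments pass to the bifibration $\pi:\mathcal{M}\to\Delta^1$, but the paper works over $\Delta^n\times\Delta^2$ whereas you work over $\Delta^n\times\Delta^1$. The paper keeps an explicit $\Delta^{\{1,2\}}$-direction of $\pi$-Cartesian edges: it glues $f$ and the Cartesian lift of $g$ into a map on $(\partial\Delta^n\times\Lambda^2_1)\cup(\{0\}\times\Delta^2)$, then extends over $\Delta^n\times\Delta^2$ by a sequence of inner-anodyne, coCartesian, Cartesian, and marked-anodyne lifts, finally reading off $F$ and $G$ as restrictions to $\Delta^n\times\Delta^{\{0,1\}}$ and $\Delta^n\times\{2\}$. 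This never relies on any particular model for $\mathcal{M}$. Your argument, by contrast, needs a model of $\mathcal{M}$ in which an edge $c\to d$ over $0\to1$ is \emph{literally} a pair (morphism $c\to Rd$ in $\mathcal{C}$, object $d\in\mathcal{D}$)---the Cartesian relative nerve of $R$ (cf.\ \cite[\S3.2.5]{Lurie1}) does this---so that there is a strict retraction $r:\mathcal{M}\to\mathcal{C}$ (your ``Cartesian projection'') making both the assembly $\tilde f=(f,g)$ and the disassembly $F=r\circ\tilde F$, $G=\tilde F|_{\Delta^n\times\{1\}}$ on the nose. In that model the edge $(u_c,Lc)$ is indeed $\pi$-coCartesian, and your shuffle filtration (reverse order $L(n),\ldots,L(0)$, yielding inner horns $\Lambda^{n+1}_k$ for $k\ge1$ and the coCartesian horn $\Lambda^{n+1}_0$ at $k=0$) is correct. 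Your route is shorter and more transparent once the model is fixed; the paper's is model-independent but combinatorially heavier. The one thing to tighten: you must name the model explicitly---in an arbitrary bifibration there is no strict retraction to $\mathcal{C}$, and the passage between an inter-fiber edge and a $\mathcal{C}$-morphism requires a non-canonical $2$-simplex, which is precisely the extra dimension the paper tracks.
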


\begin{proof}
Let $\pi: \mathcal{M}\to\Delta^1$ be
a map associated to the adjunction $(L,R)$,   
which is a coCartesian fibration and 
a Cartesian fibration.
We may assume that the fibers $\mathcal{M}_{\{0\}}$
and $\mathcal{M}_{\{1\}}$ over $\{0\}$ and $\{1\}$
are isomorphic to $\mathcal{C}$
and $\mathcal{D}$, respectively. 
We regard $f$ as 
a map $(\Delta^n\times\Delta^{\{0\}})\cup
(\partial\Delta^n\times\Delta^1)\to \mathcal{M}_{\{0\}}$
and $g$ as a map
$\partial\Delta^n\to \mathcal{M}_{\{1\}}$.

Since 
$\mathcal{M}\to\Delta^1$ is a Cartesian fibration,
we can extend the map $g$ to 
a map $h:\partial\Delta^n\times\Delta^1\to \mathcal{M}$
such that $h|_{\partial\Delta^n\times\Delta^{\{0\}}}=Rg$,
$h|_{\partial\Delta^n\times\Delta^{\{1\}}}=g$,
and $h(\Delta^{\{i\}}\times\Delta^1)$
is a $\pi$-Cartesian edge over $\Delta^1$ for all 
$i=0,1,\ldots,n$.
By the assumption that 
$Rg=f|_{\partial\Delta^n\times\Delta^{\{1\}}}$,
we obtain a map
$k:\partial\Delta^n\times\Lambda^2_1\to \mathcal{M}$
such that 
$k|_{\partial\Delta^n\times\Delta^{\{0,1\}}}=
f|_{\partial\Delta^n\times\Delta^{\{0,1\}}}$
and 
$k|_{\partial\Delta^n\times\Delta^{\{1,2\}}}= h$.

By the assumptions that
$g(d)=L(c)$ and
$f(\Delta^{\{0\}}\times \Delta^1)$ is the unit
map $c\to RL(c)$,
we have a map $l: \Delta^{\{0\}}\times\Delta^2\to\mathcal{M}$
such that $l|_{\Delta^{\{0\}}\times \Delta^{\{0,1\}}}=
f|_{\Delta^{\{0\}}\times\Delta^1}$,
$l|_{\Delta^{\{0\}}\times \Delta^{\{1,2\}}}=k|_{\Delta^{\{0\}}\times \Delta^{\{1,2\}}}$,
and
$l(\Delta^{\{0\}}\times \Delta^{\{0,2\}})$
is $\pi$-coCartesian.

Hence we obtain a map $k\cup l:
(\partial\Delta^n\times\Lambda^2_1)\cup
(\Delta^{\{0\}}\times\Delta^2)\to\mathcal{M}$.
Let $\sigma:\Delta^n\times\Delta^2\to\Delta^1$
be the projection
$\Delta^n\times\Delta^2\to\Delta^2$ followed
by $s^0:\Delta^2\to\Delta^1$,
where $s^0(\{0\})=s^0(\{1\})=\{0\}$
and $s^0(\{2\})=\{1\}$.
We shall show that $k\cup l$
extends to a map on $\Delta^n\times\Delta^2$
covering $\sigma$.
 
Since $\Lambda^2_1\hookrightarrow\Delta^2$ is inner anodyne,
$(\partial\Delta^n\times \Lambda^2_1)\cup
(\Delta^{\{0\}}\times \Delta^2) 
\to\partial\Delta^n\times\Delta^2$ is also inner anodyne
by \cite[Cor.~2.3.2.4]{Lurie1}.
Hence there is an extension 
$m: \partial\Delta^n\times\Delta^2 \to\mathcal{M}$
of $k\cup l:
(\partial\Delta^n\times \Lambda^2_1)\cup
(\Delta^{\{0\}}\times \Delta^2) \to\mathcal{M}$
covering $\sigma$.

We have the map
$f'|_{\Delta^n\times\Delta^{\{0\}}}\cup 
m|_{\partial\Delta^n\times  \Delta^{\{0,2\}}}:
(\Delta^n\times\Delta^{\{0\}})\cup
(\partial\Delta^n\times\Delta^{\{0,2\}})\to
\mathcal{M}$.
Since $m(\Delta^{\{0\}}\times \Delta^{\{0,2\}})$
is a $\pi$-coCartesian edge over $\Delta^1$,
there is an extension 
$p(0,2): \Delta^n\times\Delta^{\{0,2\}}\to \mathcal{M}$ 
of $f'|_{\Delta^n\times\Delta^{\{0\}}}\cup 
m|_{\partial\Delta^n\times  \Delta^{0,2}}$
covering $\sigma$
by \cite[Prop.~2.4.1.8]{Lurie1}.

We have the map
$p(0,2)|_{\Delta^n\times\Delta^{\{2\}}}\cup
m|_{\partial\Delta^n\times\Delta^{\{1,2\}}}:
(\Delta^n\times\Delta^{\{2\}})\cup
(\partial\Delta^n\times\Delta^{\{1,2\}})\to
\mathcal{M}$.
Since $m(\Delta^{\{n\}}\times\Delta^{\{1,2\}})$
is a $\pi$-Cartesian edge over $\Delta^1$,
there is an extension 
$p(1,2): \Delta^n\times\Delta^{\{1,2\}}\to\mathcal{M}$ 
of 
$P(0,2)|_{\Delta^n\times\Delta^{\{2\}}}\cup
m|_{\partial\Delta^n\times\Delta^{\{1,2\}}}$
covering $\sigma$
by the dual of \cite[Prop.~2.4.1.8]{Lurie1}.

Hence we obtain a map
$q=m\cup p(1,2)\cup p(0,2):
(\partial\Delta^n\times\Delta^2)\cup
(\Delta^n\times\Lambda^2_2)\to\mathcal{M}$
covering $\sigma$.
We note that $q(\Delta^{\{i\}}\times \Delta^{\{1,2\}})$
is $\pi$-Cartesian for all $i=0,1,\ldots,n$.

Let $\mathcal{E}$ be the set of edges of $\Delta^2$
consisting of all degenerate edges together with
$\Delta^{\{1,2\}}$.
We denote by $(\Delta^2)^+$
the marked simplicial set $(\Delta^2,\mathcal{E})$
and by $(\Lambda^2_2)^+$ the marked simplicial set
$(\Lambda^2_2,\mathcal{E}\cap (\Lambda^2_2)_1)$.
The map of marked simplicial sets
$(\Lambda^2_2)^+\to (\Delta^2)^+$ is marked anodyne 
by \cite[Def.~3.1.1.1]{Lurie1}.
This implies that
$(\Delta^n)^{\flat}\times 
(\Lambda^2_2)^+\cup 
(\partial\Delta^n)^{\flat}\times (\Delta^2)^+
\to
(\Delta^n)^{\flat}\times (\Delta^2)^+$
is also marked anodyne
by \cite[Prop.~3.1.2.3]{Lurie1}.

Let $(\Delta^1)^{\sharp}$ be the marked simplicial set
$\Delta^1$ equipped with the set of all edges,
and let $\mathcal{M}^{\natural}$ be the marked simplicial set
$\mathcal{M}$ equipped with the set of all $\pi$-Cartesian edges.
Since $q(\Delta^{\{i\}}\times\Delta^{\{1,2\}})$
is a $\pi$-Cartesian edge for all $i=0,1,\ldots,n$,
we have a map of marked simplicial sets
$q: (\Delta^n)^{\flat}\times (\Lambda^2_2)^+
    \cup (\partial\Delta^n)^{\flat}\times (\Delta^2)^+
    \to\mathcal{M}^{\natural}$.
We consider the following commutative diagram
of marked simplicial sets
\[ \xymatrix{
    (\Delta^n)^{\flat}\times (\Lambda^2_2)^+
    \cup (\partial\Delta^n)^{\flat}\times (\Delta^2)^+
    \ar[r]\ar@{^{(}->}[d] &
    \mathcal{M}^{\natural} \ar[d]^{\pi}\\
    (\Delta^n)^{\flat}\times (\Delta^2)^+ 
    \ar[r]^{\sigma} \ar@{.>}[ur]^r&
    (\Delta^1)^{\sharp},\\
}\]
where the upper horizontal arrow is $q$.
Since the left vertical arrow is marked anodyne,
there is a dotted arrow $r$ which makes the whole diagram
commutative by \cite[Prop.~3.1.1.6]{Lurie1}.  
The proof is completed by
setting $F=r|_{\Delta^n\times\Delta^{\{0,1\}}}$ and
$G=r|_{\Delta^n\times\Delta^{\{2\}}}$.
\qed
\end{proof}



\end{document}